\newcommand{\bbN}{\mathbb{N}}
\newcommand{\bbR}{\mathbb{R}}
\newcommand{\bbS}{\mathbb{S}}
\newcommand{\bbT}{\mathbb{T}}
\newcommand{\calA}{\mathcal{A}}
\newcommand{\calC}{\mathcal{C}}
\newcommand{\calL}{\mathcal{L}}
\newcommand{\calM}{\mathcal{M}}
\newcommand{\calO}{\mathcal{O}}
\newcommand{\calP}{\mathcal{P}}
\theoremstyle{plain}
\numberwithin{equation}{section}
\newcommand{\moduli}{\mathcal{M}}
\def\mi{{\mathbbm i}}
\def\a{{\alpha}}
\def \l{\lambda}
\def \o{\omega}
\def \g{\gamma}
\def\e{{\varepsilon}}
\def\P{{\mathbb P}}
\def \ch{ {\cosh } \, \o}
\def \th{ {\tanh } \, \o}
\def\un{{\mathbb I}{\rm d}}
\newcommand{\ov}[1]{\overline{#1}}
\def\si{\sigma}
\newcommand{\R}{{\mathbb R}}
\newcommand{\SY}{{\mathbb S }}
\newcommand{\N}{{\mathbb N}}
\newcommand{\C}{{\mathbb C}}
\newcommand{\Z}{{\mathbb Z}}
\newcommand{\pk}{{\calP}_g}
\newcommand{\SL}{\mathrm{SL}_{\mbox{\tiny{$2$}}}}
\newcommand{\Sl}{\mathfrak{sl}_{\mbox{\tiny{$2$}}}}
\newcommand{\SU}{\mathrm{SU}_{\mbox{\tiny{$2$}}}}
\newcommand{\su}{\mathfrak{su}_{\mbox{\tiny{2}}}}
\newcommand{\Sp}{\mathbb S}
\newcommand{\rmd}{{\rm d}}
\newcommand{\LSU}{\Lambda  \SU}
\theoremstyle{plain}
\newtheorem{theorem}{Theorem}[section]
\newtheorem*{theorem*}{Theorem}
\newtheorem{corollary}[theorem]{Corollary}
\newtheorem*{corollary*}{Corollary}
\newtheorem{proposition}[theorem]{Proposition}
\newtheorem*{proposition*}{Proposition}
\newtheorem{lemma}[theorem]{Lemma}
\newtheorem*{lemma*}{Lemma}
\newtheorem*{example*}{Example}
\newtheorem{definition}[theorem]{Definition}
\newtheorem*{definition*}{Definition}
\newtheorem*{notation*}{Notation}
\newtheorem{remark}[theorem]{Remark}
\newtheorem*{remark*}{Remark}
\numberwithin{equation}{section}
\title{Properly embedded minimal annuli in $\SY^2 \times \R$}
\author{L. Hauswirth \and M. Kilian \and M. U. Schmidt}
\address{\tiny L. Hauswirth, Universit Paris-Est, LAMA (UMR 8050), UPEMLV, UPEC, CNRS, F-77454, Marne-la-Valle, France }
\email{hauswirth@univ-mlv.fr}
\address{\tiny M. Kilian, School of Mathematical Sciences, Western Gateway Building, University College Cork, Ireland.}
\email{m.kilian@ucc.ie}
\address{\tiny M. U. Schmidt, Institut f\"ur Mathematik, Universit\"at Mannheim, A5, 6, 68131 Mannheim, Germany.}
\email{schmidt@math.uni-mannheim.de}
\begin{document}

\begin{abstract} 
In $\SY^2 \times \R$ there is a two-parameter family of properly embedded minimal annuli foliated by circles. In this paper we show that this family contains all properly embedded minimal annuli.
We use the description of minimal annuli in $\SY^2 \times \R$ by periodic harmonic maps $G: \C \to \SY^2$
of finite type. Due to the algebraic geometric correspondence of Hitchin  \cite{HIT}, these harmonic maps are 
parametrized by
hyperelliptic algebraic curves $\Sigma$ together with Abelian differentials $dh$ with prescribed poles. 
We deform annuli by deforming spectral data in the corresponding moduli space. Along this deformation 
we control the flux and we preserve embeddness. The center of the theory concerns the study of
singularities of the flow. In particular we open and close nodes of singular spectral curves  $\Sigma$. 
This approach applies also to mean convex Alexandrov embedded {\sc{cmc}} annuli in $\SY^3$ \cite{HKS3}.

\end{abstract}

\thanks{{\it Mathematics Subject Classification. }53A10. \today}
\maketitle
\tableofcontents

\section{Introduction}
There is a two-parameter family of embedded minimal annuli foliated by horizontal constant
curvature curves of $\SY^2$. The simplest non compact examples are the totally geodesic $\Gamma \times \R$, where $\Gamma$ is a simple closed geodesic on $\SY^2$. There exists a one-parameter family of periodic properly embedded annuli which are small graphs over $\Gamma \times \R$. These examples were described analytically by Pedrosa and Ritore \cite{PedRit} and they called them unduloids. They appear in the isoperimetric profile of $\SY^2 \times \SY^1$. These examples are rotational surfaces around vertical geodesics. A one-parameter helicoidal family, obtained by rotating a great circle on $\SY^2$ at a constant rate in the third coordinate about an axis passing through a pair of antipodal points on the rotated great circle was constructed by Rosenberg \cite{HR}.

A two-parameter family of deformations of previous examples was constructed by the first-named author \cite{hau}, and by Meeks and Rosenberg (see \cite{MRstable}, section 2) using  variational arguments. This involves solving a Plateau problem with boundaries given by two geodesics $\Gamma_1$ and $\Gamma_2$ in parallel sections $\SY^2 \times \{ t_1\}$ and $\SY^2 \times \{t_2\}$. The stable annulus bounded by these geodesics is foliated by horizontal constant curvature curves (see theorem \ref{shiffman}). Schwarz symmetry along boundary geodesics then gives a complete and properly embedded example. The two parameters (up to  isometries of $\SY^2 \times \R$) of such compact annuli are the distance between the two sections including the boundary, and the position of one geodesic in $\SY^2 \times \{t\}$, keeping the other fixed. They are periodic in the third direction, foliated by constant curvature curves of $\SY^2$ and have a vertical plane of symmetry. 

These examples are very similar to the minimal Riemann's staircase in $\R^3$, which has been classified by Meeks, Perez and Ros \cite{MPR}.
In this Euclidean case they use an integrable system point view related to the holomorphic Gauss map. They define algebra-geometric harmonic
solution to a complex KDV type equation and study holomorphic flow associate to this equation.

Constant mean curvature ({\sc{cmc}}) tori in $\R^3$ give rise to minimal annuli in $\SY^2 \times \R$ under certain conditions as follows. The Gauss map of a {\sc{cmc}} torus is a harmonic map $G: \bbT^2 \to \SY^2$. Its holomorphic quadratic differential is
$$Q=\langle G_z,\,G_z \rangle (dz)^2$$
where $z=x+ \mi y$ is a global holomorphic coordinate on the torus. Since $\langle G_z,\,G_z \rangle :\bbT^2 \to \C$ is holomorphic, it is a non-zero constant $\langle G_z,\,G_z \rangle \equiv c \in \C^*$.

After a linear change of coordinate we can assume $c =\pm 1/4$.  Then the map $X: \C \to \SY^2 \times \R$,
$$X(z)=(G(z), {\rm Re} (- 2\mi\sqrt{c}z))$$
is conformal and harmonic, and thus locally a minimal surface in $\SY^2 \times \R$ (possibly branched).
We can choose the sign of $c \in \R$ in such a way that the large curvature line on the ({\sc{cmc}}) torus corresponds to a horizontal curve in  $\SY^2 \times \{t\}$. In this case $X(z)$ is an immersion. If the Gauss map $G$ is periodic along this horizontal curve then we have a minimal annulus.

The flat {\sc{cmc}} cylinder in $\R^3$ yields a flat minimal annulus in $\SY^2 \times \R$, and the Gauss maps of Delaunay surfaces ($G$ is periodic as defined on a torus) yield the unduloid, and the {\sc{cmc}} nodoids yield the helicoid in $\SY^2 \times \R$ under this correspondence by the harmonic map $G$.

Supplementing the rotational examples, there is a two-parameter family of harmonic maps studied by Abresch \cite{abresch}. To describe the equations of Wente tori in $\R^3$, Abresch studies conformal {\sc{cmc}} $H=1/2$ immersion with large line or small line of curvature included in a plane. He studies constant mean curvature surfaces parameterized by $\R^2$, with the coordinate axes $x$ and $y$ yielding the principal lines of curvature, and solves closing conditions for the surface to obtain {\sc{cmc}} tori. On these lines, the image of the Gauss map 
is a circle of $\SY^2$.

In a conformal parametrization with $|c|=1/4$, the metric of the minimal annulus in $\SY^2 \times \R$ is given by $ds^2 = \cosh^2 \o |dz|^2$ where $\o : \C \to \R$ is a solution of the sinh-Gordon equation
\begin{equation}
\label{sinh}
\Delta \o + \sinh \o \cosh \o =0\,,
\end{equation}
where $\Delta$ denotes the Laplacian of the flat metric $|dz|^2$.  The metric of the corresponding {\sc{cmc}} $H=1/2$ surface in $\R^3$ is given by $d\tilde s^2 = e^{2\o} |dz|^2$.
Abresch classified all real analytic solutions $\o : \C \to \R$ of the  system
\[
(I) \quad \left\{\begin{array}{l}
\Delta \o  +\sinh  \o \cosh \o =0  \\[3mm]
\sinh (\o)  ( \o_{xy}) -\cosh (\o) (\o_x)( \o_y) =0
\end{array} \right.
\]
where the second equation is the condition that the small curvature lines are planar. Then he proves that this family contains {\sc{cmc}} $H=1/2$ tori. These tori give doubly periodic harmonic maps $G$ and so minimal immersions by considering $X(z)=(G(z),x)$ (the map $X(z)=(G(z),y)$ is branched). The horizontal curves of this family have non-constant curvature.

In a second part, Abresch studies solutions of the system
\[
(II) \quad\left\{\begin{array}{ll}
\Delta \o  +\sinh  \o \cosh \o =0 & \\[3mm]
\cosh (\o)  (\o_{xy}) -\sinh (\o)  ( \o_x)(\o_y) =0&
\end{array} \right.
\]
where the second equation is the condition that large lines of curvature are planar.
These solutions $\o$ induces a {\sc{cmc}} immersion of $\C$ in $\R^3$ and a doubly-periodic Gauss map $G: \R^2 \to \SY^2$. The second equation is the condition that the immersion $X(z)=(G(z),y)$
has horizontal constant curvature curve and parameterizes the whole family of properly embedded annuli of $\SY^2 \times \R$. For these reasons,

\begin{definition}
An Abresch annulus of $\SY^2 \times \R$ is an embedded minimal annulus foliated by horizontal constant curvature curves.
\end{definition}

It was conjectured by Meeks-Rosenberg \cite{MRstable} that any properly embedded genus zero minimal surface in $\SY^2 \times \R$ is an Abresch annulus. In this direction, Hoffmann and White \cite{hoffmanwhite} proved that if a properly embbeded annulus of $\SY^2 \times \R$ contains a vertical geodesic, then this is a helicoid type example. The first author in \cite{hau}  characterized Abresch annuli as the only annuli which are foliated by horizontal curvature curves. Our main result confirms this conjecture, and we prove the following

\vspace{1mm}\noindent{\bf Main Theorem.}
{\it A properly embedded minimal annulus in $\SY^2 \times \R$ is an Abresch annulus.}\vspace{1mm}

The proof combines methods from geometric analysis with techniques of integrable systems. The first ingredient is a linear area growth and curvature estimate of Meeks-Rosenberg. Due to  theorem 7.1 in \cite{MPR} (see Theorem \ref{curvest} below), the curvature and thus the area growth of a properly embedded minimal annulus $A$ in $\SY^2 \times \R$ are bounded by constants depending on the flux of the third coordinate $h: A \to \R$ along horizontal sections. Properly  immersed annuli in $\SY^2 \times \R$ are parabolic (see theorem~\ref{immersionsinhgordon}) and the flux
corresponds to the length of the period $\tau$ of the corresponding solution of the sinh-Gordon
equation~\eqref{sinh}(see lemma \ref{flux}). If the flux $|\tau| \geq \epsilon _0$, there is a constant $C_1>0$ depending only on
$\epsilon_0$ such that
$$|K| \leq C_1 (\epsilon_0)$$
We improve the linear area growth estimate of theorem 1.1 in \cite{MPR} using parabolicity, and prove in lemma \ref{tubular} that there is a constant $C_2>0$ depending only on $\epsilon_0$ such that for any $t >0$,
$${\rm Area}(A \cap [-t,t]) \leq C_2(\epsilon_0) t.$$
This estimate has two consequences. Firstly it implies that properly embedded annuli are of finite type. This means that the periodic immersion $X : \C \to \SY^2 \times \R$  with period $\tau \in \C^*$ can be described by algebraic data. Up to some finite dimensional and compact degree of freedom the immersion is determined by the so-called {\emph{spectral data}} $(a,\,b)$. They consist of two polynomials of degree $2g$ respectively $g+1$ for some $g \in \bbN$. The polynomial $a(\l)$ encodes a hyperelliptic Riemann surface called {\emph{spectral curve}}. The genus of the spectral curve is called {\emph{spectral genus}}. The other polynomial $b(\l)$ encodes the closing conditions $X(z+\tau)=X(z)$ for some $\tau \in \C^*$ depending on $(a,b)$. This correspondence is called the algebro-geometric correspondence.

Using this algebro-geometric correspondence, one can deform a minimal annulus by deforming the corresponding spectral data.
Hitchin \cite{HIT} introduce the algebra geometric correspondence for harmonic map $G: \bbT^2 \to \SY^2$. Krichever introduced the Whitham deformations as a tool to deform spectral data. Starting with an embedded minimal annulus in $\bbS ^2 \times \bbR$, the Whitham deformation allows us to deform the annulus preserving minimality, closing condition as well as embeddedness. Applying this to the flat embedded minimal annulus allows us to flow through the path-connected component of embedded annuli. In this way we are able to construct the whole family of Abresch annuli via Whitham deformation theory (see Appendix C). Applying this deformation to annulus gives us additional freedom parameter
of deformation in comparison with the doubly periodic case $G: \bbT^2 \to \SY^2$.

The second consequence of the curvature estimate is the compactness of the space of spectral data corresponding to properly embedded minimal annuli with periods $\tau$ bounded away from zero. As a direct consequence each connected component of the space of such spectral data $(a,b)$ contains a maximum of the period $\tau$, since connected components are always closed.

If the spectral genus is larger than zero, then we show in Lemma~\ref{increasetau} that there always exists a Whitham deformation of $(a,b)$ increasing the period $\tau$. The flat annulus is the only surface whose spectral curve has spectral genus zero. Therefore there is only one connected component of spectral data $(a,b)$ corresponding to properly embedded minimal annuli.

Helicoidal and rotational unduloids are of spectral genus one, while the Riemann's type examples are of spectral genus two. This family of Abresch annuli is characterized by an additional symmetry in the corresponding spectral data $(a,b)$, and we will see that it is not possible to break this symmetry by continuous deformation while preserving a closing condition of the annulus. Therefore the Abresch annuli form the only connected component of the embedded flat minimal annuli.

The curvature and area growth estimate of Meeks-Rosenberg helps us to simplify the proof of the Main Theorem in the present paper. However such an estimate relating the flux and the curvature is not necessary to control the deformation. In \cite{HKS3} the present authors construct a Whitham deformation connecting the spectral data of an arbitrary Alexandrov embedded annulus in $\SY^3$ with the spectral data of the Clifford torus without using such an estimate. It is shown that no geometrical accident can appear along this deformation.

\vskip 0.3cm
Each of the remaining sections contains a step of the proof. In the subsequent Section~\ref{sec:proof} we present the results of all remaining sections and explain how they fit  together to a proof of the Main Theorem. Section~\ref{sec:proof} contains the proof of the Main Theorem and a summary of the remaining sections.

\section{Proof of the Main Theorem}\label{sec:proof}
The proof of the theorem proceeds in several steps. Each section in the paper contains a step of the proof. We outline the results of the individual sections and explain how they fit together to prove the theorem.

\noindent{\bf Section~\ref{preliminary}.} We first set the notation and discuss
some local and global aspects of minimal surfaces in $\bbS^2 \times
\bbR$. In Theorem~\ref{immersionsinhgordon} we describe conformal minimal immersions. We show that a proper annulus is parabolic, and in a conformal parametrization $X: \C/ \tau \Z \to \SY^2 \times \R,\,z \mapsto (G(z),\,h(z))$ minimality is equivalent to the harmonicity of both $G$ and $h$. Properness implies $dh \neq 0$ and we can then find a conformal parametrization with constant Hopf differential
$Q= \langle G_z,\,G_z \rangle \,(dz)^2=\tfrac{1}{4} \exp(\mi\Theta)\,(dz^2)$ and linear vertical component $h(z) = {\rm Re} (-\mi e^{\mi\Theta/2} z)$. 
\vskip 0.3cm
In particular $h: A \to \R$ is a proper harmonic map. The intersection $A \cap \{x_3=t\}$ has only one connected
component, a topological circle. The metric of the immersion is given by $ds^2 = \cosh^2 \o \,dz \otimes d\bar z$, and the third coordinate of the unit normal vector given by $n_3 =\tanh \o$. We prove that in these special coordinates the real function $\o : \C \to \R $ is a solution of sinh-Gordon equation~\eqref{sinh}
where $\Delta$ is the Laplacian in the flat Euclidean metric. We conclude by computing $\o$ for the Abresch family.

\noindent{\bf Section~\ref{sec:curvature estimate}.}
Due to a theorem by Meeks-Rosenberg \cite{MPR}, the curvature $K$ of a properly
embedded annulus is bounded by a constant depending on the flux $F_3$ of $h:A \to \R$.
If $|F_3| \geq \epsilon_0$, there is a constant $C_1 >0$ depending
on $\epsilon_0$ with
\begin{equation}\label{eq:curvature bound}
    |K| \leq C_1(\epsilon_0)
\end{equation}
If $\tau \in \C$ is the period of the annulus, and the Hopf differential is $Q=\frac{e^{\mi\Theta}}{4}(dz)^2$,
then we show in Lemma~\ref{flux} that $F_3=|\tau|$. The estimate of curvature on parabolic annuli implies (see lemma \ref{tubular})  the existence of a constant $C_2(\epsilon_0)$ depending only on the lower bound of the flux of the height function $h: \C/\tau Z \to \R$ and the upper bound on the absolute Gaussian curvature with
\begin{equation}\label{eq:area bound}
{\rm Area}(A \cap [-t,t]) \leq C_2(\epsilon_0) t.
\end{equation}

This estimate gives a uniform bound of the third coordinate of the normal unit vector in ${\mathcal C}^{k, \alpha}$ norm  
$|\omega|_{A,k,\alpha} \leq C_0$(see proposition \ref{omegabound}). 
This implies that any horizontal curve $A \cap \{x_3=t\}$ has an embedded tubular neighborhood depending only on $C_0$ and 
$\epsilon_0$.


\noindent{\bf Section~\ref{pinkallsterling}.}
Since the function $\o$ is a solution of the sinh-Gordon equation, we can apply the iteration of Pinkall-Sterling in section \ref{pinkallsterling} to obtain an infinite hierarchy of solutions $u_1,\,u_2,...$  of the linearized sinh-Gordon equation (LSG for short):
\begin{equation}\label{eq:lsg}
    \calL\,u_n = \Delta u_n+ u_n\,\cosh(2 \o) = 0\,.
\end{equation}
Finite type (see Definition~\ref{finite type}) means that the set of such solutions forms a finite dimensional vector space in the kernel of $\calL$.
Combining this with the Meeks-Rosenberg curvature and area growth estimate~\eqref{eq:curvature bound},
this gives us the first step in the proof of our Theorem: Properly embedded minimal annuli in $\SY^2 \times \R$ are of finite type (Theorem~\ref{parabolic}).

Finite type solutions of the sinh-Gordon equation give rise to algebraic objects which we call potentials. They are defined as follows.
\begin{definition}\label{def pot}
The open subset of a $3g+1$ dimensional real vector space of matrices called potentials is
\begin{align*}
{\calP}_g= \{ \xi_\l=\sum_{d=-1}^{g}\hat \xi_d\lambda^d  \mid  \hat \xi_{-1}\in  \bigl( \begin{smallmatrix} 0 &  \mi\R^+ \cr 0 & 0 \end{smallmatrix} \bigr)  , {\rm trace}(\hat \xi_{-1}\hat  \xi_0) \neq 0,\, \\
\hat  \xi_d=-\bar{\hat \xi}^t_{g-1-d}\in\Sl(\C)\mbox{ for }d=-1,\ldots,g  \}.
\end{align*}
\end{definition}
There is a correspondence between finite type solutions of the
sinh-Gordon equation and solutions $\zeta_\l:\C\to\pk$ of a
Lax equation
$$d \zeta_\l (z) + [ \a (z), \zeta_\l (z)]=0 \hbox{ with } \zeta_\l (0) =\xi_\l \hbox{ where }$$
\begin{equation} \label{matrixalpha}
    \alpha (z) = \frac{1}{4}\,\begin{pmatrix}
    2  \o_z &  \mi \lambda^{-1}  e^\o\\
    \mi\gamma e^{-\o}&  -2\o_z
    \end{pmatrix}\, dz + \frac{1}{4}\,\begin{pmatrix}
    -2 \o_{\bar{z}} &
    \mi\bar{\gamma} e^{-\o}\\
    \mi \lambda  \,e^{\o}&
    2\o_{\bar{z}}
    \end{pmatrix} d \bar{z}
\end{equation}
The Lax equation ensures that
$\det \zeta_\l (z) = \det \xi_\l $ does not depend on the variable
$z$. In Definition~\ref{spectral curve} we define for a potential
$\xi_\l=\zeta_\l(0)$ the polynomial $a(\l)=-\l\det \xi_\l $ and the
spectral curve $\Sigma$ as the 2-point compactification of
\begin{equation}\label{eq:spectral curve}
\Sigma^* = \{(\nu,\l) \in \C^2 \mid \det(\nu\; \un - \zeta_\l)=0\} = \{
(\nu,\l) \in \C^2\mid \nu ^2 =\lambda^{-1} a(\lambda)=-\det \xi_\l \}
\end{equation}
We note that the points over $\l=0$ and $\l=\infty$ are branch points of
$\Sigma$. Here $a$ is a polynomial of degree $2g$ obeying the reality
condition
\begin{equation}\label{eq:reality}
|a(0)|=\frac{1}{16}, \;\;\;  \lambda^{2g} \overline{a(\bar{\lambda}^{-1})} =a(\lambda) \hbox{ and } \frac{a(\l) }{\l ^g} \leq 0 \hbox{ for all } \l \in \SY^1.
\end{equation}

\vskip 0.3cm
\noindent{\bf Section~\ref{sec:polynomial killimg field}. Construction of a minimal surface with potential $\xi_\l$.} We identify the sphere $\SY^2$ with $\SU / U (1)$ and $(x,y,z) \in \R^3$ with the matrix $ M= \bigl( \begin{smallmatrix} \mi z  & x+\mi y \\ -x+\mi y  & -\mi z \end{smallmatrix} \bigr)$. The Lie group $\SU$ (with associated Lie algebra $\su$), acts on $\su \cong \R^3$ isometrically by $M \longrightarrow g M g ^{-1}$ with $|M|^2=\det M$. The map  $g \to g  \sigma_3 {} ^t \bar g$ maps $\SU$ into $\SY ^2 \subset \R^3$ with $\sigma_3=  \bigl( \begin{smallmatrix} \mi & 0 \\ 0 & -\mi \end{smallmatrix} \bigr)$.
An extended frame $F : \C \longrightarrow \SU$ associate to a map $G:\C \to \SY^2$ is defined by
$$G=F\sigma_3F^{-1}.$$
To obtain a minimal surface we first solve for an extended frame  $F_\l: \C \to  \SL (\C)$
$$\label{equation}
F^{-1}_\l dF_\l = \a (z) \hbox{ with } F_\l (0)=\un$$
for parameter $\l \in \C$. For $\l \in \SY^1$, we have $F_\l \in \SU$ (see Section 6). We choose to evaluate at $\l =1$.
We say then that we 'parametrize the immersion by its Sym point' (see Proposition \ref{sym}).
We apply a construction of Sym-Bobenko type which gives us the minimal immersion at $\l=1$:
\begin{equation}\label{eq:immersion}
X(z)=(F_1 (z) \sigma_3 F_1^{-1} (z) , {\rm Re}(-\mi e^{\mi\Theta /2}z)\, \hbox{ and }\,
Q_1=-4\beta_{-1} \gamma_0 (dz)^2=\tfrac{1}{4}\, e^{\mi\Theta}(dz)^2.
\end{equation}
\vskip 0.3cm
We remark that by formal computation if we set
\begin{equation}
\label{eq:conj}
\zeta_\l (z):=F_\l^{-1}(z) \xi_\l F_\l (z),
\end{equation}
we obtain a polynomial Killing field $\zeta_\l : \C \to \pk$ solution of the Lax equation
$$d \zeta_\l + [F^{-1}_\l dF_\l , \zeta_\l]=0 \hbox{ and } \zeta_\l (0) = \xi_\l.$$
\vskip 0.3cm
Conversely from a potential $\xi_\l$, the generalized Weirstrass data of Dorfmeister, Pedit, Wu \cite{DPW} and a correspondence
between potentials, polynomial Killing fields and extended frames known as "Symes method" \cite{BurP_adl,BurP:dre,Symes_80} solve the Lax equation. This method gives a polynomial Killing field $\zeta_\l : \C \to {\calP}_g$, a solution of sinh-Gordon equation $\o : \C \to \R$ and a
map $F_\l : \C \to \SL (\C)$ which satisfy equation (\ref{eq:conj}) and which in turn gives us the immersion. 
The main tool is a decomposition of Iwasawa type for loops group. The symmetry of the matrices
$\xi_\l \in {\calP}_g$ contains the information of $F_\l \in \SU$ for $\l \in \SY^1$. This construction 
is detailed in \cite{HKS1} and explained in Section 6.

\vskip 0.3cm
Consequently we shall
characterize in the remaining sections those potentials $\xi_\l$,
which induce for $\l=1$ properly embedded minimal annuli in $\SY^2\times\R$.
\vskip 0.3cm
\noindent{\bf Section~\ref{sec:spectral data}. Isospectral set and spectral data.}
The condition on a potential $\xi_\l$ to induce a periodic immersion turns out to be a condition on the corresponding polynomial $a(\l)$ and the spectral curve \eqref{eq:spectral curve}. Therefore we investigate the
set $I(a)$ \eqref{eq:isospectral set} of all potentials with the same spectral curve:
\begin{equation}\label{eq:isospectral set}
I(a):=\left\{ \xi_\l \in \pk \mid
\l \det \xi_\l= -a(\l)\right\}
\end{equation}
\noindent{\bf Curvature estimate on finite type annuli.} The compactness of the isospectral set $I(a)$ implies an estimate of curvature. This comes from the fact that $\zeta_\l : \C \to I(a)$ depends on $\o$ and its higher derivatives. We bound these derivatives by the modulus of the roots of $a(\l)$.
\begin{proposition}
\label{corcurva}
If $\o:\C\to\R$ is the solution of the sinh-Gordon equation of a minimal immersion $X : \C \to \SY^2 \times \R$, then the curvature of $X$ is equal to
\begin{align}\label{eq:def curvature}
    K=\tanh ^2 \o - \frac{| \nabla \o|^2}{\cosh^4\o}.
\end{align}
For polynomials $a$ obeying \eqref{eq:reality} the set of finite type
immersions corresponding to all potentials $\xi_\l \in I(a)$ have uniform bounded curvature and uniform bounded
function $\o$ with
$$|K| \leq C \hbox{ and } |\o|�\leq C$$
if and only if the roots of $a$ are bounded away from
$\infty$ and $0$.
\end{proposition}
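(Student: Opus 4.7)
\emph{(i) Curvature formula.} The induced metric $ds^2 = \cosh^2\omega\,|dz|^2$ is conformal with factor $e^{2u}$ for $u = \log\cosh\omega$, so the Gaussian curvature is $K = -e^{-2u}\Delta u$. A short computation gives $\Delta(\log\cosh\omega) = \tanh\omega\cdot\Delta\omega + |\nabla\omega|^2/\cosh^2\omega$, and substituting $\Delta\omega = -\sinh\omega\cosh\omega$ from the sinh-Gordon equation~\eqref{sinh} yields $\Delta(\log\cosh\omega) = -\sinh^2\omega + |\nabla\omega|^2/\cosh^2\omega$. Dividing by $-\cosh^2\omega$ produces the stated identity.

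\emph{(ii) Sufficiency: roots in a fixed annulus imply a uniform curvature bound.} Write $a(\lambda) = c\prod_{i=1}^{2g}(\lambda - \lambda_i)$. The reality condition $|a(0)| = \tfrac{1}{16}$ from \eqref{eq:reality} reads $|c|\prod_i|\lambda_i| = \tfrac{1}{16}$, so a two-sided bound $0 < r_1 \leq |\lambda_i| \leq r_2 < \infty$ forces $\tfrac{1}{16 r_2^{2g}} \leq |c| \leq \tfrac{1}{16 r_1^{2g}}$ and hence controls all coefficients of $a$. The isospectral set $I(a) \subset \pk$, defined by the polynomial relations $\lambda\det\xi_\lambda = -a(\lambda)$ and $\beta_{-1}\gamma_0 = a(0)$ together with the reality and sign constraints of $\pk$, is therefore bounded and hence compact in the $(3g+1)$-dimensional potential space. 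Since $\zeta_\lambda(z) = F_\lambda(z)^{-1}\,\xi_\lambda\,F_\lambda(z) \in I(a)$ for every $z \in \C$, all coefficients $\hat\xi_d(z)$ are uniformly bounded in $z$. Reading the Lax equation derived from \eqref{matrixalpha} at order $\lambda^{-1}$ one identifies $\hat\xi_{-1}(z) = \tfrac{\mi\,e^{\omega(z)}}{4}\bigl(\begin{smallmatrix}0 & 1\\ 0 & 0\end{smallmatrix}\bigr)$; together with the fixed-modulus constraint $|\beta_{-1}(z)\gamma_0(z)| = \tfrac{1}{16}$ and the bound on $|\gamma_0(z)|$ this yields uniform upper and lower bounds on $e^{\omega(z)}$, so $\omega(z)$ is uniformly bounded. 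Differentiating the identification of $\hat\xi_{-1}(z)$ and expressing $\hat\xi_{-1,z}(z)$ via the Lax equation as a polynomial in bounded entries of $\hat\xi_{-1}(z)$ and $\hat\xi_0(z)$ gives uniform bounds on $\omega_z(z)$, and symmetrically on $\omega_{\bar z}(z)$. Applying the formula from (i), $|K| \leq C(r_1,r_2)$.

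\emph{(iii) Necessity and main obstacle.} Conversely, suppose along a family of admissible polynomials the roots degenerate toward $0$ or $\infty$. Then the identity $|c|\prod_i|\lambda_i| = \tfrac{1}{16}$ forces $|c| \to \infty$ or $|c| \to 0$, and through the reality condition $\hat\xi_g = -\bar{\hat\xi}_{-1}^t$ this causes $\beta_{-1}$ (and hence $e^{\omega(0)} = 4\beta_{-1}$) to diverge or collapse along a suitable sequence in the corresponding isospectral sets; in either case $|\omega(0)|$ is unbounded along the family. The delicate remaining step---the \emph{main technical obstacle}---is to ensure that this blow-up of $\omega$ is detected by the curvature formula, i.e.\ to rule out a compensating balance in which $|\nabla\omega|^2/\cosh^4\omega$ precisely tracks $\tanh^2\omega$ so as to leave $K$ bounded. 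Such a cancellation would require a finely tuned matching of the growths of $|\nabla\omega|$ and $\cosh^2\omega$ persisting to all orders of the Pinkall-Sterling hierarchy, which would ultimately force a degeneration of the spectral polynomial to lower genus, contradicting the fixed genus $g$ of $a$. Making this precise is the technical crux and requires an inductive analysis through the Pinkall-Sterling recursion, tracking how an unbounded leading (or vanishing) coefficient of $a$ propagates to unbalanced growth in $|\nabla\omega|$ versus $\cosh^2\omega$; the curvature formula from (i) then yields the desired unboundedness of $|K|$.
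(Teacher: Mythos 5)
Your part (i) is a correct derivation of \eqref{eq:def curvature} (the paper states the formula without computation, so this is a welcome addition), and your part (ii) follows essentially the paper's route: boundedness of $I(a)$, the identification $4\beta_{-1}=\mi e^{\o}$, the fixed product $\beta_{-1}\gamma_0=a(0)$, and the Lax relation $\o_z=2\alpha_0$ giving a gradient bound, hence a curvature bound via (i). The one thin spot is your ``therefore'' when you assert that control of the coefficients of $a$ bounds $I(a)$: a bounded determinant does not bound matrix entries in general; the actual mechanism, used in the paper, is that $\l^{(1-g)/2}\xi_\l$ is anti-Hermitian on $\SY^1$ by the reality condition of $\pk$, so $-\l^{-g}a(\l)$ is the square of a norm there and the Cauchy integral gives $\|\hat\xi_d\|\leq\sup_{\l\in\SY^1}\sqrt{-\l^{-g}a(\l)}$, which is bounded when the roots are.

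The genuine gap is in part (iii), the ``only if'' direction. First, your triggering mechanism is false: by the reality condition the roots of $a$ come in pairs $\alpha,\bar\alpha^{-1}$ (Proposition \ref{involution}) and the coefficients satisfy $a_{2g}=\bar a_0$, so $\prod_i|\l_i|=1$ and $|c|=|a_{2g}|=|a(0)|=\tfrac1{16}$ always; when a pair degenerates ($\alpha_d\to0$, $\bar\alpha_d^{-1}\to\infty$) the identity $|c|\prod_i|\l_i|=\tfrac1{16}$ does not force $|c|\to0$ or $\infty$, so nothing about $\beta_{-1}$ follows from it. Second, and decisively, you never exhibit a potential in $I(a)$ on which $\o$ is actually large; this is exactly what the paper's proof supplies: for each admissible $a$ it writes down the explicit off-diagonal potential with $\alpha=0$, $\beta=\tfrac{\mi}{4\l\sqrt{\prod_d|\alpha_d|}}\prod_d(1-\bar\alpha_d\l)$, $\gamma=\tfrac{\mi}{4\sqrt{\prod_d|\alpha_d|}}\prod_d(\l-\alpha_d)$, which lies in $I(a)$ and has $\o(0)=-\tfrac12\sum_d\ln|\alpha_d|$, $\nabla\o(0)=0$, so letting roots tend to $0$ (equivalently their partners to $\infty$) makes $\sup|\o|$ over the corresponding isospectral sets unbounded, which is incompatible with a uniform curvature bound. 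Your substitute for this step---the ``main technical obstacle'' paragraph about ruling out a compensating balance via an inductive analysis of the Pinkall--Sterling hierarchy---is an acknowledgement that the argument is missing rather than a proof, and the hierarchy plays no role in the paper's argument here. As it stands, the necessity half of the proposition is not established by your proposal.
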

\begin{proof}
The formula (\ref{eq:def curvature}) comes from sinh-Gordon equation applied to Gauss curvature formula for the conformal 
metric $ds^2=\cosh^2 \o |dz|^2$.
Let $\bar{\moduli}^g$ denote the subspace of all complex polynomials $a$ of degree $2g$ obeying
\begin{align*}
|a(0)|&\ne0,&\lambda^{2g}\overline{a(\bar{\lambda}^{-1})}&=a(\lambda)\mbox{ and}&\l^{-g}a(\l)&\leq 0 \hbox{ for all } \l \in \SY^1
\end{align*}
(compare with \eqref{eq:reality}). We prove in Proposition 4.9 \cite{HKS1} that the map
\begin{equation}\label{eq:map a}
  a:\pk \to\bar{\moduli}^g\,,\quad \xi \mapsto -\l\,\det\xi_\l
\end{equation}
is a proper map. The Laurent coefficients of $\xi_\l = \sum_{d=-1}^g \l^d \hat \xi_d $ are
$$
   \hat  \xi_d = \frac{1}{2\pi\mi}\int_{\SY^1} \l ^{-d} \xi_\l \,\frac{d\l}{\l}\,.
$$
Using a norm
$$
    \| \hat \xi_d\| \leq \frac{1}{2\pi\mi}\int_{\SY^1} \|\l ^{-d}\xi _\l \| \,\frac{d\l}{\l} \leq\sup_{\l\in\SY^1}\sqrt{-\l^{-g}a(\l)}\,.
$$
Thus each entry $\hat \xi_d$ of $\xi_\l$ is bounded, if $\sqrt{-\l^{-g}a(\l)}$ is bounded on $\SY^1$. For polynomials $a$ obeying~\eqref{eq:reality} this follows from the roots of $a$ being bounded. Since $\hat \xi_{-1}$ is an upper triangular matrix with coefficient $4 \beta_{-1}=\mi e^{\o}$ (see Lemma \ref{lax} and Remark \ref{laxomega}), the bound of the Laurent coefficient $\hat \xi_{-1}$ on $I(a)$ is equivalent to a uniform bound on $\o$. Schauder estimates bound the $\calC^{k,\alpha}$ norm of $\o$ on $\R /\tau\Z\times\R$.

Conversely let $\o : \C \to \R$ of the immersions corresponding to all $\xi_\l \in I(a)$ be uniformly bounded with $|\o|�\leq C$. For any roots 
$\alpha_1,\ldots,\alpha_g$ of $a$, such that $\bar{\alpha}_1^{-1},\ldots,\bar{\alpha}_g^{-1}$ are the remaining roots of $a$, there exist an off-diagonal $\xi_\l\in I(a)$ with
\begin{align*}
\alpha&=0&
\beta&=\frac{\mi}{4\l\sqrt{\prod_d|\alpha_d|}}\prod_d(1-\bar{\alpha}_d\l)&
\gamma&=\frac{\mi}{4\sqrt{\prod_d|\alpha_d|}}\prod_d(\l-\alpha_d).
\end{align*}
The corresponding $\o$ at $z=0$ is due to proposition \ref{lax} and remark \ref{laxomega} equal to
$\o(0)=-\frac{1}{2}\sum_d\ln|\alpha_d|$ and $\nabla\o(0)=0$. If $|\o (0)|�\leq C$  then all roots of $a$ are
bounded away from $\infty$ and $0$.
\end{proof}

\begin{remark} \label{algebraicestimatecurvature} 
This proposition implies that properly embedded minimal annuli with flux bounded away from zero have spectral data
with roots bounded away from infinity and zero depending only on the constant $\epsilon _0$.

\end{remark}

{\bf Group action.} We define via the Iwasawa decomposition a commuting group action
$$\pi:\C^g  \times I(a) \to I(a)$$ on the isospectral set. It integrates the family of solutions of the linearized sinh-Gordon equation $u_1,\,u_2,...$ into deformations of the metric $\o$ and then deformations of the extended frame $F_\l$.  The first solution $u_1=\o_z$ integrates as translation on the annulus. The group action corresponding to the first solution $u_1= \o _z$ represent the annulus as a two-dimensional subgroup of the isospectral set. For $(z,0,...0) \in \C^g$, we have
$$\zeta_\l(z)=\pi (z,0,...0)\xi_\l$$
the polynomial Killing field $\zeta_\l : \C \to \pk$.

An important property of spectral curves without singularities (i.e. the polynomial $a(\l)$ has only simple roots), is that the isospectral set $I(a)$ has only one orbit diffeomorphic to a real $g$-dimensional torus which is isomorphic to the real part of the Picard group (the real part of the Jacobian of $\Sigma$). This property implies that all annuli related to a smooth spectral curve have a quasi-periodic polynomial Killing
field as a flat annulus immersed in $(\SY^1)^g$ (see corollary \ref{quasiperiodic}).  Since this polynomial Killing field depends only on $\o$, this means that also the metric is quasi-periodic.

\begin{remark}
On minimal annuli of $\SY^2 \times \R$ parameterized by the third coordinate $h(z)=y$, the function $u=\cosh^2 \o (\partial_x k_g) = ( \o_{xy}) -\tanh \o ( \o_x)( \o_y)$ is a Jacobi field. In this expression $k_g$ is the geodesic curvature (see theorem \ref{shiffman}) of the horizontal curve. Integrating this normal Jacobi field on the surface gives
a variation on $\o$ which is $u_2=\o_{zzz}-2\o_{z}^3$, the second flow in the hierarchy. To integrate this Jacobi field it suffices to look for the second group action on $\xi_\l$:
$$\zeta_{\l}(z,t)=\pi (z,t,0,...0)\xi_\l=F_\l(z,t)^{-1} \xi_\l F_\l(z,t)$$
is the Killing field which integrates the Shiffman Jacobi field on the surface with extended frame $F_\l (z,t)$. In particular this flow
exists for all times and is quasi-periodic.
\end{remark}
For polynomials $a$ with higher order roots, the isospectral action is not transitive.
In this case the
isospectral set decomposes into several orbits with respect to the
action. Besides one smallest orbit all orbits contain what we call
bubbletons. We treat this case in Appendix~\ref{sec:bubbleton}. Due to
Corollary~\ref{bubbleton orbit} there always exists one largest orbit,
whose elements $\xi_\l\in I(a)$ have no roots in $\l\in\C^\times$ (see below).

The closing condition is encoded in the spectral data define in definition \ref{spectraldata}. If $\tau$ is the period for an embedded annulus induced by a potential $\xi_{0,\l}$, then any potential $\xi_\l = \pi(t) \xi_{0,\l}$ induces an embedded annulus with the same period $\tau$. This means that $\pi(\tau)\xi_\l=\xi_\l$ is a trivial action for any $\xi_\l$ in the same orbit as $\xi_{0,\l}$.
The eigenvalue $\mu(\l)$ of $\l \to F_\l (\tau)$ is a holomorphic function on $\Sigma^*$ with two essential singularities at the branch points over $\l=0$ and $\l=\infty$ of $\Sigma$. At these points the 1-form $d \ln \mu$ has second order poles with no residue and $d \ln \mu -\tfrac{1}{4} \mi \tau e^{\mi\Theta}d \sqrt{\l}^{-1}$ extends holomorphically at $\l=0$. A similar expression at $\l=\infty$ holds. This formula means that the third coordinate of the flux is encoded at the essential singularities of $\mu$ (see Propositions \ref{stab}, \ref{mu} and Definition \ref{spectraldata}).
The meromorphic differential takes the form
$$d \ln \mu =\frac{b \,d\l}{\nu \l^2}$$
for some polynomial $b(\l)$ of degree $g+1$ where
$b(0)=-\tfrac{1}{2} \tau e^{\mi\Theta} \in e^{\mi\Theta /2}\R$. We call $(a,\,b)$ the spectral data of the annulus. In Corollary~\ref{periodic immersions}
we give a complete characterization of spectral data of periodic immersions.

\vskip 0.3cm
\noindent{\bf Section~\ref{sembedded}. Isospectral set and embedded annuli. }
By the maximum principle at infinity, an embedded annulus has an embedded tubular neighborhood (see Lemma \ref{tubular}). Using this property and maximum principle at an interior point, we prove in Propositions \ref{simpleembedded} and \ref{bubbleembedded} that the isospectral action preserves embeddedness in the isospectral set $I(a)$. If we have $\hat \xi_\l= \pi (t) \xi_\l$ and $ \xi_\l $ induces an embedded annulus $X(z,\xi_\l)$, then the potential $\hat \xi_\l $ induces an embedded annulus $X(z,\hat \xi_\l)$. We provide first a proof of this property when $a(\l)$ has only simple roots. This implies that if $\xi_\l \in I(a)$ induces an embedded annulus then the whole orbit of $\xi_\l$ in $I(a)$ has potentials all inducing embedded annuli. If $a (\l)$ has only simple roots, $I(a)$ has only one orbit. In the case where $a(\l)$ has higher order roots, it can happen in general that only few orbits contains potentials inducing embedded annuli.
  \vskip 0.3cm
 \noindent
{\bf Isospectral set for polynomial $a(\l)$ having higher order roots.}
 Different $\xi_\l$ of different isospectral sets may give the same extended frame $F_\l$.  This is the case in particular if an initial value $\xi_\l$ has a root
at some $\lambda=\alpha_0 \in\C ^*$. Then also the corresponding polynomial Killing field $\zeta_\l (z)$ has a root at  $\l=\alpha_0$ for all $z\in\C$. In
this case we may reduce the order of $\xi_\l$ and $\zeta_\l (z)$ without changing the corresponding extended frame $F_\lambda$.

This configuration corresponds to a singular spectral curve i.e. the polynomial $a(\l)$ has a root of order at least two at $\alpha_0$. We can remove this singularity without changing the surface. There is a polynomial $p(\l)$ such that $\tilde \xi _\l= \xi_\l/ p $ does not vanish at $\alpha_0$ and is the initial value of a polynomial Killing field $\tilde \zeta _\l (z)$ without zeroes at $\alpha_0$. We show in Proposition 4.4 \cite{HKS1} that 
both polynomial Killing fields $\zeta_\l$ and $\zeta_\l /p$ induce congruent minimal surfaces in $\SY^2 \times \R$:
\begin{proposition}(proposition 4.4,\cite{HKS1})
\label{remove}
If a polynomial Killing field $\zeta_\l$ with initial value $\xi_\l \in I(a) \subset \pk$
has zeroes in $\lambda \in\C^{*}$, then there is a
polynomial $p(\lambda)$, such that the following conditions
hold:
\begin{enumerate}
\item $\xi_\l/p$ has no zeroes in $\lambda \in \C^{*}$, has degree $(g-{\rm deg } p)$.
\item If $F_\l$ and $\tilde F_\l$ are the unitary decomposition factors of $\zeta_\l$ and $\zeta_\l/p$ respectively then
$\widetilde {F}_\l (p(0)z)=F_\l(z)$ and the induced immersions parametrized by  Sym point $X(z)$ and $\widetilde{X} (z)$ are congruent.

\end{enumerate}
\end{proposition}
Hence amongst all polynomial Killing fields that give rise to a
minimal surface of finite type there is one of smallest possible degree
(without adding further poles).
\begin{proposition}(lemma 4.7  \cite{HKS1})
Let $I(a)$ be the isospectral set associated to a polynomial $a(\l)$, which satisfies the reality condition.
\begin{enumerate}
\item If $a (\l)$ has a double root $\alpha_0 $ with $|\alpha_0|=1$, then $I(a)= \{\xi_\l \in I(a) \mid \xi_{\alpha_0}=0\}$ and there is an isomorphism
$$I(a) \longrightarrow I({\frac{\alpha_0 a}{( \l-\alpha_0)^2}}) \,\hbox{ defined by }\, \xi_\l \mapsto  \frac{\sqrt{\alpha_0}\, \xi _\l}{\l-\alpha_0}$$
\item If $a(\l)$ has double root $\alpha_0$ with $|\alpha_0 | \neq 1$ then $I(a)=\{\xi_\l \in I(a) \mid \xi_{\alpha_0} \neq 0\} \cup \{\xi_\l \in I(a) \mid \xi _{\alpha_0}=0\}$
and there is an isomorphism
$$\{ \xi_\l \in I(a) \mid \xi_{\alpha_0} =0\} \longrightarrow I ({\frac{a}{(\l-\alpha_0)(1-\bar{\alpha}_0 \l)}}) \,\hbox{ defined by }\, \xi_\l \mapsto \frac{1}{(\l-\alpha_0)(1-\bar \alpha_0 \l)}\xi_\l\,. $$
\end{enumerate}
\end{proposition}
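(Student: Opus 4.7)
The key tool in both parts is that the reality relation $\hat\xi_d=-\overline{\hat\xi_{g-1-d}}^t$ on $\mathcal{P}_g$ is equivalent to the single identity
\begin{equation*}
\xi_\lambda=-\lambda^{g-1}\,\overline{\xi_{\bar\lambda^{-1}}}^{\,t},
\end{equation*}
which, evaluated at a root $\alpha_0$ of $a$, is to be combined with $\det\xi_{\alpha_0}=-a(\alpha_0)/\alpha_0=0$.

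\emph{Part (1).} With $|\alpha_0|=1$ one has $\bar\alpha_0^{-1}=\alpha_0$. Writing $\alpha_0=e^{\mi\theta}$ and $\eta:=e^{-\mi(g-1)\theta/2}\xi_{\alpha_0}$, the reality identity rearranges to $\eta^\dagger=-\eta$, so $\eta\in\su$. For a traceless element of $\su$ the determinant is a sum of three real squares, so $\det\eta=|\det\xi_{\alpha_0}|=0$ forces $\eta=0$ and hence $\xi_{\alpha_0}=0$. This proves the first equality in (1). I would then define the candidate isomorphism $\Phi(\xi_\lambda):=\sqrt{\alpha_0}\,\xi_\lambda/(\lambda-\alpha_0)$, which is a Laurent polynomial of top degree $\lambda^{g-1}$ since $\xi_{\alpha_0}=0$. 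Three items have to be verified: (a) reality in $\mathcal{P}_{g-1}$, where a direct computation using $|\alpha_0|=1$ and a consistent branch of $\sqrt{\alpha_0}$ yields $\overline{\Phi(\xi)_{\bar\lambda^{-1}}}^{\,t}=-\lambda^{2-g}\Phi(\xi)_\lambda$; (b) spectral data, where $\det\Phi(\xi)_\lambda=\alpha_0\det\xi_\lambda/(\lambda-\alpha_0)^2$, so $\tilde a(\lambda):=\alpha_0\,a(\lambda)/(\lambda-\alpha_0)^2$ is a polynomial of degree $2(g-1)$ that one checks obeys~\eqref{eq:reality}, using that $|\lambda-\alpha_0|^2$ is real and positive on $\SY^1\setminus\{\alpha_0\}$; (c) the normalization $\tilde\beta_{-1}\tilde\gamma_0=\tilde a(0)$, which by expanding $\Phi(\xi)$ at $\lambda=0$ reduces to $\beta_{-1}\gamma_0/\alpha_0=a(0)/\alpha_0=\tilde a(0)$. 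The inverse $\tilde\xi_\lambda\mapsto(\lambda-\alpha_0)\tilde\xi_\lambda/\sqrt{\alpha_0}$ is immediate.

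\emph{Part (2).} The decomposition in the statement is tautological, and the content lies in the isomorphism on the subset $\{\xi_\lambda\in I(a)\mid\xi_{\alpha_0}=0\}$. There, the reality identity applied at $\lambda=\bar\alpha_0^{-1}\neq\alpha_0$ gives $\xi_{\bar\alpha_0^{-1}}=-(\bar\alpha_0^{-1})^{g-1}\,\overline{\xi_{\alpha_0}}^{\,t}=0$, so $\xi_\lambda$ is divisible by $(\lambda-\alpha_0)(\lambda-\bar\alpha_0^{-1})=-\alpha_0^{-1}(\lambda-\alpha_0)(1-\bar\alpha_0\lambda)$. The quadratic $(\lambda-\alpha_0)(1-\bar\alpha_0\lambda)$ is by construction self-conjugate under $\lambda\mapsto\bar\lambda^{-1}$ in the graded sense compatible with the $\mathcal{P}_g$-involution, so $\Psi(\xi_\lambda):=\xi_\lambda/((\lambda-\alpha_0)(1-\bar\alpha_0\lambda))$ lies in $\mathcal{P}_{g-1}$ with spectral polynomial $\tilde a=a/((\lambda-\alpha_0)(1-\bar\alpha_0\lambda))$ satisfying~\eqref{eq:reality}, and the inverse is multiplication by the same quadratic.

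\emph{Main obstacle.} The delicate step is (a) in Part (1), the reality bookkeeping: because one divides $\xi_\lambda$ by the non-self-conjugate factor $(\lambda-\alpha_0)$, restoring the $\mathcal{P}_{g-1}$ involution on the quotient requires precisely the scalar prefactor $\sqrt{\alpha_0}$ that appears in the statement, with the branch of the square root fixed so that the $(1,2)$-entry of $\hat{\tilde\xi}_{-1}$ remains in $\mi\R^+$ (modulo an allowed rotation of the Sym phase $\Theta$ by $-\theta$, consistent with $\tilde a(0)=a(0)/\alpha_0$). By contrast, in Part (2) the quadratic divisor is already self-conjugate under $\lambda\mapsto\bar\lambda^{-1}$, so no extra scalar factor is needed; the price is that one must first use reality to promote the single vanishing $\xi_{\alpha_0}=0$ to simultaneous vanishing at both paired roots $\alpha_0$ and $\bar\alpha_0^{-1}$.
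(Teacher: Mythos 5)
Your argument for the key assertion in (1) --- that a root of $a$ on the unit circle forces $\xi_{\alpha_0}=0$ --- is the same as the paper's: by Remark \ref{reality} the matrix $\chi_\l=\l^{(1-g)/2}\xi_\l$ is traceless and anti-Hermitian for $\l\in\SY^1$, so its determinant is a norm (your sum of three real squares), and $\det\xi_{\alpha_0}=-a(\alpha_0)/\alpha_0=0$ gives $\xi_{\alpha_0}=0$. That one line is essentially all the paper proves here; the isomorphisms themselves are quoted from \cite{HKS1}, and for (2) the paper only records the tautological splitting. Your additional step in (2), that the reality identity $\overline{\xi_{\bar\l^{-1}}}^{\,t}=-\l^{1-g}\xi_\l$ evaluated at $\l=\bar\alpha_0^{-1}$ promotes $\xi_{\alpha_0}=0$ to $\xi_{\bar\alpha_0^{-1}}=0$, is correct and is indeed the right way to obtain divisibility by the full quadratic.

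However, the verifications you assert do not go through as written. In (1)(a) the scalar produced by the reality computation is $\overline{\sqrt{\alpha_0}}\,\alpha_0/\sqrt{\alpha_0}=1$ for either branch, so one finds $\overline{\Phi(\xi)_{\bar\l^{-1}}}^{\,t}=+\l^{2-g}\Phi(\xi)_\l$, the wrong sign, and no choice of branch repairs it; correspondingly in (1)(b), on $\SY^1$ one has $(\l-\alpha_0)^2=-\l\,\alpha_0\,|\l-\alpha_0|^2$ rather than $|\l-\alpha_0|^2$, so $\l^{-(g-1)}\alpha_0\,a(\l)/(\l-\alpha_0)^2\geq 0$ there, violating the last condition of \eqref{eq:reality}. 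A concrete test: $g=1$, $\xi_\l$ off-diagonal with entries $\mi(\l^{-1}+1)$ and $\mi(1+\l)$, so $a(\l)=-(1+\l)^2$ and $\alpha_0=-1$; the quotient $\xi_\l/(\l+1)$ lies in ${\calP}_0$ with prefactor $1$ (which is $\mi\sqrt{\alpha_0}$ for a suitable branch), but not with $\sqrt{\alpha_0}=\pm\mi$, and its spectral polynomial is $-1$, not $\alpha_0 a/(\l-\alpha_0)^2=+1$. So the consistent bookkeeping requires the prefactor $\mi\sqrt{\alpha_0}$ and quotient polynomial $-\alpha_0 a/(\l-\alpha_0)^2$; your \emph{Main obstacle} paragraph, which claims $\sqrt{\alpha_0}$ is precisely what restores the involution, is therefore incorrect, and a proof must either redo this computation or flag the discrepancy with the printed normalization. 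In (2), dividing the $2\times2$ matrix $\xi_\l$ by $q(\l)=(\l-\alpha_0)(1-\bar\alpha_0\l)$ divides its determinant by $q^2$, so $\Psi(\xi)$ has top degree $g-2$ and spectral polynomial $a/q^2$; it lies in ${\calP}_{g-2}$, not in ${\calP}_{g-1}$ with polynomial $a/q$ as you assert, so your verification is internally inconsistent and does not show that $\Psi$ lands in the stated target. Finally, in both parts the $\l^{-1}$-coefficient of the quotient picks up a phase ($-\alpha_0^{-1/2}$, resp.\ $-\alpha_0^{-1}$), so membership in the space of potentials holds only after the Sym-point rotation of Proposition \ref{sym}; you note this for (1), but it is equally needed in (2).
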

\begin{proof}
(1) If $a$ has a double root at $\alpha_0$ with $|\alpha_0|=1$ , then for any $\xi _\l \in I(a)$,
we have $\xi _{\alpha_0}=0$, because the determinant is a norm for any $\l \in \SY^1$.

(2) If $a$ has a double root at $\alpha_0$, with $|\alpha_0| \neq 0$, then the isospectral set splits into a part which contain potentials with a zero at $\alpha_0$ (and we can remove again the singularity), and the set of potential not zero at $\alpha_0$. The last case mean that $\xi _{\alpha_0}$ is a nilpotent matrix,
and we say that the surface has a bubbleton. We study this case in Appendix \ref{sec:bubbleton} and in the next subsection.
\end{proof}
{\bf Remove or add bubbletons on annulus.} Bubbletons occur when the polynomial $a(\l)$ has higher order roots $\a_1, ..., \a_k$.
We can assume (repeating several times the same roots if necessary) that
$$a(\l)=\prod_{i=1}^{k} (\l - \a_i)^2(1- \l \bar \a_i)^2 \tilde a(\l)=(\l - \a_j)^2(1- \l \bar \a_j)^2 \tilde a_j(\l)$$
where $\tilde a(\l)$ has only simple roots, ${\rm deg}\; a(\l)=2g$ and  ${\rm deg}\; \tilde a(\l)=2g-2k$. The group action
$$\pi (.)\xi_\l: \C^{g}  \to I(a)$$
preserves the degree of roots of $\xi_\l$ at $\a_1,..., \a_k$ (if $\xi_{0,\alpha_i}=0$, then $\xi_{\alpha_i}(t)=\pi (t)\xi_{0,\alpha_i}=0$).
\vskip 0.3cm
Hence the isospectral action defines several orbits in $I(a)$, defined by the degree of the roots of $\xi_\l$ at roots $\a_i$. The group $\C^{g}$ acts on $I(a)$ but the stabilizer
$\Gamma_{\xi_\l}=\{ t \in \C^{g} \mid \pi(t)\xi_\l=\xi_\l \}$ is no longer isomorphic to $\Z^{2g}$.
\vskip 0.3cm
If $\xi_{0,\l} \in I(a)$ and $O=\{ \xi_\l \in I(a) \mid \xi_\l= \pi (t) \xi_{0,\l} \hbox{ for } t \in \C^{g} \}$ is the orbit of some potential $\xi_{0,\l}$
having roots at $\a_j$ i.e. $\xi_{0, \alpha_j}=0$, there is a real two-dimensional subgroup $E^2 \subset \C^{g}$ which acts
trivially on $\xi_{0, \l}$. In this case $O$ is diffeomorphic to $I({\tilde a_j})$. We can remove the roots of $\xi_{0,\l}$ without
changing the immersion given by $F_\l$. If we do that for any higher order roots of $a$, we remove all zeroes and obtains
a quasi-periodic annulus induced by a potential $\tilde \xi_\l \in I({\tilde a}) \cong  (\bbS^1)^{g-2k}$.
\vskip 0.3cm
Now, we consider the orbit $O$ of a potential $\xi_{0,\l} \in I(a)$, where $\xi_{0,\a_i} \neq 0$ and $\a_i$ is a higher order root of $a(\l)=(\l - \a_i)^2(1- \l \bar \a_i)^2 \tilde a_i(\l)$. If $\xi _{\alpha_i} \neq 0$ and $\det \xi_{\alpha_i}=0$, then the matrix is nilpotent and defines a complex line
$L' \in \C\P^1$ related to the one dimensional complex subspace $\ker \xi_{0,\alpha_i} = {\rm Im}\;  \xi_{0,\alpha_i}$ (see appendix B). We
can decompose uniquely the potential as
$$\xi_{0,\l} =(L', \tilde \xi_{0,\l})= p(\l)h_{L',\alpha_i} \tilde \xi_{0,\l} h^{-1}_{L',\alpha_i}$$
where $\tilde \xi_{0,\l} \in I({\tilde a_i})$  has a determinant with a lower degree $\alpha_i$, and $h_{L',\alpha_i} (\l) \in \Lambda _r^+ \SL (\C)$ (see r-Iwasawa decomposition with $r<|\alpha_i|<1$  Proposition \ref{riwasawa}) is a matrix depending on parameter $\l$ with pole at $\alpha_i$ and $1/\bar \alpha_i$ and $h_{L',\alpha_i} (0)$ is an upper
triangular matrix. A property of this decomposition is that  $\xi_{0,\a_i}= 0$ if and only if the eigenline $(L')^\perp$ is an eigenline of $\tilde \xi_{0,\a_i}$.

There is a two dimensional subgroup action  $\tilde \pi: \C\times I(a) \to I(a)$ which preserves the second factor of the decomposition  $(L', \tilde \xi_{0,\l})\in \C\P^1 \times I({\tilde a_j})$ and acts transitively on the first factor $L' \in (\C\P^1)^{\times}$ (see Theorem \ref{bubbleaction}).  If $(L')^\perp$ is an eigenline of  $\tilde \xi_{\alpha_0}$, the potential $\xi_\l$ has a zero at $\l=\alpha_0$ and the subgroup $\tilde \pi$ acts trivially. The set  $(\C\P^1 )^{\times}$ is $\C\P^1$  minus  the fixed point of the action $\tilde \pi$.This decomposition implies that the orbit $O$ is dense in $I(a)$. The differential
structure of $O$ comes from the differentiability of the action away from its fixed point where $\tilde \pi$ extends only continuously.

Since the action preserves embeddedness (see Proposition \ref{bubbleembedded}), there is a continuous isospectral deformation of embedded annuli $A(t)$, induced by potentials $\xi_\l (t)= \tilde \pi (\beta(t)) \xi_{0,\l}$, such that $L'(\beta (t)) \to L'_0$ with $(L'_0)^{\perp}$ are eigenlines of $\tilde \xi_{0,\a_i}$.

At the limit we have an annulus induced by $\xi_{1,\l}=p(\l)h_{L'_0,\alpha_0} \tilde \xi_{0,\l} h^{-1}_{L'_0,\alpha_0}$ which is the limit of embedded annuli obtained from $\xi_\l (t)=\tilde \pi (\beta(t)) \xi_{0,\l}= p(\l)h_{L'(\beta(t)),\alpha_0} \tilde \xi_{0,\l} h^{-1}_{L'_(\beta(t),\alpha_0}$.
The potential $\xi_{1,\l}$ induces an embedded annulus but has roots. We can divide the potential by the polynomial
$p(\l)$ and decrease the spectral arithmetic genus of $\Sigma$. Doing this process inductively we can remove all higher order roots of $a(\l)$, one by one.

\vskip 0.3cm
As a result the classification of all embedded minimal annuli relies
on two problems:
\vskip 0.3cm
\noindent{\bf Problem A.} Classify all spectral data $(a,b)$ with $a$
having only simple roots and $I(a)$ corresponding to embedded minimal
annuli.
\vskip 0.3cm
\noindent{\bf Problem B.} Determine for all such spectral data $(a,b)$ all
possibilities to add bubbletons without destroying the
embeddedness.
\vskip 0.3cm
\noindent
{\bf Compactness of the space of embedded annuli and spectral data. } We consider the space of
embedded annuli of finite type with bounded spectral genus $g \leq g_0$ and flux bounded away from $0$ with the topology of the uniform convergence on compact sets:
$${\calM}_{g_0}^{emb} (\epsilon_0)=\{ X: \C/ \tau\Z \to \SY^2 \times \R ;  |\tau| \geq \epsilon_0, \hbox{X embedded}, (a,b,\xi_\l) \in \C^{2g}[\l] \times \C^{g+1}[\l] \times \pk, g \leq g_0\}$$

\begin{theorem}\label{compact}
A sequence $(X_n)_{n \in \N} \in {\calM}_{g_0}^{emb} (\epsilon_0)$ has a subsequence converging to $X_0 \in {\calM}_{g_0}^{emb} (\epsilon_0)$ and there is a constant $C_3 >0$ such that $|\tau| \leq C_3$ on ${\calM}_{g_0}^{emb} (\epsilon_0)$.
\end{theorem}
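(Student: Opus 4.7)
The theorem combines an a priori upper bound $|\tau|\leq C_3$ with sequential compactness of the spectral data. I would prove the upper bound first, since it feeds into the compactness argument.

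\textbf{Upper bound via coarea.}
The inputs are $F_3=|\tau|$ (Lemma~\ref{flux}) and the area growth bound~\eqref{eq:area bound}. The height function $h$ is the restriction of the $\R$-coordinate on $\SY^2\times\R$ (ambient unit gradient), so $|\nabla h|_A\leq 1$. Each regular level set $\gamma_s=h^{-1}(s)$ is a single circle generating $H_1(A)$ because $h$ is a proper submersion of an annulus, so $*dh|_{\gamma_s}=|\nabla h|\,d\ell$ and
$$|\tau|=F_3=\int_{\gamma_s}*dh=\int_{\gamma_s}|\nabla h|\,d\ell\leq\mathrm{length}(\gamma_s).$$
Integrating in $s\in[-t,t]$ and applying the coarea formula with $|\nabla h|\leq 1$,
$$2t\,|\tau|\leq\int_{-t}^t\mathrm{length}(\gamma_s)\,ds=\int_{A\cap[-t,t]}|\nabla h|\,dA\leq\mathrm{Area}(A\cap[-t,t])\leq C_2(\epsilon_0)\,t,$$
so $|\tau|\leq C_2(\epsilon_0)/2=:C_3$ uniformly on $\calM_{g_0}^{emb}(\epsilon_0)$.

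\textbf{Compactness of spectral data.}
For $(X_n)\subset\calM_{g_0}^{emb}(\epsilon_0)$ with data $(a_n,b_n,\xi_{n,\lambda})$, the lower flux bound $|\tau_n|\geq\epsilon_0$ and Section~\ref{sec:curvature estimate} give the uniform curvature estimate $|K_n|\leq C_1(\epsilon_0)$. By Proposition~\ref{corcurva} the roots of $a_n$ lie in a compact subset of $\C\setminus\{0\}$, and the degree bound $\deg a_n\leq 2g_0$ with the normalization $|a_n(0)|=1/16$ from~\eqref{eq:reality} confine $a_n$ to a compact set, from which I extract $a_n\to a_\infty$. The Laurent-coefficient estimate
$$\|\hat\xi_d\|\leq\sup_{\lambda\in\SY^1}\sqrt{-\lambda^{-g}a(\lambda)}$$
used in the proof of Proposition~\ref{corcurva} is uniform in $n$, so along a further subsequence $\xi_{n,\lambda}\to\xi_{\infty,\lambda}\in I(a_\infty)$. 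For the polynomials $b_n$ of degree $\leq g_0+1$, the constant term $|b_n(0)|=|\tau_n|/2$ is pinched between $\epsilon_0/2$ and $C_3/2$; the leading coefficient is pinned by the prescribed singular behavior of $d\ln\mu=b\,d\lambda/(\nu\lambda^2)$ at $\lambda=\infty$; and the intermediate coefficients are determined continuously by the residue-free conditions at the branch points of $\Sigma$, so $b_n\to b_\infty$ after passing to a subsequence.

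\textbf{Geometric convergence and closure.}
Uniform curvature and linear area growth yield smooth subsequential convergence $X_n\to X_\infty$ on compacta of $\C$, and by the Sym--Bobenko construction of Section~\ref{sec:polynomial killimg field} the limit is the minimal immersion associated to $(a_\infty,b_\infty,\xi_{\infty,\lambda})$. Lemma~\ref{tubular} supplies embedded tubular neighborhoods of a radius depending only on $C_1(\epsilon_0)$, which prevents multiplicity from jumping and forces $X_\infty$ to be embedded; moreover $|\tau_\infty|=\lim|\tau_n|\geq\epsilon_0$ and the spectral genus of $X_\infty$ is at most $g_0$, so $X_\infty\in\calM_{g_0}^{emb}(\epsilon_0)$. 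The main obstacle is the case where $a_\infty$ acquires higher-order roots: the isospectral structure of the limit then differs from that of the approximants (bubbletons can merge or appear), and matching the geometric limit with the algebraic data requires the reduction from Proposition~\ref{remove} together with the bubbleton analysis of Section~\ref{sembedded}.
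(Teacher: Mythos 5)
Your proposal is correct and follows essentially the paper's own argument: the upper bound $|\tau|\leq C_3$ comes, as in the paper, from the flux identity of Lemma~\ref{flux} together with the linear area growth of Lemma~\ref{tubular} (the paper obtains $2t|\tau|\leq \mathrm{Area}(X\cap[-t,t])$ directly from $\cosh^2\o\geq 1$ in the conformal metric, while you use the coarea formula with $|\nabla h|\leq 1$ — the same estimate in different clothing), and the compactness is derived, exactly as in the paper, from the curvature bound via Proposition~\ref{corcurva} forcing the roots of $a_n$ away from $0$ and $\infty$, the compactness of the isospectral sets $I(a_n)$ giving convergence of the potentials, and the convergence of $(a_n,b_n)$ plus local smooth convergence of the immersions with the maximum-principle/tubular-neighborhood argument to keep the limit embedded with $\epsilon_0\leq|\tau_0|\leq C_3$. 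Your concluding caveat about $a_\infty$ acquiring higher-order roots goes slightly beyond the paper's brief treatment but does not alter the route.
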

\begin{proof}
We consider a sequence of embedded minimal annuli $(X_n)_{n \in \N}$ with associated spectral data $(a_n,b_n,\xi_{\l,n})$. Since we assume that $|\tau_n| \geq \epsilon_0$ is bounded away from zero, the curvature is bounded and
$X_n$ is locally a graph over a tangent disc of radius $\delta >0$. There is a subsequence of immersions $X_{k}: \C \to \SY^2 \times \R$
which converge on compact sets to $X_0$ a minimal surface.  The surface $X_0 : \C \to \SY^2 \times \R$ is an annulus if we bound the period $|\tau|$ above to get a subsequence $\tau_{k}$ converging to the period of the annulus $X_0$. By the maximum principle if $X_0$ is an annulus, it is embedded and $\o_0$ is uniformly bounded on $X_0$ in norm ${\mathcal C}^{k,\a}$ (otherwise the annulus has vertical unit normal vector wich contradicts the maximum principle, see Proposition \ref{omegabound})
\vskip 0.3cm
By Lemma \ref{tubular}, the linear area growth satisfies
$${\rm Area}(X_n \cap \SY^2 \times[-t,t]) \leq C_2 (\epsilon_0) t.$$
Using the fact that the metric is $ds^2=\cosh^2 \o_n |dz|^2$, we conclude that $2t |\tau_n| \leq C_3
{\rm Area}(X_n([0,\tau_n] \times [-t,t]))$, and thus the flux is uniformly bounded 
on the space of embedded annuli  ${\calM}_{g_0}^{emb} (\epsilon_0)$. Hence any sequence of annuli $X_n$ has a subsequence
converging to an embedded annulus $X_0$.
\vskip 0.3cm
The sequence of immersions $X_{k}$ is induced by a sequence of potentials $\xi_{\l,k} \in {\calP}_{g_k}$ with $g_k \leq g_0$. 
Arguing as in Proposition \ref{omegabound},  the function $\o_{k}$ is uniformly bounded for $|\tau|�\geq \epsilon_0$
and the bound on the curvature of Proposition \ref{corcurva} implies that all roots of the polynomials $a_n$ are bounded away from $0$ and $\infty$ for any $n \in \bbN$. Since $I({a_n})$ are compact sets, there is a subsequence of potentials $\xi_{\l,k}$ which converge to a potential $\xi_{\l,0} \in I({a_0})$.
\vskip 0.3cm
The potential depends on the metric $\o_{k}$ and its higher derivatives at one point. By elliptic theory $\o_{k} \to \o_0$ in the ${\calC}^{k,\alpha}$ topology. This implies that the sequence of solutions of LSG  $u_1,u_2....$ converges on any compact set. After renormalization of coefficients, the sequence of algebraic relations
$$
    \sum_{i=1}^{g=g_k} a_{i,k} u_{i,k} + b_{i,k} \bar u_{i,k}=0
$$
converge to some similar algebraic condition on the solution $u_1,u_2,...$ of the metric $\o_0$.
The spectral genus of $X_0$ is bounded by $g_0$ and the sequence of potentials $\xi_{\l,k}$ converge to a potential $\xi_{\l,0}$ which induces the embedded annulus $X_0$  with  flux $\epsilon_0 \leq |\tau| \leq C_3$. Since the limit is an annulus of period $\tau_0$,
the monodromy eigenvalue $\mu$ is well defined and there is a subsequence of spectral data $(a_{k},b_{k})$ which converge to some
$(a_0,b_0) \in \C^{2g}[\l] \times \C^{g+1}[\l] $.
\end{proof}

\vskip 0.3cm
\noindent
{\bf Deformation of spectral data preserves embeddedness. } We consider a potential $\xi_{\l,0}$ which induces an embedded annulus $X_0(z)$ with spectral data $(a_0,b_0)$ and a sequence  of annuli $(X_{n} (z))_{n\in \N}$ which converge to $X_0(z)$ on compact sets. We denote by $(a_0,b_0)$ and $(a_n, b_n)$ the corresponding spectral data, and by $\xi_{\l,0}, \xi_{\l,n}$
the corresponding potentials with polynomial Killing field $\zeta_{\l,0} (z), \zeta_{ \l,n} (z)$. We assume that  $a_n \to a_0$, $b_n \to b_0$, $\tau_n \to \tau_0$ and $\xi_{\l,n} \to \xi_{\l,0}$. We prove in the following Proposition that embeddedness is an open property in regards to the spectral data.
\begin{proposition}
\label{embeddeddeformation}
If $X_0 (z)$ is a finite type embedded minimal annulus with isospectral set having only one
orbit (i.e.  $a_0(\l)$ has only simple roots or removable double roots), then there is $n_0>0$ such that for $n \geq n_0$, the annuli
$X_n$ are embedded.
\end{proposition}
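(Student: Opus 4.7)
I would argue by contradiction, using the isospectral group action to translate any presumed self-intersection to a slice at height zero, and then use the embedded tubular neighbourhood of the limiting annulus to obtain a local-injectivity contradiction.

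Assume, after passing to a subsequence, that each $X_n$ has a self-intersection $X_n(z_{n,1}) = X_n(z_{n,2})$ with $z_{n,1} \neq z_{n,2}$ in $\C/\tau_n\Z$. Since the height $h_n(z) = \mathrm{Re}(-\mi e^{\mi\Theta_n/2}z)$ is $\R$-linear and $\tau_n \in e^{-\mi\Theta_n/2}\R$, the two preimages lie at a common height, so in coordinates $\zeta = u + \mi v$ with $\tau_n > 0$ real we may write $z_{n,j} = u_{n,j} + \mi v_n$ with $u_{n,j} \in [0, \tau_n)$, $u_{n,1} \neq u_{n,2}$, and a common $v_n \in \R$. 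Replacing $\xi_{\lambda,n}$ by $\tilde{\xi}_{\lambda,n} := \pi(\mi v_n, 0, \ldots, 0)\,\xi_{\lambda,n} \in I(a_n)$ and composing the induced annulus with the vertical isometry $(p,t) \mapsto (p, t-v_n)$ yields a new annulus $\tilde X_n$ with $\tilde X_n(u_{n,1}) = \tilde X_n(u_{n,2})$ on the horizontal slice at height $0$.

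By Proposition~\ref{corcurva} and the convergence $a_n \to a_0$, the roots of $a_n$ remain uniformly bounded away from $0$ and $\infty$, so the compact isospectral sets $I(a_n)$ stay in a fixed bounded region and the sequence $\tilde{\xi}_{\lambda,n}$ is bounded. After a further subsequence, $\tilde{\xi}_{\lambda,n} \to \xi'_{\lambda,0} \in I(a_0)$ and the corresponding annuli $\tilde X_n$ converge in $C^k_{\mathrm{loc}}$ to a minimal annulus $X'_0$. This is where the one-orbit hypothesis enters decisively: since $I(a_0)$ consists of a single isospectral orbit, $\xi'_{\lambda,0}$ lies in the orbit of $\xi_{\lambda,0}$, and by Proposition~\ref{simpleembedded} the action preserves embeddedness, so $X'_0$ is itself embedded.

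The upper bound $\tau_n \leq C_3$ from Theorem~\ref{compact} makes the $u_{n,j}$ bounded, so a further subsequence yields $u_{n,j} \to u_{0,j} \in [0, \tau_0]$. Passing to the limit in the self-intersection equality gives $X'_0(u_{0,1}) = X'_0(u_{0,2})$, and embeddedness of $X'_0$ forces $u_{0,1} \equiv u_{0,2}$ modulo $\tau_0$; choosing the representatives accordingly, $|u_{n,1} - u_{n,2}| \to 0$ in $\C$. Finally, Lemma~\ref{tubular} provides a uniform radius $\delta_0 > 0$ on which $X'_0$ is a normal graph over its tangent plane, hence injective there, and $C^1$-convergence transfers this to $\tilde X_n$ on balls of radius $\delta_0/2$ around $u_{0,1}$ for $n$ large. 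For such $n$, both $u_{n,1}$ and $u_{n,2}$ lie in this ball, contradicting $\tilde X_n(u_{n,1}) = \tilde X_n(u_{n,2})$ with $u_{n,1} \neq u_{n,2}$. The main obstacle is ensuring the translated limit $X'_0$ is embedded; this is exactly what the one-orbit hypothesis supplies, since otherwise the translated limit could escape the embedded orbit of $\xi_{\lambda,0}$ and land on a bubbleton-type orbit where embeddedness might be lost---a situation that genuinely requires the separate treatment of Problem~B.
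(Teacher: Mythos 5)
Your proposal follows essentially the same route as the paper's proof: translate the presumed self-intersection to a fixed height via the isospectral action $\pi(\mi v_n,0,\ldots,0)$, extract a limit potential in $I(a_0)$ from the compactness of the isospectral sets, use the one-orbit hypothesis together with Proposition~\ref{simpleembedded} to conclude that the translated limit annulus is embedded, and obtain a contradiction from smooth convergence on compact sets. The only difference is that you make explicit the final step (the two intersection parameters converge to a common point of the embedded limit, after which a uniform local-injectivity radius rules out nearby distinct self-intersections), which the paper leaves implicit; your argument is correct.
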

 \begin{proof}
If $X_0 (z)$ is embedded then we prove that $X_n(z)$ is an embedded annulus for $n$ large enough.
Assume the contrary, that there is a subsequence of non embedded annuli $X_{k}$ which converge to $X_0$ on compact sets and there is a diverging sequence of real numbers $y_k$ such that $x \to X_{k}(x,y_k)$ is a self-intersecting horizontal curve in $\SY^2$.

The sequence of potentials $\pi(\mi y_k,0,...0) \xi_{\l,k}$ induce annuli $\tilde X_{k}(x,y)=X_{k}(x, y+ y_k)$.
The sequence $\pi(\mi y_k,0,...0) \xi_{\l,k}$ has a subsequence converging to an element $\widehat \xi_{\l,0}$ of $I({a_0})$ since
$a_{k} \to a_0$ and $I({a_n})$ are compact subsets of $\pk$ for any $n$.
Since the action is transitive on $I({a_0})$, there is $t \in \C^g$ such that $\widehat \xi_{\l,0}=\pi (t) \xi_{\l,0}$ and
$\widehat \xi_{\l,0}$ induces an embedded annulus by proposition \ref{simpleembedded}.

The Iwasawa decomposition is a real analytic diffeomorphism and $(t, \xi_\l) \to \pi(t)\xi_\l$ is uniformly continuous on compact sets from
$\C^g \times \pk$ to $\pk$.  Hence the immersions $x: [0, \tau_k] \to \tilde X_{k} (x,0)$ is smoothly
converging to  $x:[0, \tau_0] \to \widehat X(x,0)$. Since on an isospectral set the third coordinate $n_3$ of the normal is uniformly
bounded away from the value $n_3=1$, the trace of the tubular neighborhood $T_{\epsilon_1} = Y((\C \backslash \tau \Z)\times ] - \epsilon_1,\epsilon_1[)$ induce an embedded tubular neighborhood of the curve $x \to \widehat X(x,0)$ in $\SY^2$ with width depending only on $C_0$ and $\epsilon_0$. This gives a contradiction, since the curve $x \to \tilde X_{k} (x,0)$
is a graph in this tubular neighborhood for $k >n_0$ large enough.
\end{proof}

\begin{proposition}
\label{deformembbubble}
If $X_0 (z)$ is an embedded simple bubbleton with $a_0(\l)$  having only one higher order root  (i.e. $a_0 (\l)=(\l - \alpha_0)^2(1-\l \bar \alpha_0)^2 \tilde a_0 (\l)$ and
$\tilde a_0 (\l)$ has only simple roots), then there is $n_0>0$ such that for $n \geq n_0$, $X_n$ are embedded annuli.
\end{proposition}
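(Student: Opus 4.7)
The plan is to argue by contradiction, adapting the strategy of Proposition~\ref{embeddeddeformation} to the richer orbit structure of $I(a_0)$. I would first assume that a subsequence $X_{n_k}$ fails to be embedded; by smooth local convergence $X_{n_k}\to X_0$ and the embeddedness of $X_0$, the self-intersections must escape to infinity, so I extract $y_k\in\R$ with $|y_k|\to\infty$ such that $x\mapsto X_{n_k}(x,y_k)$ is non-simple. Translating vertically, I set $\tilde\xi_{\lambda,n_k}:=\pi(\mi y_k,0,\ldots,0)\,\xi_{\lambda,n_k}\in I(a_{n_k})$, which induces $\tilde X_{n_k}(x,y)=X_{n_k}(x,y+y_k)$. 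The Laurent-coefficient bound from the proof of Proposition~\ref{corcurva}, combined with $a_{n_k}\to a_0$, places all $\tilde\xi_{\lambda,n_k}$ in a fixed compact subset of $\pk$, so a further subsequence converges: $\tilde\xi_{\lambda,n_k}\to\hat\xi_{\lambda,0}\in I(a_0)$, and correspondingly $\tilde X_{n_k}\to\hat X_0$ smoothly on compact sets, where $\hat X_0$ is the annulus induced by $\hat\xi_{\lambda,0}$.

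The essential new feature compared with Proposition~\ref{embeddeddeformation} is that $I(a_0)$ is no longer a single $\C^g$-orbit: it decomposes into the dense main orbit $O_2$ of simple bubbletons with $\xi_{\alpha_0}\neq 0$ (containing $\xi_{\lambda,0}$) and the degenerate orbit $O_1=\{\xi\in I(a_0):\xi_{\alpha_0}=0\}$, identified via Proposition~\ref{remove} with $I(\tilde a_0)$. I would then show $\hat X_0$ is embedded in both cases. If $\hat\xi_{\lambda,0}\in O_2$, transitivity of $\pi$ on $O_2$ yields $t\in\C^g$ with $\hat\xi_{\lambda,0}=\pi(t)\xi_{\lambda,0}$, and Proposition~\ref{bubbleembedded} implies embeddedness of $\hat X_0$. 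If instead $\hat\xi_{\lambda,0}\in O_1$, then since $O_2$ is dense in $I(a_0)$ one can pick $\xi_j=\pi(t_j)\xi_{\lambda,0}\in O_2$ with $\xi_j\to\hat\xi_{\lambda,0}$; each $\xi_j$ induces an embedded annulus by Proposition~\ref{bubbleembedded}, and the smooth limit $\hat X_0$ is then embedded by exactly the maximum-principle argument recalled in the paragraph preceding this proposition (the period $\tau_0>0$ prevents collapse, so properly embedded annuli pass to properly embedded limits).

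With $\hat X_0$ embedded, the contradiction follows as in Proposition~\ref{embeddeddeformation}: the horizontal curve $x\mapsto\hat X_0(x,0)$ on $[0,\tau_0]$ is simple, and by the uniform continuity of the Iwasawa decomposition on compact subsets of $\C^g\times\pk$, the curves $x\mapsto\tilde X_{n_k}(x,0)=X_{n_k}(x,y_k)$ converge smoothly to it and are therefore simple for $k$ large, contradicting the choice of $y_k$. The hard part is the boundary-orbit case: transitivity of $\pi$ fails on $I(a_0)$, so one must combine the density of $O_2$ in $I(a_0)$ with the closedness of embeddedness under smooth limits of proper minimal annuli in order to propagate embeddedness from the main orbit into its boundary.
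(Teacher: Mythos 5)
Your proposal is correct and follows essentially the paper's own route: establish that every potential in $I(a_0)$ --- both the bubbleton orbit with $\xi_{\alpha_0}\neq 0$ (via Proposition~\ref{bubbleembedded} and transitivity) and the degenerate orbit with $\xi_{\alpha_0}=0$ --- induces an embedded annulus, and then repeat the translation/compactness contradiction of Proposition~\ref{embeddeddeformation}. The only cosmetic difference is in the degenerate orbit: you invoke density of the bubbleton orbit in $I(a_0)$ together with closedness of embeddedness under smooth limits, whereas the paper runs the explicit removing-bubbleton limit of the $\tilde\pi$-action to reach one element of $I(\tilde a_0)$ and then spreads embeddedness by transitivity (Proposition~\ref{simpleembedded}); these are the same limiting argument in different packaging.
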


\begin{proof}
We apply the same procedure as in preceeding Proposition. We consider a sequence of non embedded annuli $X_n$ which has
self intersection at $\{ x_3 =y_k\}$. Using a translation, we obtain a family of annuli $\tilde X_{n_k}(x,y)$ associate to some
potential converging to $\widehat \xi_{\l,0} \in I(a_0)$.
\vskip 0.3cm
In this case $I({a_0})=\{  \xi_\l \in I({a_0}) \mid \xi_{\alpha_0}=0\} \cup \{ \xi_\l \in I({a_0}) \mid \xi_{\alpha_0} \neq 0 \}$ has two orbits, and we apply the same argument. We  have to prove that any potential $\xi _\l \in I({a_0})$,  induces an embedded annulus, independently of the orbit.

Since the bubbleton is embedded, we proved in Proposition \ref{bubbleembedded} that any $\tilde \pi (\beta) \xi_{\l,0} = (L'(\beta), \tilde \xi_\l)$ induce an embedded bubbleton. Applying the procedure of removing bubbleton we can find an isospectral action which ends in the limit in a quasi-periodic annulus induced by $\tilde \xi_{\l,0} \in I({\tilde a_0})$. Since the action is continuous at the whole isospectral set, $\tilde \xi_{\l,0}$ induces an embedded annulus. Hence we conclude that there is a potential in each orbit which induces an embedded annulus. Then $I({a_0})$ is a set of potentials which induce embedded annuli and we can repeat the argument above.
\end{proof}
\begin{remark}
The sequence $(a_n,b_n)$ of spectral data can represent two different cases: (1) It can be a sequence of polynomials with simple roots. Two roots of $a_n$ coalesce in the limit at $\alpha_0$. (2) It can be a sequence of spectral data with a double root $\alpha_0(n)$ which induce a smooth deformation of bubbletons with $\alpha_0(n)$ converging to some double root $\alpha_0$. In this case, the Proposition shows that we can deform a simple bubbleton and preserve embeddness. We will use this deformation in the sequel to prove that there are no bubbletons on Abresch annuli.
\end{remark}

\begin{remark} We remark that we can pass from the isospectral set containing the
the bubbleton to the lowest dimensional orbit by passing to the limit.
This preserves embeddness. But conversely it is not possible to pass from the lowest dimensional orbit to the higher dimensional orbit and
preserve embeddness. This is the reason of the isolated property of the Abresh annuli.
\end{remark}

\noindent{\bf Section~\ref{abdeformation}.}
We adapt the Whitham deformation to our setting. Krichever defined
this deformation in the study of moduli space theory. This deformation
changes the spectral data in such a way that the corresponding annulus
flows along integral curves defined on the set of $(a,\,b)$.
We consider an embedded annulus with spectral data $(a,b)$ which satisfies the conditions of definition \ref{spectraldata}. We deform
the spectral curve $\Sigma$ in such a way that the immersion stays periodic. We have to find a deformation of $a(\l)$ by moving its roots without destroying the global properties
of the holomorphic function $\mu: \Sigma^* \to \C$ with prescribed essential singularities at the two marked points $(\infty, 0)$ and $(\infty, \infty)$. We consider the moduli space of spectral data $\{(a,b)\}$ and we derive vector fields on these data. The integral curves of this vector field are differentiable families of spectral data of
minimal cylinders. This is the content of section \ref{abdeformation}. We consider a real polynomial $c(\l)$ of degree at most $g+1$ with $\l^{g+1} \overline{c(1/ \bar \l)} = c(\l)$ and
the equation
$$\partial _t \ln \mu =\frac{c(\l)}{\nu \l }.$$
On the other hand we have the condition $d \ln \mu = \frac{b\,d \l}{\nu \l^2}$ and these equations are compatible on the integral curves if and only if
$$-2 \dot b a+b \dot a = -2 \l a c'+ac + \l a'c.$$
This equation defines the values of $\dot a$ at the roots of $a$ and $\dot b$ at the roots of $b$. This is well defined when $a$ and $b$ have no common roots. The closing
condition of the immersion is preserved if $c(1)=0$ (the Sym point stays at $\l =1$) and ${\rm Im}\; (c(0)/b(0)=0$ for the third coordinate to stay periodic (see theorem \ref{thm:deformation}).
\vskip 0.3cm
{\bf Connectedness of the space of embedded minimal cylinders of finite type.}
We prove in this section that every connected component of ${\calM}_{g_0}^{emb} (\epsilon_0)$ has only one connected
component which contains Abresch annuli. In the following Theorem we construct a sequence of 
minimal annuli for which the flux attain its maximum at a flat cylinder.

\begin{theorem}
\label{connectedtheorem}
Every connected component of the space ${\calM}_{g_0}^{emb} (\epsilon_0)$ contains
a sequence $(X_n)_{n \in \N} $ converging to the flat cylinder.
Hence the space ${\calM}_{g_0}^{emb} (\epsilon_0)$ has only one connected component
which contains Abresch annuli.
\end{theorem}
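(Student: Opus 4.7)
The plan is to combine the compactness of $\calM_{g_0}^{emb}(\epsilon_0)$ from Theorem~\ref{compact} with Lemma~\ref{increasetau}, which asserts that whenever the spectral genus is positive one can find a Whitham deformation strictly increasing $|\tau|$. Fix a connected component $\calC$ of $\calM_{g_0}^{emb}(\epsilon_0)$. Since $\calC$ is closed in a sequentially compact space it is itself sequentially compact, and the continuous function $|\tau|$ attains its maximum on $\calC$ at some embedded annulus $X^{*}$ with spectral data $(a^{*}, b^{*})$ and spectral genus $g^{*}$.

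The crucial step is to show $g^{*}=0$. Assume for contradiction $g^{*}\geq 1$. By Lemma~\ref{increasetau} there is a real-analytic one-parameter family of spectral data $(a(s), b(s))$ through $(a^{*}, b^{*})$ along which $\frac{d}{ds}|\tau(s)|\big|_{s=0}>0$. Via the associated family of potentials $\xi_\l(s)$ and their Iwasawa decompositions, this produces a continuous family of closed minimal annuli $X_s$ with spectral genus $g(s)\leq g^{*}\leq g_0$ (the Whitham flow can only drop the genus at collisions of roots of $a$, never increase it). By Propositions~\ref{embeddeddeformation} and~\ref{deformembbubble}, embeddedness is preserved for small $s>0$: otherwise a sequence of non-embedded $X_{s_n}$ with $s_n\to 0^{+}$ would, by Theorem~\ref{compact}, converge to a non-embedded annulus which, via the maximum principle applied to two overlapping sheets, would force $X^{*}$ itself to be non-embedded. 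Thus $X_s\in\calC$ for small $s>0$ with $|\tau(s)|>|\tau(X^{*})|$, contradicting maximality. Therefore $g^{*}=0$ and $X^{*}$ is a flat cylinder.

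This already shows that $\calC$ contains a flat cylinder; the sequence $(X_n)$ announced in the statement can be taken as any sequence along the Whitham flow approaching $X^{*}$ (or simply the constant sequence $X_n\equiv X^{*}$). Since the Abresch annuli are constructed in Appendix~C as a path-connected family starting at the flat cylinder, the component containing any flat cylinder is precisely the Abresch component. As every component contains a flat cylinder and connected components are pairwise disjoint, $\calM_{g_0}^{emb}(\epsilon_0)$ consists of a single component, namely the Abresch component.

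The principal obstacle is controlling the Whitham flow through singular configurations of the spectral data, particularly transitions where bubbletons appear or disappear and where roots of $a$ collide on $\SY^1$. Propositions~\ref{embeddeddeformation} and~\ref{deformembbubble} handle the generic simple-root and simple-bubbleton cases, but multi-bubbleton transitions require an inductive application of the bubbleton removal scheme of Section~\ref{sembedded}, together with the continuity of the dressing action up to the fixed points described there. A secondary issue is ensuring that $|\tau(s)|$ does not leave $[\epsilon_0, C_3]$ through the lower end while we are trying to push it upward, which is automatic because we are moving in the direction of increase.
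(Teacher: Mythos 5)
Your overall strategy coincides with the paper's: use Theorem~\ref{compact} to produce a maximizer of $|\tau|$ in each connected component, argue via Lemma~\ref{increasetau} that positive spectral genus would allow a flux-increasing deformation contradicting maximality, and conclude that every component meets the flat cylinder, hence coincides with the Abresch component. However, your reduction ``$g^{*}\geq 1$ plus Lemma~\ref{increasetau} gives a real-analytic family $(a(s),b(s))$ through $(a^{*},b^{*})$'' skips exactly the two singular situations that occupy most of the paper's proof. First, the maximizer's polynomial $a^{*}$ may have higher-order roots, i.e.\ the maximizer may be a (multi-)bubbleton. In that case the Whitham flow cannot simply be started and lifted: the lifting of paths of spectral data to paths of potentials and annuli (Proposition~4.9 of \cite{HKS1}, invoked at the end of the paper's proof) requires $a$ to have only simple roots, and Propositions~\ref{embeddeddeformation} and~\ref{deformembbubble}, which you cite for preservation of embeddedness, only cover the simple-root and single simple-bubbleton cases. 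The paper first removes all bubbletons by an isospectral limit (Propositions~\ref{bubbleembedded}, \ref{bubbleaction}, \ref{bubbleclosing}), which keeps $\tau$ fixed and stays in the same component, and only then applies the Whitham deformation to the resulting annulus $X_1$ with $a_1$ having simple roots; this is also why the theorem is phrased as ``contains a sequence converging to the flat cylinder'' rather than ``contains the flat cylinder as a maximizer''. You mention the bubbleton-removal scheme only as an ``obstacle'' at the end, without integrating it into the contradiction argument (in particular without noting that it preserves $\tau$, so maximality survives the reduction).

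Second, at the maximizer $a^{*}$ and $b^{*}$ may have common roots. There the vector field defined by \eqref{eq:integrability_2} is singular, and Lemma~\ref{increasetau} only supplies a polynomial $c$ with ${\rm Re}\,(c(0)/b(0))<0$; it does not by itself produce an integral curve. The paper handles this via the smooth reparametrization of spectral data in Section~\ref{param} and the stable-manifold argument of Proposition~\ref{commonroots}, which yields a trajectory moving out of the singularity while increasing the flux. Your proposal omits this case entirely (you only mention collisions of roots of $a$ on $\SY^1$ and bubbleton transitions). A smaller inaccuracy: you invoke Theorem~\ref{compact} for a sequence of non-embedded annuli $X_{s_n}$, but that theorem is stated only for sequences in ${\calM}_{g_0}^{emb}(\epsilon_0)$, i.e.\ embedded ones; the correct tool for openness of embeddedness along the deformation is Proposition~\ref{embeddeddeformation} itself (whose proof runs through the isospectral action and the tubular-neighborhood estimate), not a compactness argument applied to non-embedded surfaces.
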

\begin{proof}
Consider a properly embedded minimal annulus $X:\C / \tau \Z \to \SY^2 \times \R$ with
spectral data $(a,b)$.  We consider the connected component of the space ${\calM}_{g_0}^{emb} (\epsilon_0)$, which
contains $X$. Choose in this connected component a sequence of annuli which maximize the flux $|\tau|$.

By Theorem \ref{compact}, there is a subsequence $(X_n)_{n \in \N}$ which converges to some $X_0 \in {\calM}_{g_0}^{emb} (\epsilon_0)$.  
This annulus is embedded, has spectral data $(a_0,b_0)$ and is induced by a potential $\xi_{\l,0}$.  Every connected component contains a such $X_0$ which maximizes the flux $|\tau|$.

Eventually $\Sigma_0$ has singularities i.e. $a_0(\l)$ has higher order roots and $\xi_{\l,0}$ may have zeroes.
In this case, we remove zeroes of the potential $\xi_{\l,0}$ and then obtain eventually a bubbleton. In this case we use the isospectral deformation which is transitive on an orbit of a bubbleton. We apply what we call "remove a bubbleton" to get at the limit an embedded annulus with spectral curve without singularities. Consequently we have an annulus with quasi-periodic metric $\o$. Proposition~\ref{bubbleclosing} guarantees that during this process the period $\tau$ does not change, as it is an isospectral invariant and we stay in the same connected component by continuity of the isospectral action.

We denote this annulus by $X_1$ with spectral data $(a_1,b_1)$ where $a_1$ has only simple roots. 
We claim that $X_1$ is flat. Otherwise we can deform this cylinder and increase the flux of $X_1$ by the following deformation:

Due to Proposition~\ref{embeddeddeformation} the deformation of the spectral data preserves the property that the corresponding isospectral sets
correspond to embedded minimal annuli. If $a$ and $b$ have no common roots we define a vector field $c(\l)$ over the spectral data $(a,b)$
preserving the closing condition. We remark that
$${\rm Re} \frac{c(0)}{b(0)}= -2 \partial_t \ln |\tau|$$
Then assuming that ${\rm Re} \frac{c(0)}{b(0)} <0$, we increase the flux along the deformation. This is possible if we have enough degree of freedom in the choice of the polynomial $c(\l)$ which is the case only if the spectral genus $g>0$.  For any local maximum of the flux, the spectral genus is zero, see Lemma \ref{increasetau}.

In the case where $a$ and $b$ have several common roots,  we look for a special reparameterization of spectral data $(a,b)$ by the value of the period map $\mu$ at its branch points. In this part we study singularities of the Whitham deformation and describe
the main difficulty of the theorem. We prove with the Stable Manifold Theorem that there is a trajectory which moves out of the singularities (see section \ref{param} and 9.3 and proposition \ref{commonroots}). We increase the flux when we pass through this singularities when we move out by choosing the value of $c(0)$.  Since $a$ has only simple roots, the elements of $I(a)$ have no roots on $\C^\ast$. Due to Theorem~5.8 \cite{HKS1} continuous paths of spectral data can be lifted in a neighbourhood of $(a,b)$ to continuous paths of potentials and annuli. 
\end{proof}

\begin{remark} The section 9.2 and 9.3 describe how the Whitham deformation apply to open double points on a spectral curve
and how to pass eventually singularities. This section describes how find at least two trajectories in the space
of spectral data which preserve the period and permit to pass trough the singularity. The reason is that the equation is associate
to a meromorphic vector field and we can find a blow up of the singularity.
\end{remark}
{\bf Section 10. Isolated property of Abresch annuli} Consider the family ${\calA}$ of Abresch annuli.  If ${\calM}_{g_0}^{emb} (\epsilon_0) \setminus {\calA} \neq \emptyset$,
we can find a sequence of embedded annuli $(X_n)_{n \in \N} \in {\calM}_{g_0}^{emb} (\epsilon_0 ) \setminus {\calA}$ converging to $ X_0 \in {\calA}$ with
spectral data $( a_0, b_0)$ and potential $\xi_{\l,0}$. By Theorem \ref{compact}, the spectral data $(a_n,b_n, \xi_{\l,n})$ has a converging subsequence with limit $(a_0,b_0, \xi_{\l,0})$.

If $(X_n) $ with spectral data $(a_n,b_n, \xi_{\l,n})$  is not foliated by constant curvature curves then the Shiffman's Jacobi field $u (n)$ of $X_n$ is not identically zero. The idea is to use $u (n)$ in order to construct a non zero bounded Jacobi field $v_0$ on $X_0$.
Using the four vertex theorem on embedded horizontal curve we conclude that $v_0$ has at least four zeroes and then $v_0 \equiv 0$, a contradiction to lemma \ref{degenerate}.

Since $X_n $ converges to $X_0$ on compact sets, $u (n)$ converges to zero on compact sets. Since $\o (n)$ is bounded in ${\mathcal C}^{k,\alpha}$ norm by Proposition \ref{omegabound} on the whole annulus, the Jacobi field $u (n) = \o_{xy}(n)-\tanh \o (n) \o_{x}(n)\o_{y}(n)$ is bounded on 
$X_n$. We consider $c_n :=\sup _{X_n} |u(n)|$ and

$$v (n):=\frac{u (n)}{c_n}\,.$$
Then there is a subsequence which converges by Arzela-Ascoli's theorem to a bounded function $v_0$ on $X_0$.
If there is a subsequence $n_k$  such that $u(n_k)$ takes its maximum value on a compact set $\{ -t \leq x_3 \leq t\}$,
then $v_0$ is not identically zero since the point where the supremum is attained has an accumulation point $z_0$ where $v_0(z_0) =1$.

In the case where $u(n)$ take its maximum value at infinity, $v_0$ could be zero. Consider the point $\bar z(n)=(\bar x(n), \bar y (n))$ on the annulus where $u(n) (\bar x(n),\bar y(n)) = \frac12  \sup _{X_n} |u (n)|$ and $\bar y(n) \longrightarrow \infty$. We consider  curves $\gamma (\bar y(n)) = X_n\cap \{y=\bar y(n)\}$ and we can find a divergent subsequence $y _{n_k}$, such that
$ X_{k}=\pi (iy_{n_k},0,...0)X_{n_k}(x,y)=X_{n_k}(x,y+y_{n_k})$ is converging to an annulus $X_1$, with spectral data $(a,b, \xi_{\l,1})$ (since $a_n \to a$ and $\tau_n \to \tau_0$).

Since the limit $X_0$ is geometrically an Abresch annulus, we have that $a(\l)=q(\l)a_0(\l)$ where
$$q(\l)= \prod_{i,j=1}^{i,j=k,l} (\l - \a_i)^2(1-\l \bar\a_i)^2 (\l - \beta_j)^2 \hbox{ with }   |\alpha_i | \neq 1, |\beta_j|=1.$$

{\bf Case 1 $X_1$ is an Abresch annulus. } If $q(\l)$ has only roots $|\beta_j|=1$ on the unit circle, then corresponding potential $\xi_{\l,1}$ has zeroes at these roots.
We can remove these roots without changing the extended frame, and $X_1$ is a finite translation of the Abresch annulus $X_0$. In the general situation we are not able to decide which geometric limit we have.

We parameterize $X_n$ and $X_1$ by its third coordinate. We denote by $ds_n=\cosh ^2 \o (n) |dz|^2$
and $ds_1=\cosh^2 \o(1) |dz|^2$ the associated metrics with $\o (n) \to \o(1)$ uniformly.
Jacobi operators are given by
$${\calL}_n = \frac{1}{\cosh ^ 2 \o (n)}\left( \partial^2 _{x} + \partial ^2_{y} +1 + 2 \frac{|\nabla \o (n)|^2}{\cosh ^2 \o (n)}\right).$$
Since $\o (n)$ is bounded by Proposition \ref{omegabound}, the Jacobi field $u (n) = \o_{xy}(n)-\tanh \o (n) \o_{x}(n)\o_{y}(n)$ is bounded on $X_n$ and
$$v_n:=\frac{u (n)}{\sup |u (n)|}$$
converges by Arzela-Ascoli's theorem to a bounded solution of ${\calL}_1v_0=0$, a bounded Jacobi function on $X_1$.
Since $u (n)=\cosh^2 \o (n) \partial_x k_g$ (see theorem \ref{shiffman}), it has at least four zeroes on any level horizontal curve by the four vertex theorem (see \cite{jackson}). The set of curves $\Gamma=\{v_n^{-1}(0)\}=\{u (n) ^{-1}(0)\}$ describes at least four nodal domain intersecting every horizontal curve (see theorem \ref{shiffman}). By counting the number of zeroes of $v_n$ on each horizontal section $x \to X(x,y_0)$, we deduce that $v_0$ cannot have generically two zeroes on horizontal curves and we argue as follow to get a contradiction.

Otherwise, it means that two or three zeroes of $v_n$ coalesce in the limit. We will find
an open interval $y \in ]t_1,t_2[$, such that on every curve $\gamma (t)= A \cap \SY^2 \times \{t\}$ the zeroes of $v_n$ are coalesce in the limit at two zeroes. A coalescing of zeroes will produce a new nodal curve $\Gamma_0=v_0 ^{-1} (0)$ generically
transverse to horizontal section of $X_1 \cap \SY^2 \times[t_1,t_2]$. We can find a horizontal section transverse to $\Gamma_0$.
Since $\Gamma_0$ is a limit of several nodal curves collapsing together at the limit, $v_0$ will not change sign along $\gamma (t)$ crossing $\Gamma_0$, or $v_0$  will change sign but with $\partial _x v_0=0$ on $\Gamma_0$. This contradicts a Theorem of Cheng \cite{cheng} on the singularity of nodal curves for the solution of an elliptic operator which are isolated and describing equiangular curves
at the singularity.

In summary the four vertex theorem implies that $v_0$ has at least four zeroes generically on each horizontal section. Now a careful analysis on the operator of $X_1$ will give a contradiction. We conclude by the Lemma \ref{degenerate}, that a
such Jacobi field cannot exist on an Abresch annulus $X_1$.

{\bf Case 2 $X_1$ is not an Abresch annulus. } In this case, the polynomial $a(\l)$ has an isospectral set $I(a)$ with several orbits. This situation occurs when there is a double root $|\alpha_i| \neq 1$. If $X_1$ is not an Abresch annulus, it implies that
$\xi_{\l,1}$, limit of isospectral action of $\xi_{\l,n}$ is in a higher dimensional orbit in $I(a)$. The annulus $X_1$ is a bubbleton
on an Abresch annulus.
\vskip 0.3cm
\noindent{\bf Section~\ref{abresch bubbleton}.}
Using our 'removing bubbletons' procedure, we can assume the
existence of an embedded simple bubbleton on the spectral curve of an
Abresch annulus. The idea explained in section \ref{abresch bubbleton} follows from two things. 

Firstly we deform a simple bubbleton of spectral genus 1 or 2 to a simple bubbleton on the flat cylinder. 
Secondly we open the double point of the bubbleton into a spectral curve of genus two preserving embeddedness. 
We can open a node on $\Sigma$ keeping the period closed, if the node is on a double point $\alpha_0$, where $\mu (\alpha_0) =\pm 1$.
Opening the node into a higher genus curve preserves embeddedness if and only if all orbits of the isospectral set 
induce embedded annuli i.e. if the bubbleton is embedded (by hypothesis).

We do this by increasing the flux $|\tau|$. But since we are on the flat cylinder, the period is already at least $2\pi$, and by the above, we are in a component which contains a genus zero example of flux strictly greater than $2\pi$, a multi sheeted covering of the flat annulus which is not
embedded. This is a contradiction since we preserve embeddness along the deformation by increasing the flux. The proof of the isolated is
finish.

\vskip 0.3cm
\noindent{\bf{Proof of the Main theorem}.} Consider an annulus $A$ not foliated by circle. It has uniform
bounded curvature and have finite type, hence there is $\epsilon_0>0$ such that $A \in {\calM}_{g_0}^{emb} (\epsilon_0)$ and
${\calM}_{g_0}^{emb} (\epsilon_0) \setminus {\calA} \neq \emptyset$. Now we apply, Theorem \ref{connectedtheorem} and the Section 10 and 11 on isolated property  to prove that in fact $A$ has to be an Abresch annulus.

\section{Minimal annuli and the sinh-Gordon equation}
\label{preliminary}
\noindent{\bf Local parametrization.}
We consider $X = (G,\,h): \C  \to \SY^2 \times \R$ a minimal surface conformally immersed in $\SY^2 \times \R$. As usual write $z = x +\mi y$. The horizontal component $G: \C \to \SY^2$ of the minimal immersion is a harmonic map. If we denote by $(\C,\si^2 (u)|\rmd u|^2)$ the complex plane with metric induced by the stereographic projection of $\SY^2$, the map $G$ satisfies
\begin{equation*}
    G_{z \bar{z}}+ 2(\log \si \circ G )_u G_z G_{\bar{z}}=0\,.
\end{equation*}
The holomorphic {\it quadratic Hopf differential} associated to the harmonic map $G$ is given by
\begin{equation*}
    Q(G)=(\si \circ G)^2 G_z {\bar G}_z ( \rmd z)^{2}:= \phi (z) (\rmd z)^2\,.
\end{equation*}
The function $\phi$ depends on $z$, whereas $Q(G)$ does not.
Conformality reads
\[
    \vert G _x \vert^2_{\si} + (h_x)^2  = \vert G _y \vert ^2_{\si} + (h_y)^2 \quad \mbox{and} \quad
    \left<G_x ,\,G _y \right>_{\si}+ (h_x)(h_y) = 0
\]
hence $(h_z)^{2}(\rmd z)^2=-Q (G )$. The zeroes of $Q$ are double, and we can define $\eta$ as the holomorphic 1-form $\eta =\pm 2i \sqrt{Q}$. The sign is chosen so that
\begin{equation*}
    h= {\rm Re} \int  \eta\,.
\end{equation*}
The unit normal vector $n$ in $\SY^2 \times \R$ has third coordinate
$$
    \langle n ,\,\tfrac{\partial}{\partial t}\rangle = n_{3} = \frac{\vert  g\vert^2-1}{\vert g\vert^2+1} \quad \mbox{where} \quad
    g^{2}:=-\dfrac{G_z}{\ov{G _{\bar z}}}\,.
$$
We define the real  function $\o : \C \to \R$  by
$$
    n_{3}:=\tanh \o\,.
$$
We express the differential $\rmd G$ independently of $z$ by:
\begin{equation*}
\rmd G= G_{\bar z} \rmd\bar z+G_{z} \rmd z=
\frac{1}{2 \si \circ G}\overline{ g^{-1} \eta} -\frac{1}{2
\si \circ G} g \,\eta
\end{equation*}
and the metric $\rmd s^2$ is given (see \cite{[S-T]}) in a local coordinate $z$ by
\begin{equation*}
    \rmd s^2=(\vert G_{z} \vert_\si + \vert G_{\bar z} \vert_\si ) ^2 \vert \rmd z\vert ^2=\frac14(\vert g\vert^{-1}+\vert g \vert)^{2}\vert \eta \vert ^{2}=
    4 \cosh ^2 \o  |Q|\,.
\end{equation*}
We remark that the zeroes of $Q$ correspond to the poles of $\o$, so that the immersion
is well defined. Moreover the zeroes of $Q$ are points, where
the tangent plane is horizontal. The Jacobi operator is
$$
    \calL = \frac{1}{4|Q|\, \cosh ^2 \o}\left(\partial^2_{x}+\partial^2_{y} +{\rm Ric}(n) + \vert dn \vert ^2 \right)
$$
and can be expressed in terms of $Q$ and $\o$ by
\begin{equation} \label{jacobi}
    \calL = \frac{1}{4|Q|\,\cosh ^2 \o}\left(\partial^2_{x}+\partial^2_{y} +4|Q|+\frac{2 | \nabla \o |^2}{\cosh^2 \o} \right)
\end{equation}
Since $n_3= \tanh \o$ is a Jacobi field obtained by vertical translation in $\SY^2 \times \R$, we have $\calL \tanh \o=0$ and
$$
    \Delta \o +|Q| \sinh \o \cosh \o=0\,,
$$
where $\Delta = \partial^2_{x}+\partial^2_{y}$ is the Laplacian of the flat metric.


{\bf Minimal annuli.}
Consider a minimal annulus $A$ properly embedded in $\SY^2 \times\ R$.
If $A$ is tangent to a horizontal section $x_3=0$, the set $A \cap \{ x_3 =0 \}$ bounds on $A$ a compact component in some half-space $x_3 \geq 0$ or $x_3 \leq 0$ with boundary in $\SY^2 \times \{0\}$, a contradiction to the maximum principle. Hence the annulus is transverse to every horizontal section $\SY^2 \times \{t\}$ and intersect the level section in one compact connected component, topologically a circle. The third coordinate map $h : A \to \R$  is a proper harmonic map on each end of $A$, with $dh \neq 0$. Then each end of $A$ is parabolic and the annulus can be conformally parameterized by $\C / \tau \Z$. We will consider in the following conformal minimal periodic immersions $X: \C  \to \SY^2 \times \R$ with $X(z+\tau)=X(z)$.

Since $dh \neq 0$, the Hopf differential $Q$ has no zeroes. If $h^*$ is the harmonic
conjugate of $h$, we can use the holomorphic map $\mi(h+\mi\,h^*) : \C^2 \to \C$ to parameterize
the annulus by the conformal parameter $z=x+\mi y$. In this parametrization the period of the annulus is $\tau \in \R$ and
$$
    X(z)=(G(z),\,y) \hbox{ with } X(z+\tau)=X(z)\,.
$$
We say that we have parameterized the surface by its {\it third component}.
We remark that $Q=\frac14 (dz)^2$ and $\o$ satisfies the $\sinh$-Gordon equation \eqref{sinh}.
\begin{remark}
In this paper we will relax the condition $\tau \in \R$ into $\tau \in \C$, but we will parameterize our annuli conformally such that $Q$ will be constant, independent of $z$ and $4|Q|=1$.
We will say that the annulus is parameterized by its Sym point (see proposition \ref{sym}). This is a linear  change in the conformal parameter $z \to e^{\mi\Theta} z$
\end{remark}
In summary we have proven the following
\begin{theorem}\label{immersionsinhgordon}
A proper minimal annulus is parabolic and $X: \C/ \tau \Z \to \SY^2 \times \R$ has conformal parametrization  $X(z)=(G(z),\,h(z))$ with
\begin{enumerate}
\item constant Hopf differential $Q= \frac{1}{4} \exp(\mi\Theta)\,dz^2$, and $h(z) = {\rm Re} (-\mi e^{\mi\Theta/2} z)$.
\item The metric of the immersion is $ds^2=\cosh^2(\o)\, dz\otimes d\bar z$.
\item The third coordinate of the unit normal vector is $n_3 =\tanh \o$.
\item The real function $\o : \C/\tau \Z \to \R $ is a solution of $\sinh$-Gordon equation \eqref{sinh}.
\end{enumerate}
In particular the annulus intersect each horizontal section $\SY^2 \times \{t\}$ in exactly one embedded topological circle.
\end{theorem}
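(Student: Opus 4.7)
\textbf{Proof plan for Theorem~\ref{immersionsinhgordon}.}
The plan is to assemble the four claims from the identities already established in the preceding ``Local parametrization'' material. The key steps are, in order: (i) prove parabolicity of $A$ together with the transversality $dh\ne 0$; (ii) pick a global holomorphic coordinate $z$ in which the Hopf differential is constant; and (iii) read off the sinh-Gordon equation from the fact that $\tanh\o$ is a Jacobi field induced by the vertical Killing field of $\SY^2\times\R$.

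First, I would prove transversality. If $A$ were tangent to some horizontal slice $\SY^2\times\{t_0\}$, then at the tangency $A$ and the slice share tangent planes; applying the maximum principle to the signed third coordinate $x_3-t_0$ restricted to a component of $A\cap\{x_3\ge t_0\}$ near that point produces a compact piece with boundary in the slice, contradicting the strong maximum principle for the minimal surface equation. Therefore $dh\ne 0$ everywhere, and $h:A\to\R$ is a proper non-constant harmonic function on each end. Standard parabolicity theory for complete annular ends with a proper harmonic exhaustion then shows each end, and hence $A$, is parabolic, so $A$ is conformally $\C/\tau\Z$ for some $\tau\in\C^{\ast}$.

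Second, I would normalize the conformal parameter. Since $dh\ne 0$, the Hopf differential $Q$ is nowhere zero, and the associated holomorphic one-form $\eta=2\mi\sqrt{Q}$ is a nowhere vanishing holomorphic section of the canonical bundle of $\C/\tau\Z$. Any such form is a constant multiple of $dz$ for any global holomorphic coordinate on the cylinder. Rescaling and rotating $z$ by that constant brings $Q$ into the normalized form $Q=\tfrac{1}{4}e^{\mi\Theta}(dz)^2$ and consequently $h={\rm Re}\int\eta={\rm Re}(-\mi e^{\mi\Theta/2}z)$, proving item (1). Item (2) is the general identity $ds^2=4\cosh^2\o\,|Q|\,|dz|^2$ combined with the normalized value $|Q|=\tfrac{1}{4}$, and item (3) is precisely the definition of $\o$ via $n_3=\tanh\o$ introduced earlier.

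Third, for item (4), vertical translation of $\SY^2\times\R$ is a Killing field, so its normal component $n_3=\tanh\o$ is a global Jacobi field; hence $\calL\tanh\o=0$. Substituting $\tanh\o$ into the explicit form \eqref{jacobi} of the Jacobi operator and simplifying, the terms involving $|\nabla\o|^2$ cancel and one is left with $\Delta\o+|Q|\sinh\o\cosh\o=0$, which under the normalization $4|Q|=1$ built into the Sym parametrization is equation \eqref{sinh}. I expect the main technical care to lie in step (ii): verifying that the linear change of coordinate that normalizes $\eta$ is compatible with the period lattice (so that the descent to $\C/\tau\Z$ is globally well-defined with the stated forms of $Q$ and $h$), and bookkeeping the phase $\Theta$ so that items (1) and (4) are consistent.
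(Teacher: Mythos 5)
Your overall architecture matches the paper's: transversality to the horizontal slices by the maximum principle, parabolicity from the proper harmonic height $h$ with $dh\neq 0$, the identities $ds^2=4\cosh^2\o\,|Q|\,|dz|^2$ and $n_3=\tanh\o$, and the sinh-Gordon equation from $\calL\tanh\o=0$. However, the justification you give for the crucial normalization step (your step (ii)) is wrong as stated. You claim that the nowhere vanishing holomorphic $1$-form $\eta=2\mi\sqrt{Q}$ on $\C/\tau\Z$ must be a constant multiple of $dz$. That is true on a compact torus, but false on the noncompact cylinder: for instance $e^{\sin(2\pi z/\tau)}\,dz$ is holomorphic, $\tau$-periodic and nowhere zero, yet not a constant multiple of $dz$. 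So merely ``rescaling and rotating $z$'' does not produce the normal form of item (1), and this is precisely the point you yourself identified as needing the main technical care.

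The missing idea --- and what the paper actually does --- is to build the new coordinate out of $\eta$ itself rather than to argue that $\eta$ is already constant. Since $h={\rm Re}\int\eta$ is a proper harmonic function with $dh\neq 0$, the map $w=\mi(h+\mi\,h^*)=\mi\int\eta$ (with $h^*$ the locally defined harmonic conjugate) is holomorphic with nowhere vanishing derivative; its $dh$-period along the generator of $\C/\tau\Z$ vanishes while the period of $dh^*$ (the flux) is nonzero, and properness of $h$ makes $w$ descend to a conformal reparametrization of the annulus by a straight cylinder, in which $Q=\tfrac14(dw)^2$ and $h$ is linear in $w$. Only after this does the linear change of coordinate placing the Sym point (cf.\ Proposition \ref{sym} and the remark before the theorem) yield item (1) with the phase $\Theta$; items (2)--(4) then follow exactly as in your plan, the reduction of $\calL\tanh\o=0$ via \eqref{jacobi} giving $\Delta\o+4|Q|\sinh\o\cosh\o=0$, which is \eqref{sinh} once $4|Q|=1$ (the stray factor of $4$ is inherited from the text and harmless). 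In particular, the constancy of $\eta$ in the uniformizing coordinate is a consequence of this construction, not of a general fact about holomorphic forms on cylinders.
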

\noindent{\bf Annuli foliated by constant curvature curves.}
The function $\o$ determines the geometry of the annulus. We are interested in a 2-parameter family
of minimal annuli foliated by horizontal curves with constant geodesic curvature. In the case of minimal surfaces in $\R^3$ Shiffman introduced the function $u=-\lambda \partial_x ( k_g)$, called Shiffman's Jacobi field. In $\SY^2 \times \R$, this function
exists, and we cite the following 
\begin{theorem} \cite{hau} \label{shiffman}
Let $A$ be a minimal surface embedded in  $\SY^2 \times \R$, transverse to every section of $\SY^2 \times \{t\}$ and parameterized by the third coordinate. Then the geodesic curvature in $\SY^2$ of the horizontal level curve $\g_h (t)=A \cap (\SY^2 \times \{t\})$
is given by
\begin{equation}
k_g (\g_h) = \frac{-\o _y}{\ch }.
\end{equation}
The function $u= - \ch  (k_g)_x$ is a Jacobi field, so that $u$ is a solution of the elliptic equation
$$
    \calL u = \Delta _g u + {\rm Ric}(n)u + \vert dn \vert ^2 u=0\,.
$$
Here ${\rm Ric}(n)$ is the Ricci curvature of the two planes tangent to $A$,
$\vert dn \vert$ is the norm of the second fundamental form and
$\Delta _g= \frac {1}{\rho^2}\Delta$.

Let $A$ be a compact minimal annulus immersed in $\SY^2 \times \R$,
with ${\rm Index } (L) \leq 1$. If $A$ is bounded by two curves $\Gamma _1$ and  $\Gamma _2$
with constant geodesic curvature, then $u$ is identically zero and $A$ is foliated by horizontal curves of constant curvature in $\SY^2$.
\end{theorem}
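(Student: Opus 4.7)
My plan is to establish the three assertions in sequence from the conformal parametrization of Theorem~\ref{immersionsinhgordon}, in which $X(z)=(G(z),y)$ and the horizontal level curves are the coordinate lines $\{y=t\}$.

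For the geodesic curvature formula, the key observation is that along $\gamma_h(t)$ the tangent is $G_x$ with $|G_x|^2_{\SY^2}=\cosh^2\omega$ (from conformality together with $h=y$). I would then decompose the ambient covariant derivative $D_T T$ of the unit tangent $T=G_x/\cosh\omega$ in two ways: on one hand $D^{\SY^2}_T T = k_g^{\SY^2}\,N$, since $T$ is purely horizontal and the product connection reduces to that of $\SY^2$; on the other hand, as a vector in $T(\SY^2\times\R)$, it equals $k_g^A\,\hat n_A + \mathrm{II}(T,T)\,n$, where $k_g^A = -\tanh\omega\,\omega_y/\cosh\omega$ is given by Liouville's formula in the intrinsic conformal metric $\cosh^2\omega|dz|^2$. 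The two orthonormal bases $\{N,\partial_t\}$ and $\{\hat n_A,n\}$ of $T^{\perp}$ differ by a rotation with cosine $n_3 = \tanh\omega$, so $N = \tanh\omega\,\hat n_A + (1/\cosh\omega)\,n$, and matching coefficients yields $k_g^{\SY^2} = k_g^A/\tanh\omega = -\omega_y/\cosh\omega$.

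The Jacobi-field property of $u = -\cosh\omega\,(k_g)_x = \omega_{xy} - \tanh\omega\,\omega_x\omega_y$ is then a direct verification: using the explicit form of $\calL$ in~\eqref{jacobi} with $4|Q|=1$, one expands $(\Delta + 1 + 2|\nabla\omega|^2/\cosh^2\omega)u$ and substitutes the sinh-Gordon equation~\eqref{sinh} together with its $\partial_x$, $\partial_y$, and $\partial_x\partial_y$ derivatives; after using $\cosh^2\omega - \sinh^2\omega = 1$ to regroup, the expression collapses identically to zero. This is the Shiffman computation, purely algebraic and analogous to the classical $\R^3$ case.

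For the final assertion, the boundary hypothesis forces $(k_g)_x\equiv 0$ on $\partial A$, so $u|_{\partial A}=0$ and $u$ is a Dirichlet Jacobi field of $\calL$. The strategy is to combine this with the auxiliary Jacobi field $\tanh\omega$ coming from vertical translation: setting $v = u/\tanh\omega$ wherever defined, $v$ satisfies a linear elliptic equation with no zero-order term, so the strong maximum principle with $v|_{\partial A}=0$ forces $v\equiv 0$ on each nodal domain and hence $u\equiv 0$. The main obstacle is that on a merely immersed (not embedded) annulus the field $\tanh\omega$ may vanish at interior points with horizontal tangent plane, making the ratio singular; the hypothesis $\mathrm{Index}(\calL)\leq 1$ is precisely what controls this pathology, since the Courant nodal-domain inequality bounds the number of nodal domains of $u$ by $\mathrm{Index}(\calL)+1\leq 2$, and a careful analysis of the interaction between the nodal sets of $u$ and of $\tanh\omega$—the delicate step carried out in~\cite{hau}—closes the argument. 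Once $u\equiv 0$, we have $\partial_x k_g\equiv 0$, so each level curve $\gamma_h(t)$ has constant geodesic curvature in $\SY^2$.
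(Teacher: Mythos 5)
The paper itself offers no proof of this statement: it is quoted verbatim from \cite{hau}, so there is no internal argument to compare with, and your proposal has to stand on its own. On that basis, the first two assertions are handled correctly. The double decomposition of $D_TT$ --- once with respect to the totally geodesic slice $\SY^2\times\{t\}$, once with respect to $A$ --- together with Liouville's formula $k_g^A=-\tanh\omega\,\omega_y/\cosh\omega$ for the metric $\cosh^2\omega\,|dz|^2$ does give $k_g^{\SY^2}\tanh\omega=k_g^A$ and hence $k_g=-\omega_y/\cosh\omega$; only note that your formula for $N$ is not horizontal as written: since the conormal has third component $1/\cosh\omega$ and $n_3=\tanh\omega$, one must take $N=\pm\bigl(\tanh\omega\,\hat n_A-\tfrac{1}{\cosh\omega}\,n\bigr)$, though the coefficient matching is unaffected. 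Likewise your claim that the verification of $\calL u=0$ collapses after substituting the sinh-Gordon equation is true: with $u=\omega_{xy}-\tanh\omega\,\omega_x\omega_y$ one finds $\Delta u+u=-\tfrac{2|\nabla\omega|^2}{\cosh^2\omega}\,u$, which is exactly \eqref{jacobi} with $4|Q|=1$.

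The genuine gap is in the third assertion, which is the substance of the theorem. Your proposed mechanism --- divide $u$ by the vertical-translation Jacobi field $\tanh\omega$ and invoke the strong maximum principle --- does not work: $\tanh\omega$ vanishes on an interior set (the points where the tangent plane of $A$ is vertical), and even on a component where $\tanh\omega>0$ the quotient $v=u/\tanh\omega$ vanishes only on the portion of that component's boundary lying in $\partial A$, not on its interior boundary, so there is no boundary-value problem to which the maximum principle applies. Moreover the Courant bound you invoke concerns the nodal domains of the Dirichlet $0$-eigenfunction $u$ and says nothing about the nodal set of $\tanh\omega$, which does not vanish on $\partial A$ and is not subject to Courant at all; so ${\rm Index}\,(\calL)\le 1$ does not ``control this pathology'' in the way you assert. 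You acknowledge the difficulty and then defer ``the delicate step'' to \cite{hau} --- but that step is the whole content of the final assertion, so the proposal does not prove it. The argument that actually closes it exploits the level-curve structure rather than any quotient: since $k_g$ is periodic on each horizontal circle, $\int \partial_x k_g\,dx=0$, so $u$ either vanishes identically on a level circle or changes sign at least twice on it; combined with $u|_{\partial A}=0$, ${\rm Index}\le 1$ and Courant (at most two nodal domains), a topological analysis of the nodal set of $u$ on the annulus yields the contradiction, with no division by $\tanh\omega$ anywhere. Nothing of that analysis appears in your sketch.
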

This theorem proves the existence of a 2-parameter family of minimal surfaces foliated by horizontal constant geodesic curvature curves which are equivalent to Riemann's minimal example of $\R^3$. Meeks and Rosenberg \cite{MR} prove the existence by solving a Plateau problem between two geodesics $\Gamma_1$ and $\Gamma_2$ contained in two horizontal sections $\SY^2 \times \{t_1\}$ and $\SY^2 \times \{t_2\}$ and then find a stable minimal annulus bounded by the geodesics. By the theorem above
this annulus is foliated by horizontal circles and using symmetries along horizontal geodesics in $\SY^2 \times \R$, one obtains a properly embedded minimal annulus in $\SY^2 \times \R$. This annulus is periodic in the third direction.
\begin{proposition} \cite{hau}
A minimal annulus foliated by constant curvature horizontal curves admits a parametrization by the third coordinate where the metric $ds^2=\cosh(\o)\, |dz|^2$ satisfies the Abresch system
\[
    \left\{\begin{array}{ll}
    \Delta \o  +\sinh  \o \cosh \o =0 & \\[1mm]
    u=\cosh \o \partial_x(k_g)= (\o_{xy}) -\th ( \o_x)( \o_y) =0.&
\end{array} \right.
\]
\end{proposition}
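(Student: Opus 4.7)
The plan is to invoke the two preceding structural results (Theorem \ref{immersionsinhgordon} and Theorem \ref{shiffman}) and reduce the proposition to a short computation, since the first Abresch equation is already the sinh-Gordon equation, and the second is exactly the vanishing of the Shiffman Jacobi field in the third-coordinate parametrization.

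First I would parametrize the annulus by its third coordinate. Because the annulus is transverse to each horizontal slice $\SY^2 \times \{t\}$, Theorem \ref{immersionsinhgordon} (applied with $\Theta=0$, so that $h(z)=y$) yields a conformal immersion $X(z)=(G(z),y)$ with $Q=\tfrac{1}{4}(dz)^2$, metric $ds^2=\cosh^2(\o)\,|dz|^2$, and with $\o$ a solution of
\[
    \Delta \o+\sinh\o\cosh\o=0.
\]
This gives the first equation of the Abresch system.

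Next I would translate the hypothesis "foliated by horizontal constant curvature curves" into a condition on $\o$. In the chosen parametrization the horizontal level curve at height $t=y_0$ is $\gamma_{y_0}\colon x\mapsto X(x,y_0)$. By Theorem \ref{shiffman}, its geodesic curvature in $\SY^2$ is
\[
    k_g(\gamma_{y_0})=-\frac{\o_y}{\cosh\o}.
\]
Constancy of $k_g$ along $\gamma_{y_0}$, for each $y_0$, is exactly $\partial_x k_g\equiv 0$ on the annulus. A direct differentiation gives
\[
    \partial_x k_g
    = -\frac{\o_{xy}\cosh\o-\o_y\sinh\o\,\o_x}{\cosh^{2}\o}
    = -\frac{1}{\cosh\o}\bigl(\o_{xy}-\tanh\o\,\o_x\o_y\bigr),
\]
so that $\cosh\o\,\partial_x k_g=-\bigl(\o_{xy}-\tanh\o\,\o_x\o_y\bigr)$. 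The vanishing of this expression is precisely the second equation of the Abresch system, and this is exactly the statement that Shiffman's Jacobi field $u=\cosh\o\,\partial_x k_g$ vanishes identically.

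There is essentially no obstacle here: everything reduces to quoting the existing parametrization statement, the formula for $k_g$, and performing the one-line quotient-rule computation above. The only point that requires a brief justification is that "foliated by constant curvature horizontal curves" is equivalent, in this parametrization, to $\partial_x k_g=0$ on the whole annulus, which is immediate because the $x$-coordinate runs along the level curve $\{y=y_0\}$ at each height.
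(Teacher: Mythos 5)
Your argument is correct and is exactly the route the paper intends: the paper itself gives no proof (it cites \cite{hau}), but its Theorem \ref{immersionsinhgordon} (third-coordinate parametrization, sinh-Gordon equation for $\o$) together with the formula $k_g=-\o_y/\cosh\o$ of Theorem \ref{shiffman} reduce the proposition to precisely your quotient-rule computation and the observation that constancy of $k_g$ along each level curve means $\partial_x k_g\equiv 0$. The only cosmetic point is a sign: with the paper's convention $u=-\cosh\o\,(k_g)_x$ from Theorem \ref{shiffman} one gets $u=\o_{xy}-\tanh\o\,\o_x\o_y$, so the proposition's statement carries a harmless sign slip relative to your computation, and the conclusion $u\equiv 0$ is unaffected.
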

Abresch \cite{abresch} solved this system using elliptic functions and the separation of variable
$$
    \partial_x \frac{\o_y}{\cosh \o}=\partial_y \frac{\o_x}{\cosh \o}=\cosh ^{-1} \o ( \o_{xy} -\th \o_x \o_y)=0\,.
$$
The solution $\omega : \C \to \R$ allows to reconstruct the immersion up to the isometry (see section \ref{symbobenko}). The period closes in $\C$ because horizontal curves are circles.
\begin{theorem}\label{abresch} \cite{abresch}, \cite{hau}
Let  $\o : \R^2 \to \R$ be a real-analytic solution of the Abresch system, then we define the functions $f,\,g$ in terms of $\o$ by
\begin{equation}
    f = \frac{-\o _x}{\ch }\quad \mbox{and}\quad g = \frac{- \o _y}{\ch }
\label{eq:3.2}
\end{equation}
Then the real functions $x \mapsto f(x)$ and $y \mapsto g(y)$ depend
on one variable and for $c\leq 0, d \leq 0$ satisfy the system

\[
\begin{array}{ll}
    - (f_x)^2=f^4 +(1 +c-d )f^2 + c \,,\quad
    -f_{xx}= 2f^3 +(1 +c-d )f \,,& \\
    - (g_y)^2=g^4 +(1 +d-c )g^2 + d \,,\quad
    -g_{yy}= 2 g^3 +(1 +d-c )g \,.&
\end{array}
\]
Conversely we can recover the solution $\o$ from functions $f$ and
$g$ by
\begin{equation}
\sinh \o=(1 +f^2+g^2)^{-1}(f_x + g_y)
\label{eq:3.1}
\end{equation}
There is a solution of the system if and only if $c\leq 0$ and $d\leq 0$,  $\o$ is doubly periodic and
exists on the whole plane $\R^2$.
\end{theorem}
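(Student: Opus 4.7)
The plan is to read the second equation of $(II)$ as a compatibility condition that forces separation of variables, and then to extract the ODEs for $f$ and $g$ from the sinh-Gordon equation. A direct computation gives
\[
\partial_y\!\left(\frac{\o_x}{\cosh\o}\right)=\frac{\cosh\o\,\o_{xy}-\sinh\o\,\o_x\o_y}{\cosh^2\o},
\]
so the second equation of $(II)$ is equivalent to $\partial_y(\o_x/\cosh\o)=0$, and symmetrically $\partial_x(\o_y/\cosh\o)=0$. Hence $f:=-\o_x/\cosh\o$ depends on $x$ only and $g:=-\o_y/\cosh\o$ on $y$ only, giving \eqref{eq:3.2}. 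Substituting $\o_x=-f\cosh\o$, $\o_y=-g\cosh\o$ into $\Delta\o+\sinh\o\cosh\o=0$ and dividing by $\cosh\o$ yields the reconstruction formula \eqref{eq:3.1}.

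Next I would differentiate \eqref{eq:3.1} with respect to $y$, using $\partial_y(\sinh\o)=\cosh\o\cdot\o_y=-g\cosh^2\o$ together with $\cosh^2\o=1+\sinh^2\o$ to eliminate $\o$ from the resulting expression. After clearing denominators, the relation rearranges to a polynomial identity of the form
\[
(f_x)^2+f^4+A(y)\,f^2+B(y)=0,
\]
with $A(y)=2+2g^2+g_{yy}/g$ and $B(y)=(1+g^2)^2+(1+g^2)\,g_{yy}/g-(g_y)^2$. Since the left-hand side depends only on $x$, while $A,B$ depend only on $y$, and $f$ is non-constant (the constant case being degenerate and treatable by inspection), both coefficients must be constants. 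Labelling $A=1+c-d$ and $B=c$ gives $-(f_x)^2=f^4+(1+c-d)f^2+c$, and differentiating in $x$ yields $-f_{xx}=2f^3+(1+c-d)f$. The definition of $A$ then reads $-g_{yy}=2g^3+(1+d-c)g$, and feeding this back into $B=c$ produces $-(g_y)^2=g^4+(1+d-c)g^2+d$. The symmetry of the roles of $x$ and $y$ guarantees that the separation constant appearing in the $g$-ODE is indeed the same $d$, with opposite sign conventions as stated.

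The converse direction is a direct verification: given $f(x),g(y)$ satisfying the four ODEs with constants $c,d$, define $\o$ by \eqref{eq:3.1}; one checks that $-\o_x/\cosh\o=f$, $-\o_y/\cosh\o=g$, and that the Abresch system is then satisfied. The existence and periodicity statement reduces to a phase-portrait analysis of the quartic $P(t)=t^4+(1+c-d)t^2+c$: the equation $(f_x)^2=-P(f)$ admits a bounded real orbit precisely when $P$ takes nonpositive values on a bounded interval, which translates into $c\leq 0$, and similarly $d\leq 0$ for the $g$-equation; in this range $f$ and $g$ are classical Jacobi elliptic functions, hence periodic, so $\o$ extends as a smooth doubly-periodic solution on all of $\R^2$. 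I expect the main obstacle to be the algebraic manipulation producing the polynomial identity for $(f_x)^2$ in closed form; once that identity is in hand, the separation argument and reconstruction are essentially mechanical.
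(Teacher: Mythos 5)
The paper gives no proof of this theorem at all -- it is quoted from Abresch \cite{abresch} and Hauswirth \cite{hau}, with only the separation-of-variables identity $\partial_y(\o_x/\cosh\o)=\partial_x(\o_y/\cosh\o)=0$ indicated just before the statement -- and your argument follows exactly that route; your key computations (the coefficients $A(y)=2+2g^2+g_{yy}/g$, $B(y)=(1+g^2)^2+(1+g^2)g_{yy}/g-(g_y)^2$, the recovery of both $g$-equations from $A=1+c-d$, $B=c$, and the converse verification) all check out. The only soft spot is the ``only if'' part of $c\le 0,\ d\le 0$: nonpositivity of the quartic on a bounded interval does not by itself force $c\le 0$ for a single ODE, so the necessity needs the coupling of the two equations (if $c>0$, a bounded nonconstant $f$ forces $1+c-d<0$, hence $d>1+c>0$, which makes $g^4+(1+d-c)g^2+d$ strictly positive and leaves no real solution $g$), plus a word on the degenerate constant cases.
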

%


\section{Curvature estimate of Meeks and Rosenberg}
\label{sec:curvature estimate}
Meeks and Rosenberg \cite{MRstable} study properly embedded minimal annuli
in $\SY^2 \times \R$. They prove an estimate of curvature in terms of the third coordinate of the flux.
\begin{lemma} \label{flux}
Let $\gamma$ be a simple curve (embedded) non homotopically trivial in $A$ and let
$\eta=J \gamma '$ be a unit vector field tangent to $A$ and orthogonal to $\gamma '$ along
$\gamma$, then we consider $\eta_3=\langle \eta, \frac{\partial}{\partial t}\rangle$ and the third coordinate
of the flux map
$$F_3= \int_{\g} \eta_3 ds.$$
If the annulus $A$ is conformally parameterized with $Q=\tfrac{1}{4}e^{\mi\Theta}(dz)^2$ and $X(z+\tau)=X(z)$ then $F_3 ( \gamma)=\pm |\tau|$.
\end{lemma}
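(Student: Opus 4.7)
The plan is to exploit the fact that the third component of the flux is a homological invariant, compute it on the most convenient representative of the generator of $H_1(A,\Z)$, and read the answer off of the explicit formula $h(z)=\mathrm{Re}(-\mi e^{\mi\Theta/2}z)$ for the height function from Theorem~\ref{immersionsinhgordon}.

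First I would observe that, since $h\colon A\to\R$ is harmonic, the identity
\begin{equation*}
\eta_3\,ds=\langle\eta,\tfrac{\partial}{\partial t}\rangle\,ds=\frac{\partial h}{\partial\eta}\,ds=\ast\,dh
\end{equation*}
holds along $\gamma$, where $\ast$ is the Hodge star of the induced metric $ds^{2}=\cosh^{2}\o\,|dz|^{2}$. Because $h$ is harmonic the $1$-form $\ast\,dh$ is closed, so $F_{3}(\gamma)=\int_{\gamma}\ast\,dh$ depends only on the homology class of $\gamma$. Moreover $\ast$ acting on $1$-forms on a surface is conformally invariant, so $\ast\,dh$ computed in the induced metric coincides with $\ast\,dh$ computed in the flat metric $|dz|^{2}$ on $\C/\tau\Z$.

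Next I would replace $\gamma$ (a generator of $H_{1}(A,\Z)\cong\Z$) by the straight-line representative $\gamma_{0}(t)=t\tau$, $t\in[0,1]$, on $\C/\tau\Z$. Writing $f(z)=-\mi e^{\mi\Theta/2}z$, which is holomorphic, we have $h=\mathrm{Re}\,f$ and its harmonic conjugate is $h^{\ast}=\mathrm{Im}\,f=-\mathrm{Re}(e^{\mi\Theta/2}z)$. Since $\ast\,dh=dh^{\ast}$ in the flat metric, the integral telescopes:
\begin{equation*}
F_{3}(\gamma)=\pm\int_{\gamma_{0}}\ast\,dh=\pm\bigl(h^{\ast}(\tau)-h^{\ast}(0)\bigr)=\mp\,\mathrm{Re}\bigl(e^{\mi\Theta/2}\tau\bigr),
\end{equation*}
where the leading sign reflects the two possible choices of $\eta=J\gamma'$.

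Finally I would use the closing condition $X(z+\tau)=X(z)$, which forces $h(z+\tau)=h(z)$, i.e.\ $\mathrm{Re}(-\mi e^{\mi\Theta/2}\tau)=0$. Equivalently $e^{\mi\Theta/2}\tau\in\R$, so $e^{\mi\Theta/2}\tau=\pm|\tau|$. Substituting this into the previous display gives $F_{3}(\gamma)=\pm|\tau|$, which is the claim. There is essentially no obstacle here: once the flux is identified with the period of the conjugate harmonic function, everything reduces to the one-line computation above, and the sign ambiguity is the expected consequence of the two choices of orientation of $\eta$ and of the sign of $e^{\mi\Theta/2}\tau$.
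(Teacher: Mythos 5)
Your proof is correct. The identity $\eta_3\,ds=\ast\,dh$, the closedness of $\ast\,dh$ coming from the harmonicity of $h$, the conformal invariance of the Hodge star on $1$-forms, and the final use of the closing condition $\mathrm{Re}(-\mi e^{\mi\Theta/2}\tau)=0$ to get $e^{\mi\Theta/2}\tau=\pm|\tau|$ are all sound, and the residual signs are exactly the allowed ambiguity in the statement.

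The paper argues slightly differently: it first performs a conformal change of coordinate so that the annulus is parameterized by its third coordinate with real period $|\tau|$ (so $h(z)=y$), and then computes the flux directly along a horizontal level curve $x\mapsto X(x,y_0)$, where the unit conormal is explicitly $\eta=\mathrm{sech}(\o)\,(G_y,1)$; since $ds=\cosh\o\,dx$ along that curve, the integrand collapses to $dx$ and the integral is $|\tau|$. The independence of the choice of $\gamma$ is left implicit there. Your route packages the same content more invariantly: identifying $F_3(\gamma)$ with the period of the conjugate harmonic function $h^{\ast}$ makes the homology invariance explicit (so one really may replace the arbitrary non-trivial simple curve by the straight segment $t\mapsto t\tau$), and it avoids computing the conormal altogether, at the price of having to extract $e^{\mi\Theta/2}\tau\in\R$ from the closing condition rather than normalizing the period to be real at the outset. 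Both computations are essentially the flux-as-period statement; yours is the cleaner justification of the step the paper glosses over, while the paper's is more hands-on geometric.
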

\begin{proof}
After a conformal change of coordinate we assume that the annulus is parameterized by its third coordinate with real period $|\tau|$. We compute the flux along a horizontal curve $x \to X(x,y_0)$. The co-normal along this curve is $\eta= \mathrm{sech}\,(\o)\,(G_y,1)$.
Hence
$$
    F_3= \int_0^{|\tau|} \eta_3 \,ds= |\tau|\,.
$$
\end{proof}
Meeks and Rosenberg proved in theorem 7.1 in \cite{MR}
the following curvature estimate
\begin{theorem} \label{curvest}\cite{MR}
For any properly embedded minimal annulus $A$ in $\SY^2 \times \R$ with flux $|F_3| \geq \epsilon_0>0$,  there exists a constant $C_1 >0$ depending only on $\epsilon_0$ such that $|K| \leq C_1(\epsilon_0)$.
\end{theorem}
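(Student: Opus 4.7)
The plan is a blow-up argument by contradiction, in the spirit of Choi--Schoen and Colding--Minicozzi as adapted by Meeks--Rosenberg to $\SY^2\times\R$. Suppose no such $C_1(\epsilon_0)$ exists. Then there is a sequence of properly embedded minimal annuli $A_n\subset\SY^2\times\R$ with $|F_3(A_n)|\ge\epsilon_0$ and points $p_n\in A_n$ with $|K(p_n)|\to\infty$. By a standard Brakke--Almgren point-picking lemma, one replaces $p_n$ by nearby $q_n\in A_n$ where $\lambda_n^2:=|K(q_n)|\to\infty$ and $|K|\le 4\lambda_n^2$ on an intrinsic ball of radius $\lambda_n^{-1/2}$ around $q_n$. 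After a vertical translation, arrange $q_n\in\SY^2\times\{0\}$.

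Now rescale the ambient geometry. The metrics $(\SY^2\times\R,\lambda_n^2 g)$ centered at $q_n$ converge smoothly on compact sets to flat $\R^3$: the $\SY^2$ factor blows up to $\R^2$ while the vertical Killing field $\partial_t$ is preserved. The rescaled annuli $\tilde A_n$ have $|K|(q_n)=1$ and uniformly bounded curvature on every fixed ball. Combining this with the linear area growth of Meeks--Rosenberg (Theorem~1.1 in \cite{MPR}, which rescales to quadratic area growth of controlled multiplicity), the standard compactness theorem for sequences of embedded minimal surfaces with uniformly bounded curvature and locally bounded area yields a subsequential $C^\infty$ limit on compact sets. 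The limit $A_\infty$ is a complete, properly embedded minimal surface in $\R^3$ with $|K|(0)=1$, hence not a plane.

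To reach a contradiction, use the flux. The vertical Killing field $\partial_t$ survives the rescaling, and by Lemma~\ref{flux} the third component of the flux along a core circle of $\tilde A_n$ equals $\lambda_n|\tau_n|\ge\lambda_n\epsilon_0\to\infty$. On the other hand, $A_\infty$ is a nonplanar properly embedded minimal surface in $\R^3$ of bounded curvature, so by the local picture / structure theorem for such surfaces its sheets near $0$ are modeled on planes or on a helicoid; in particular the integral of the vertical normal component along any embedded loop contained in a fixed compact set is uniformly bounded. Transferring this bound back to $\tilde A_n$ via $C^\infty$ convergence yields the desired contradiction with $\lambda_n\epsilon_0\to\infty$.

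The main obstacle is the last step, because the core loop of $\tilde A_n$ realizing the flux has length of order $\lambda_n$ and escapes every compact set of the limit. I would handle this by cutting the loop into $O(\lambda_n)$ arcs of bounded rescaled length and, using isometries of $\SY^2$ to recenter, comparing each arc to a local piece of the limit lamination; the total vertical integral is then bounded sheet by sheet. An alternative route is to argue that the unbounded flux forces many almost horizontal sheets of $\tilde A_n$ to accumulate near $q_n$ with uniformly positive vertical spacing, so that the Colding--Minicozzi one-sided curvature estimate forces curvature collapse at the origin, again contradicting $|K|(q_n)=1$.
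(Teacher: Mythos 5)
First, note that the paper itself gives no proof of this statement: Theorem \ref{curvest} is quoted from Meeks--Rosenberg \cite{MR} (Theorem 7.1), so your argument has to stand on its own. Its first half is fine and standard (point picking, rescaling by $\lambda_n=|K(q_n)|^{1/2}$, subconvergence with locally bounded curvature to a nonflat minimal surface $A_\infty\subset\R^3$ with $|K|(0)=1$), but the concluding contradiction does not exist as stated. A nonflat properly embedded minimal surface of bounded curvature in $\R^3$ is a perfectly admissible blow-up limit (a helicoid, say), so the contradiction must come from the flux hypothesis, and your way of importing the flux into the limit fails: the rescaled flux $\lambda_n|\tau_n|\ge\lambda_n\epsilon_0$ is carried by a level curve whose length is itself at least $\lambda_n\epsilon_0$ (since $|\eta_3|\le 1$), so it escapes every compact set, and its divergence is in no way anomalous --- rescaling the flat cylinder $\Gamma\times\R$ also gives divergent flux, zero curvature and a planar limit. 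Your first repair (cutting the loop into $O(\lambda_n)$ arcs, each contributing a bounded amount) yields a total of order $\lambda_n$, which is \emph{consistent with}, not contradictory to, $\lambda_n\epsilon_0$. Your second repair has no mechanism: a level curve of enormous length need not accumulate near $q_n$ (again the rescaled flat cylinder is a counterexample), so the one-sided curvature estimate is never triggered.

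The missing idea is that the flux hypothesis must be made to bite at the \emph{original} scale, not after rescaling: one has to show that curvature blow-up forces the flux to become small, contradicting $|F_3|\ge\epsilon_0$. Concretely, if $|K|$ blows up then (Colding--Minicozzi type multigraph formation, or convergence of the non-rescaled annuli to a lamination whose limit leaves are stable, hence horizontal spheres $\SY^2\times\{t\}$ --- this is exactly what happens for the helicoidal examples with small pitch) the annuli become locally almost-horizontal multigraphs, which makes $\eta_3$ uniformly small along level curves of controlled length, i.e.\ $|F_3|\to 0$. So the correct logic is ``blow-up $\Rightarrow$ small flux'', not ``large rescaled flux versus a bounded-curvature limit''. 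Two smaller cautions: the local area of the rescaled surfaces is not uniformly bounded (the linear growth estimate rescales to $O(R\lambda_n)$ in a ball of radius $R$), so compactness should be taken in the bounded-curvature/lamination sense rather than via area bounds; and within this paper the linear area growth of Lemma \ref{tubular} is itself deduced from Theorem \ref{curvest}, so only the external estimate of \cite{MPR} may be invoked here, as you do, to avoid circularity.
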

They prove a linear growth estimate for minimal surfaces embedded in general product spaces $M \times \R$, but in $\SY^2 \times \R$
the annulus is parabolic and we can improve the result with a recent result of Mazet \cite{mazet} and an estimate of Heintze-Karcher \cite{HeintzeKarcher}. This implies that geometrically, an embedded annulus has an uniform tubular neighborhood.
\begin{lemma}
\label{tubular}
If $X: \C / \tau \Z \to \SY^2 \times \R$ is an embedded annulus, then $X$ is the restriction
of an $\epsilon_1$-tubular embedded neighborhood $T_{\epsilon_1}$ of the annulus i.e. there is $\epsilon_1 >0$ such that
$$
    Y : (\C / \tau \Z) \times ]-\epsilon_1,\epsilon_1[ \to \SY^2 \times \R \hbox{ with } Y(z,s)={\rm Exp}_{X(z)}(s\,n(z))
$$
is an embedded three dimensional manifold $T_{\epsilon_1}=Y((\C / \tau \Z) \times ]-\epsilon_1,\epsilon_1[)$   into $\SY^2 \times \R$. This constant $\epsilon_1$ depends only on a lower bound of the flux $F_3=|\tau| \geq \epsilon_0>0$. Thus for any $t>0$, there is a constant $C_2 >0$ which depends only on $\epsilon_0$ such that
$${\rm Area}(X \cap \SY^2 \times[-t,t]) \leq C_2 (\epsilon_0) t$$
\end{lemma}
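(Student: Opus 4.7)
The plan is to first establish the uniform tubular neighborhood, and then derive the linear area bound from a Heintze--Karcher volume comparison.

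\textbf{Step 1: local tubular radius from bounded curvature.} By Theorem~\ref{curvest}, the absolute Gaussian curvature of $X$ is bounded by $C_1(\epsilon_0)$, so the norm of the second fundamental form is bounded by $\sqrt{2C_1(\epsilon_0)}$ (using minimality). The normal exponential map $Y(z,s)=\mathrm{Exp}_{X(z)}(s\,n(z))$ is therefore a local diffeomorphism on a strip $|s|<\epsilon_{\mathrm{loc}}$, with $\epsilon_{\mathrm{loc}}>0$ depending only on $\epsilon_0$ (and on the ambient curvature of $\SY^2\times\R$). In particular, for every $z_0$ there exists a uniform $\delta>0$ such that the tube over the disc $\{|z-z_0|<\delta\}$ of radius $\epsilon_{\mathrm{loc}}$ is embedded.

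\textbf{Step 2: global injectivity of the tube via maximum principle at infinity.} Failure of global embeddedness of the tube would give two sequences of points $z_n,z_n'\in \C/\tau\Z$ with $|z_n-z_n'|\to\infty$ in the universal cover (or in any case $\mathrm{dist}_X(X(z_n),X(z_n'))\to\infty$) and $\mathrm{dist}_{\SY^2\times\R}(X(z_n),X(z_n'))\to 0$. Translating vertically so that both points fall into a bounded height slab and passing to a subsequence, the uniform curvature bound from Theorem~\ref{curvest} and parabolicity of the annulus (Theorem~\ref{immersionsinhgordon}) allow one to extract smooth limits of the two sheets. By Mazet's version of the maximum principle at infinity for properly embedded minimal surfaces in $\SY^2\times\R$, two properly embedded minimal ends cannot approach each other at distance zero unless they coincide, which contradicts embeddedness of $X$. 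This yields a uniform $\epsilon_1=\epsilon_1(\epsilon_0)>0$ such that $Y$ is an embedding on $(\C/\tau\Z)\times(-\epsilon_1,\epsilon_1)$. The hardest step is this global injectivity, as it is exactly where one needs the Mazet maximum principle at infinity and the compactness supplied by the Meeks--Rosenberg curvature estimate; routine local arguments are not enough.

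\textbf{Step 3: linear area bound via Heintze--Karcher.} Shrinking $\epsilon_1$ if necessary (still depending only on $\epsilon_0$) so that the Jacobian of $Y$ is pinched between two positive constants (using the curvature bound), Heintze--Karcher gives
\begin{equation*}
\mathrm{Vol}\bigl(T_{\epsilon_1}(X\cap\SY^2\times[-t,t])\bigr)\;\geq\; c(\epsilon_0)\,\epsilon_1\,\mathrm{Area}\bigl(X\cap\SY^2\times[-t,t]\bigr).
\end{equation*}
On the other hand the tube over $X\cap\SY^2\times[-t,t]$ is contained in $\SY^2\times[-t-\epsilon_1,t+\epsilon_1]$, whose volume equals $4\pi(2t+2\epsilon_1)$. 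Combining these two inequalities,
\begin{equation*}
\mathrm{Area}\bigl(X\cap\SY^2\times[-t,t]\bigr)\;\leq\;\frac{4\pi(2t+2\epsilon_1)}{c(\epsilon_0)\,\epsilon_1}\;\leq\; C_2(\epsilon_0)\,t
\end{equation*}
for all $t$ large, with $C_2$ depending only on $\epsilon_0$. This gives the claimed linear area growth.
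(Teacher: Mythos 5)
Your proposal follows essentially the same route as the paper: the Meeks--Rosenberg curvature bound controls the normal exponential map, Mazet's theorem (maximum principle/halfspace argument) gives embeddedness of a uniform tube, and comparing the tube's volume with that of the slab $\SY^2\times[-t,t]$ yields the linear area growth. The only substantive difference is bookkeeping: the paper explicitly verifies the hypotheses of Mazet's theorem (mean-convexity of the equidistant surfaces from ${\rm Ric}\geq 0$ and uniform second-fundamental-form bounds via the Riccati equation), which your appeal to a `maximum principle at infinity' implicitly requires.
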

\begin{proof}
We denote $A(s)=Y(\C/ \tau \Z, s)$. Following \cite{HeintzeKarcher} the differential of the exponential map $Y: (z,s) \to {\rm Exp}_{X(z)} (s n(p))$ is uniformly bounded on $\C \times [-\epsilon_1,\epsilon_1]$ for $\epsilon_1 >0$  depending only on the geometry of $\SY^2 \times \R$, and the upper bound of the Gaussian curvature $K$ of $A(0)=X(\C / \tau \Z)$. Then the projection $\pi_s$ along geodesics of the equidistant surface  $A(s)$ to $A(0)$ is a quasi-isometry. There is a constant $K_1$ such that if $|K| \leq C_1(\epsilon_0)$ and $s  \in [-\epsilon_1, \epsilon_1]$ we have $K_1^{-1}(\epsilon_0) |v| \leq |d\pi_s(v)| \leq K_1 ( \epsilon_0) |v|$ for any
$v \in T_{Y(z,s)} A(s) $.

Since the Ricci-curvature is positive, we have $\tfrac {d}{ds} H_s = ({\rm Ric}(\partial_s) + |dn_s|^2) \geq 0$ and the equidistant
surface $A(s_0)$ has mean curvature vector pointing outside the tubular neighborhood $T_{s_0}$.

Each equidistant surface $A(s)$ has shape operator which satisfies a Riccati-type equation, hence by Karcher \cite{Karcher},
the second fundamental form of $A(s)$ is uniformly bounded on $[-\epsilon_1,\epsilon_1]$.

We satisfy the hypothesis of theorem 7 of Mazet \cite{mazet}. If there is a parabolic annulus $X$ such that
$T_{\epsilon_1}=Y((\C / \tau \Z) \times [\epsilon_1,\epsilon_1[ )$ is not embedded, a subregion of $A$ would produce a connected
component $S$ bounded or unbounded into the tubular neighborhood $T_{\epsilon_1}$, which contradicts the maximun principle
(see  \cite{mazet}).

This uniform bound of the minimal width of the tubular embedded neighborhood of the surface gives a linear growth estimate.
There is  a constant $C$ depending only on the geometry of $\SY^2 \times \R$ and $\epsilon_0$ such that
$$\epsilon_1 {\rm Area} (X \cap  \SY^2 \times [-t,t]) \leq  C  {\rm Vol}( T_{\epsilon_1} \cap [-t,t]) \leq 4 C \pi t$$
and the constant $C_2=4C \pi / \epsilon_1$ depends only on $\epsilon_0$.
 \end{proof}
In the following, we will deform minimal annuli keeping $F_3$ bounded away from zero. Then the curvature of the annulus will remain uniformly bounded. As a corollary we derive a uniform estimate for $\o$, hence for the third coordinate of the normal $n_3=\tanh \o$.
\begin{proposition}
\label{omegabound}
If $A$ is a properly embedded minimal annulus in $\SY^2 \times \R$ with induced metric
$ds^2=\cosh ^2 \o |ds|^2$, then there exist a constant $C_0>0$ such that for any $k \in \N$
$$
    |\o|_{A, k,\alpha} \leq C_0\,.
$$
Since $n_3= \tanh \o$ is bounded away from the value $n_3=1$, the intersection $T_{\epsilon_1} \cap (\SY^2 \times \{t\})$
is a tubular neighborhood in $\SY^2$ of the level curve $A \cap (\SY^2 \times \{t\})$ with a width $\epsilon_2 >0$ uniformly bounded
above by a constant $c>0$ depending only in $\epsilon _1$ and $C_0$.
\end{proposition}
\begin{proof}
Consider a sequence of points $p_n \in A$ such that $\o (p_n)$ is diverging to infinity, and consider
a sequence of translations $t_n e_3$ such that $A + t_n e_3$ is a sequence of annuli with $p_n + t_n e_3 \in \SY^2 \times \{0\}$. Then by the curvature estimate of Meeks-Rosenberg there is a subsequence
converging locally to an embedded minimal surface $A_0$ in $\SY^2 \times [-t,t]$.
The area estimate shows that $A_0$ is an annulus, with the same flux $F_3$. By our hypothesis
this leads to a pole occurring at the height $t =0$ since $|\o| \to \infty$. The limit normal vector
$n_3 (p_n) =\tanh \o_n (p_n) \to \pm 1$ and the annulus $A_0$ would be tangent to the height $\SY^2 \times \{0\}$, a contradiction to the maximum principle. Then
$$
    \sup _{z\in A} |\o| \leq C_0\,.
$$
Now we apply the Schauder estimate to the sinh-Gordon equation to have a $\calC^{k,\a}$ estimate on the solution of the sinh-Gordon equation on $\R / \tau \ Z \times \R$.
\end{proof}

Adapting an argument of Lockhart-McOwen \cite{lockhart-mcowen}, Meeks-Perez-Ros \cite{MPR} prove the following:
\begin{theorem}
\label{finitekernel}
An elliptic operator $L u= \Delta u + q u$ on a cylinder $\SY^1 \times \R$  has for
bounded and continuous $q$ a finite dimensional kernel on the space of uniformly bounded
$\calC ^2$ functions on $\SY^1 \times \R$.
\end{theorem}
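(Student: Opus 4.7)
The plan is to combine a Fourier decomposition in the circle direction with a high-frequency coercivity estimate, and to bound the dimension of the remaining low-frequency sector via a Fredholm argument for ODE systems of bounded order.

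First I would Fourier-expand any $u \in V := \ker L \cap \calC^2_b(\SY^1 \times \R)$ as $u(\theta,t) = \sum_{n \in \Z} \hat u_n(t)\, e^{in\theta}$ and split $u = Pu + Qu$, with $P$ projecting onto the low modes $|n| \leq N_0$ and $Q = I - P$, for a threshold $N_0$ depending only on $\|q\|_\infty$. Since $P,Q$ commute with $\Delta$ but not with multiplication by $q$, the equation $Lu = 0$ decouples into
\begin{equation*}
\Delta(Pu) + P(qu) = 0, \qquad (Q\Delta Q + QqQ)(Qu) = -QqPu.
\end{equation*}

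Next I would invert the high-mode operator. For $|n| > N_0$ the operator $\partial_t^2 - n^2$ on $L^\infty(\R)$ has the bounded Green's function $-\frac{1}{2|n|}e^{-|n||t-s|}$, so $\|(\partial_t^2 - n^2)^{-1}\|_{L^\infty \to L^\infty} \leq 1/n^2$. Summing over $|n|>N_0$ gives $\|(Q\Delta Q)^{-1}\|_{L^\infty \to L^\infty} \leq C/N_0$. For $N_0$ chosen with $C\|q\|_\infty/N_0 < 1$ the Neumann series converges and inverts $Q\Delta Q + QqQ$, yielding
\begin{equation*}
Qu = -(Q\Delta Q + QqQ)^{-1}(QqPu).
\end{equation*}
Hence $u \mapsto Pu$ is injective on $V$ and recovers $u$ linearly from its low-mode part.

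The final step is to bound the dimension of the image of $V$ in $P\calC_b$. The image consists of bounded $\C^{2N_0+1}$-valued functions of $t$ satisfying the closed integro-differential equation $\Delta(Pu) + PqPu + PqKPu = 0$, where $K = -(Q\Delta Q + QqQ)^{-1} Qq$ is the bounded coupling. Its leading local part $\partial_t^2 - \mathrm{diag}(n^2)$ is a second-order ODE system of order $d := 2(2N_0+1)$, perturbed by a bounded-but-nonlocal zero-order term through $K$. Because the high-mode Green's kernels decay exponentially in $|t-s|$, the $K$-term acts as a compact perturbation on bounded continuous functions of $t$, and the bounded-solution space of the full system is then a finite-dimensional subspace by a Fredholm alternative for linear ODE systems perturbed by compact integral operators. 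This bounds $\dim V \leq d$.

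The main obstacle will be in this last step: one must justify that the nonlocality in $t$ introduced by $K$ behaves as a compact perturbation of the local ODE on the finite-dimensional low-mode space, so that the classical fact that a linear system of order $d$ has at most a $d$-dimensional space of bounded solutions survives the perturbation. The exponential decay of the high-mode Green's kernels combined with interior Schauder estimates on $\SY^1 \times [-R,R]$ (giving precompactness in $\calC^2_{\mathrm{loc}}$ via Arzela--Ascoli as $R \to \infty$) is what is needed to deliver this conclusion.
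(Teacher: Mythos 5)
Your first two steps (mode splitting at a threshold $N_0(\|q\|_\infty)$ and inversion of the high-mode block by a Neumann series, so that $u\mapsto Pu$ is injective on the kernel) are sound, modulo routine care about working in $\calC_b(\R,L^2(\SY^1))$ rather than mode-by-mode sup norms. But the final step, which you yourself flag as the main obstacle, contains a genuine gap, and it is exactly where the content of the theorem lies. The coupling operator $K=-(Q\Delta Q+QqQ)^{-1}Qq$ is \emph{not} a compact perturbation on bounded continuous functions of $t$: an integral operator whose kernel merely decays exponentially in $|t-s|$ fails to be compact on $\calC_b(\R)$ (translates of a fixed bump give a bounded sequence whose images have no uniformly convergent subsequence), and interior Schauder estimates plus Arzel\`a--Ascoli only give $\calC^2_{\mathrm{loc}}$ precompactness, which does not control behaviour at infinity. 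Moreover, there is no ``Fredholm alternative for ODE systems perturbed by compact integral operators'' that bounds the space of \emph{bounded} solutions of the reduced nonlocal system by its differential order $d=2(2N_0+1)$: because of the nonlocal term, solutions are not determined by initial data at a point, so the classical $d$-dimensional count does not survive, and the operator $w\mapsto w''+Dw+Bw+Tw$ on unweighted spaces over $\R$ is precisely the kind of operator that is in general not Fredholm without further structure. If such a soft argument worked, the weighted-space machinery would be unnecessary.

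For comparison, the paper does not prove this statement at all: it quotes it from Meeks--P\'erez--Ros, who adapt the Lockhart--McOwen theory of elliptic operators on non-compact (cylindrical) manifolds. The standard route is to work on exponentially weighted spaces, where the model operator is Fredholm away from the indicial weights, and to show that bounded kernel elements are captured by finitely many weight intervals (equivalently, to use exponential dichotomy/center-manifold-type arguments for the first-order system in $t$, in which the center directions are finite dimensional). Your reduction to finitely many Fourier modes is compatible with that strategy, but to close the argument you would need to establish a dichotomy or weighted Fredholm property for the reduced low-mode system -- i.e., genuine control of the center directions -- rather than appealing to compactness of $K$ on $\calC_b$.
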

%
%

\section{Finite type theory of the sinh-Gordon equation}
\label{pinkallsterling}
\noindent{\bf Pinkall-Sterling induction.}
Suppose $\o$ is a solution of the sinh-Gordon equation \eqref{sinh}. There is an iteration of Pinkall-Sterling \cite{PS} to obtain a hierarchy of solutions of the linearized sinh-Gordon equation \eqref{eq:lsg}. This inductively defines an infinite sequence of formal solutions of the linearized sinh-Gordon equation.  To construct this family, we consider the matrix valued 1-form
\begin{equation} 
 \alpha (\o) = \frac{1}{4}\,\begin{pmatrix}
    2 \o_z &  \mi\;  \lambda^{-1}  e^\o\\
  \mi \; \gamma e^{-\o}&  -2\o_z
  \end{pmatrix}\, dz + \frac{1}{4}\,\begin{pmatrix}
-2 \o_{\bar{z}} &
 \mi\,\bar{\gamma} e^{-\o}\\
 \mi\,  \lambda  \,e^{\o}&
2\o_{\bar{z}}
  \end{pmatrix} d \bar{z}
  \end{equation}
\begin{definition}\label{finite type}
We call a solution of the sinh-Gordon equation $\o : \C \to \R$ of finite type, if there exists $g \in \N$ and $\gamma \in \bbS^1$ and functions $u_n, \tau_n, \sigma_n : \C \to \C$ such that
\begin{equation} \label{potentielphi}
  \Phi_\l  = \frac{\l^{-1}}{4} \begin{pmatrix}   0 &  \mi e^{\o}   \cr 0 & 0 \end{pmatrix} +
  \sum_{n=0}^g \l^n \begin{pmatrix} u_n(z) & e^{\o} \tau_n (z) \\
  e^{\o} \sigma_n (z) & -u_n (z)  \end{pmatrix}
\end{equation}
is a solution of the Lax equation equation $d \Phi _\l =  [\Phi_\l,\,\a (\o)]$.
\end{definition}
\noindent
Pinkall and Sterling solve this problem and we can summarize their result in the following
\begin{proposition} \label{induction}
Suppose $\Phi_\l$ is of the form (\ref{potentielphi}) and $\a(\o)$ defined by \eqref{matrixalpha} for  a given real function $\omega : \C \to \R$. If $\Phi_\l$ satisfies $d\Phi _\l= [\Phi_\l\,, \a (\o)]$ then:
\begin{enumerate}
\item  The function  $\o$ is a solution of the sinh-Gordon equation $\Delta \o + \cosh \o \sinh \o=0$.
\item For all $n \in \N$, the function $u_n$ solves the linearized sinh-Gordon equation $\Delta u_n + \cosh (2\o) u_n =0$.
\item For given $u_n, \sigma_n, \tau_{n-1}$, with $u_n$ solution of \eqref{eq:lsg}, we solve the system
\[
    \tau_{n;\bar z}=\tfrac{1}{2}\mi \bar \gamma e^{-2\omega} u_n \,, \qquad
    \tau_{n; z}=- 2 \mi\bar\gamma\, u_{n;zz} + 4\mi\bar\gamma\,\omega_z u_{n;z}
\]
and then define
\[
    u_{n+1} = -2\mi \tau_{n;z}-4\mi \omega_z \tau_n\,, \qquad
    \sigma_{n+1}=\gamma\,e^{2\omega} \tau_n + 4\mi\gamma u_{n+1; \bar z}
\]
to obtain $u_{n+1},\,\sigma_{n+1},\,\tau_n$. This defines a formal solution $\Phi_\l$ of the Lax equation.
\item At each step, $\tau_n$ is defined up to a complex constant $c_n$ and
\item If we consider $u_n = \bar u_{n+1},  \sigma_n = \bar \tau_n , \tau_{n-1}= \bar \sigma_{n+1}$, then the iteration procedure of {\rm{(3)}} gives $u_{n+1}=\bar u_n , \sigma_{n+1}= \bar \tau_{n-1}, \tau_n= \bar \sigma_{n}$ and
$$
    \tilde \Phi_\l (z) := (\l^{g-1}){}^t \overline{ \Phi_{1/ \bar \l}(z)} $$ is also a solution of $d \Phi _\l= [\Phi_\l,\,\a(\o)]$.
\end{enumerate}
\end{proposition}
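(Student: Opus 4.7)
My plan is to expand the Lax equation $d\Phi_\l = [\Phi_\l, \alpha(\o)]$ in powers of $\l$ and match coefficients order by order. Since $\alpha(\o)$ is a Laurent polynomial whose $dz$-part involves $\l^{-1},\l^0$ and whose $d\bar z$-part involves $\l^0,\l^{+1}$, while $\Phi_\l = \sum_{n=-1}^g \l^n \Phi_n$, the commutator $[\Phi_\l,\alpha]$ ranges over $\l^{-2},\ldots,\l^{g+1}$. The highest and lowest orders vanish automatically by the nilpotent structure of the leading matrices of $\alpha$ and $\Phi_\l$, and the intermediate orders produce all the content of the proposition.

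For item (1), the existence of a nontrivial $\Phi_\l$ satisfying the Lax equation forces the flatness $d\alpha = \alpha\wedge\alpha$ of the connection $\alpha(\o)$. A direct computation of this Maurer--Cartan equation from the explicit formula \eqref{matrixalpha} reduces, after the obvious cancellations, to the scalar equation $\Delta \o + \sinh\o\cosh\o = 0$, establishing the sinh-Gordon equation.

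For items (3) and (4), I match coefficients at order $\l^n$. The upper off-diagonal entries of the $d\bar z$- and $dz$-components give respectively $\tau_{n;\bar z} = \tfrac{\mi}{2}\bar\gamma e^{-2\o} u_n$ and $\tau_{n;z} = -2\mi\bar\gamma u_{n;zz} + 4\mi\bar\gamma\,\o_z u_{n;z}$. The cross-derivative compatibility $\partial_z \tau_{n;\bar z} = \partial_{\bar z}\tau_{n;z}$ reduces, after invoking sinh-Gordon for $\o$, to exactly the LSG equation $\Delta u_n + u_n\cosh(2\o) = 0$ for $u_n$, which is the inductive hypothesis. Hence $\tau_n$ exists and is unique up to an additive complex constant $c_n$, which is (4); and the remaining entries at order $\l^{n+1}$ then produce $u_{n+1} = -2\mi \tau_{n;z} - 4\mi\o_z \tau_n$ and $\sigma_{n+1} = \gamma e^{2\o}\tau_n + 4\mi\gamma u_{n+1;\bar z}$, closing the induction. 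For item (2) I would either cross-differentiate the equations that determine $u_{n;z}$ and $u_{n;\bar z}$ from the recursion and apply sinh-Gordon for $\o$, or more conceptually observe that the polynomial $\Phi_\l$ furnishes a formal one-parameter family of solutions of the Lax pair whose linearization places $u_n$ in the kernel of the LSG operator.

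For item (5), when $\gamma \in \bbS^1$ the connection $\alpha(\o)$ obeys a reality condition of the form ${}^t\overline{\alpha(\o)(1/\bar\l)} = -\alpha(\o)(\l)$, so the involution $\Phi_\l \mapsto \l^{g-1}\,{}^t\overline{\Phi_{1/\bar\l}}$ sends solutions of the Lax equation to solutions. Comparing Laurent coefficients of the transformed solution to those of $\Phi_\l$ yields the symmetries $u_n \leftrightarrow \bar u_{n+1}$, $\sigma_n \leftrightarrow \bar\tau_n$, $\tau_{n-1}\leftrightarrow \bar\sigma_{n+1}$, and substitution into the recursion of (3) verifies it is preserved. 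The main obstacle will be the bookkeeping in (3): identifying precisely which matrix entry at which order in $\l$ yields each of the four scalar identities, and verifying that the compatibility of the two PDEs defining $\tau_n$ is equivalent to LSG for $u_n$ with no residual terms from sinh-Gordon. A secondary point of care is arranging the reality involution in (5) so that it acts compatibly with the chosen degree $g$, the factor $\l^{g-1}$ being tuned so that the image of a degree-$g$ polynomial is again a polynomial of the same type.
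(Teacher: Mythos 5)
The paper does not actually prove this proposition: it is stated as a summary of the Pinkall--Sterling iteration and referred to \cite{PS}, so there is no in-paper argument to compare against. Your plan — expand $d\Phi_\lambda=[\Phi_\lambda,\alpha(\omega)]$ in powers of $\lambda$ and match matrix entries, check the cross-derivative compatibility of the $\tau_n$-system, and use the reality relation ${}^t\overline{\alpha_{1/\bar\lambda}}=-\alpha_\lambda$ for item (5) — is the standard verification and it does go through. Concretely, with $\gamma\in\bbS^1$: the $(1,2)$-entry of the $d\bar z$-part at order $\lambda^n$ gives $\tau_{n;\bar z}=\tfrac{\mi}{2}\bar\gamma e^{-2\omega}u_n$; the $(1,1)$-entry of the $d\bar z$-part at order $\lambda^{n+1}$ gives $\sigma_{n+1}=\gamma e^{2\omega}\tau_n+4\mi\gamma u_{n+1;\bar z}$; combining the $(1,1)$-entry of the $dz$-part, $u_{n;z}=\tfrac{\mi\gamma}{4}\tau_n-\tfrac{\mi}{4}e^{2\omega}\sigma_{n+1}$, with $\tau_{n;\bar z}$ and $\sigma_{n+1;\bar z}$ yields $4u_{n;z\bar z}=-\cosh(2\omega)u_n$, i.e.\ item (2) (no use of sinh-Gordon is even needed there); the lowest-order coefficients force $\sigma_0=\tfrac{\mi\gamma}{4}e^{-2\omega}$, $u_0=\omega_z$ and then the sinh-Gordon equation, which also confirms item (1); and your conjugation argument for item (5) is exactly right, the factor $\lambda^{g-1}$ keeping the Laurent range $\lambda^{-1},\dots,\lambda^{g}$.

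Two bookkeeping points you flagged do need fixing, though neither undermines the approach. First, the $(1,2)$-entry of the $dz$-part at order $\lambda^n$ does \emph{not} give $\tau_{n;z}=-2\mi\bar\gamma u_{n;zz}+4\mi\bar\gamma\omega_z u_{n;z}$; it gives the defining relation $u_{n+1}=-2\mi\tau_{n;z}-4\mi\omega_z\tau_n$. The $u_n$-only formula for $\tau_{n;z}$ is not a single matrix entry: it follows by eliminating $\sigma_{n+1}$ between the $(1,1)$-entry at order $\lambda^n$ and the $(2,1)$-entry $\sigma_{n+1;z}=-\tfrac{\mi\gamma}{2}e^{-2\omega}u_{n+1}$ at order $\lambda^{n+1}$, using the $u_{n+1}$ relation; equivalently, in the iteration it is the \emph{definition} of $\tau_n$ from $u_n$, and its consistency with the $\bar z$-equation is what LSG guarantees. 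On that consistency: given sinh-Gordon, $\partial_z\tau_{n;\bar z}=\partial_{\bar z}\tau_{n;z}$ is equivalent to $\partial_z\bigl(e^{-2\omega}(\Delta u_n+\cosh(2\omega)u_n)\bigr)=0$ rather than to LSG itself, but the implication you actually need (LSG for $u_n$ implies solvability of the $\tau_n$-system, hence existence of $\tau_n$ up to the constant $c_n$) is exactly what holds. Second, in item (1), "a nontrivial $\Phi_\lambda$ forces flatness" should be argued via $0=d^2\Phi_\lambda=[\Phi_\lambda,\,d\alpha+\alpha\wedge\alpha]$ together with the facts that the curvature of \eqref{matrixalpha} is diagonal and the $\lambda^{-1}$-coefficient of \eqref{potentielphi} has the nowhere-vanishing off-diagonal entry $\tfrac{\mi}{4}e^{\omega}$ (or, more simply, read sinh-Gordon off the lowest-order coefficient equations as above); also mind the sign convention in the Maurer--Cartan equation. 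With these repairs your outline is a complete and correct proof.
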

Pinkall-Sterling \cite{PS} prove that beginning the iteration with $u_{-1}=0, \sigma_{-1}=0, \tau_{-1}=1/4$ and $u_0=\omega_z$, then all solutions depend only on $\o$ and its $k$-th derivatives with $k \leq 2n+1$ (see Proposition 3.1 in \cite{PS}).  Each function constructed in this way is complex valued, and $u_n : \C \to \C$ has real and imaginary part that are both solutions of \eqref{eq:lsg}. Applying this iteration procedure with $u_{-1}=0$, we obtain the sequence with first terms
\begin{align*}
    u_{-1}&=0,\;\; u_{0}= \o_z ,\;\; u_{1}= (\o_{zzz}-2\o_z^3)\\
    u_{2} &= (\o_{zzzzz}-10 \o_{zzz}\o_{z}^3-10 \o^2_{zz} \o_{z} + 6\o_{z}^5) ,\;....
\end{align*}
Since we consider a uniform bounded solution of the sinh-Gordon equation $\o : \R\times \SY^1  \to \R$, this infinite sequence produces bounded solution of the linearized sinh-Gordon equation on $A$ by Schauder estimates. Hence this sequence is a finite dimensional family (by theorem \ref{finitekernel}) and there is a $g \in \N$ and complex coefficient $a_i, b_i$ such that
$$
    \sum _{i=0}^g   a_i u_{i} +   b_i \bar u_{i}  =0\,.
$$
By prescribing the right constants $c_0,\,c_1,...$ in the iteration procedure the $g+1$ term is zero.
Thus the relation above implies the existence of a polynomial solution $\Phi_\l$ of degree $g$, hence $\omega$ is of finite type.
\begin{theorem} \label{parabolic}
A properly embedded minimal annulus in $\SY^2 \times \R$ is of finite type
\end{theorem}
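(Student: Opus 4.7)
The plan is to combine the a priori bounds from the Meeks--Rosenberg curvature estimate with the Pinkall--Sterling hierarchy and the finite-dimensionality statement of Theorem~\ref{finitekernel}. By Theorem~\ref{immersionsinhgordon} the annulus is parabolic and conformally parametrized by $\C/\tau\Z$, where $\omega$ satisfies the sinh-Gordon equation~\eqref{sinh}. Since $A$ is properly embedded, Lemma~\ref{flux} identifies its vertical flux with $|\tau|$, which is nonzero; so Theorem~\ref{curvest} supplies a uniform curvature bound, and then Proposition~\ref{omegabound} gives uniform $\calC^{k,\alpha}$ bounds on $\omega$ over $\C/\tau\Z$ for every $k$.

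Next, I would run the Pinkall--Sterling induction (Proposition~\ref{induction}) starting from $u_{-1}=0$, $\tau_{-1}=1/4$, $u_0=\omega_z$. This produces an infinite formal sequence $u_0,u_1,u_2,\dots$, each $u_n$ being a polynomial in $\omega$ and its derivatives up to order $2n+1$. Because all derivatives of $\omega$ are uniformly bounded on $\C/\tau\Z$, each $u_n$ is a uniformly bounded $\calC^2$ function on $\C/\tau\Z$ which lies in the kernel of the linearized operator $\calL u=\Delta u+\cosh(2\omega)u$. The coefficient $\cosh(2\omega)$ is itself bounded and continuous on the cylinder, so Theorem~\ref{finitekernel} applies and the space of bounded $\calC^2$ solutions of $\calL u=0$ on $\SY^1\times\R$ is finite-dimensional.

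Consequently, for some $g\in\N$ the real and imaginary parts of $u_0,\bar u_0,u_1,\bar u_1,\ldots,u_g,\bar u_g$ must be linearly dependent, i.e.\ there exist complex constants $a_i,b_i$, not all zero, with
\begin{equation*}
\sum_{i=0}^{g} a_i u_i + b_i \bar u_i = 0.
\end{equation*}
The key remaining point, and the main obstacle, is to convert this linear relation into an actual polynomial Killing field $\Phi_\l$ of degree $g$ in $\lambda$. Here I would exploit the freedom from item (4) of Proposition~\ref{induction}: at each stage of the iteration $\tau_n$ is only determined up to an integration constant $c_n$, and these constants enter linearly in the subsequent $u_{n+1}$ via $u_{n+1}=-2\mi\tau_{n;z}-4\mi\omega_z\tau_n$. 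The linear dependence above can therefore be used to tune $c_0,\ldots,c_g$ so that $u_{g+1}$, and with it the next pair $(\sigma_{g+1},\tau_g)$, vanishes identically. The symmetry in item (5) then ensures the resulting $\Phi_\l$ of the form~\eqref{potentielphi} is a genuine polynomial of degree $g$ in $\lambda$ solving $d\Phi_\l=[\Phi_\l,\alpha(\omega)]$.

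By Definition~\ref{finite type} this exhibits $\omega$ as a finite-type solution of the sinh-Gordon equation, and hence $A$ is a minimal annulus of finite type, proving Theorem~\ref{parabolic}. The whole argument hinges on two inputs: the Meeks--Rosenberg-type estimates that convert the geometric hypothesis (proper embeddedness with nonzero flux) into analytic bounds on $\omega$, and the Lockhart--McOwen-style finite-kernel result for the linearized operator on the cylinder.
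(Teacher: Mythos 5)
Your proposal is correct and follows essentially the same route as the paper: Meeks--Rosenberg curvature bound plus the slide-back/Schauder estimates of Proposition~\ref{omegabound} to bound $\omega$ in every $\calC^{k,\alpha}$ norm, the Pinkall--Sterling hierarchy giving infinitely many bounded solutions of the linearized sinh-Gordon equation, the finite-kernel result (Theorem~\ref{finitekernel}) forcing a linear relation, and the freedom in the integration constants $c_n$ to truncate the hierarchy and produce a degree-$g$ polynomial solution $\Phi_\l$ of the Lax equation. This matches the paper's proof of Theorem~\ref{parabolic} step for step.
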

\begin{proof}
Due to the Meeks-Rosenberg theorem, the curvature of this annulus is bounded by \eqref{eq:curvature bound}
where $F_3$ is the third coordinate of the flux. From this estimate and a slide-back sequence we prove (see Lemma \ref{omegabound}) that the third coordinate defined by $n_3= \tanh \o$ is bounded away from one and $\sup_A |\o| \leq C_0$.

Since $n_3 \neq 1$, the tangent plane is nowhere horizontal and $(h_z)^2=-Q \neq 0$ on the annulus. By Theorem \ref{immersionsinhgordon} the properly embedded annulus is parabolic and we can re-parameterize by its third coordinate. We consider an immersion $X : \C \to \SY^2 \times \R$ with $X(x+\mi y)=(G(x+\mi y),y)$ and period $\tau \in \R$ defined by the smallest positive value such that $X(z+\tau)=X(z)$. The metric is given by  $ds^2 =\cosh^2 \o |dz|^2$ and $n_3= \tanh \o$ is the third coordinate of the normal. The function $\o :\C \to \R$ is uniformly bounded and satisfies the sinh-Gordon equation \eqref{sinh}. Schauder estimates apply and we have
$$
    |\o|_{\calC ^{k,\alpha}}\ \leq C_1
$$
for a constant $C_1$ depending on the annulus $A= X(\C / \tau \Z)$. Now we apply the theorem \ref{finitekernel}  which assures that the operator
$$
    \Delta + \cosh(2 \o) : \calC ^{2,\alpha} (\C / \tau \Z) \to  \calC ^{0,\alpha} (\C / \tau \Z)
$$
has bounded dimensional kernel on the subspace of uniformly bounded ${\mathcal C}^2$ functions on the annulus $A$. Hence $\o$ is of finite type by the Pinkall-Sterling iteration.
\end{proof}

\noindent{\bf Polynomial Killing field and potential.} For $\Phi_\l$ in \eqref{potentielphi} we choose the constant  $\gamma \in \SY^1$ in $\a_\l$ (see formula \ref{matrixalpha}) such that the residue at $\l =0$ of
$$
    \zeta_\l (z) = \Phi_\l (z) - \l^{g-1}\;\overline{ \Phi_{1/ \bar \l} (z)}^t
$$

is of the form $\bigl( \begin{smallmatrix} 0 & \mi \R^\times \\ 0 & 0 \end{smallmatrix} \bigr)$, so that $\zeta_\l$ takes values in the space of potentials $\pk$ (see Definition~\ref{def pot}).
\begin{definition}\label{pkf}
Polynomial Killing fields are maps $\zeta_\l : \C \to \pk$
which solve
$$d\zeta_\l  = [\zeta_\l,\,\alpha(\o)] \hbox{ with } \zeta_\l (0) = \xi_\l \in \pk.$$
\end{definition}
When $g$ is even, $\xi_0,...,\xi_{\frac{g}{2}-1}$ are independent $2 \times 2$ traceless complex matrix and the real vector space $\pk$ is of dimension $3g+1$, and has up to isomorphism a unique norm $\|\cdot\|$. For an odd $g$, the difference is in $\hat \xi_{\frac{g-1}{2}} \in \su (\C)$, the Lie algebra of $\su (\C)$. These Laurent polynomials $\xi_\l$
define smooth mappings $\xi : \l \in \SY^1 \to \Sl(\C)$.


\begin{remark} \label{reality}
The polynomial $a(\l) : = - \l \det \xi_\l$ satisfies the reality condition
\begin{equation} \label{eq:real}
    \l^{2g}\overline{ a (1/\bar \l)} = a (\l)\,.
\end{equation}
Since $\chi_\l=  \lambda^{\frac{1-g}{2}} \xi_\l$ is traceless and satisfy ${}^t \overline{ \chi _{1/\bar \l }} = -\chi _{\l}$ for any
$\xi_\l \in \pk$ and for $\l \in \SY^1$, the determinant is the square of a norm and we have $||\xi_\l||=||\chi_\l||=\det \chi_\l \geq 0$ for $\l \in \SY^1$. Thus
\begin{equation} \label{eq:hermit}
    \l^{-g}\,a(\l) \leq 0 \hbox{ for } \l \in \SY^1
\end{equation}
The condition ${\rm trace}(\hat \xi_{-1} \hat \xi_0)$ implies that $a(0)\neq 0$ and by symmetry
the highest coefficient of $a$ is non-zero (see definition \ref{pks} to see that ${\rm trace}(\hat \xi_{-1} \hat \xi_0)=\beta_{-1}\gamma_0=a(0)$).
\end{remark}
{\bf Spectral curve.} The spectral curve is defined by the determinant of a polynomial Killing field $\zeta_\l$. The main property of the Lax equation is that $a(\l)=-\l \det \zeta _\l= -\l \det \xi_\l$ is independent of $z$. Following Bobenko (\cite{bob}), the polynomial $a(\l)$ defines a hyperelliptic Riemann surface $\Sigma$ as follows:
\begin{definition}\label{spectral curve}
The spectral curve $\Sigma$ of genus $g$ associate to a potential
$\xi_\l \in\pk$ is defined by adding $(\infty,0)$ and $(\infty,\infty)$ as
branch points in the compactification of \eqref{eq:spectral curve}
\begin{equation*}
\Sigma^* = \{(\nu,\l) \in \C^2 \mid \det(\nu\; \un - \zeta_\l)=0\} = \{
(\nu,\l) \in \C^2\mid \nu ^2 =\lambda^{-1} a(\lambda)=-\det \xi_\l \}
\end{equation*}
where $a$ is a polynomial with $2g$ pairwise distinct roots which
satisfies the reality conditions
\begin{equation*}
|a(0)|=\frac{1}{16}, \;\;\;  \lambda^{2g} \overline{a(\bar{\lambda}^{-1})} =a(\lambda) \hbox{ and } \frac{a(\l) }{\l ^g} \leq 0 \hbox{ for all } \l \in \SY^1.
\end{equation*}
\end{definition}
\begin{proposition}
\label{involution} $\Sigma$ has three involutions
\begin{align}\label{eq:involutions}
\sigma&:(\lambda,\nu)\mapsto(\lambda,-\nu)&
\rho&:(\lambda,\nu)\mapsto(\bar{\lambda}^{-1},- \bar{\lambda}^{1-g}\bar{\nu})&
\eta&:(\lambda,\nu)\mapsto(\bar{\lambda}^{-1},\bar{\lambda}^{1-g}\bar{\nu})&
\end{align}
The involution $\sigma$ is the hyperelliptic involution. The involution $\eta$ has no fixed point while $\rho$ fixes all points of the unit circle $|\l|=1$.
In particular, roots $\a_1,\,\ldots ,\a_{2g}$ of $a(\l)$ are symmetric with respect to the unit circle so that $a(\alpha_i)=0$ if and only if and $a(1/\bar{\alpha}_i) =0$.
\end{proposition}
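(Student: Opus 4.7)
The plan is to verify by direct computation that each of the three maps is a well-defined involution on $\Sigma$ preserving $\nu^2 = \lambda^{-1}a(\lambda)$, then to analyze their fixed point sets, and finally to deduce the root symmetry of $a$ from the reality condition. First I would check preservation of $\Sigma^*$: for $\sigma$ this is immediate since $(-\nu)^2 = \nu^2$. For the antiholomorphic maps I would use $\lambda^{2g}\overline{a(\bar\lambda^{-1})} = a(\lambda)$ to compute
$$(\pm\bar\lambda^{1-g}\bar\nu)^2 = \bar\lambda^{2-2g}\,\overline{\lambda^{-1}a(\lambda)} = \bar\lambda^{1-2g}\,\overline{a(\lambda)} = \bar\lambda\,a(\bar\lambda^{-1}) = (\bar\lambda^{-1})^{-1}a(\bar\lambda^{-1}),$$
which is exactly the equation at the image point, so the sign choice distinguishing $\rho$ from $\eta$ is irrelevant for preservation. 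The involution property $\sigma^2=\rho^2=\eta^2=\mathrm{Id}$ then follows from a one-line check, e.g.\ $\rho\big(\bar\lambda^{-1},-\bar\lambda^{1-g}\bar\nu\big) = \big(\lambda,\,-\lambda^{g-1}\overline{(-\bar\lambda^{1-g}\bar\nu)}\big) = (\lambda,\nu)$, and analogously for $\eta$.

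Next I would extend the maps to the two added points at infinity (the compactification points over $\lambda=0$ and $\lambda=\infty$). The hyperelliptic $\sigma$ extends by exchanging sheets and fixes these two branch points. The antiholomorphic involutions $\rho$ and $\eta$ each swap the points over $\lambda=0$ and $\lambda=\infty$ (since $\lambda\mapsto \bar\lambda^{-1}$ exchanges $0$ and $\infty$); I would check consistency of local uniformizers near each, which is routine from the Puiseux expansion of $\nu$ in $\sqrt{\lambda}$ and $\sqrt{\lambda^{-1}}$.

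For the fixed point analysis, $\sigma$ fixes exactly the points with $\nu=0$, i.e.\ the $2g$ zeros of $a$ together with the two points at infinity, which are precisely the branch points of the double cover $\Sigma\to\CPone$; this identifies $\sigma$ as the hyperelliptic involution. For $\rho$ and $\eta$, a candidate fixed point forces $\lambda = \bar\lambda^{-1}$, hence $|\lambda|=1$; on this circle the reality condition $\lambda^{-g}a(\lambda)\leq 0$ shows that $\nu^2 \in \lambda^{g-1}\R_{\leq 0}$, so $\nu = \varepsilon\, t\,\lambda^{(g-1)/2}$ for appropriate $\varepsilon\in\{\pm i,\pm 1\}$ and $t\geq 0$. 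Substituting into the defining relations $\nu = -\bar\lambda^{1-g}\bar\nu$ (for $\rho$) and $\nu = \bar\lambda^{1-g}\bar\nu$ (for $\eta$) and using $\bar\lambda^{1-g}=\lambda^{g-1}$ on $\Sp^1$, I would verify that exactly one of the two sign choices is compatible with the sign structure fixed by the reality condition, which singles out the asserted case for each involution.

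Finally I would deduce the root symmetry directly: evaluating the reality identity $\lambda^{2g}\overline{a(\bar\lambda^{-1})} = a(\lambda)$ at a root $\alpha$ of $a$ gives $\alpha^{2g}\overline{a(\bar\alpha^{-1})} = 0$, and since $a(0)\neq 0$ forces $\alpha\neq 0$, we obtain $a(\bar\alpha^{-1}) = 0$. The main subtlety I anticipate is bookkeeping the sign and square-root branch in the fiber analysis over $\Sp^1$ so as to correctly match $\rho$ and $\eta$ to their respective fixed-point statements; everything else is a mechanical unwinding of the definitions and reality condition.
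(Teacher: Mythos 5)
The paper states this proposition without any proof, so there is no argument of the authors to compare yours against; your strategy of direct verification is the obvious and appropriate one, and the parts you actually carry out are correct: the computation showing $\rho$ and $\eta$ preserve $\Sigma^*$ via the reality condition, the checks $\sigma^2=\rho^2=\eta^2=\mathrm{Id}$, the identification of $\sigma$ as the hyperelliptic involution through its fixed points, and the deduction $a(\alpha)=0\Leftrightarrow a(1/\bar{\alpha})=0$ from $\lambda^{2g}\overline{a(\bar{\lambda}^{-1})}=a(\lambda)$ and $a(0)\neq 0$.

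The genuine gap is precisely the step you defer. The only nontrivial content of the fixed-point claims is the sign check over $|\lambda|=1$, and you merely promise that it "singles out the asserted case"; carried out with the reality condition actually stated in this paper, it does not. For $\lambda\in\SY^1$ with $a(\lambda)\neq 0$, using $\bar{\lambda}=\lambda^{-1}$ and $|\nu|^2=|a(\lambda)|=-\lambda^{-g}a(\lambda)$ (Remark \ref{reality} gives $\lambda^{-g}a(\lambda)\le 0$ on the circle), one finds
$$\frac{\bar{\lambda}^{1-g}\bar{\nu}}{\nu}=\lambda^{g-1}\,\frac{|\nu|^{2}}{\nu^{2}}=\frac{|\nu|^{2}}{\lambda^{-g}a(\lambda)}=-1 ,$$
so $\eta(\lambda,\nu)=(\lambda,-\nu)$ over the unit circle, i.e.\ $\eta$ acts there like $\sigma$ and is fixed-point free when $a$ has no roots on $\SY^1$, while $\rho$ fixes every point of $\Sigma$ lying over $\SY^1$. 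A concrete instance: for $g=0$, $a\equiv -\tfrac{1}{16}$, the point $(\lambda,\nu)=(1,\tfrac{\mi}{4})$ satisfies $\rho(1,\tfrac{\mi}{4})=(1,-\overline{\tfrac{\mi}{4}})=(1,\tfrac{\mi}{4})$. Thus, with the sign normalization $\lambda^{-g}a(\lambda)\le 0$ used throughout this paper, the roles of $\rho$ and $\eta$ in the statement come out interchanged (the printed attribution corresponds to the opposite normalization $\lambda^{-g}a(\lambda)\ge 0$). Your hedged parametrization $\nu=\varepsilon t\lambda^{(g-1)/2}$ with $\varepsilon\in\{\pm\mi,\pm 1\}$ papers over exactly this point: the reality condition forces $\varepsilon=\pm\mi$, and once that is fixed the computation is forced and cannot "confirm the asserted case" as you claim. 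You need to perform this computation explicitly and either obtain the fixed-point sets as above or make the sign convention under which the statement holds explicit; as written, the decisive step of your proof would fail.
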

%
%
%
\section{Construction of minimal annuli via potentials}
\label{sec:polynomial killimg field}
We next explain how to reconstruct an immersion from
a potential $\xi_\l \in \pk$. Expanding a polynomial Killing field $\zeta_\l: \C \to \pk$ as
\begin{equation} \label{eq:zeta-expansion}
    \zeta_\l = \begin{pmatrix} 0 & \beta_{-1} \\  0 & 0 \end{pmatrix} \lambda^{-1} + \begin{pmatrix} \alpha_0 & \beta_0 \\ \gamma_0 & -\alpha_0 \end{pmatrix} \lambda^0 + \ldots + \begin{pmatrix} \alpha_g & \beta_g \\ \gamma_g & -\alpha_g \end{pmatrix} \lambda^g
\end{equation}
we associate a matrix 1-form defined by
\begin{equation} \label{eq:alpha-zeta}
    \a(\zeta_\l)=\begin{pmatrix}
    \a_0 & \beta_{-1}\l^{-1}  \cr
    \g_0 & -\a_0
    \end{pmatrix} {\rm d}z -\begin{pmatrix}
    \bar \a_0 & \bar \g_0  \cr
    \bar \beta_{-1} \l & - \bar \a_0
    \end{pmatrix} {\rm d} \bar z
\end{equation}
We cite Proposition 3.2 \cite{HKS1} which provides the following existence and uniqueness result
\begin{proposition} \cite{HKS1}
\label{lax}
For each $\xi_\l\in\pk$ there is a unique solution
$\zeta_\l (z) : \C \to \pk$ of
\begin{equation} \label{eq:pKf}
    d\zeta_\l (z) =[\,\zeta_\l (z)
    ,\,\alpha(\zeta_\l(z))\,]\quad\mbox{ with }\quad
    \zeta_\l(0)=\xi_\l.
\end{equation}
If we set $4\beta_{-1} (z) :=\mi e^{\o(z)}$ then $\o$ is a solution of sinh-Gordon equation and $\a ( \zeta_\l(z))=\a(z)$ has the form of
formula (\ref{matrixalpha}) and we express the coefficient of $\alpha(z)$ in terms of $\omega$.
\end{proposition}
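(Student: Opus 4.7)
The plan is to read \eqref{eq:pKf} as an autonomous ODE on the finite-dimensional real vector space underlying $\pk$ and to use standard ODE techniques. Writing $\zeta_\l(z) = \sum_{d=-1}^{g}\l^d\,\hat\xi_d(z)$, the 1-form $\alpha(\zeta_\l)$ of \eqref{eq:alpha-zeta} depends polynomially on the entries $\alpha_0,\beta_{-1},\gamma_0$ of $\zeta_\l$, so the right-hand side $[\zeta_\l,\alpha(\zeta_\l)]$ is a smooth vector field on the $(3g+1)$-dimensional coefficient space. Splitting $\alpha = A\,dz + B\,d\bar z$, Picard--Lindel\"of yields unique local solutions of $\partial_z \zeta_\l = [\zeta_\l,A]$ and $\partial_{\bar z}\zeta_\l = [\zeta_\l,B]$ with $\zeta_\l(0) = \xi_\l$. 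Commutation of the two flows, $[\partial_z,\partial_{\bar z}]\zeta_\l = [\zeta_\l,\,\partial_{\bar z}A - \partial_z B - [A,B]]$, reduces by genericity of $\zeta_\l$ in $\Sl(\C)$ to the zero-curvature condition $\partial_{\bar z}A - \partial_z B = [A,B]$, which will turn out to encode the sinh-Gordon equation for $\o$.

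Global existence uses two conservation laws. The direct trace computation $\partial_z\det\zeta_\l = \operatorname{tr}\bigl(\operatorname{adj}(\zeta_\l)[\zeta_\l,A]\bigr) = 0$ shows that $a(\l) = -\l\det\zeta_\l$ is preserved along the flow, and the norm bound $\|\hat\xi_d\| \le \sup_{\l\in\mathbb{S}^1}\sqrt{-\l^{-g}a(\l)}$ established in the proof of Proposition~\ref{corcurva} keeps the Laurent coefficients of $\zeta_\l(z)$ uniformly bounded in $z$, so no finite-time blow-up occurs and $\zeta_\l$ extends to all of $\C$. Preservation of the defining conditions of $\pk$ then comes in two parts: the reality condition $\hat\xi_d = -\bar{\hat\xi}_{g-1-d}^{\,t}$ is preserved because $\alpha(\zeta_\l)$ is equivariant under the involution $(\l,M)\mapsto (1/\bar\l,\,-\bar M^{\,t})$ that defines $\pk$, while $\hat\xi_{-1}(z)$ stays of the form $\bigl(\begin{smallmatrix} 0 & \mi\R^+ \\ 0 & 0 \end{smallmatrix}\bigr)$ because the $\l^{-1}$-component of the Lax equation preserves both the conserved identity $\det\hat\xi_{-1}=0$ and, by continuity, the sign of $\operatorname{Im}\beta_{-1}$.

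To extract the sinh-Gordon equation and the form \eqref{matrixalpha} of $\alpha(z)$, use that $\beta_{-1}(z)\in\mi\R^+$ to define $\o:\C\to\R$ by $4\beta_{-1}(z) = \mi\,e^{\o(z)}$. Extracting the $\l^{-1}$-coefficient of $\partial_{\bar z}\zeta_\l = [\zeta_\l,B]$ yields the scalar relation $\partial_{\bar z}\beta_{-1} = 2\bar\alpha_0\,\beta_{-1}$, whence $\bar\alpha_0 = \tfrac12\o_{\bar z}$, i.e.\ $\alpha_0 = \tfrac12\o_z$ by reality of $\o$. The conservation of $a(0) = \beta_{-1}\gamma_0$ of modulus $\tfrac{1}{16}$ forces $\gamma_0 = \tfrac14\,\mi\gamma\,e^{-\o}$ for a constant $\gamma\in\SY^1$ determined by the initial datum, so that substituting these identifications in \eqref{eq:alpha-zeta} recovers exactly the 1-form $\alpha(\o)$ of \eqref{matrixalpha}. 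The zero-curvature condition $d\alpha(\o) + \alpha(\o)\wedge\alpha(\o) = 0$, already established as the compatibility of the two Lax flows, then reduces on the $\l^0$-diagonal to the scalar equation $\Delta\o + \sinh\o\cosh\o = 0$, which is \eqref{sinh}.

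The main obstacle in this plan is the control of $\hat\xi_{-1}$ along the nonlinear flow: the $\l^{-1}$-component of the Lax equation couples $\hat\xi_0$ and $\hat\xi_{-1}$ through a commutator, and one must simultaneously rule out the development of diagonal or lower-triangular entries in $\hat\xi_{-1}$ and preserve the positivity of $\operatorname{Im}\beta_{-1}$. This rests on a careful interplay between the conserved relation $\det\hat\xi_{-1}=0$, the reality symmetry defining $\pk$, and the self-referential structure of $\alpha(\zeta_\l)$; once this is settled, the remainder of the argument is a finite-dimensional ODE existence statement combined with a comparison of commutator coefficients at the lowest orders of $\l$.
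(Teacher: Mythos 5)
The decisive step in your plan is also its weak point. You propose to build the joint solution by solving the two one-variable ODEs $\partial_z\zeta_\l=[\zeta_\l,A(\zeta_\l)]$ and $\partial_{\bar z}\zeta_\l=[\zeta_\l,B(\zeta_\l)]$ separately and then letting the flows commute, but the identity $[\partial_z,\partial_{\bar z}]\zeta_\l=[\zeta_\l,\,\partial_{\bar z}A-\partial_z B-[A,B]]$ is only available once a joint $C^2$ solution already exists. To produce one from the two Picard--Lindel\"of solutions you must first verify a Frobenius-type condition, namely that the two vector fields $\zeta\mapsto[\zeta,A(\zeta)]$ and $\zeta\mapsto[\zeta,B(\zeta)]$ on $\pk$ commute, and your justification of this is circular: you invoke the zero-curvature condition for $\alpha(\o)$, ``which will turn out to encode the sinh-Gordon equation for $\o$'' --- but $\o$ is defined from the very solution you are constructing, and the sinh-Gordon equation is a \emph{consequence} of the existence of the polynomial Killing field, not an admissible input. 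The commutativity of these two flows is a genuine algebraic fact about the hierarchy (it is essentially what the isospectral machinery is designed to establish) and cannot be settled by ``genericity of $\zeta_\l$ in $\Sl(\C)$''; note moreover that vanishing of the mixed-derivative bracket only forces the curvature term into the centralizer of $\zeta_\l$, which is an additional point requiring argument. By contrast, the issue you single out as the ``main obstacle'' --- preservation of the strictly upper-triangular form of $\hat\xi_{-1}$ and of ${\rm Im}\,\beta_{-1}>0$ --- is the easy part: with \eqref{eq:alpha-zeta} the $\l^{-2}$-coefficient of the commutator vanishes (the $\l^{-1}$-part of the $dz$-component of $\alpha(\zeta_\l)$ is $\hat\xi_{-1}$ itself), the $\l^{-1}$-coefficient is strictly upper triangular, and $\beta_{-1}$ can never vanish because $\beta_{-1}\gamma_0=a(0)\neq0$ is conserved, so the sign of ${\rm Im}\,\beta_{-1}$ is preserved by continuity.

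The paper does not prove the proposition by an abstract ODE argument at all: it cites \cite{HKS1} (Proposition 3.2), where the solution is produced by the loop-group construction that the present paper recalls around \eqref{eq:conj} and Definition \ref{groupaction}. One takes the ($r$-)Iwasawa factorization $\exp(z\,\xi_\l)=F_\l(z)B_\l(z)$ and sets $\zeta_\l(z)=F_\l^{-1}(z)\,\xi_\l\,F_\l(z)=B_\l(z)\,\xi_\l\,B_\l^{-1}(z)$. Since the Iwasawa decomposition is a global real-analytic diffeomorphism, this $\zeta_\l$ is defined on all of $\C$, stays in $\pk$, and a direct computation shows it solves the Lax equation with $\alpha(\zeta_\l)$ of the form \eqref{matrixalpha}; uniqueness is obtained by reversing the construction (given a solution, integrate $F_\l^{-1}dF_\l=\alpha(\zeta_\l)$ and use uniqueness of the factorization). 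This route bypasses exactly the compatibility problem your proposal leaves open, and it also renders your blow-up discussion unnecessary. Your concluding identification of coefficients (the relation between $\a_0$ and the derivatives of $\o$, and constancy of $\gamma$ from the conserved $a(0)=\beta_{-1}\gamma_0$) does agree with Remark \ref{laxomega}, so that part of the write-up is sound once existence is in hand; but as it stands the existence argument has a genuine gap.
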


\begin{remark}
\label{laxomega}
The Lax equation preserves  $\beta_{-1}(z)  \in \mi \R^+$ and we can define a
function $\o: \C \to \R$ by setting $4 \beta_{-1}(z) :=\mi e^\o$. Now the equation $\beta_{-1,z}=2 \alpha_0 \beta_1$ implies that $2 \alpha_0 = \o_z$.
To express $\gamma_0$ in terms of $\alpha_0$ and $\beta_{-1}$, we consider the Lax equation and find
${\rm d}''\gamma_0=-2 \bar \alpha_0 \gamma_0$. Then $\gamma_0 = q e^{-\o}$ where $q$ is a holomorphic function. 
The term $q$ is constant. The reason is that along the parameter $z$, we have $a(\l)=-\l \det \zeta_\l (z)=-\l \det \xi_\l$ and $a(0)=\beta_{-1}\gamma_0=q/4$
with $4|q|=1$. The polynomial coefficient of $\zeta_\l(z)$ depends on higher derivative of the function $\o(z)$ pointwise in $z$.
\end{remark}
From the potential we obtain the extended frame and then the immersion by Sym-Bobenko formula,
\begin{theorem}(Theorem 1.3 \cite{HKS1})
\label{symbobenko}
Let $\xi_\l$ be a potential and $\zeta_\l : \C \to \pk$ the polynomial
Killing field \eqref{eq:pKf}.
For any  constant $ \gamma$ in $\SY^1$, the immersion
$$X_\l(z) = ( F_\lambda \sigma_3 F^{-1}_\lambda, {\rm Re}( -\mi\sqrt{ \gamma \lambda^{-1}}z))$$
with $\l \in \SY^1$, defines a one-parameter family of conformal
minimal immersion in $\SY^2 \times \R$ with  metric $ds^2  = \cosh ^2
\o |dz|^2$ if the extended frame $F_\l :\Omega \to \SL (\C)$ is a
solution of
\begin{equation}\label{eq:symb}
F _\lambda ^{-1}dF_\lambda=\alpha(\zeta_\l(z)):=\alpha(\o)
\qquad\mbox{with}\qquad F_\lambda(0)= \un\\
\end{equation}
where $\alpha (\o)$ has the form of (\ref{matrixalpha}). For $\lambda \in \SY^1$ we obtain the one parameter family of isometric associated family. In particular for $\l =\gamma=1$ we have $X_1=(G_1, y)$ a conformal immersion parameterized by its third coordinate. The function  $\o:\R^2 \to \R$ solves the sinh-Gordon equation.
\end{theorem}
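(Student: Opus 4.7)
The plan is to run the standard Sym-Bobenko reconstruction. From Proposition~\ref{lax} the polynomial Killing field determines the real function $\o$ via $4\beta_{-1}(z)=\mi e^{\o(z)}$, and the associated one-form $\a(\o)$ of~\eqref{matrixalpha}. The first step is to verify the Maurer-Cartan equation $d\a+\a\wedge\a=0$ for \emph{all} $\l\in\C^\times$: writing $\a=\a'dz+\a''d\bar z$, the integrability condition $\a''_z-\a'_{\bar z}+[\a',\a'']=0$ reduces, in each matrix entry, to the sinh-Gordon equation $4\o_{z\bar z}+\sinh(2\o)/2=0$ and its conjugate, which we already know holds; the remaining mixed $\l,\l^{-1}$ terms cancel using $|\gamma|=1$. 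Integrability then yields a unique smooth solution $F_\l:\C\to\SL(\C)$ of $F_\l^{-1}dF_\l=\a(\o)$ with $F_\l(0)=\un$. For $\l\in\SY^1$ and $\gamma\in\SY^1$ the substitutions $\bar\l^{-1}=\l$ and $\bar\gamma\gamma=1$ show $\a(\o)^\ast=-\a(\o)$, hence $F_\l\in\SU$; consequently $G_\l(z):=F_\l\sigma_3 F_\l^{-1}$ lands in $\SY^2\subset\su$ under the identification of the statement with $|M|^2=\det M$.

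The core computation is the combined conformality and metric identity. Differentiating gives $dG_\l=F_\l[\a,\sigma_3]F_\l^{-1}$, and since $[\a,\sigma_3]$ is strictly off-diagonal with entries $-2\mi\a_{12}$ and $2\mi\a_{21}$, one has $|dG_\l|^2=\det[\a,\sigma_3]=-4\a_{12}\a_{21}$. Inserting the explicit form of $\a(\o)$ produces
\[
    |dG_\l|^2 = \tfrac{1}{4}\bigl[\gamma\l^{-1}(dz)^2 + \bar\gamma\l(d\bar z)^2 + 2\cosh(2\o)\,dz\,d\bar z\bigr].
\]
For the vertical part, writing $k=\sqrt{\gamma\l^{-1}}$ and using $|k|=1$ on $\SY^1$ yields
\[
    (dh)^2 = \tfrac{1}{4}\bigl[-\gamma\l^{-1}(dz)^2 - \bar\gamma\l(d\bar z)^2 + 2\,dz\,d\bar z\bigr].
\]
The $(dz)^2$ and $(d\bar z)^2$ contributions cancel exactly in the sum, giving $|dX_\l|^2=\tfrac12(1+\cosh(2\o))\,dz\,d\bar z=\cosh^2(\o)\,dz\otimes d\bar z$. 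This simultaneously establishes the conformality of $X_\l$ and the stated metric formula, and the strict positivity of $\cosh^2\o$ exhibits $X_\l$ as an immersion.

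It remains to verify minimality, which in a conformal parametrization is equivalent to harmonicity of both components into their respective factors. The height $h(z)={\rm Re}(-\mi\sqrt{\gamma\l^{-1}}z)$ is $\R$-linear in $x,y$ and hence trivially harmonic. For $G_\l$ the harmonicity into $\SY^2$ follows from the Pohlmeyer/loop-group criterion: a map admitting an $\SU$-lift $F$ whose pullback connection $F^{-1}dF$ extends to an $\l$-family solving Maurer-Cartan with the $dz$-part carrying a simple off-diagonal $\l^{-1}$ pole and the $d\bar z$-part the dual off-diagonal $\l$ pole is automatically harmonic; the form $\a(\o)$ manifestly has this structure. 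Concretely, differentiating $dG_\l=F_\l[\a,\sigma_3]F_\l^{-1}$ once more and applying Jacobi, the Maurer-Cartan identity, and the restricted $\l$-dependence of $[\a',\sigma_3]$ and $[\a'',\sigma_3]$ shows $(G_\l)_{z\bar z}$ is proportional to $G_\l$, the harmonic map equation. Thus $X_\l$ is a conformal harmonic immersion, i.e.\ a minimal surface, for every $\l\in\SY^1$; specializing $\l=\gamma=1$ reduces $h$ to $y$, which recovers the parametrization by the third coordinate, and varying $\l$ over $\SY^1$ yields the associated family. The main technical point is the invocation of the Pohlmeyer criterion for $G_\l$; the remainder is direct matrix algebra from the prescribed form of $\a(\o)$.
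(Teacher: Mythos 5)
Your verification is correct: the zero-curvature computation does reduce to the sinh-Gordon equation, the unitarity of $F_\l$ on $\SY^1$, the cancellation of the $(dz)^2$ and $(d\bar z)^2$ parts between $|dG_\l|^2$ and $(dh)^2$, the resulting metric $\cosh^2\o\,dz\otimes d\bar z$, and the harmonicity of $G_\l$ (indeed $(G_\l)_{z\bar z}=-\tfrac14\cosh(2\o)\,G_\l$) all check out, and this is exactly the standard Sym--Bobenko argument that the paper defers to \cite{HKS1} rather than proving here. The only cosmetic remark is that $|\gamma|=1$ is actually used in the diagonal entry of $[\a',\a'']$ (to get $\sinh 2\o$) and in the mixed term $\langle G_z,G_{\bar z}\rangle$ (to get $\cosh 2\o$), not in the off-diagonal terms, which vanish identically.
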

\begin{remark}
\label{real}
{\bf Reality condition.}
 From the relation $\bar \a_{1/\bar \l}^t = -\a_\l$, the solution of (\ref{eq:symb}) satisfies $\bar F^t _{1/\bar \l} = F^{-1}_\l$.
 For  $\lambda \in \SY^1$,  $\alpha (\o)$ takes values in $\su$ and $F_\lambda$ takes
values in $\SU$. For general $\lambda \in \C^*$, we have $F_\l \in \SL (\C)$ since $\mathrm{tr} (\a(\o)) =0$. We call $\l$
the spectral parameter.
\end{remark}

\begin{remark}
\label{normbeta}
{\bf Normalisation.}
By  conformal parametrization we can choose, $4 |Q|=1$ (the annulus is transverse to horizontal sections).
Next we discuss the constants $\beta, \gamma \in \SY^1$ which are related to the Hopf differential. We will see that $4 Q=\beta \gamma \l^{-1}$.
We can normalize the parametrization by $\beta=\mi$ and a constant $|\gamma|=1$. For a given extended frame $F_\l$ which satisfies the equation (\ref{eq:symb}),
we consider $g=\bigl( \begin{smallmatrix} \beta &0 \cr 0 & \bar \beta \end{smallmatrix} \bigr) \in U(1)$. Then $\tilde F_\l=F_\l g$ induces the same immersion $G_\l=F_\l \sigma_3 F_\l ^{-1}$ and satisfies equation (\ref{eq:symb}) with $\beta=1$, $\tilde F _\lambda ^{-1}d \tilde F_\lambda = g^{-1} \alpha_\l (z) g$.
\end{remark}

\noindent{\bf The Sym point and conformal parametrization.} The determinant of $\zeta_\l (z)$  does not depend on $z$ and we remark that $k(\l)= \l \det \zeta_\l (z)  = \l \det \xi_\l$ and $k(0)=-\beta_{-1} \gamma_0=-a(0)$. The  term $\beta_{-1} \gamma_0$ of the corresponding polynomial Killing field does not depend on the
surface parameter $z$.

If $F_\l$ is related to a minimal surface $X_\l: \R^2 \to \SY^2 \times \R$ parameterized by its third coordinate then there is  $\lambda_0 \in \SY^1$, such that
$$X_{\l_0} (z) = (G_{\l_0} (z), y)=(F_{\l_0} (z) \sigma_3 F_{\l_0}^{-1} (z), {\rm Re}(-\mi \sqrt{\beta_{-1} \gamma_0{\l_0} ^{-1}}z))$$
and we observe that then the Hopf differential is $Q=-4\beta_{-1} \gamma_0 {\l_0}^{-1}(dz)^2=\frac14 (dz)^2$ i.e.  $\beta_{-1} \gamma_0= -\frac{\l_0}{16}.$
The value of ${\l_0}=e^{\mi\theta}$ associate to an immersion $X_\l$ is called the Sym point. In the following we will prefer a conformal parametrization which fixes the Sym point to $\lambda_0 =1$. To do that we make the conformal change $z \to e^{\mi(1-g)\theta/2} z$ and the M\"obius transformation $\l \to e^{\mi\theta}\l$. We explain in Appendix \ref{sympoint} how to apply this transformation.
\begin{definition}\label{pks}
A finite type minimal immersion $X:\R^2 \to \SY^2 \times \R$ is
conformally parameterized by its Sym point if there is a polynomial
Killing field
$$\zeta_\l :\R^2\to\left\{ \xi_\l \in \pk\mid
\l \det \xi_\l= -a(\l) \hbox{ and }
\beta_{-1} \gamma_0=a(0)= -\tfrac{1}{16}\,e^{\mi(1-g)\theta}:=
-\tfrac{1}{16}\,e^{\mi\Theta} \right\}
$$
which solves the Lax equation \eqref{eq:pKf}
where $a(\l)$ is a complex polynomial of degree $2g$ which satisfies
the reality conditions \eqref{eq:reality} (see remark \ref{real})
If $F_\l$ is the unitary factor associate to $\xi_\l$, then in this
parametrization the immersion is given by \eqref{eq:immersion}
\end{definition}
%
%
\section{Isospectral group and spectral data}\label{sec:spectral data}
In this Section we characterize those potentials $\xi_\l\in\pk$, which
correspond to periodic minimal immersions. This property turns out to
be a property of the spectral curve in
definition~\ref{spectral curve}. For a given
polynomial $a$ of degree $2g$ obeying \eqref{eq:reality} either all
elements of $I(a)$ in \eqref{eq:isospectral set}
have this property or no element. These sets $I(a)$ are called
isospectral sets and consists of matrices $\xi_\l$ having the same
spectral curves $\Sigma$ and same off-diagonal product
$a(0)= \beta_{-1} \gamma_0$. It is a $g$-dimensional complex manifold
(see below)
and its tangent space at an initial value is associate to the solution $u_1, u_2....$ of LSG depending on $\o$
and its higher derivative. Each solution of LSG constructed in proposition \ref{pinkallsterling} integrates to a long time solution $\o(t)$ of the sinh-Gordon equation. This deformation gives a deformation of annuli preserving the spectral curve. The integrable system consists of this hierarchy of commuting variational fields. These variational fields are exactly the isospectral deformations, and the commuting property is the key point to describe the geometry of the anulus at infinity, as well as the embeddedness property.
\begin{definition}
Suppose $X:\C \to \bbS^2 \times \R$ is a minimal immersion of finite type with potential $\xi_\l$. An isospectral deformation of $X$ is a smooth family of finite type immersions $X(t,z):[0,T]\times \C \to \SY^2 \times \R$ with $X(0,z)=X(z)$, and corresponding smooth family of potentials $\xi_\l :[0,T] \to I(a)$ with $a(\l)=-\l \det \xi_\l (0)
=-\l \det \xi_\l (t)$ for all $t\in [0,T]$.
\end{definition}

\noindent{\bf Iwasawa decomposition.} For real $r \in (0,1]$, let $ \SY_r=\{ \l \in \C; |\l|=r \}$ and define the loop group $\Lambda_r \SL(\C)={\calO}(\SY_r,\SL (\C))$ as the set of analytic maps $\SY_r \to \SL (\C)$.
We use the following loop group and subgroup:
Let the circle $ \SY_r=\{ \l \in \C; |\l|=r \}$, the disc $I_r=\{ \l \in \C; |\l| <r \}$ and the annulus $A_r = \{ \l \in \C; r < | \l | < 1/r \}$. We consider the set of analytic maps
\begin{align*}
    &\Lambda_r \SL(\C)= \{ F : \SY _r \to \SL  ( \C) \mid F \hbox{ analytic} \}\,,\\
    &\Lambda _r\SU(\C) = \{ F : A _r \longrightarrow \SL ( \C) \mid F \hbox{ analytic on }A_r \hbox{ and } F |{_{\SY^1}}\in \SU \}\,,\\
    &\Lambda _r^+ \SL (\C) =\{  B \in \Lambda_r \SL (\C) \cap  \calO (I_r,\SL (\C)) \mid
    B(0)= \bigl( \begin{smallmatrix} \rho & c  \cr 0 & 1/\rho \end{smallmatrix} \bigr)
    \hbox{ for } \rho \in \R^+ \hbox{ and } c \in \C \}\,.
\end{align*}

Then multiplication is a real analytic diffeomorphism
$$\Lambda_r \SL(\C) \to  \Lambda _r\SU(\C) \times \Lambda _r^+ \SL (\C)$$
and any $\phi _\l \in \Lambda_r \SL(\C)$ can be uniquely factorized into $\phi _\l=F_\l B_\l$ with
$F_\l \in \Lambda _r\SU(\C)$ and $B_\l \in \Lambda _r^+ \SL (\C)$. This is the $r$-Iwasawa decomposition (or factorization) of $\phi _\l$. When $r=1$, we omit the subscript and it is referred to as the Iwasawa decomposition. Given a map $\phi_\l : \Omega \to  \Lambda_r \SL(\C)$, we can
apply the decomposition $\phi_\l=F_\l B_\l$  pointwise on the domain $\Omega$ , and then $F_\l:\Omega \to \Lambda _r\SU(\C)$ and $B_\l:\Omega \to \Lambda _r^+ \SL (\C)$.

\begin{theorem} \cite{McI:tor, PreS}
\label{riwasawa}
The multiplication $\Lambda_r \SU (\C) \times \Lambda _r^+\SL (\C)$ to $\Lambda_r \SL (\C)$ is a real analytic bijective diffeomorphism. The unique splitting of an element $\phi _\l \in \Lambda_r \SL (\C)$ into
$$\phi _\l= F_\l B_\l $$
with $F_\l \in \Lambda_r \SU (\C)$ and $B_\l \in  \Lambda _r^+ \SL (\C)$ is the r-Iwasawa decomposition.
\end{theorem}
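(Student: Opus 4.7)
The theorem is a classical loop-group Iwasawa decomposition of the type first proved by Pressley--Segal and adapted to the $r$-setting by McIntosh. My plan is to separate the statement into three pieces: uniqueness of the factorization, existence, and the real-analytic smoothness, and to attack them in that order.

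\textbf{Uniqueness.} Suppose $\phi=F_1B_1=F_2B_2$ with $F_i\in\Lambda_r\SU(\C)$ and $B_i\in\Lambda_r^+\SL(\C)$. Set
\begin{equation*}
h(\lambda):=F_2^{-1}(\lambda)F_1(\lambda)=B_2(\lambda)B_1^{-1}(\lambda).
\end{equation*}
The left representation is analytic on the annulus $A_r=\{r<|\lambda|<1/r\}$, and the right representation is analytic on the disc $I_r=\{|\lambda|<r\}$; agreement on $\SY_r$ gives a holomorphic extension of $h$ to the disc $\{|\lambda|<1/r\}$. Unitarity of $F_2^{-1}F_1$ on $\SY^1$ combined with analytic continuation yields the reality relation $h(\lambda)^{-1}=\overline{h(1/\bar\lambda)}{}^t$ on $A_r$. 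The right-hand side extends $h^{-1}$ holomorphically to $\{|\lambda|>r\}$, and combined with the holomorphic extension of $h^{-1}$ to $\{|\lambda|<1/r\}$ it yields a holomorphic map $h^{-1}:\C\P^1\to\SL(\C)$, hence constant by Liouville. Evaluating at $\lambda=0$, the constant is upper-triangular with positive real diagonal (being $B_2(0)B_1(0)^{-1}$), while on $\SY^1$ it lies in $\SU$. The only element of $\SU$ with this triangular-with-positive-diagonal shape is $\un$, so $h\equiv\un$ and $F_1=F_2$, $B_1=B_2$.

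\textbf{Existence.} The standard existence proof goes through the Segal--Pressley Grassmannian model. Polarize $L^2(\SY_r,\C^2)$ into positive and negative Fourier parts; the stabilizer of the positive-part subspace under the natural action of $\Lambda_r\SL(\C)$ is essentially $\Lambda_r^+\SL(\C)$, while the subgroup $\Lambda_r\SU(\C)$ acts transitively on the connected component of the basepoint by an orthogonal-projection (Riemann--Hilbert) argument. Transitivity of the $\Lambda_r\SU$-action on the quotient $\Lambda_r\SL(\C)/\Lambda_r^+\SL(\C)$ is precisely surjectivity of the multiplication map. Equivalently, one can construct $F_\lambda$ from a given $\phi_\lambda$ by a holomorphic Gram--Schmidt procedure on the two columns of $\phi_\lambda$, with the triangular normalization at $\lambda=0$ forcing $B_\lambda$ into $\Lambda_r^+\SL(\C)$.

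\textbf{Smoothness and diffeomorphism.} The multiplication map is visibly real analytic. Its differential at the identity is the sum $(\xi,\eta)\mapsto\xi+\eta$ where $\xi$ ranges over the Lie algebra of $\Lambda_r\SU(\C)$ and $\eta$ over that of $\Lambda_r^+\SL(\C)$. The fact that this is a topological direct sum decomposition of the full loop algebra $\Lambda_r\Sl(\C)$ is the Lie-algebra Iwasawa splitting, which is a straightforward Hardy-type Fourier argument. Hence the differential is a Banach-space isomorphism; the inverse function theorem gives a local real-analytic inverse, and combining with the already established global bijectivity upgrades the statement to a global real-analytic diffeomorphism.

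\textbf{Main obstacle.} Uniqueness and the smoothness step are essentially formal once the pieces are set up; the substantive content is the existence/surjectivity claim, which requires either the Grassmannian framework or an equivalent $L^2$ Riemann--Hilbert argument to turn the infinitesimal splitting into a global one. The main delicacy is handling the normalization of $B_\lambda(0)$ (positive real $\rho$) consistently with the $\SU$-constraint on $\SY^1$; this is exactly what makes the factorization unique rather than only unique up to a constant diagonal $U(1)$ factor.
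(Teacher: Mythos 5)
The paper does not actually prove this theorem: it is quoted verbatim as a known result with the citation \cite{McI:tor, PreS}, so there is no internal argument to compare against. Your sketch follows exactly the route of those references (Pressley--Segal, adapted to $r$-loops by McIntosh). Within it, the uniqueness step is complete and correct: gluing $F_2^{-1}F_1=B_2B_1^{-1}$ across $\SY_r$, using the reality relation $h(\l)^{-1}=\overline{h(1/\bar\l)}^{\,t}$ coming from unitarity on $\SY^1$ to extend to all of $\C\P^1$, and then killing the constant by the normalization $B(0)=\bigl(\begin{smallmatrix}\rho & c\\ 0 & 1/\rho\end{smallmatrix}\bigr)$, $\rho>0$, is precisely the standard argument, and it is also the right place to see why the normalization of $B_\l(0)$ removes the residual diagonal $U(1)$ ambiguity. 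The smoothness step (Lie-algebra splitting $\Lambda_r\Sl(\C)$ into the unitary and positive subalgebras, inverse function theorem, then globalization by bijectivity and translation) is likewise the standard one, with the caveat that one must fix a Banach (e.g.\ Wiener-type or $H^s$) completion of the analytic loops, or argue on an inductive limit, before invoking the inverse function theorem.

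The only substantive weakness is the existence part, which is also the only genuinely hard part of the theorem. Appealing to transitivity of $\Lambda_r\SU(\C)$ on the relevant component of the Grassmannian is legitimate, but it is exactly the content of the cited theorems rather than a proof of it, and your parenthetical ``equivalently, by a holomorphic Gram--Schmidt procedure on the two columns'' is not a correct substitute: pointwise Gram--Schmidt on $\SY_r$ produces a pointwise $QR$-factorization whose factors have no reason to satisfy the global analyticity constraints (unitary factor extending to the annulus $A_r$ and unitary on $\SY^1$, positive factor extending holomorphically to $I_r$); obtaining those requires the Riemann--Hilbert/Birkhoff-type projection argument of the Grassmannian model. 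Since the paper itself delegates the whole statement to \cite{McI:tor, PreS}, your proposal is an acceptable account of the standard proof, provided the existence step is understood as resting on those references rather than on the Gram--Schmidt remark.
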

\noindent{\bf Isospectral Group action.}
We define a smooth action of the Lie-group $\mathbb{C}^g$ on
$I(a)$. This action corresponds to a deformation by integrating the hierarchy of solutions of LSG of Proposition \ref{induction}. 
\begin{definition}
\label{groupaction}
For $t=(t_0,\ldots,t_{g-1}) \in \C^g$ we set the group action $\pi(t) :I(a) \to I(a)$ defined by
\begin{equation}\label{eq:isospectral action}
\begin{aligned}
&
\xi_\l\mapsto\pi(t)\xi_\l=B_\l(t)\xi_\l B_\l^{-1}(t)=F_\lambda^{-1}(t)\xi _\l F_\lambda(t)\\
\hbox{ with }&
\exp\left(\xi_\l \sum_{i=0}^{g-1}\lambda^{-i}t_i\right)=F_\lambda(t)B_\l(t).
\end{aligned}\end{equation}
In the last equation the right hand side is the Iwasawa decomposition
of the left hand side.
\end{definition}
\begin{remark}
i) The first solution of LSG in the hierarchy is $u_0=\o_z$, which acts transitively on the annulus.
The determinant $\det\xi_\l$ is invariant under this action and the action of the translations by $z\in\mathbb{C}$ coincides with the
action of $t_z=(z,0,\ldots,0)\in\mathbb{C}^g$.  The associate Jacobi field is obtained by changing coordinates and by vertical translations. 

The decomposition $\exp (z \xi_\l)=F_\l (z)B_\l(z)$
gives the polynomial Killing field $\zeta_\l (z) = F^{-1}_\l (z) \xi_\l F_\l (z)$.

ii) The second solution of LSG in the hierarchy is $u_1=\o_{zzz}-2\o_{z}^3$ which is associated to the Shiffmann Jacobi field on the surface.
\end{remark}
For $t,t'\in\mathbb{C}^g$ the corresponding
Iwasawa decompositions obey
$$F_\lambda(t+t')B_\l(t+t')=F_\lambda(t)B_\l(t)F_\lambda(t')B_\l(t')
=F_\lambda(t')B_\l(t')F_\lambda(t)B_\l (t).$$
Therefore the Iwasawa decomposition for the action of
$\pi(t')$ on $\pi(t)\xi$ is equal to
$$F'_\lambda(t')B_\l'(t') =\exp\left(  B_\l (t) \xi_\l  B_\l ^{-1}(t)  \sum_{i=0}^{g-1}\lambda^{-i}t_i\right)$$
and
$$
F'_\lambda(t')B_\l'(t')=B_\l(t)F_\lambda(t')B_\l(t')B_\l^{-1}(t)=
F_\lambda^{-1}(t)F_\lambda(t+t')B_\l(t+t')B_\l^{-1}(t).$$
Since the Iwasawa decomposition is unique we conclude
\begin{proposition}
\label{com}
The group action $\pi(t) :I(a) \to I(a)$  is a commuting action and
\begin{align*}
\pi(t')\pi(t)\xi_\l=
B'(t')B(t)\xi_\l B^{-1}(t)B'^{-1}(t')=
B(t+t')\xi_\l B^{-1}(t+t')&=\pi(t+t')\xi  _\l.
\end{align*}
$$F_\lambda'(t')=F_\lambda^{-1}(t)F_\lambda(t+t') \hbox{ and } B_\l'(t')=B_\l(t+t')B_\l^{-1}(t)$$
\end{proposition}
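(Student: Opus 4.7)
The plan is a three-step uniqueness-of-factorization argument; the bulk of the work is done by the commutativity of the exponential generators of the action, and the remainder is bookkeeping in the Iwasawa decomposition.

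First I would observe that for any $t,t'\in\mathbb{C}^g$ the matrices $\xi_\lambda\sum_i\lambda^{-i}t_i$ and $\xi_\lambda\sum_i\lambda^{-i}t'_i$ commute, since the factors $\sum_i\lambda^{-i}t_i$ and $\sum_i\lambda^{-i}t'_i$ are scalar Laurent polynomials in $\lambda$ and therefore central. Consequently,
\begin{equation*}
\exp\!\Bigl(\xi_\lambda\sum_i\lambda^{-i}(t_i+t'_i)\Bigr)=\exp\!\Bigl(\xi_\lambda\sum_i\lambda^{-i}t_i\Bigr)\exp\!\Bigl(\xi_\lambda\sum_i\lambda^{-i}t'_i\Bigr),
\end{equation*}
and the defining Iwasawa factorizations in \eqref{eq:isospectral action} for $\pi(t)$, $\pi(t')$ and $\pi(t+t')$ therefore imply
\begin{equation*}
F_\lambda(t+t')B_\lambda(t+t')=F_\lambda(t)B_\lambda(t)F_\lambda(t')B_\lambda(t').
\end{equation*}

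Next I would compute $\pi(t')\pi(t)\xi_\lambda$ directly from the definition, applying the action to the potential $\pi(t)\xi_\lambda=B_\lambda(t)\xi_\lambda B_\lambda^{-1}(t)$. Since conjugation by $B_\lambda(t)$ commutes with the exponential series, the Iwasawa factorization defining $\pi(t')$ at this new base point reads
\begin{equation*}
F_\lambda'(t')B_\lambda'(t')=\exp\!\Bigl(B_\lambda(t)\xi_\lambda B_\lambda^{-1}(t)\textstyle\sum_i\lambda^{-i}t'_i\Bigr)=B_\lambda(t)F_\lambda(t')B_\lambda(t')B_\lambda^{-1}(t),
\end{equation*}
and substituting the identity from the first step transforms this into
\begin{equation*}
F_\lambda'(t')B_\lambda'(t')=\bigl(F_\lambda^{-1}(t)F_\lambda(t+t')\bigr)\cdot\bigl(B_\lambda(t+t')B_\lambda^{-1}(t)\bigr).
\end{equation*}

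Finally I would check that the two factors on the right-hand side lie respectively in $\Lambda_r\SU(\C)$ and $\Lambda_r^+\SL(\C)$. The unitary factor is automatic from the group property of $\Lambda_r\SU(\C)$. The positive factor is the only concrete verification I anticipate: one must check that $B_\lambda(t+t')B_\lambda^{-1}(t)$ is still holomorphic on the interior disc and that its $(1,1)$-entry at $\lambda=0$ remains real and positive. Both follow from the explicit normalization $B(0)=\bigl(\begin{smallmatrix}\rho&c\\0&1/\rho\end{smallmatrix}\bigr)$ with $\rho\in\R^+$: this form is closed under products and inverses, so the diagonal of the product at $\lambda=0$ is again of this type. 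Uniqueness of the Iwasawa decomposition then forces
\begin{equation*}
F_\lambda'(t')=F_\lambda^{-1}(t)F_\lambda(t+t'),\qquad B_\lambda'(t')=B_\lambda(t+t')B_\lambda^{-1}(t),
\end{equation*}
and substituting these into $\pi(t')\pi(t)\xi_\lambda=B_\lambda'(t')B_\lambda(t)\xi_\lambda B_\lambda^{-1}(t)B_\lambda'^{-1}(t')$ telescopes to $B_\lambda(t+t')\xi_\lambda B_\lambda^{-1}(t+t')=\pi(t+t')\xi_\lambda$. The commutativity $\pi(t')\pi(t)=\pi(t)\pi(t')$ then follows from the symmetry of this expression in $t$ and $t'$. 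The only place where care is required is the check that the normalization at $\lambda=0$ is preserved under the product; everything else is formal manipulation of the uniqueness statement.
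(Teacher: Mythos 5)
Your proof is correct and follows essentially the same route as the paper: commute the exponential generators to get $F_\lambda(t+t')B_\lambda(t+t')=F_\lambda(t)B_\lambda(t)F_\lambda(t')B_\lambda(t')$, rewrite the exponential of the conjugated potential as $B_\lambda(t)F_\lambda(t')B_\lambda(t')B_\lambda^{-1}(t)$, and conclude by uniqueness of the Iwasawa factorization. Your extra check that $B_\lambda(t+t')B_\lambda^{-1}(t)$ respects the normalization at $\lambda=0$ is a detail the paper leaves implicit (it follows since $\Lambda_r^+\SL(\C)$ is a group), but it does not change the argument.
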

This action has an obvious extension to an action of all sequences $(\ldots,t_{-1},t_0,t_1,\ldots)$ with only
finitely many non vanishing entries. Whenever the exponential on the left hand side
of \eqref{eq:isospectral action} belongs to one factor of the Iwasawa decompostion, then the corresponding $t$ acts trivially.
For example, for any $k \geq 1$, the matrix $\exp (\l^k \xi_\l)$ belongs to the second factor of the Lie Algebra, while if $k \leq -g$,
the matrix $\exp (\l^k \xi_\l)$ is a product of two commuting factors of the Iwasawa decomposition. These actions acts trivially.

Furthermore, since all $\xi_\l\in I(a)$ obey
$$^{t}\overline{ \xi _{1/ \bar{\lambda}}}=-\lambda^{1-g}\xi _\l$$
the matrices
$$\exp \left(\xi_\l\left(\sum_{i=0}^{g-1}\lambda^{-i}t_i+\sum_{i=0}^{g-1}\lambda^{i+1-g}\bar{t}_i\right) \right) \in \LSU (\C)$$
belong to the Lie Algebra of the first factor in the Iwasawa
decomposition.  Then the related action $\pi (t_i+\bar{t}_{g-1+i})\xi_\l=\xi_\l$ is trivial. In summary we conclude with the following
\begin{remark}
Only a finite dimensional Lie group with Lie algebra isomorphic to $\mathbb{R}^{g}$ acts non-trivally on $I(a)$.
\end{remark}
This group action yields a differentiable structure on the isospectral set, and we cite 
\begin{proposition}(Theorem 4.8 \cite{HKS1})
\label{1}
Let $I(a)$ be the isospectral set with polynomial $a(\l)$, which satisfies the reality condition of definition \ref{pks}
\begin{enumerate}
\item $I(a)$ is compact.
\item If the $2g$ roots of $a(\l)$ are pairwise distinct (without double roots), then $I(a)$  is connected
 smooth g-dimensional manifold.  This manifold is diffeomorphic to a $g$-dimensional real torus; $I(a) \cong \left(\Sp^1\right)^g$.
\end{enumerate}
\end{proposition}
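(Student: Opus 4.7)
For compactness, I would simply reuse the a priori bound on the Laurent coefficients of $\xi_\l$ already derived in the proof of Proposition~\ref{corcurva}: for $\xi_\l = \sum_{d=-1}^g \hat\xi_d\l^d \in I(a)$ the reality condition combined with $\l\det\xi_\l = -a(\l)$ yields
$$\|\hat\xi_d\| \leq \sup_{\l\in\SY^1}\sqrt{-\l^{-g}a(\l)},$$
which is uniform on $I(a)$ because $a$ is fixed and $-\l^{-g}a(\l)\geq 0$ on $\SY^1$ by \eqref{eq:reality}. Closedness is immediate since $I(a)$ is cut out of the finite-dimensional vector space $\pk$ by the polynomial equation $\l\det\xi_\l+a(\l)=0$, the pinning $\beta_{-1}\gamma_0=a(0)$, and the already-imposed reality constraint $\hat\xi_d = -\overline{\hat\xi}_{g-1-d}^{\,t}$. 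This proves (1).

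For (2), my strategy is to show that the real $g$-dimensional subgroup of $\C^g$ that acts non-trivially on $I(a)$, identified in the Remark preceding the Proposition, acts freely and transitively, and then use compactness from (1) to conclude that $I(a)$ is a compact real $g$-torus. The first step is to analyze the differential of $\pi(t)$ at any $\xi_\l\in I(a)$. Differentiating \eqref{eq:isospectral action} at $t=0$ in the direction $t_i$, the tangent vector at $\xi_\l$ is the Lax-flow vector $[\xi_\l,(\l^{-i}\xi_\l)_+]$, where $(\,\cdot\,)_+$ denotes the component in the Lie algebra of the $\Lambda^+$-factor of the Iwasawa decomposition. Via Proposition~\ref{lax} these infinitesimal deformations correspond exactly to the Pinkall--Sterling formal solutions $u_0,\ldots,u_{g-1}$ of the linearized sinh-Gordon equation constructed in Proposition~\ref{induction}. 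These are linearly independent, modulo the real structure, precisely when the spectral curve $\Sigma$ is smooth, i.e.\ when $a$ has $2g$ simple roots. This gives $I(a)$ the structure of a smooth real $g$-manifold near any point in an orbit.

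The main obstacle is transitivity, and the cleanest way to dispose of it is through the algebro-geometric correspondence. To each $\xi_\l\in I(a)$ one attaches its eigenline divisor on $\Sigma$: the locus of $(\nu,\l)\in\Sigma^*$ at which $\nu\,\Id-\xi_\l$ has rank one, completed by its behaviour at the two branch points over $\l=0,\infty$. After subtracting a fixed base divisor this defines a map
$$I(a)\longrightarrow \mathrm{Jac}(\Sigma).$$
A Krichever-type argument, using that $\xi_\l$ is recovered up to isomorphism from its eigenline bundle, shows that this map is a smooth injection onto the fixed-point set of the antiholomorphic involution on $\mathrm{Jac}(\Sigma)$ induced by $\rho$ of Proposition~\ref{involution}; because $\rho$ acts without fixed points on $\Sigma$, this real locus is a real $g$-dimensional subtorus of $\mathrm{Jac}(\Sigma)$. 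The isospectral flows $\pi(t)$ intertwine with linear translation on this torus (they are linearized by the Abel map, which is the content of the algebraic completeness of the sinh-Gordon hierarchy), and translation is of course transitive; this yields transitivity of $\pi$ on $I(a)$.

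Combining the three steps, $I(a)$ is a homogeneous space for the real $g$-dimensional Lie group identified in step two. The stabilizer of any point is discrete by the injectivity of the differential from step two, and closed since $\pi$ is continuous; compactness from (1) then forces the stabilizer to be a rank-$g$ lattice $\Lambda$, hence $I(a)\cong \R^g/\Lambda\cong (\SY^1)^g$. Connectedness and smoothness drop out of this quotient description. The real content of the proof is the algebro-geometric identification with the real Jacobian in step three; smoothness of $\Sigma$ (i.e.\ simple roots of $a$) is used exactly to ensure that one works with an actual abelian variety rather than a generalized Jacobian with non-compact factors, which is why the hypothesis of pairwise distinct roots cannot be dropped.
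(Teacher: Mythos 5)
First, a point of reference: the paper does not actually prove Proposition \ref{1} here; it is quoted from \cite{HKS1}, and the only guidance the paper gives is the remark in Section \ref{sec:proof} that for $a$ with simple roots $I(a)$ is a single orbit isomorphic to the real part of the Picard group of $\Sigma$. Your plan follows exactly that route. Part (1) is essentially sound: the Laurent-coefficient bound recycled from the proof of Proposition \ref{corcurva} gives uniform boundedness on $I(a)$. One repair is needed for closedness, though: $\pk$ is an \emph{open} subset of the $(3g+1)$-dimensional vector space (it demands $\hat\xi_{-1}\in\bigl(\begin{smallmatrix}0&\mi\R^{+}\cr 0&0\end{smallmatrix}\bigr)$ and ${\rm trace}(\hat\xi_{-1}\hat\xi_0)\neq0$), so ``cut out by polynomial equations'' does not by itself give closedness; you must add that along any sequence in $I(a)$ the pinning $\beta_{-1}\gamma_0=a(0)\neq0$ together with the uniform bound on $\gamma_0$ keeps $\beta_{-1}$ bounded away from $0$, so limits cannot escape through the boundary of $\pk$.

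For part (2) the genuine gap sits in your third step. You reduce everything to the assertion that the eigenline-divisor map is a smooth bijection from $I(a)$ onto the fixed locus of the involution induced by $\rho$ on $\mathrm{Jac}(\Sigma)$, on which the flows $\pi(t)$ act by translations; but this assertion is precisely the substance of the proposition, and as formulated it is not even the right target: the fixed locus of an antiholomorphic involution on a $g$-dimensional complex torus is in general a disjoint union of up to $2^{g}$ real $g$-tori, so transitivity of translations on ``the'' real torus does not give transitivity on $I(a)$ unless you also prove that the image is a \emph{single} component and that the map is onto it --- which is exactly the connectedness statement you are trying to prove. What is needed, and what the cited proof supplies, is the divisor-theoretic analysis: (i) injectivity of the eigenline correspondence; (ii) identification of the admissible divisors (compatible with unitarity on $\SY^1$, the sign condition $\l^{-g}a(\l)\leq0$ there, and the fact that $\rho$ has no fixed points) as exactly one component; and (iii) the converse reconstruction showing every divisor in that component yields a potential genuinely lying in $\pk$, in particular with $\beta_{-1}\in\mi\R^{+}$ and no degeneration. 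Without (ii) and (iii) your concluding step --- compact homogeneous space for a real $g$-dimensional group with discrete stabilizer, hence $\R^{g}/\Lambda\cong(\SY^1)^{g}$ --- has nothing to act transitively on; with them, the stabilizer/lattice argument and the smoothness discussion via the independence of the isospectral vector fields are fine.
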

Proposition \ref{1} implies the following corollary:
\begin{corollary}\label{quasi periodic}
\label{quasiperiodic}
We consider a finite type annulus $X: \C \to \SY^2 \times \R$ with period $\tau \in \C$ associate to a spectral curve $\Sigma$ and potential $\xi_\l$.
%
\begin{enumerate}
\item If $a(\l)$ has only simple roots, then the solution $\o$ is the restriction of a function on a g-dimensional torus to a two dimensional subgroup, and so $\o$ and $\zeta_\l (z)$ are quasi-periodic.
\item Every solution $u_n$ of LSG induced by the Pinkall-Sterling iteration integrates into a long time solution of the sinh-Gordon equation.
\item If $a(\l)$ has only simple roots, then every annulus has a quasi-periodic metric. This means that if we consider a diverging sequence $(t_n)$, the sequence of annuli $\o(x,y+t_n)= \o_n(x,y)$ has a subsequence converging to $\tilde \o (x,y)$ defined by a potential in $I(a)$.
\end{enumerate}
\end{corollary}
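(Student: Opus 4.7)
The plan is to leverage the compactness statement of Proposition~\ref{1}(2) together with the commuting group action of Definition~\ref{groupaction} and Proposition~\ref{com}. By Proposition~\ref{1}(2), when $a(\l)$ has only simple roots the isospectral set $I(a)$ is a smooth compact $g$-dimensional torus $(\SY^1)^g$, and the action $\pi:\C^g\times I(a)\to I(a)$ factors through a real $g$-dimensional Lie group by the remark that precedes the corollary. The key observation is that the flow generated by $t_0\in\C$ acting as $\pi(t_0,0,\ldots,0)$ gives exactly the polynomial Killing field $\zeta_\l(z)=\pi(z,0,\ldots,0)\xi_\l$, a real two-dimensional orbit in the compact real torus $I(a)$.

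For part (1), I would define $\Phi:(\SY^1)^g\to I(a)$ as the diffeomorphism from Proposition~\ref{1}(2) and note that the action $\pi(t_0,0,\ldots,0)$ corresponds on $(\SY^1)^g$ to translation along a linear two-dimensional subgroup $\Lambda\subset(\SY^1)^g$ determined by a real-linear map $\C\to\R^g$. Hence $z\mapsto\zeta_\l(z)$ is the restriction of the globally defined smooth map $\Phi$ to this two-plane embedded in the torus, which is precisely the statement that $\zeta_\l(z)$ is quasi-periodic. Since $4\beta_{-1}(z)=\mi e^{\o(z)}$ by Remark~\ref{laxomega}, the function $\o$ is a smooth function of $\zeta_\l(z)\in I(a)$, and therefore inherits the quasi-periodicity as the restriction of a smooth function on $(\SY^1)^g$.

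For part (2), I would use that each Pinkall-Sterling field $u_n$ is, by construction, the tangent vector at $\xi_\l$ to the one-parameter flow $t_n\mapsto\pi(0,\ldots,0,t_n,0,\ldots,0)\xi_\l$ in the $n$-th coordinate direction of $\C^g$. Since $I(a)$ is compact and the action is smooth, each such flow is globally defined. Translating back through the Sym--Bobenko construction of Theorem~\ref{symbobenko}, this produces a smooth one-parameter family of solutions $\o(z,t_n)$ of the sinh-Gordon equation with $\partial_{t_n}\o|_{t_n=0}=u_n$ (up to the reality-extraction from $u_n:\C\to\C$), which is the long time integration claim. Part (3) follows directly from compactness: for any diverging sequence $(t_n)$, the sequence $\pi(\mi t_n,0,\ldots,0)\xi_\l$ lies in the compact manifold $I(a)$ and so admits a convergent subsequence $\pi(\mi t_{n_k},0,\ldots,0)\xi_\l\to\tilde\xi_\l\in I(a)$; by continuous dependence of the Iwasawa decomposition and hence of $\o$ on the initial potential, the translated annuli $\o(x,y+t_{n_k})$ converge on compact subsets to the quasi-periodic $\tilde\o$ induced by $\tilde\xi_\l\in I(a)$.

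The main obstacle I expect is bookkeeping rather than conceptual: one needs to verify carefully that the kernel of the $\C^g$-action described in the remark before the corollary does not kill the translation direction $(z,0,\ldots,0)$, so that the orbit of $z$-translations is genuinely a two-dimensional subtorus rather than collapsing. This is checked by inspecting the relation $t_i+\bar t_{g-1+i}\in\ker\pi$, which for $i=0$ gives the constraint $t_0+\bar t_{g-1}$; since only $t_0$ is nonzero in the translation direction and $g\geq 1$, this gives no trivial contribution and the two-real-dimensional orbit is realized inside $I(a)\cong(\SY^1)^g$, justifying the quasi-periodicity conclusion.
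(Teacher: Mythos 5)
Your argument is essentially the paper's own: the paper gives no separate proof, deducing the corollary directly from Proposition~\ref{1} (compactness of $I(a)$ and, for simple roots, its identification with the torus $(\SY^1)^g$) together with the commuting isospectral action of Definition~\ref{groupaction} and Proposition~\ref{com}, exactly as you do for (1)--(3). One small caveat: your final ``non-collapsing'' check is unnecessary and its conclusion can even fail --- for $g=1$ the $z$-translation orbit inside $I(a)\cong\SY^1$ is at most one-dimensional (as for the rotational examples, where $\o$ depends on a single real variable) --- but quasi-periodicity only requires that $z\mapsto\pi(z,0,\ldots,0)\xi_\l$ be a group homomorphism into the compact torus composed with a smooth function, so the statement is unaffected.
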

\noindent{\bf Spectral data of minimal annuli.}
We study the monodromy $M_\l(\tau)=F_\l(z)^{-1}F_\l(z+\tau)$ of the extended frame $F_\l$ for a period $\tau$.
By construction the monodromy takes values in $\SU$ for $|\lambda |=1$. The monodromy depends on the choice of base point $z$, but its conjugacy
class and hence eigenvalues $\mu,\,\mu^{-1}$ do not. The eigenspace of $M_\l (\tau)$ depends holomorphically on $(\mu, \l)$.

Let $\zeta_\l$ be a solution of Lax equation (\ref{lax})
with initial value $\xi_\l\in\pk$, with period $\tau$ so that $\zeta_\l(z+\tau) = \zeta_\l(z)$ for all $z \in \R^2$. Then for $z=0$ we have

$$\xi_\l= \zeta_\l(0) = \zeta_\l(\tau) =
F_\lambda^{-1}(\tau) \,\xi\,F_\lambda(\tau) = M_\lambda^{-1} (\tau) \xi \,M_\lambda (\tau)$$
and thus
$$
    [\,M_\lambda (\tau) ,\,\xi_\l\,] = 0\,.
$$
Hence the eigenvalues of $\xi_\l$ and $M_\lambda (\tau)$
are different functions on the same Riemann surface $\Sigma$. Furthermore the eigenspaces of $M_\lambda (\tau) $ and $\xi_\l$ coincide point-wise. At $\l=0$ and $\l=\infty$, the monodromy $M_\lambda (\tau)=F_\lambda (\tau)$ has essential singularities. The period $\tau$ is related to a trivial action on the isospectral set $I(a)$, and we prove that the essential singularities of $\mu$ at $\l=0$ and $\l=\infty$ depend only on an isospectral orbit.
\begin{proposition}(Proposition 5.4 \cite{HKS1})
\label{stab}
The group $\Gamma_{\xi_\l}=\{ t \in \R^g; \pi (t) \xi_\l = \xi_\l \}$ depends only on the orbit of $\xi_\l$.
If $\gamma \in \Gamma_{\xi_\l}$ satisfies $F(\gamma)=\pm \un$ at $\l=\l_0$ for some $\xi_{\l,0} \in {\calP}_g$ then the same is
true for all $\xi_\l$ in the orbit of $\xi_{\l,0}$. The period $\tau$ is related to $t=(\tau,0,...,0) \in \Gamma_{\xi_\l}$.
\end{proposition}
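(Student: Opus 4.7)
The plan is to establish the three assertions in order, each following from a short algebraic manipulation of the isospectral action together with the cocycle identities of Proposition~\ref{com}.

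For the first assertion, I would use the commutativity of the action. If $\hat\xi_\l = \pi(s)\xi_\l$ lies in the orbit of $\xi_\l$ and $t\in\Gamma_{\xi_\l}$, then
\[
\pi(t)\hat\xi_\l=\pi(t)\pi(s)\xi_\l=\pi(s)\pi(t)\xi_\l=\pi(s)\xi_\l=\hat\xi_\l,
\]
so $t\in\Gamma_{\hat\xi_\l}$. The reverse inclusion follows by replacing $s$ by $-s$ (using that $\pi$ extends to an action of the abelian group). Hence $\Gamma_{\hat\xi_\l}=\Gamma_{\xi_\l}$, i.e.\ the stabilizer depends only on the orbit.

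For the second assertion, I would derive a conjugation formula for the unitary frame associated to the stabilizer element. Using the cocycle identity $F_\l(t+t')=F_\l(t)F'_\l(t')$ twice, I write
\[
F_\l(s+\gamma)=F_\l(s)\,\hat F_\l(\gamma)\quad\text{and}\quad F_\l(\gamma+s)=F_\l(\gamma)\,F_\l(s),
\]
where the second identity uses $\pi(\gamma)\xi_\l=\xi_\l$ so that the frame based at $\pi(\gamma)\xi_\l$ coincides with the frame based at $\xi_\l$. Comparing the two gives
\[
\hat F_\l(\gamma)=F_\l^{-1}(s)\,F_\l(\gamma)\,F_\l(s).
\]
Evaluating at $\l=\l_0$ and substituting $F_{\l_0}(\gamma)=\pm\un$, the right hand side collapses to $\pm\un$, giving $\hat F_{\l_0}(\gamma)=\pm\un$. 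This is invariant along the orbit, since any two elements of the orbit of $\xi_{\l,0}$ are related by some $s\in\mathbb{C}^g$. The main obstacle is verifying the second cocycle identity carefully, because it relies on the fact that the evolution started at the \emph{same} initial value $\xi_\l$, which is exactly what $\gamma\in\Gamma_{\xi_\l}$ ensures.

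For the third assertion, I would invoke the identification, explained in the remark following Definition~\ref{groupaction}, that translation by $z\in\mathbb{C}$ on the surface coincides with the action of $(z,0,\ldots,0)\in\mathbb{C}^g$, corresponding to the first member $u_0=\o_z$ of the Pinkall--Sterling hierarchy. Explicitly, the Iwasawa factor of $\exp(z\xi_\l)=F_\l(z)B_\l(z)$ yields $\zeta_\l(z)=F_\l^{-1}(z)\xi_\l F_\l(z)=\pi((z,0,\ldots,0))\xi_\l$. If $X$ is periodic with period $\tau$, then $\zeta_\l(\tau)=\zeta_\l(0)=\xi_\l$, so $\pi((\tau,0,\ldots,0))\xi_\l=\xi_\l$ and therefore $(\tau,0,\ldots,0)\in\Gamma_{\xi_\l}$, as claimed.
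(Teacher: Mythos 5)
Your proposal is correct and follows essentially the same route as the paper: the paper's proof is exactly the commutativity observation $\pi(\gamma)\pi(t)\xi_\l=\pi(t)\pi(\gamma)\xi_\l=\pi(t)\xi_\l$, and your conjugation formula $\hat F_\l(\gamma)=F_\l^{-1}(s)F_\l(\gamma)F_\l(s)$ just makes explicit, via the cocycle identities of Proposition~\ref{com}, the propagation of $F_{\l_0}(\gamma)=\pm\un$ along the orbit that the paper leaves implicit. The identification of the period with $(\tau,0,\ldots,0)$ via the first flow is likewise the paper's intended reading, so no changes are needed.
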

\begin{proof}
This is the direct consequence of the commuting property of the group action $\pi (t)$. We have only to remark, that
$\pi(\gamma) \xi_\l = \xi_\l$ implies that $\pi(\gamma) \pi(t) \xi_\l =\pi(t)\pi(\gamma)  \xi_\l=\pi(t)\xi_\l $.
\end{proof}
\begin{proposition}(Proposition 5.5 \cite{HKS1})
\label{mu}
Let $\xi_\l \in {\calP}_g$ without roots. Then $\gamma \in \Gamma_{\xi_\l}=\{ t \in \R^g \mid \pi (t) \xi_\l = \xi_\l \}$ if and only if
there exists on $\Sigma=\{ (\nu,\l) \in \C^2 \mid \nu^2=a(\l)/\l\}$ a function $\mu$ which satisfies:
\begin{enumerate}
\item $\mu$ is holomorphic on $\Sigma - \{ 0, \infty \}$ and there exist holomorphic functions $f, g$ defined on $\C^*$ with $\mu =f  \nu +g$.
\item $\sigma ^* \mu = \mu ^{-1}, \rho ^* \mu = \bar \mu ^{-1} , \eta ^* \mu =\bar \mu ^{}$ and $\mu=\pm 1$ at each branch
point $\alpha_i$ of $\Sigma$.
\item $d \ln \mu$ is a meromorphic 1-form with $d \ln \mu -d(\sum_{i=0}^{g-1} \gamma_i \l^{-i}\nu)$ holomorphic in a neighborhood
of $\l=0$ and $d \ln \mu + d(\sum_{i=0}^{g-1} \bar \gamma_i \l^{i+1-g}\nu)$ is holomorphic at $\l=\infty$.
\item For $\gamma=(\tau, 0,...0)$, the differential $d\ln\mu$ has second order poles without residues at the two
points $\lambda=0$ and $\lambda=\infty$, and $d \ln \mu -\frac{\mi e^{\mi\Theta/2} \tau }{4}d\sqrt {\l}^{-1}$ extends holomorphically to $\l=0$, while $d \ln \mu -  \frac{\mi e^{-\mi\Theta/2} \bar \tau  }{4}d \sqrt{\l}$ extends holomorphically to $\l = \infty$.
\end{enumerate}
\end{proposition}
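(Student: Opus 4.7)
The plan is to exhibit $\mu$ as the eigenvalue function of the loop $F_\lambda(\gamma)$ acting on the $\nu$-eigenline of $\xi_\lambda$ on $\Sigma$, and to run the argument backwards for the converse.

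First I would observe that $\pi(\gamma)\xi_\lambda=\xi_\lambda$ together with uniqueness of the Iwasawa decomposition forces both factors of $\exp\bigl(\xi_\lambda\sum_{i=0}^{g-1}\gamma_i\lambda^{-i}\bigr)=F_\lambda(\gamma)B_\lambda(\gamma)$ to commute with $\xi_\lambda$: the left-hand side commutes with $\xi_\lambda$, and conjugation by $\xi_\lambda$ preserves each of the Iwasawa factors because the splitting is invariant under the centralizer. Since $\xi_\lambda$ has no roots in $\C^\times$ and its eigenvalues $\pm\nu$ are generically distinct, its centralizer in $\Sl(\C)$ is spanned by $\un$ and $\xi_\lambda$. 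Thus I can write $F_\lambda(\gamma)=g(\lambda)\un+f(\lambda)\xi_\lambda$ with $g,f$ holomorphic on $\C^\times$, and its eigenvalues on the two eigenlines are $g\pm f\nu$; the choice $\mu=g+f\nu$ directly gives property~(1). Property~(2) follows because $\sigma$ exchanges the two eigenlines (so $\sigma^*\mu=\mu^{-1}$ since $\det F_\lambda(\gamma)=1$); because $F_\lambda(\gamma)\in\SU$ on $|\lambda|=1$ gives $|\mu|=1$ there, hence $\eta^*\mu=\bar\mu$; and $\rho^*\mu=\bar\mu^{-1}$ is the composition of the previous two. At a branch point $\nu=0$, $\xi_\lambda$ is a nonzero nilpotent and the only element of $\SU$ commuting with it is $\pm\un$, so $\mu=\pm 1$ there.

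For properties (3)--(4) I would use the factorization $F_\lambda(\gamma)=\exp\bigl(\xi_\lambda\sum\gamma_i\lambda^{-i}\bigr)B_\lambda(\gamma)^{-1}$ and read off eigenvalues on the $\nu$-eigenline to get $\mu=\exp\bigl(\nu\sum\gamma_i\lambda^{-i}\bigr)\mu_B^{-1}$, where $\mu_B$ is the corresponding eigenvalue of $B_\lambda(\gamma)$. Since $B_\lambda(\gamma)$ extends holomorphically to the disc $|\lambda|<1$ with $B_\lambda(0)$ upper triangular, $\mu_B$ is holomorphic and nonvanishing near $\lambda=0$, so $d\ln\mu-d\bigl(\nu\sum\gamma_i\lambda^{-i}\bigr)$ extends holomorphically at $\lambda=0$. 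The corresponding assertion at $\lambda=\infty$ comes from applying $\rho$ and using the reality $\hat\xi_d=-\bar{\hat\xi}^t_{g-1-d}$, which converts the exponent into $\sum\bar\gamma_i\lambda^{i+1-g}$. In the distinguished case $\gamma=(\tau,0,\ldots,0)$ the exponent is $\tau\nu$; combining $\nu^2=a(\lambda)/\lambda$ with $a(0)=-\tfrac{1}{16}e^{\mi\Theta}$ gives the leading asymptotic $\nu\sim\tfrac{\mi}{4}e^{\mi\Theta/2}\lambda^{-1/2}$ near $\lambda=0$, so $\tau\,d\nu=\tfrac{\mi\tau}{4}e^{\mi\Theta/2}d\sqrt{\lambda}^{-1}+O(d\lambda)$, matching the stated expansion. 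The absence of residues and the second-order pole at $\lambda=0,\infty$ follow because $d\nu$ is exact on $\Sigma$, hence has zero residue at any branch point.

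For the converse, given $\mu$ satisfying (1)--(3), I would reconstruct $F_\lambda(\gamma)$ as the unique loop in the centralizer of $\xi_\lambda$ with eigenvalues $\mu,\mu^{-1}$; the symmetries in (2) force this loop into $\Lambda_r\SU$. Property (3) then implies that $\mu_B:=\exp\bigl(-\nu\sum\gamma_i\lambda^{-i}\bigr)\mu$ extends holomorphically and nonvanishingly across $\lambda=0$, and together with the centralizer structure this forces the matrix $\exp\bigl(-\xi_\lambda\sum\gamma_i\lambda^{-i}\bigr)F_\lambda(\gamma)$ to extend holomorphically across the disc as $B_\lambda(\gamma)^{-1}$, yielding the Iwasawa decomposition that certifies $\gamma\in\Gamma_{\xi_\lambda}$. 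The main obstacle is the global holomorphic extension: eigenvalues are generally multi-valued around branch points of $\Sigma$ lying inside the disc, so one must use precisely the condition $\mu=\pm 1$ at each such branch point to kill the monodromy of $\mu_B$ and obtain a single-valued holomorphic $B_\lambda(\gamma)^{-1}$; matching the normalization at $\lambda=0$ (upper-triangular $B(0)$ with positive diagonal) then pins down the factor uniquely. I expect the technical heart of the argument to lie in this careful regularity analysis at the branch points of $\Sigma$ and in verifying that the reality condition at $\lambda=\infty$ assembles consistently with the disc extension at $\lambda=0$.
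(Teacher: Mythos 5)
You are judging this one against a proof the paper does not actually contain: Proposition \ref{mu} is quoted from \cite{HKS1}, so only your argument can be assessed, and its skeleton (take $\mu$ to be the eigenvalue of the commuting unitary factor $F_\lambda(\gamma)$ on the eigenlines of $\xi_\lambda$, read the essential singularities off the factorization $\exp\bigl(\xi_\lambda\sum\gamma_i\lambda^{-i}\bigr)=F_\lambda(\gamma)B_\lambda(\gamma)$, and reverse the factorization for the converse) is indeed the standard route and surely the one of the cited proof. Two of your individual steps, however, do not work as written. First, your justification of $\mu(\alpha_i)=\pm1$ is invalid: $F_\lambda(\gamma)\in\Lambda_r\SU$ only means unitarity on $|\lambda|=1$, while the branch points $\alpha_i$ are roots of $a$ and generically lie off the unit circle, so ``the only element of $\SU$ commuting with a nonzero nilpotent is $\pm\mathbb{I}$'' cannot be invoked there. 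The correct (and much simpler) argument follows from what you already established: branch points are fixed by $\sigma$, and $\mu\cdot\sigma^{*}\mu=\det F_\lambda(\gamma)=1$ forces $\mu(\alpha_i)^2=1$. Second, ``$|\mu|=1$ on $|\lambda|=1$, hence $\eta^{*}\mu=\bar\mu$'' is not a deduction: values on the circle alone do not relate $\mu$ at $\lambda$ and at $1/\bar\lambda$. You need the global symmetry ${}^{t}\overline{F_{1/\bar\lambda}(\gamma)}=F_\lambda(\gamma)^{-1}$ (coming from ${}^{t}\overline{\xi_{1/\bar\lambda}}=-\lambda^{1-g}\xi_\lambda$ and uniqueness of the Iwasawa splitting) together with an identification of how $\rho$ and $\eta$ permute the two points of $\Sigma$ over each $\lambda\in\SY^1$; an anti-holomorphic continuation argument from the circle can replace this, but it must be made explicit and it presupposes exactly that identification.

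Beyond that, the converse and the holomorphy claims are a plan rather than a proof, and they are where the content of the statement sits when $a$ has higher-order roots. Holomorphy of $f,g$ on all of $\C^{*}$ should not be taken from eigenvalue formulas (division by $\nu^{2}$ is problematic at roots of $a$); it follows from $\xi_\lambda\neq0$ by solving $F=g\,\mathbb{I}+f\,\xi_\lambda$ locally against a nonvanishing entry of $\xi_\lambda$. In the converse, writing $B_\lambda=c_1\mathbb{I}+c_2\xi_\lambda$ with $h=e^{-\nu\sum\gamma_i\lambda^{-i}}\mu$, $c_1=\tfrac12(h+h^{-1})$ and $c_2=\tfrac{1}{2\nu}(h^{-1}-h)$, the role of $\mu(\alpha_i)=\pm1$ is to cancel the $1/\nu$ pole of $c_2$ at the interior branch points (a pole cancellation, not a monodromy issue), and one must still check that $c_2\xi_\lambda$ is regular at $\lambda=0$, that $B(0)$ is upper triangular with positive diagonal (an overall sign $\pm\mathbb{I}$ has to be absorbed consistently into $F$), and that the resulting constant ambiguity in the Iwasawa factors commutes with $\xi_\lambda$ so that $\pi(\gamma)\xi_\lambda=\xi_\lambda$ really follows. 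So: right strategy, but the branch-point step is wrong as argued, the involution symmetries are under-justified, and the converse's technical heart is deferred rather than carried out.
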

The existence of an annulus depends on the existence of the function $\mu$ having the correct behavior at $\l=0$ and $\l=\infty$.
In the case where $a$ has only simple roots we have only to prove that $\mu$ is holomorphic on $\Sigma - \{ 0, \infty \}$, which is a
weaker condition than $\mu =f  \nu +g$ with holomorphic function $f, g$ defined on $\C^*$ in the general case (when $\nu$ has higher order roots, $f$ could have poles). Keeping this in mind we define spectral data of a minimal cylinder by
\begin{definition}
\label{spectraldata}
The spectral data of a minimal cylinder of finite type in
$\SY^2 \times \R$ consists of  $(a,\,b)$ where
$a$ is a complex polynomial of degree $2g$, and  $b$ is a complex polynomial of degree $g+1$
such that
\begin{enumerate}
\item[(i)] $\l ^{2g}  \bar a (\bar \l ^{-1})=a(\l) $ and $\l^{-g} a (\l) \leq 0$ for all $\l \in S^1$
\item[(ii)]$\l ^{g+1} \bar b (\bar \l ^{-1}) =- b (\l)$
\item[(iii)] $b(0) =-\frac{ \tau e^{\mi \Theta}}{32} \in e^{\mi \Theta/2} \R $
\item[(iv)] $   {\rm Re}\left( \int_{\alpha_i}^{1/\bar \a_i}\frac{b d\l}{\nu \l^2} \right) =0$ for all roots $\a_i$ of $a$ where the integral is computed
on the straight segment $[\a_i, 1/ \bar \alpha_i]$.
 \item[(v)] The unique function $h:\tilde  \Sigma \to \C$ where $\tilde \Sigma= \Sigma - \cup \gamma_i$ and $\gamma_i$ are closed cycles over the straight lines connecting $\alpha_i$ and $1/ \bar \alpha_i$, such that
 $$\sigma ^* h(\l)= -h(\l) \hbox{ and } dh=  \frac{b d\lambda}{\nu \lambda^2}$$
takes values on $i\pi \Z$ at all roots of $(\l -1)a$.
 \item[(vi)] In the case when $a$ has higher order roots there are holomorphic functions $f,\, g$ defined on $\C^*$ with $e^h=f  \nu +g$.
\end{enumerate}
\end{definition}
\begin{corollary}\label{periodic immersions}
If $\xi_\l\in\pk$ corresponds to a periodic minimal immersion
$X:\C/\tau\Z\to\SY^2\times\R$, then there exists a polynomial $b$,
which obeys (i)-(vi) with $a(\l)=-\l\det(\xi_\l)$.

If $(a,b)$ obeys (i)-(vi), then all $\xi_\l\in I(a)$
correspond to periodic minimal immersions.
\end{corollary}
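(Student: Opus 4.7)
The proof splits into two directions, both resting on Proposition \ref{mu}, which translates membership in the stabilizer $\Gamma_{\xi_\l}$ into the existence of a function $\mu$ on $\Sigma$ with prescribed symmetries and singular behavior. The plan is to go via this function $\mu$ and recover or produce the polynomial $b$ through the identity $d\ln\mu = b\,d\l/(\nu \l^2)$.

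For the forward direction, suppose $\xi_\l \in \pk$ induces a periodic immersion $X:\C/\tau\Z \to \SY^2\times\R$. Then the polynomial Killing field satisfies $\zeta_\l(z+\tau)=\zeta_\l(z)$, which by the commuting group action (Proposition \ref{com}) is equivalent to $(\tau,0,\ldots,0)\in\Gamma_{\xi_\l}$. If $\xi_\l$ has no roots in $\C^\times$, Proposition \ref{mu} directly produces $\mu$. If $\xi_\l$ has zeroes, I would first reduce via Proposition \ref{remove} to a rootless potential, apply Proposition \ref{mu} there, and then pull $\mu$ back to the original curve — the pullback will automatically take the form $f\nu+g$ required in (vi). Then I would define $b$ by $d\ln\mu = b\,d\l/(\nu\l^2)$; because $d\ln\mu$ has second-order poles exactly at the two branch points over $\l=0,\infty$ and the factor $1/(\nu\l^2)$ already carries those poles, $b$ extends as a polynomial in $\l$, and a degree count near $\l=\infty$ forces $\deg b = g+1$.

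Verifying (i)--(vi): condition (i) is the reality of $a$ inherited from $\xi_\l\in\pk$ (see Remark \ref{reality}); condition (ii) follows from $\rho^*\mu=\bar\mu^{-1}$ by translating the induced symmetry $\rho^*(d\ln\mu)=-\overline{d\ln\mu}$ into the stated conjugation law for $b$. Condition (iii) is obtained by expanding $\nu \sim \tfrac{\mi}{4}e^{\mi\Theta/2}\l^{-1/2}$ near $\l=0$ — using $a(0)=-\tfrac{1}{16}e^{\mi\Theta}$ from Definition \ref{pks} — and matching the leading coefficient of $d\ln\mu$ predicted by Proposition \ref{mu}(4); combined with the closing of the third coordinate ${\rm Re}(-\mi e^{\mi\Theta/2}\tau)=0$, which forces $\tau e^{\mi\Theta/2}\in\R$, this gives $b(0)\in e^{\mi\Theta/2}\R$. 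Condition (iv) is the statement that $|\mu|=1$ on $\SY^1$ (which is $\eta^*\mu=\bar\mu$) forces the real part of the periods along the straight cycles $[\alpha_i,1/\bar\alpha_i]$ to vanish, and (v) combines $\mu=\pm 1$ at branch points (Proposition \ref{mu}(2)) with the closing condition $M_1(\tau)=\pm\un$ at the Sym point $\l=1$, without which the horizontal component $F_1\sigma_3 F_1^{-1}$ would not be $\tau$-periodic. Condition (vi) is a restatement of Proposition \ref{mu}(1).

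For the converse, suppose $(a,b)$ satisfies (i)--(vi). Using (iv) and (v), the function $h$ in (v) integrates to a single-valued $\mu=e^h$ on $\Sigma\setminus\{0,\infty\}$; the symmetries (i)--(ii) translate to $\sigma^*\mu=\mu^{-1}$, $\rho^*\mu=\bar\mu^{-1}$, $\eta^*\mu=\bar\mu$; and (vi) provides the required form $\mu=f\nu+g$ when $a$ has higher order roots. Condition (iii) matches the local expansion of $d\ln\mu = b\,d\l/(\nu\l^2)$ at $\l=0,\infty$ to the precise leading terms required by Proposition \ref{mu}(4) with the given $\tau$. Invoking Proposition \ref{mu} then yields $(\tau,0,\ldots,0)\in\Gamma_{\xi_\l}$ for every $\xi_\l\in I(a)$, which by Proposition \ref{stab} is an isospectral invariant. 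Thus $\zeta_\l(z+\tau)=\zeta_\l(z)$ and the monodromy $M_\l(\tau)$ commutes with $\xi_\l$ with eigenvalues $\mu(\l)^{\pm 1}$. At $\l=1$, condition (v) gives $\mu(1)=\pm 1$, so $M_1(\tau)=\pm\un$, closing the horizontal component; the third coordinate ${\rm Re}(-\mi e^{\mi\Theta/2}z)$ is $\tau$-periodic by (iii). The main obstacle is the careful bookkeeping of the branch choices of $\nu$ and $\sqrt{\l}$ needed to pin down the exact normalization in (iii), and isolating the role of the extra root at $\l=1$ in (v), which is not part of Proposition \ref{mu} and must be imposed separately to guarantee closure at the Sym point rather than merely periodicity of the polynomial Killing field.
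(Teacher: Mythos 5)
Your route is exactly the one the paper intends: the corollary is stated there without a separate proof, as the direct combination of Proposition \ref{stab}, Proposition \ref{mu} and Definition \ref{spectraldata}, and your verification is the right fleshing-out of that — in particular the derivation of (iii) from the expansion $\nu\sim\tfrac{\mi}{4}e^{\mi\Theta/2}\l^{-1/2}$ at $\l=0$ together with the vertical closing ${\rm Im}(e^{\mi\Theta/2}\tau)=0$, the identification of (iv), (v), (vi) with the properties of $\mu$ in Proposition \ref{mu}, your correct observation that the value $\mu(1)=\pm1$ at the Sym point is an extra closing condition not contained in Proposition \ref{mu}, and the converse via reconstructing $\mu=e^{h}$ and closing the frame at $\l=1$ (unitarity there upgrades ``both eigenvalues $\pm1$'' to $M_1(\tau)=\pm\un$).

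The one step that does not hold as written is the forward-direction treatment of potentials with roots: ``the pullback will automatically take the form $f\nu+g$ required in (vi)''. If $\xi_\l$ vanishes at $\alpha_0$, then $a=p^2\tilde a$ and $\nu=p\,\tilde\nu$, and pulling back $\mu=\tilde f\tilde\nu+\tilde g$ from the reduced curve gives $f=\tilde f/p$ on $\Sigma$; this is holomorphic on $\C^{\ast}$ only if $\tilde f(\alpha_0)=0$, i.e.\ only if $\alpha_0$ is a double point of the reduced eigenvalue, $\tilde\mu(\alpha_0)=\pm1$. That is precisely what (vi) encodes — it is the condition under which the \emph{whole} of $I(a)$, including the bubbleton orbits, closes — and it is not forced by periodicity of the single rooted potential: multiplying the genus-zero potential of the flat cylinder by $p(\l)=(\l-\alpha_0)(1-\bar\alpha_0\l)/|\alpha_0|$ with $\alpha_0\in(-1,0)$ produces an element of ${\calP}_2$ with the same extended frame, hence a periodic immersion, while $\ln\tilde\mu(\alpha_0)=\tfrac{\mi\pi}{2}\bigl(\sqrt{\alpha_0}+1/\sqrt{\alpha_0}\bigr)$ is generically not in $\mi\pi\Z$. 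So in the forward direction you must either restrict to $\xi_\l$ without roots in $\C^{\times}$ (which is also the hypothesis of Proposition \ref{mu}), first reducing by Proposition \ref{remove} and producing the polynomial $b$ for the reduced data $\tilde a$, or give an actual argument for why the removed roots are double points; the word ``automatically'' is where the argument fails.
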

\section{Isospectral group and embedded annuli}
\label{sembedded}
We prove that embeddedness is an isospectral property. First recall that the maximum principle
at infinity implies the following

\begin{proposition}
\label{simpleembedded} Let $I(a)$ the isospectralset of a polynomial $a(\l)$.
Suppose $a$ has only simple roots, and $\tau \in \C$ such that the induced immersion $X: \C \to \SY^2 \times \R$ of $\xi _{\l} \in I(a)$
satisfies $X(z+\tau)=X(z)$. Then if $X (\C / \tau \Z)$ is embedded then
any $ \hat \xi_\l \in I(a)$ induces an embedded annulus $\hat X (\C / \tau \Z)$.
\end{proposition}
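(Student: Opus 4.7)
The plan is a continuity argument along the isospectral orbit. Since $a$ has only simple roots, Proposition~\ref{1} asserts that $I(a)$ is connected and is a single orbit of the action $\pi(\cdot)$ of Definition~\ref{groupaction}, diffeomorphic to the real torus $(\SY^1)^g$. Choose $t_0 \in \C^g$ with $\pi(t_0)\xi_\l = \hat\xi_\l$ and consider the real-analytic path
\begin{equation*}
\xi_\l(s) := \pi(s\,t_0)\,\xi_\l,\qquad s \in [0,1].
\end{equation*}
By Proposition~\ref{stab} the stabilizer is constant along an orbit, so $(\tau,0,\ldots,0)$ lies in every $\Gamma_{\xi_\l(s)}$ and each $\xi_\l(s)$ induces a minimal immersion $X_s:\C/\tau\Z \to \SY^2\times\R$. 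Proposition~\ref{corcurva} yields a curvature bound uniform in $s$ (because $I(a)$ is compact and all roots of $a$ are fixed), and Lemma~\ref{tubular} then provides an embedded tubular neighborhood of a uniform radius $\epsilon_1>0$ around any embedded $X_s$, depending only on this curvature bound and $|\tau|$.

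I would then show that
\begin{equation*}
E = \{\,s \in [0,1] \mid X_s\ \text{is embedded}\,\}
\end{equation*}
is both open and closed in $[0,1]$; together with $0 \in E$ and connectedness of $[0,1]$, this forces $E=[0,1]$. Openness at $s_0 \in E$ uses that the Sym--Bobenko formula~\eqref{eq:immersion} and the Iwasawa decomposition depend real-analytically on the initial potential, so $X_s \to X_{s_0}$ smoothly on compact subsets of $\C$ as $s\to s_0$. Combined with the quasi-periodicity of the metric from Corollary~\ref{quasi periodic} and the uniform tubular neighborhood of radius $\epsilon_1$ around $X_{s_0}$, every sufficiently nearby $X_s$ stays inside and transverse to this tubular neighborhood, hence is globally embedded.

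For closedness, suppose $s_n \to s_\infty$ with $s_n \in E$. The uniform curvature bound gives smooth convergence $X_{s_n} \to X_{s_\infty}$ on compacta, and any self-intersection of $X_{s_\infty}$ must be tangential, since a transverse intersection would persist under $C^1$-small perturbation and already appear in some $X_{s_n}$. Two tangential minimal sheets meeting at an interior point coincide on a neighborhood by the strong maximum principle, hence by real-analyticity of the immersion there is a nonzero $\delta \in \C$ with $X_{s_\infty}(z+\delta) = X_{s_\infty}(z)$; this $\delta$ then belongs to the stabilizer of $\xi_\l(s_\infty)$, and so, by Proposition~\ref{stab}, to the stabilizer of every $\xi_\l(s)$, contradicting that the nearby embedded $X_{s_n}$ have no subperiod dividing $\tau$. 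Tangent sheets meeting only at infinity are excluded by the maximum principle at infinity invoked in the proof of Lemma~\ref{tubular}. Therefore $s_\infty \in E$.

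The main obstacle is the non-compactness of the annulus: embeddedness of a non-compact minimal surface is not generally stable under smooth convergence on compact sets, since sheets could in principle collide only at infinity along the deformation. Lemma~\ref{tubular} is the decisive input, because its tubular neighborhood radius is controlled purely by the isospectrally invariant lower bound on the flux; combined with the quasi-periodicity of $\omega$, this upgrades the local smooth convergence produced by the Iwasawa decomposition into the global control of embeddedness needed to close both the openness and the closedness steps.
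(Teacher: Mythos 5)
Your overall strategy (connect $\xi_\l$ to $\hat \xi_\l$ through the orbit, use Proposition \ref{stab} to keep the period, and run an open-and-closed argument on the embedded set) has the same skeleton as the paper, and your closedness step is essentially the paper's maximum-principle argument. The genuine gap is in the openness step. You claim that because $X_s\to X_{s_0}$ smoothly on compact subsets of $\C$, and because the metric is quasi-periodic, a nearby $X_s$ ``stays inside and transverse to'' the $\epsilon_1$-tubular neighborhood of $X_{s_0}$, hence is embedded. That global containment does not follow from what you cite, and it is in general false: the convergence coming from real-analyticity of the Iwasawa decomposition is only uniform on compact subsets, the annulus has two non-compact ends, and two immersions with nearby potentials typically drift apart extrinsically as $|y|\to\infty$; Corollary \ref{quasi periodic} controls the intrinsic quantity $\o$ only, not the relative position of the two surfaces in $\SY^2\times\R$. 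So nothing in your argument excludes sheets of $X_s$ colliding with $X_{s_0}$ (or with themselves) far out on the ends --- which is exactly the obstacle you name in your last paragraph; invoking Lemma \ref{tubular} does not by itself close it, because you never establish that the two annuli are globally within $\epsilon_1$ of each other.

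The missing idea is the one the paper supplies at precisely this point. Since $I(a)$ is compact and $(t,\xi_\l)\mapsto\pi(t)\xi_\l$ is uniformly continuous, for small $t$ one has $\|\pi(t)\eta_\l-\eta_\l\|\le\delta$ for every $\eta_\l\in I(a)$; by commutativity of the action (Proposition \ref{com}) this yields $\|\hat\zeta_\l(z)-\zeta_\l(z)\|=\|\pi(t)\bigl(\pi(z)\xi_\l\bigr)-\pi(z)\xi_\l\|\le\delta$ for \emph{all} $z$, i.e. the two polynomial Killing fields stay uniformly close along the entire annulus, not just on a compact piece. Restarting the Iwasawa factorization at an arbitrary base point $z_0$, via $F^{z_0}_\l(z)=F^{-1}_\l(z_0)F_\l(z+z_0)$, one then gets that on a fixed-size neighborhood of every $z_0$ the immersion $\hat X$ is $\calC^2$-close to $X$ \emph{after applying an ambient isometry} $I(z_0)=F_\l(z_0)\hat F_\l^{-1}(z_0)$ depending on $z_0$. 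It is this closeness-up-to-a-point-dependent-isometry near every point, combined with the uniform embedded tubular neighborhood of Lemma \ref{tubular}, that makes $\hat X$ locally a small graph over the embedded annulus and hence embedded; no claim of global closeness of $X_s$ to $X_{s_0}$ without such isometries is available, and your proposal needs either this mechanism or an equivalent uniform-in-$z$ control to make the openness (and, for the same reason, the ``excluded at infinity'' part of the closedness) step valid.
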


\begin{proof}
By the maximum principle we know that embeddedness is a closed condition.
If there is a sequence of annuli $A_n$ converging on compact sets to a limit $A_0$,
then if $A_n$ is embedded, the limit is embedded. If not there is a point of $A_0$ where
we locally have at least two disks in $A_0$ which are intersecting with
transversal tangent plane. Then for $n$ large enough we can observe two transversely
intersecting disks converging to $A_0$ and the property to be embedded is
a closed property in $I(a)$.

We prove that embeddedness is an open property in $I(a)$. Consider a potential $\xi_{\l}$ inducing an embedded annulus $X(z)$, and a potential 
$\hat \xi_\l$ close to $\xi_{\l}$ inducing an annulus $\hat X(z)$ with the same period $\tau$. We prove that $\hat X(z)$ is embedded.

Since the Iwasawa decomposition is a diffeomorphism, the map $X: \C / \tau \Z \times I(a) \to \SY^2 \times \R$ where  $X(z,\xi_{\l})$ is the immersion induced by $\xi_{\l}$ at a point $z$, is uniformly smooth on compact sets of $\C / \tau \Z$.
Then for all $\epsilon >0$ there is $R > 2\tau$ such that on $\bar B( 0,R) \times I(a)$ there exists $\delta >0$ with
$$\xi_{\l}, \hat \xi_\l \in I(a) , ||\xi_{\l}- \hat \xi_\l || <\delta \quad \hbox{ implies } \quad ||X(z,\xi_{\l}) - X(z,\hat \xi_\l)||_{{\calC}^2} <\epsilon \,\,\hbox{ for $z \in \bar B( 0,R)$}.$$

Since $I(a)$ has a structure of a $g$-dimensional manifold and the isospectral group action defines a chart around $\xi_{\l}$, there is a $t\in \C^g$ such that $\hat \xi_{\l} = \pi (t)\xi_{\l}$ and $st \in \C^g$, define a smooth deformation $ \pi (s t)\xi_{\l} , s \in [0,1]$ between $\xi_{\l}$ and  $\pi (t)\xi_{\l}$.

Locally in a neighborhood of some $\xi_{0} \in I(a)$ and some $\delta >0$, there exists $\delta_1>0$ and $\delta_2 >0$ such that
$$||\pi(t)\xi_\l - \xi_\l|| \leq \delta \hbox{ for } || \xi_\l - \xi_{0}||< \delta_2 \hbox{ and } |t| < \delta_1.$$
Using compactness of $I(a)$  there is a $\delta' >0$ such that for all $\xi_\l \in I(a)$ and $| t| <\delta' $, we have
$$||\pi(t)\xi_\l - \xi_\l|| \leq \delta.$$
Now consider annuli $X(\C / \tau \Z)$ and $\hat X(\C / \tau \Z)$ induced by potentials $\xi_{\l}$ and $\hat \xi_\l=\pi (t) \xi_{\l}$, and with polynomial Killing fields $\zeta_\l (z) = \pi(z) \xi_{\l}$ and $\hat \zeta _\l(z) = \pi(z)\hat  \xi_\l=\pi(z)\pi (t) \xi_{\l}$.
Since the action $\pi(t)\pi (z)\xi_\l= \pi (z) \pi (t) \xi_\l$ by commuting property, we have
$$||\hat \zeta_\l (z) -\zeta _{\l}(z)||=||\pi(z)\pi (t) \xi_{\l}- \pi(z) \xi_{\l}||=||\pi (t) [\pi(z) \xi_{\l}]-\pi(z) \xi_{\l}|| \leq \delta$$
This proves that the polynomial Killing fields $\zeta_{\l}(z)$ and $\hat \zeta_\l (z)$ are uniformly close in $z \in \C/\tau \Z$, and it remains to consider the corresponding immersions.

Let $z_0 \in \C/\tau \Z$ and $\exp (z\zeta _{\l} (z_0))=F^{z_0}_\l(z) B^{z_0}_\l (z)$ and 
$\exp (z\hat\zeta _{\l} (z_0))=\hat F^{z_0}_\l(z) \hat B^{z_0}_\l (z)$ the Iwasawa decompositions. Then $F^{z_0}_\l(z)=F_\l ^{-1}(z_0) F_\l (z+z_0)$ and $\hat F^{z_0}_\l(z)= \hat F_\l ^{-1}(z_0) \hat F_\l (z+z_0)$ by the commuting formula of Lemma \ref{com}.
Since $||\hat \zeta_\l (z_0) -\zeta _\l(z_0)|| \leq \delta$, we have 
$$||F^{z_0}_\l(z) - \hat F^{z_0}_\l(z)|| < \epsilon \hbox{ for } z \in \bar B(0,R).$$
This implies that
$$|| I(z_0) \hat  F_\l (z+z_0)- F_\l (z+z_0)|| <\epsilon  \hbox{ for } z \in \bar B(0,R)$$
where $I(z_0)=F_\l (z_0) \hat F_\l^{-1} (z_0)$ is an isometry of $\SY^2 \times \R$ depending on $z_0$ which preserves
the level set $\SY^2 \times \{t\}$.
This proves that for any $\epsilon >0$ and any point $z_0 \in \C / \tau \Z $, there is a compact set $K(z_0) \subset \C / \tau \Z$  such that there exists an isometry
$I(z_0)$ of $\SY^2 \times \R$, with $I(z_0)\hat X(z) $  uniformly  ${\calC}^2$ close to $X(z) $ on $K(z_0)$
$$\sup _{z \in K(z_0)}| |I(z_0) \hat X (z) - X (z) ||_{{\calC}^2} \leq \epsilon$$

A properly embedded annulus intersects any level set $\{x_3 =t\}$ in exactly one connected component which is a topological circle.
By the maximum principle at infinity for minimal surfaces in manifolds with non-negative Ricci curvature, each minimal annulus has an embedded tubular neighborhood $T_{\epsilon_1}=Y((\C / \tau \Z) \times ]-\epsilon_1,\epsilon_1[  )$. The constant $\epsilon_1>0$ depends only
on a lower bound of the flux $F_3= |\tau| \geq \epsilon_0 >0$. Since $|\o|$ is uniformly bounded in ${\mathcal C}^{k,\alpha}$
norm, the third coordinate $n_3= \tanh \o$ is bounded away from $n_3=1$. This implies that the tubular neighborhood $T_{\epsilon_1}$ intersects a
horizontal section $\{x_3 =t\}$ in an embedded tubular neighborhood of the curve $X(z) \cap \{x_3=t\}$. Hence the isometry $I(z_0)$ gives a comparison of the curves $I(z_0)\hat X(z) \cap \{x_3=t\}$ with $X(z) \cap \{x_3=t\}$ and proves that every level curve of $\hat X(z)$ are embedded. Hence the annulus $\hat X(z)$ is embedded. Hence embeddedness is an open property of $I(a)$.
\end{proof}
In the general case the action has several orbits. If an annulus is embedded in one
orbit, all elements of the orbit induce embedded annuli. Higher orbits are bubbletons. The following proposition is used to reduce bubbletons.
\begin{proposition}
\label{bubbleembedded}
Assume that there is an isospectral set $I(a)$ with $a(\l)$ having higher order roots $\alpha_0,...,\a_n$ with $|\a_i| \neq 1$ for $i=1,...,n$ and $\tau \in \C$ such that
the induced immersion $X: \C \to \SY^2 \times \R$ of $\xi _{\l} \in I(a)$, with $\xi_{\a_i} \neq 0$ for $i=1,...,n$
satisfies $X(z+\tau)=X(z)$. Then if $X (\C / \tau \Z)$ is embedded then any $\hat  \xi_\l = \pi (t) \xi_{\l} \in I(a)$  induces an embedded annulus $\hat X (\C / \tau \Z)$.
\end{proposition}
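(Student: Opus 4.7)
The plan is to adapt the strategy of Proposition~\ref{simpleembedded} to a single orbit of the isospectral action. The continuous path $\gamma(s) := \pi(s t)\xi_\l$, $s \in [0,1]$, joins $\xi_\l$ to $\hat\xi_\l$; by the group law of Proposition~\ref{com} it stays inside the orbit of $\xi_\l$, and the nonvanishing condition $\xi_{\alpha_i} \neq 0$ is preserved along the flow (the group action preserves the order of vanishing of $\xi_\l$ at each $\alpha_i$). My aim is to show that the set $S := \{s \in [0,1] \mid \gamma(s) \text{ induces an embedded annulus}\}$ is both open and closed in $[0,1]$. Since $0 \in S$ and $[0,1]$ is connected, this forces $1 \in S$, which is the claim.

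Closedness of $S$ follows from the maximum principle, exactly as in Proposition~\ref{simpleembedded}: if $s_n \to s_\infty$ with $\gamma(s_n) \in S$, the corresponding immersions converge on compact subsets of $\C / \tau \Z$ by real-analytic dependence of the Iwasawa decomposition, and a transverse self-intersection in the limit would yield transverse self-intersections of the approximating annuli for large $n$, a contradiction.

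For openness, fix $s_0 \in S$ and set $\xi_0 := \gamma(s_0)$, inducing an embedded annulus $X_0$. For $s$ near $s_0$ write $\gamma(s) = \pi((s-s_0)t)\xi_0$, and apply the commutativity from Proposition~\ref{com} to the polynomial Killing field:
$$\pi(z)\gamma(s) = \pi((s-s_0)t)\,\pi(z)\xi_0.$$
The set $\{\pi(z)\xi_0 \mid z \in \C/\tau\Z\}$ is compact and contained in a single orbit (that of $\xi_0$), so continuity of the action at $t = 0$ yields
$$\sup_{z\in\C/\tau\Z}\|\pi(z)\gamma(s) - \pi(z)\xi_0\| \longrightarrow 0 \quad \text{as } s \to s_0.$$
Converting this $\calC^0$-smallness of polynomial Killing fields into local $\calC^2$-smallness of immersions up to $z_0$-dependent isometries of $\SY^2 \times \R$ then proceeds exactly as in the proof of Proposition~\ref{simpleembedded}. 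The embedded tubular neighborhood of uniform radius $\epsilon_1(\epsilon_0)$ around $X_0$ supplied by Lemma~\ref{tubular} finally forces the perturbed annulus to remain a normal graph over $X_0$, hence embedded.

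The main obstacle is uniformity of the action estimate near the stratum boundary: one must verify that $\|\pi(\delta t)\eta_\l - \eta_\l\|$ is small uniformly for $\eta_\l$ in the compact set $\{\pi(z)\xi_0 \mid z \in \C/\tau\Z\}$, and uniformly in $s$ near $s_0$. Because this set lies strictly inside the orbit of $\xi_0$ and never touches the lower-dimensional orbits where some $\xi_{\alpha_i}$ vanishes, the action is smooth in an open neighborhood inside $\pk$, and compactness yields the required uniform estimate. The delicate continuity of the action across orbit boundaries, needed when a family of bubbletons degenerates to the smaller orbit, is handled separately via the decomposition $\xi_{0,\l} = (L', \tilde\xi_{0,\l})$ discussed earlier in the text and does not enter the present openness argument.
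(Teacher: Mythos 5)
Your overall scheme (reduce to a uniform-in-$z$ closeness of the polynomial Killing fields $\pi(z)\hat\xi_\l$ and $\pi(z)\xi_\l$, then convert to local $\calC^2$ closeness of the immersions up to isometries and finish with the tubular neighborhood of Lemma~\ref{tubular}) is the same as the paper's, but your openness step has a genuine gap at exactly the point you set aside. First, $\C/\tau\Z$ is an infinite cylinder, not a torus, so the set $\{\pi(z)\xi_0 \mid z\in\C/\tau\Z\}$ is not compact; only its closure inside the compact set $I(a)$ is. Second, and more seriously, that closure does meet the lower-dimensional orbits: a bubbleton is a localized dressing, and as $z$ diverges along the annulus the line $L'(z)={}^t\overline{\tilde F}_{\alpha_0}(z)L'$ tends to a fixed point of the subgroup action $\tilde\pi$, i.e.\ the translated potentials $\pi(z)\xi_0$ accumulate on potentials vanishing at the $\alpha_i$. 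So you cannot assert that the translation-orbit ``lies strictly inside the orbit of $\xi_0$'' and obtain the uniform estimate from smoothness of the action on the open stratum plus compactness; the continuity of the action across orbit boundaries is precisely what is needed to make $\sup_{z}\|\pi(\delta t)\,\pi(z)\xi_0-\pi(z)\xi_0\|$ small, and it cannot be deferred to a separate discussion.

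The paper closes this hole differently: the isospectral action, which fails to be free (and is only continuous, not smooth) at potentials with roots, nevertheless extends continuously to all of $\pk$, and uniform continuity of $(t,\xi_\l)\mapsto\pi(t)\xi_\l$ for small $|t|$ is then obtained from compactness of the whole isospectral set $I(a)$ by a finite subcover, not from compactness of the translation-orbit. One then observes that translations preserve the orbit ($\pi(z)\xi_\l$ has a root at $\alpha_i$ if and only if $\xi_\l$ does), so the commutation $\pi(z)\pi(t)\xi_\l=\pi(t)\pi(z)\xi_\l$ gives the uniform bound for all $z$, and the conclusion proceeds exactly as in Proposition~\ref{simpleembedded}. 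You should also say a word about why the period is preserved along your path $s\mapsto\pi(st)\xi_\l$ (this is Proposition~\ref{bubbleclosing}, resp.\ Proposition~\ref{stab}). With the uniform continuity taken over all of $I(a)$ instead of over the open orbit, and the periodicity reference added, your open--closed argument becomes correct and essentially coincides with the paper's proof.
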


\begin{proof}
In  \cite{HKS1}, section 6 we prove that any potential $\xi_\l$ splits into an element of $\C \P^1 \times I({\tilde a})$ where $\tilde a$ is a polynomial with lower degree.
There is a group action $\pi : \C^g \times I(a) \to I(a)$, but the stabilizer $\Gamma_{\xi_\l}$ is no longer isomorphic to  $\Z^g$.
We prove in proposition \ref{bubbleaction}, that there is a subgroup action
$$\tilde \pi : \C \times I(a) \to I(a)$$
which acts transitively on the $\C \P^1$ factor, fixing the second factor of $I({\tilde a})$. We have $\tilde \pi (\beta) \xi_\l=(L(\beta), \tilde \xi_\l)$
where $\beta \in \C \P^1$. This implies that the group action $\pi$ gives different orbits in $I(a)$. Each orbit is characterized by the property that all its element have the same roots of the same orders at $\a_1,...,\a_n$, since the action $\pi(t)\xi_\l$ preserves the order of roots of $\xi_\l$.

Suppose $\xi_{0,\l}$ is a potential of an embedded annulus and assume it has no zeroes.  Consider its orbit
$O=\{ \xi_\l \in I(a) ;  \xi_\l = \pi (t) \xi_{0,\l}\}.$ We prove that if $\hat \xi_\l \in O$, then it induces an embedded annulus. In fact $\xi_\l$ is a higher order bubbleton. If the annulus of $\xi_\l$ closes with period $\tau$, we prove in Proposition \ref{bubbleclosing} that all $\hat \xi_\l \in O$ give $\tau$-periodic annuli.

Now since $(t, \xi_\l) \to \pi(t)\xi_\l$ is continuous, we have for all $\delta >0$ and $\xi_0 \in I(a)$, that there exists $\delta_1 >0$ and $\delta _2 >0$ such that $|| \pi (t) \xi_\l - \xi_\l ||  \leq \delta \hbox{ for } ||\xi_\l - \xi_0|| \leq \delta_2, |t|<\delta_1.$

If $\xi_\l$ has a root, then a subgroup of $t \in \C^g$ acts trivially on $\xi_\l$. But the important point is that the action extends continuously to this subset. Since $I(a)$ is compact, the covering by the set $B(\xi_0, \delta_2)$ has a finite subcovering. Then there is $\delta'>0$
with
$$|| \pi (t) \xi_\l - \xi_\l ||  \leq \delta \hbox{ for } \xi_\l \in I(a) , |t|<\delta'.$$
Note that if $\hat \xi_\l= \pi (t) \xi_{\l,0}$ induces a different annulus, then $|| \pi(z)\pi(t) \xi_{\l,0}-\pi(z)  \xi_{\l,0}|| \leq \delta$ for all $|t| \leq \delta'$
and for all $z \in \C$. Translations in the annulus preserve the orbit because $\pi (z) \xi_\l$ has a root if and only if $\xi_\l$ has a root.
Therefore we see that $\hat \zeta_{\l}(z)$ and $\zeta_{0,\l}(z)$ stay uniformly close and we conclude as in the preceding lemma that the orbit is a subset of $I(a)$ inducing embedded annuli.
\end{proof}
\begin{remark} We remark that in the case where $\xi_\l$ would have
a root at $\alpha_0$ then the group action $\pi (t)\xi
_\l$ would have a root too. Then at this particular point the group action does not generate a real $g$-dimensional manifold. But one can prove that $\pi$ generates a $g-2n$ dimensional subspace of the tangent space of
${\calP}_g$. The subgroup action $\tilde \pi (\beta)$ with $\beta \in \C^n$ acts trivially on the set of potentials having $2n$ roots counted with multiplicity. This is equivalent to
the situation where one can remove the singularity without changing the immersion (see proposition \ref{remove}).
\end{remark}
\section{Deformation of finite type annuli}
\label{abdeformation}
In this section we describe the deformation of minimal cylinders of finite type. We deform
the spectral curves $\Sigma$ and we preserve the closing condition.  A minimal cylinder of $\SY^2 \times \R$ possesses a monodromy $F_\l (\tau)$ whose eigenvalue
$\mu (\tau)$ is a holomorphic function on $\Sigma$ with essential singularities at $\lambda =0$ and $\lambda = \infty$.
In the following we deform  $\Sigma=\{ \nu ^2 = \lambda ^{-1} a(\lambda)\}$ by moving the roots of the polynomial $a$ without destroying the global properties of the holomorphic function $\mu$. The existence of $\mu$ on $\Sigma$ with
$$d \ln \mu = \frac{b\, d \l}{\nu \l^2}$$
which satisfy the closing condition of definition \ref{spectraldata}, assure that the spectral data $(a,b)$ parameterize a closed annulus.
Then we  derive vector fields  on open sets of spectral data
$${\calM}:=\left\{ (a,\,b)\right\}$$
and show that their integral curves are differentiable families of spectral data of periodic
minimal cylinders.

We parameterize such deformations by one parameter $t \in [0,\e)$. Along the deformation we will increase
the length of the period $|\tau|$. This will be a condition on the vector fields on the set $(a(t),b(t))$.
\vskip 0.5cm
\noindent
{\bf The Whitham deformation. } We consider the function $\ln \mu$ as a function depending on $\l$ and $t$. On $\Sigma$, the multivalued holomorphic function $\ln \mu$ has simple poles
at $\l=0$ and $\l =\infty$. Let us define  $U_1, U_2 .....U_{2g}$ a covering by open subsets of $\Sigma$,
such that each $U_i$ contains at most one branch point $\a_i$ and $U_{2g+1}$ is an open neighborhood
at $(\infty,0)$,  $U_{2g+2}$ an open neighborhood  of $(\infty, \infty)$.  We can write locally
the meromorphic function on $\Sigma$ by

$$
    \ln \mu = \left\{ \begin{array}{ll}
            f_i (\l) \sqrt{\l-\a_i} + \pi\,\mi\,  n_i  &\mbox{ on } U_i, 1\leq i \leq 2g \\
            \l^{-1/2}\, f_{2g+1}(\l) + \pi\,\mi\,  n_{2g+1}  &\mbox{ on } U_{2g+1}\\
            \l^{1/2}\, f_{2g+2}(\l) + \pi\,\mi\,  n_{2g+2} &\mbox{ on } U_{2g+2}\,.
            \end{array} \right.
$$
We can write locally on the open set $U_i$,
$$\partial_t \ln \mu = \partial_t f_i ( \lambda) \sqrt{\l -\a_i} - \frac{ \dot \a_i f_i(\l) }{2 \sqrt{\l - \a_i}}\,.$$
We remark that at each branch point $\partial_t \ln \mu$ has a first order pole on $\Sigma$.
Since the branches of $\ln \mu $ differ from each other by an integer multiple
of $2 \pi i$, then  $\partial_t\ln \mu$ is single valued on $\Sigma$ and
can have poles only at the branch points of $\Sigma$, or equivalently at the zeroes of $a$
and at $\l=0$ or $\l=\infty$. Collecting all these conditions we can write $\partial_t \ln \mu $
globally on $\Sigma$ by
\begin{equation}\label{eq:def_c}
  \partial_t\ln \mu =  \frac{c(\l)}{\nu \l }
\end{equation}
with a real polynomial $c$ of degree at most $g+1$, which satisfies the reality condition
\begin{equation}
\label{eq:realc}
\l ^{g+1} \overline{c (\bar \l ^{-1})}=c(\l).
\end{equation}
The abelian differential $d \ln \mu$ of the second kind is of the form
 \begin{equation}
 \label{eq:dlogmu}
 d \ln \mu = \frac{b \,d \lambda}{\nu \lambda ^2}
 \end{equation}
 where $b$ is a real polynomial of degree $g+1$  which satisfies the reality condition
 $$\lambda ^{g+1} \overline{ b (\bar \l ^{-1})}=-b(\l).$$
We differentiate \eqref{eq:dlogmu} with respect to $t$, and
\eqref{eq:def_c} with respect to $\l$ and derive
\begin{equation*}
 \partial^2_{t\l}\ln \mu =  \partial_{\l} \frac{ c}{\nu \l}=\frac{c'}{\nu \l}-\frac{c}{\nu \l ^2}- \frac{c \nu '}{\nu ^2 \l}=
  \frac{2\nu ^2 \l ^2 c' -2\l \nu ^2 c -c a'\l+ca}{2 \nu ^3 \l ^3}
  \end{equation*}
 \begin{equation*}
 \partial^2_{\l t }\ln \mu =  \partial_{t} \frac{ b}{\nu \l^2}=\frac{\dot b}{\nu \l^2}-\frac{b \dot \nu }{\nu ^2 \l ^2}=
  \frac{2\nu ^2 \l \dot b -b \dot a }{2 \nu ^3 \l ^3}
  \end{equation*}

\noindent
Hence second partial derivatives commute if and only if
\begin{equation} \label{eq:integrability_1}
-  2\dot{b}a + b\dot{a} = - 2 \l ac' + ac + \l a'c.
\end{equation}
Both sides in the last formula are polynomials of at most degree $3g+1$
which satisfy a reality condition. This corresponds to $3g+2$ real equations.
Choosing a polynomial $c$ which satisfies the reality condition (\ref{eq:realc})
we thus obtain a vector field on polynomials $\{(a,b)\}$.
When $a$ and $b$ have no common roots $\a_i$ and $\beta_i$ then equation
\eqref{eq:integrability_1} uniquely determines the values of
$\dot{a}$ at the roots of $a$  and the values of $\dot{b}$ at the
roots of $b$.  In the case, where $a$ and $b$ have only simple roots, we have the Whitham deformation system
\begin{equation}
\label{eq:integrability_2}
\dot a ( \alpha _i) = \frac{\a_i a' (\a_i) c(\a_i)}{b(\a_i)}\,; \qquad
\dot b( \beta_i)=\frac{2\beta_i a(\beta_i)c'(\beta_i) -a(\beta_i)c(\beta_i) - \beta_i a' (\beta_i) c(\beta_i)}{2a(\beta_i)}\,.
\end{equation}

\vskip 0.3cm
\noindent
{\bf Singularity of the Whitham deformation.} By defining such polynomials $c$ we obtain vector fields on the space of polynomials $a$ of degree $2g$
and polynomials $b$ of degree $g+1$ satisfying the reality condition.  The value of $c$ at each $\a_i$ and $\beta_i$ determine uniquely a tangent vector $(\dot a,\dot b)$ on the moduli space of minimal cylinders.  Equations \eqref{eq:integrability_2} define rational vector fields on the
space of spectral data $(a,b)$ of minimal annuli. We will study in the next subsection the condition on $c$ to preserve the period along the deformation when $a$ and $b$ have no common roots.

The integral curves have possible singularities when $a$ and $b$ have common roots but we will see below that we can pass through such singularities. The
closing conditions on $c$ are the same, but the difference now is that the vector field is meromorphic. In this setting we reparameterize the integral curves $(a(t),b(t))$ in the neighborhood of each pole by using a smooth reparametrization of the flow using the value at the branch point of the function $\ln \mu$. We construct a smooth model for the space of spectral curves preserving $\mu$ in the spirit of deformations of branched coverings of Hurwitz in section \ref{param}. In this model we reparameterize the equation (\ref{eq:integrability_2}), and obtain instead of a pole a zero in the induced vector field. At the roots of the vector field in the equation (\ref{eq:integrability_2}) we show that the linearization has  non-zero positive and negative eigenvalues and thus integral curve moving in and out of the singularity. This give us a way to pass through
common roots of $a$ and $b$ in the meromorphic equation (\ref{eq:integrability_2}) (see theorem \ref{commonroots}).

\vskip 0.3cm
\noindent
{\bf Opening double points.} Another application of this smooth model is a construction of integral curves which open double points on $\Sigma$ and increase the genus of the spectral curve. To open a double point we add a double zero to the polynomial $a(\l)$ and a simple zero to the polynomial $b(\l)$, and note that this does not change $\mu$. Thus we create a common root of $a$ and $b$. Passing through the singularity means that we open this double point. We can open double points only at points where $\mu = \pm 1$. When two roots coalesce, this is a double point of $\mu$.

There is a difference between opening double points on $\SY^1$ and opening pairs of double points away from $\SY^1$. In the first case, there exists only one direction which preserves the condition $\l^{-g} a(\l) \leq 0$ for $\l \in \SY^1$. The other direction should move the roots of $a (\l)$
on the unit circle, a deformation which is not allowed. Pairs of double points can be opened in both directions. We can pass through such singularities 
and this defines a deformation which increases the genus of the spectral curve.

To increase the flux simultaneously while opening a non-real double point we choose a $c$ which changes the flux. At the double point, we find an integral curve of the equation and we pass through the singularity.
Since for non-real double points we can open double points in both direction of the integral curve,
there exists a direction which increases the flux and opens the double point.

In the case where we open a real double point, there is only one direction which preserves the condition $\l^{-g} a(\l) \leq 0$ and leads away from the unit circle, preserving the reality condition. In general we cannot increase the flux by opening real double points. 
This is the case when we open double points on the genus zero curve (see section \ref{opendoublegenuszero}).

\vskip 0.3cm
\noindent
{\bf Increasing the flux along the deformation. } At $\l=0$, we have $-2\dot b (0)  a(0) +b(0) \dot{a}(0)=a(0)c(0)$. This equation can be expressed as
$$\partial_t( b \sqrt{-a}^{-1})(0)=\frac{-2\dot b (0)  a(0) +b(0) \dot{a}(0)}{2(-a(0))^{3/2}}=\frac{-c(0)}{2 \sqrt{-a(0)}}$$
Then
$$-2 {\rm Re}\;  \partial_t (\ln  b \sqrt{-a}^{-1} ) (0)= -2  \partial_t  \ln |\tau|  ={\rm Re}  \frac{c(0)}{b(0)}.$$
\begin{lemma}
\label{increasetau}
If $g>0$, then there exits a $c(\l)$ preserving the closing condition and reality condition which increases $|\tau|$ along its integral curves.
At a local maxima of $|\tau|$, we have $g=0$.
\end{lemma}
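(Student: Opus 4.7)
The plan is to prove the lemma by a dimension count together with an explicit construction. The space of admissible $c$ has enough real degrees of freedom when $g \geq 1$ that, after imposing the two real closing conditions $c(1) = 0$ and $\operatorname{Im}(c(0)/b(0)) = 0$, the linear functional $c \mapsto \operatorname{Re}(c(0)/b(0))$ remains non-trivial and its sign can be chosen to increase $|\tau|$.

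First I would set up the count. Writing $c(\l)=\sum_{i=0}^{g+1}c_i\l^i$, the reality condition $\l^{g+1}\overline{c(\bar\l^{-1})}=c(\l)$ reads $c_i=\overline{c_{g+1-i}}$ for all $i$, so the space of admissible $c$ is a real vector space of dimension $g+2$. The condition $c(1)=0$ is automatically a real constraint since reality forces $\overline{c(1)}=c(1)$, and $\operatorname{Im}(c(0)/b(0))=0$ is a second real linear constraint. Hence the subspace $V$ of $c$'s satisfying both closing conditions has $\dim_\R V\ge g$, and this is where $g\ge 1$ will enter: for $g=0$ these two constraints generically exhaust all freedom and collapse $V$ to $\{0\}$.

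Next I would exhibit an explicit element of $V$ on which the Re-functional is nonzero. For $g\ge 1$ and $A\in\C$ set
\[
c(\l) = (\l - 1)(A - \bar A\,\l^g).
\]
A direct expansion gives $c(\l)=-A+A\l+\bar A\l^g-\bar A\l^{g+1}$, so $c(1)=0$ and the coefficients indeed satisfy $c_i=\overline{c_{g+1-i}}$, confirming the reality condition. Moreover $c(0)=-A$. By definition~\ref{spectraldata}(iii), $b(0)=\rho\,e^{\mi\Theta/2}$ for some $\rho\in\R^\ast$, so choosing $A=s\,e^{\mi\Theta/2}$ with $s\in\R\setminus\{0\}$ yields
\[
\frac{c(0)}{b(0)} \;=\; -\frac{s}{\rho} \;\in\; \R\setminus\{0\},
\]
so $c\in V$ and $\operatorname{Re}(c(0)/b(0))=-s/\rho$ is nonzero with sign freely prescribable through the choice of $s$.

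Finally, invoking the evolution identity derived in the paragraph preceding the lemma, $-2\,\partial_t\ln|\tau|=\operatorname{Re}(c(0)/b(0))$, I would select the sign of $s$ opposite to that of $\rho$ so that $\operatorname{Re}(c(0)/b(0))<0$ and hence $\partial_t\ln|\tau|>0$ along the integral curve of the Whitham vector field associated with this $c$. The only subtle point is really the dimension inequality $\dim_\R V\ge g\ge 1$; the explicit construction then bypasses any further obstacle, and I expect no serious difficulty beyond verifying that the two closing constraints are genuinely independent for $g\ge 1$, which is immediate from the construction since $c(0)=-A$ can be made to take any value in $e^{\mi\Theta/2}\R$ while still satisfying $c(1)=0$.
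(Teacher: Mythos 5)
Your proposal is correct and follows essentially the same route as the paper: the paper's proof is exactly the explicit choice $c(\l)=(\l-1)\bigl(b(0)-\overline{b(0)}\,\l^g\bigr)$, which is your $c(\l)=(\l-1)(A-\bar A\,\l^g)$ with $A$ a particular real multiple of $e^{\mi\Theta/2}$ (namely $A=b(0)$), giving $c(0)/b(0)=-1<0$ and hence $\partial_t\ln|\tau|>0$. Your remark that the construction collapses for $g=0$ (where $b(0)\in\R$ forces $c\equiv 0$) is also precisely the paper's justification for the hypothesis $g>0$.
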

\begin{proof}
We choose independently of the fact that we have a singularity in the flow a vector field with ${\rm Re} \tfrac{c(0)}{b(0)} <0.$
This condition assures that we increase the third coordinate of the flux along the deformation.
Choose $c(\l)=(\l-1)(b(0)-\bar b(0) \l^g)$ and check that $c(0)=-b(0)$. By appendix C when $g=0$ then $b(0)=\frac{-\pi}{16} \in \R$ and $c=0$.
\end{proof}
\subsection{Flow without common roots of $a$ and $b$}
In the case where $b$ has higher order roots $\beta_j$ of order $k$, we need to consider the system (\ref{eq:integrability_2}), with additional equations $\dot b ' ( \beta_j), ...,\dot b^{(k-1)} (\beta_j)$. The equations can be easily derived from the derivatives
of (\ref{eq:integrability_1}), taking onto account that $b(\beta_j)=b'(\beta_j)=....=b^{(k-1)}(\beta_j)=0$.
\begin{theorem}\label{thm:deformation}
Let $U$ be an open subset of spectral data $(a,\,b)$ in ${\calM}_g$, with $a,\,b$ having no common roots.
Then a polynomial $c$ of degree $g+1$ which satisfies the reality condition \eqref{eq:realc} exists, and the equations \eqref{eq:integrability_1} define a smooth vector field on $U$.

If {\rm{(i)}} $c(1)=0$, and {\rm{(ii)}} ${\rm Im}\,(c(0)/b(0))=0$, then integral curves $(a(t),\,b(t)) \in \C^{2g}[\l] \times\C^{g+1}[\l]$ are spectral data of a continuous family of minimal annuli in $\SY ^2 \times \R$. If ${\rm Re}\,(c(0)/b(0)) <0$, the curve  $(a(t),\,b(t))$ is the spectral data of minimal annuli with flux bounded away from $0$.
\end{theorem}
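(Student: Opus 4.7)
The proof splits into four parts corresponding to the four assertions. Existence of a polynomial $c$ of degree $g+1$ satisfying the reality condition $\lambda^{g+1}\overline{c(\bar\lambda^{-1})}=c(\lambda)$ is immediate: writing $c(\lambda)=\sum_{k=0}^{g+1}c_k\lambda^k$, the reality condition pairs $c_k$ with $\overline{c_{g+1-k}}$, yielding a real $(g+2)$-dimensional space of admissible $c$.

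For the vector field, I would argue on the open set $U$ where $a,b$ are coprime that the polynomial identity $-2\dot b\,a+b\,\dot a=-2\lambda ac'+ac+\lambda a'c$ has a unique solution $(\dot a,\dot b)$ of the prescribed degrees with matching reality. Evaluating at each simple root $\alpha_i$ of $a$ or $\beta_j$ of $b$ yields the explicit expressions \eqref{eq:integrability_2}; coprimality guarantees the denominators are nonzero, so $\dot a(\alpha_i)$ and $\dot b(\beta_j)$ are smooth functions of the roots. A polynomial division argument (divide the right-hand side by $a$ after subtracting the Lagrange interpolant $b\,\dot a$) produces a particular solution in the correct degree range; the Bezout ambiguity is the one-parameter real family $(\dot a+2ra,\dot b+rb)$, corresponding to time reparametrization, which I would fix by a canonical normalization such as prescribing the leading coefficient of $a(t)$. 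Reality of the output follows from reality of both sides of the identity.

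Third, to preserve the closing conditions under the two hypotheses, I would invoke the derivation of \eqref{eq:integrability_1} from $\partial_t\ln\mu=c(\lambda)/(\nu\lambda)$, which by construction makes the flow compatible with the global holomorphic structure of $\mu$ on $\Sigma^*$. Hypothesis $c(1)=0$ gives $\partial_t\ln\mu|_{\lambda=1}=0$, so the Sym-point value $\mu(1)=\pm 1$ is preserved, maintaining the closing at $\lambda=1$. The hypothesis $\mathrm{Im}(c(0)/b(0))=0$ combined with the identity (already derived in the excerpt) $\partial_t\bigl(b(\lambda)/\sqrt{-a(\lambda)}\bigr)\big|_{\lambda=0}=-c(0)/(2\sqrt{-a(0)})$ keeps the quantity $b(0)/\sqrt{-a(0)}$ real, preserving Definition~\ref{spectraldata}(iii), i.e.\ $b(0)\in e^{\mi\Theta/2}\R$. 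Conditions (i)--(ii) of that definition are preserved by reality, and (iv)--(vi) follow because the pole and branch structure of $d\ln\mu$ on $\Sigma$ is built into the defining equation of the flow.

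Finally, the flux claim is the computation immediately preceding Lemma~\ref{increasetau}: $-2\partial_t\ln|\tau|=\mathrm{Re}(c(0)/b(0))$. Under $\mathrm{Re}(c(0)/b(0))<0$, $|\tau|$ is strictly increasing along integral curves, so starting from $|\tau|(0)>0$ it stays bounded away from zero. The step I expect to be the main obstacle is the verification that the discrete periodicity conditions \ref{spectraldata}(iv)--(v), asserting vanishing real part of certain cycle integrals and values of $h$ in $\mi\pi\Z$ at branch points, are preserved globally along the flow: the local Taylor data of $\mu$ at $\lambda=0,1,\infty$ are controlled by the hypotheses on $c$, but propagating this to all branch points requires the full integrability of the constructed vector field on the space of spectral curves.
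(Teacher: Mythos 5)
Your overall skeleton follows the paper's route (derive the vector field from \eqref{eq:integrability_1}, use $c(1)=0$ for the Sym point, use $\partial_t\bigl(b\sqrt{-a}^{-1}\bigr)(0)=-c(0)/(2\sqrt{-a(0)})$ for condition (iii), and the flux computation for the last claim), but there is a genuine gap, and it is precisely the step you yourself flag as ``the main obstacle'': the preservation of Definition~\ref{spectraldata}(iv)--(v), i.e.\ that the cycle integrals keep vanishing real part and that $\ln\mu$ keeps values in $\mi\pi\Z$ at all roots of $(\l-1)a$. These are the actual periodicity conditions, so asserting that they ``follow because the pole and branch structure of $d\ln\mu$ is built into the defining equation of the flow'' and then conceding that you cannot propagate the control from $\l=0,1,\infty$ to the branch points leaves the theorem unproved at its core. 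No appeal to ``full integrability on the space of spectral curves'' is needed, but some argument is.

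The paper closes this gap with two observations you do not make. First, since $c$ obeys the reality condition \eqref{eq:realc}, $\partial_t\ln\mu=c/(\nu\l)$ is a single-valued meromorphic function on $\Sigma$ compatible with $\sigma,\rho,\eta$; hence the periods of $d\ln\mu$ are independent of $t$, which preserves single-valuedness, the symmetry relations of $\mu$, and condition (iv). Second, and this is the key point, the branch-point condition (v) is preserved iff $\tfrac{d}{dt}\ln\mu(\a_i(t),t)=\partial_\l\ln\mu\cdot\dot\a_i+\partial_t\ln\mu=0$; by \eqref{eq:def_c} and \eqref{eq:dlogmu} the two terms have singular parts $\tfrac{b(\a_i)}{\nu\,\a_i^2}\dot\a_i$ and $\tfrac{c(\a_i)}{\nu\,\a_i}$, which cancel exactly when $\dot\a_i=-\a_i c(\a_i)/b(\a_i)$, i.e.\ \eqref{eq:integrability_4}. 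But this relation is automatic along the flow you already wrote down: differentiating $a(\a_i(t),t)=0$ gives $\dot\a_i=-\dot a(\a_i)/a'(\a_i)$, and \eqref{eq:integrability_2} turns this into $\dot\a_i=-\a_i c(\a_i)/b(\a_i)$. So the discrete conditions are preserved because the vector field itself encodes the cancellation at the moving branch points, with $c(1)=0$ handling the fixed point $\l=1$. A minor further correction: the Bezout ambiguity $(\dot a,\dot b)\mapsto(\dot a+2r a,\dot b+r b)$, $r\in\R$, is not a time reparametrization; it is the scaling gauge $(a,b)\mapsto(e^{2rt}a,e^{rt}b)$ which leaves $\mu$ unchanged, and it is fixed by the normalization $|a(0)|=\tfrac1{16}$ in the moduli (equivalently, as in the paper's proof, by the fact that the highest coefficient of $\dot b$ is pinned by \eqref{eq:integrability_1}, since $b_{g+1}=-\overline{b(0)}\neq0$).
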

\begin{proof}
The spectral data $(a,b)$ satisfy hypothesis of Definition~\ref{spectraldata}.
The solutions $\dot{a}$ and $\dot{b}$ of
equation~\eqref{eq:integrability_1} are rational expressions of the
coefficients of $a$, $b$ and $c$. If $a$ and $b$ have no common roots,
then the Taylor coefficients of $\dot{a}$ and $\dot{b}$ at the roots
of $a$ and $b$ up to the order of the roots minus one, respectively,
and the highest coefficient of $\dot{b}$
are uniquely determined by equation~\eqref{eq:integrability_2}.
Hence in this case the denominators of the rational expressions for
$\dot{a}$ and $\dot{b}$ do not vanish. Hence for smooth $c(\l)$ the
corresponding $\dot{a}$ and $\dot{b}$ are smooth too. We can thus find an integral curve in the set
of complex polynomial.

The transformation rules of $\mu$ under $\sigma$, $\rho$ and $\eta$ are
preserved under the flows of the vector field corresponding to $c$.
Hence the integrals of $d\ln\mu$ along any smooth path from one root
of $a$ to another root of $a$ is preserved too. This implies that
the subset of spectral data $(a,b)$, which determine a single valued function $\mu$ with
$\sigma^{\ast}\mu=\mu^{-1}$, $\rho^{\ast}\mu=\bar{\mu}^{-1}$ and
$\eta^{\ast}\mu=\bar{\mu}^{}$ is preserved under this flow.

Due to \eqref{eq:def_c} $\partial_t\ln \mu$ is a meromorphic function on the hyperelliptic curve $\Sigma$ and the periods of the meromorphic differential $d\ln\mu$ do not depend on $t$.  The closing condition of definition \ref{spectraldata} is preserved if $\frac{\rmd}{\rmd t}\ln \mu(\a_i(t),\,t) = 0$ for all roots $\a_i (t)$
of $a(t)$, which holds precisely when
$${\rm d} \ln
\mu(\a_i(t),\,t)\,\partial_t\a_i +
    \partial_t \ln \mu(\a_i(t),\,t) = 0\,.$$
Using equations \eqref{eq:def_c} and \eqref{eq:dlogmu}, the closing
conditions are therefore preserved if and only if
\begin{equation}\label{eq:integrability_4}
    \dot{\a}_i =
  \frac{-\a_i  \,c(\a_i)}{b(\a_i)}\,
\end{equation}
This equation describes the deformation of the spectral curve $\Sigma$. The polynomial $b$ exists and integrals along cycles are multiples of $\mi \pi \Z$.
The condition $c(1)=0$ ensures that the value of $\mu$ remains constant at $\lambda =1$, that is $\mu (1)=\pm 1$.

To close the annulus, we have to keep $b(0) \in e^{\mi\Theta/2} \R$ where $a(0) = \frac{-e^{\mi\Theta}}{16}$ for each $t\in \R$.
At $\l=0$, we observe that $-2\dot b (0)  a(0) +b(0) \dot{a}(0)=a(0)c(0)$ and as in the computation of increasing flux
$\partial_t (\ln  b \sqrt{-a}^{-1} ) (0) = \tfrac{-c(0)}{2b(0)}.$
To close the period we need that $b(0)\sqrt{-a(0)}^{-1} =\frac{ \tau e^{\mi\Theta/2}}{8} \in \R$ along the deformation. This gives condition (ii).

In order to control the flux, we need to control the period $\tau$ away from zero. This explains the condition on ${\rm Re}\frac{c(0)}{b(0)} <0$
along the deformation (see lemma \ref{increasetau}).
\end{proof}
\subsection{Smooth parametrization of spectral data and opening double points}
\label{param}
We consider a non-singular spectral curve $\Sigma$ of genus $g$ ($a$ has only simple roots) and spectral data of a minimal annulus given by $(a,b) \in {\calM}_g$. We consider $\mu =f \nu +g$ (see definition \ref{spectraldata}) with $g^2- f^2 \nu^2=1$ (using
the hyperelliptic involution). Roots of $f$ are points where $\mu^2=1$. These points are called double points. A possible higher genus 
spectral curve can be desingularized at such points. In this section we do the opposite. This means that we add singularities to the spectral data
$(a,b)$ and embedd the moduli space ${\calM}_{g} \hookrightarrow {\calM}_{g+n}$. In the next subsection we will see
that in the larger moduli space ${\calM}_{g+n}$, we can pass through a singularity of the vector field induced by $c$ (as describe in Theorem \ref{thm:deformation}), when $a$ and $b$ have common roots or when we want to increase the genus of a spectral curve $\Sigma$ 
by opening a double point.

By the hyperelliptic involution we have that $\mu^{-1}=-f \nu +g$. Hence

$$f= \frac{\mu - \mu^{-1}}{2 \nu} \hbox{ and } g =\frac{\mu + \mu^{-1}}{2}$$

When $a$ and $b$ have a common root at $\alpha_0$, then we integrate $d \ln \mu =\frac{b d \l}{\nu \l}$ near $\alpha_0$ and obtain $ \ln \mu = f_0(\l) (\l - \alpha_0)^d +g_0$ with  $d \geq 3/2$ and a local holomorphic function $f_0$ with $f_0 (\alpha_0)\neq 0$. Hence $f(\l)= \sinh (f_0(\l)(\l- \a_0)^{d-1/2})$ and $f$ vanishes at $\alpha_0$ with same order as $f_0(\l)(\l- \a_0)^{d-1/2}$.

If $\mu (\alpha_0)=\pm 1$, then we have $a(\alpha_0)=0$ or $f(\alpha_0)=0$. In the case where both are zero we have common roots of $a$ and $b$. When only $f(\alpha_0)=0$ and $a(\alpha_0)\neq 0$, then we have a simple double point (which is not a branch point).
There exist infinitely many such double points. In the last case, when $a(\alpha_0)=0$ and $f(\alpha_0) \neq 0$, then we have a simple branch point of $\Sigma$.

We consider spectral data $(\tilde a, \tilde b)= (a p^2, b p)$, such that $\mu$ can be written as $\mu =\tilde f \tilde \nu + \tilde g$ with holomorphic functions $\tilde f$ and $\tilde g$ on $\C^*$, with $\tilde \nu ^2= \frac{\tilde a (\l)}{\l}$ and $\tilde f$ does not vanish at the roots of $\tilde a$. This means that we add to $a$ an appropriate number of double roots at all common roots of $a$ and $b$. Moreover we have the choice
to do the same simultaneously at finitely many double points. Now we construct a smooth parametrization of $(\tilde a, \tilde b)$ in ${\calM}_{g+n}$.

We choose simply connected neighbourhoods $V_1,\ldots,V_M$ in
$\C^\ast$ at all roots of $\tilde b$ including the common roots with $\tilde a$.
Let $U_1,\ldots,U_M$ denote the pre-images in
$\Sigma\setminus\{0,\infty\}$ of $V_1,\ldots,V_M$ under the map $\l$.
For $m=1,\ldots,M$ we choose on $U_m$ a branch of the function
$\ln\mu$. On $U_m$ the function $\frac{1}{2\pi \mi}(\ln\mu+\sigma^\ast\ln\mu)$ is equal to a constant integer $n_m$ at the branch point.
Theses branches obey
\begin{equation}\label{eq:local description}
(\ln\mu-n_m \mi\pi)^2=A_m \quad\hbox{ for }\quad m=1,\ldots M,
\end{equation}
with holomorphic function $A_m$ on $V_m$ which vanish at the roots of $\tilde a$.
Since $\sigma^*(\ln\mu-n_m \mi\pi)=-(\ln\mu-n_m \mi\pi)$, the function $A_m$ depends only on $\l$ (see Forster \cite{Forster-RS}, theorem 8.2).
If we choose $U_m$ and $V_m$ small enough, then the derivative of $A_m$ has no roots besides
the corresponding root of $\tilde b$ ($d \ln \mu$ vanishes at roots of $b$). The roots of $\tilde b$ are exactly the roots of the derivative of $A_m$.
For small enough $U_m$ and $V_m$ there exists a biholomorphic map $\l\mapsto z_m(\l)$ from $V_m$
to a simply connected open neighbourhood $W_m$ of $0\in\mathbb{C}$, such
that $A_m$ coincides with
\begin{equation}\label{eq:pol A}
A_m(\l)=z_m^{d_m}(\l) + a_m
\end{equation}
At a root of $\tilde b$, which is not a root of $\tilde a$ (i.e. a root of $b$ which is not a root of $a$, and so a root of $d \ln \mu$),
the constant $a_m \neq 0$, and $d_m -1$ is the order of the roots of $b$. At a common root of $\tilde a$ and $\tilde b$, the constant $a_m =0$ and $d_m$ is an odd integer (i.e.  common roots of $a$ and $b$ including the case of double points).

We  describe  spectral curves in a neighbourhood of the given
spectral curve by small perturbations $\tilde{A}_1,\ldots,\tilde{A}_M$
of the polynomials $A_1,\ldots,A_M$. More precisely, we consider
polynomials $\tilde{A}_1,\ldots,\tilde{A}_M$ of the form
\begin{equation}\label{eq:deformed pol A}
\tilde{A}_m(z_m)=z_m^{d_m}+\tilde{a}_{m,1}z_m^{d_m-1}+ \tilde{a}_{m,2}z_m^{d_m-2}+\ldots+\tilde{a}_{m,m}
\end{equation}
with coefficients $\tilde{a}_{m,2},\ldots,(\tilde{a}_{m,m}-a_m)$ nearby zero. By a shift $z \to z+z_0$, we can always assume that the sum of the roots is zero and then $\tilde a_{m,1}=0$. We glue each $W_m$ of the sets $W_1,\ldots,W_M$ to $\mathbb{P}^1\setminus(V_1\cup\ldots\cup V_M)$ along the boundary of $V_m$ in such a way
that for all $m=1,\ldots,M$  the polynomial $\tilde{A}_m $ coincides with the unperturbed function $A_m $ in a tubular neighborhood of the boundary $\partial W_m$. We obtain a new copy of $\C\mathbb{P}^1$. By uniformization, there exists a new global parameter $\tilde{\l}$, which is equal to $0$ and $\infty$ at the two points corresponding to $\l=0$ and $\l=\infty$, respectively. This new parameter is unique up to multiplication with elements of $\C^\ast$. There exists a biholomorphic map $\tilde \l= \phi (\l)$ which changes the parameter $ \l \in \C\mathbb{P}^1\setminus(V_1\cup\ldots\cup V_M)$ in the global parameter $\tilde \l$. There
is a biholomorphic map $\tilde \l=\phi_m (z_m)$ which changes the local parameter $z_m \in W_m$ into $\tilde \l$.
Let $\tilde \l \to \tilde a ( \tilde \l)$ be the polynomial whose roots (counted with multiplicities) coincide with the zero set of $\tilde A _1 (\tilde \l),...,\tilde A _M (\tilde \l)$ and the roots of
$\tilde \l \to a \circ \phi ^{-1} (\tilde \l)$ on  $\mathbb{P}^1\setminus(V_1\cup\ldots\cup V_M)$.

Now $\tilde \Sigma =\{ (\tilde \nu, \tilde \l ) \in \C^2 \mid \nu ^2 =\frac{\tilde a (\tilde \l)}{\tilde \l} \}$ is a new hyperelliptic Riemann surface associated to the set of polynomials
$\tilde A_1,...,\tilde A_M$.  We say that polynomials $\tilde{A}_1,\ldots,\tilde{A}_M$ respect the reality condition if the involutions $\sigma$, $\rho$ and $\eta$ lift to involutions
of $\tilde \Sigma$ and then define a spectral curve. In this case, the parameter $\tilde \l$ is determined up to a multiplication of unimodular numbers. We call this transformation a M\"obius transformation. The equations
\begin{equation}
\label{fonctiontilde}
(\ln\mu-n_m i\pi)^2=\tilde A_m(\tilde \l)= \tilde A_m \circ  \phi _m ^{-1}  (\tilde \l )=\tilde A_m (z_m) \quad\hbox{ for }\quad m=1,\ldots M
\end{equation}
define a function $\mu$ on the pre-image of $\phi_m(W_m) \cap \mathbb{P}^1$ by the map $\tilde \l$ into  $\tilde \Sigma$. The function $\mu$ extends to the pre-image of $\C^* \setminus(V_1\cup\ldots\cup V_M)$ by $\tilde \l=\phi (\l)$ and coincides with the unperturbed $\mu$ on this set.

On $\tilde \Sigma$ the differential $d \ln \mu$ is again meromorphic and takes the form $d \ln \mu = \frac{\tilde b (\tilde \l) \,d \tilde \l}{\tilde \nu \tilde \l^2}$ with a unique polynomial $\tilde b$. By taking the derivative of (\ref{fonctiontilde}) we have
\begin{align*}
2(\ln\mu-n_m\pi \mi)\tfrac{d}{d \tilde \l}\ln\mu&=
\tilde A_m'(z_m(\tilde \l))z'_m( \tilde \l)\,.
\end{align*}
Then roots of $\tilde b$ are the roots of the derivatives of $\tilde A_1,...,\tilde A_M$.

\begin{proposition}
\label{opendoublepointprop}
The set of  polynomials $\tilde A_1,\ldots, \tilde A_M$ which respect the reality condition and coefficients $\tilde{a}_{m,2},\ldots,(\tilde{a}_{m,m}-a_m)$ nearby zero define spectral data of periodic solutions of the sinh-Gordon equation in a neighbourhood of $ \Sigma$. The spectral genus is generically  $g+n$. A $\tilde c$ depending smoothly on $(\tilde a, \tilde b)$ defines a smooth vector field on these parameters. Integral curves of $\tilde c$ in the set of spectral data deform $\tilde A_1,\ldots, \tilde A_M$.
\end{proposition}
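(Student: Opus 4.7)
The plan is to verify in turn the three assertions: that each admissible tuple $(\tilde A_1,\ldots,\tilde A_M)$ yields bona fide spectral data, that the generic spectral genus of the deformed curve is $g+n$, and that a smooth $\tilde c$ in $(\tilde a,\tilde b)$ pulls back to a smooth vector field on the parameter space whose integral curves are well defined.

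First I would check conditions (i)--(vi) of Definition~\ref{spectraldata} for the pair $(\tilde a,\tilde b)$ produced by the gluing construction. Reality conditions (i) and (ii) follow from the hypothesis that each $\tilde A_m$ respects the involutions $\sigma,\rho,\eta$: these involutions lift to $\tilde\Sigma$, and the unimodular ambiguity in the uniformizer $\tilde\l$ is fixed so that the transformation laws of $\tilde a,\tilde b$ agree with those required. Condition (vi), the representation $\mu=\tilde f\tilde\nu+\tilde g$, is immediate from \eqref{fonctiontilde}: inside each $V_m$ the equation exhibits $\mu$ explicitly, while on the complement of $\bigcup V_m$ one uses the unperturbed formula, the two agreeing on overlaps, so that $\tilde f,\tilde g$ extend holomorphically to $\C^\ast$. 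Conditions (iv) and (v) reduce to the fact that $\mu$ is single-valued on $\tilde\Sigma$: integrals of $d\ln\mu$ across cycles joining $\sigma$-conjugate branch points are purely imaginary and take values in $\mi\pi\Z$ at the roots of $(\l-1)\tilde a$. Condition (iii) on the phase of $\tilde b(0)$ is preserved because the perturbation is supported away from $\tilde\l=0,\infty$, so the asymptotic form of $d\ln\mu$ prescribed by the closing condition at the ends is unchanged.

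For the generic genus, I would observe that at a common root of $a$ and $b$ or at a chosen double point the unperturbed polynomial $\tilde A_m=z_m^{d_m}$ has a single root of order $d_m$ at $z_m=0$; a generic choice of coefficients $\tilde a_{m,2},\ldots,\tilde a_{m,m}$ splits this root into $d_m$ distinct simple ones. Summing contributions over all $m$, the total number of simple roots of $\tilde a$ becomes $2(g+n)$, so the arithmetic genus of $\tilde\Sigma$ is $g+n$. The vector-field claim then follows from analytic dependence: gluing, uniformization, and the formulas \eqref{eq:pol A}, \eqref{eq:deformed pol A} are all analytic in the perturbation parameters (standard holomorphic dependence of the uniformizer on moduli, as in Forster~\cite{Forster-RS}). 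Hence $(\tilde A_1,\ldots,\tilde A_M)\mapsto(\tilde a,\tilde b)$ is smooth, a smooth $\tilde c$ on $(\tilde a,\tilde b)$ pulls back to a smooth vector field in the $\tilde A_m$-coordinates, and integral curves exist by the standard ODE theorem.

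The main obstacle is bookkeeping of the reality conditions together with the normalization of the uniformizer. One must show that the unimodular ambiguity in $\tilde\l$ can be fixed simultaneously with all three involutions $\sigma,\rho,\eta$ and with condition (iii) on $\tilde b(0)$, and that the sign condition $\l^{-g}\tilde a(\l)\leq 0$ on $\SY^1$ persists under small perturbations in the reality-invariant subspace of coefficients. Finally, verifying the dimension count---that the real manifold of admissible $(\tilde A_1,\ldots,\tilde A_M)$ has precisely the dimension of the space of spectral data of genus $g+n$ with the prescribed period conditions---is the most delicate check, since it is what ensures that the parametrization is a local diffeomorphism rather than merely an immersion.
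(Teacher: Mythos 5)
Your proposal verifies the comparatively routine parts of the statement (the spectral-data conditions of Definition~\ref{spectraldata} and the generic genus count), but it leaves unproven the assertion that is actually the content of the paper's proof: that a polynomial $\tilde c$ determines a well-defined, smooth vector field $(\dot{\tilde A}_1,\ldots,\dot{\tilde A}_M)$ on the parameters. You dispose of this with ``a smooth $\tilde c$ on $(\tilde a,\tilde b)$ pulls back to a smooth vector field in the $\tilde A_m$-coordinates'' by appealing to analytic dependence of the gluing and uniformization on the moduli, but no map is exhibited along which anything is pulled back, and analyticity of the construction by itself does not produce the correspondence. The relation between $\tilde c$ and the deformation is the Whitham equation $\partial_t\ln\mu=\tilde c/(\tilde\nu\tilde\l)$ together with $d\ln\mu=\tilde b\,d\tilde\l/(\tilde\nu\tilde\l^2)$, and one must translate it into the local models: differentiating $(\ln\mu-n_m\mi\pi)^2=\tilde A_m(z_m(\tilde\l))$ in $t$ gives
\begin{equation*}
\frac{\tilde\l\,\tilde c(\tilde\l)}{\tilde b(\tilde\l)}
=\frac{\dot{\tilde A}_m(z_m(\tilde\l))}{\tilde A'_m(z_m(\tilde\l))\,z'_m(\tilde\l)}
+\frac{\dot z_m(\tilde\l)}{z'_m(\tilde\l)},
\end{equation*}
and the argument of the paper proceeds by observing that the second term on the right is regular at the roots of $\tilde b$ (because $z_m$ is biholomorphic), so the singular parts of the left-hand side at the roots of $\tilde b$ are carried entirely by $\dot{\tilde A}_m$; since the two highest coefficients of $\tilde A_m$ are frozen, $\deg\dot{\tilde A}_m<\deg\tilde A'_m$, and the values of $\tilde c$ at the roots of $\tilde b$ (which coincide with the zeros of $\tilde A'_m$) determine $\dot{\tilde A}_m$ uniquely and linearly. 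None of this interpolation/degree argument appears in your proposal, so the third and fourth sentences of the proposition are not established.

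A second, related omission: the normalization issue you flag at the end (the unimodular ambiguity in $\tilde\l$) is not merely bookkeeping to be checked against the reality conditions; it shows up precisely as the kernel of the correspondence above, namely the freedom $\tilde c\mapsto\tilde c+\mi\gamma\tilde b$, which corresponds to the rotation $\tilde\l\mapsto e^{\mi\gamma t}\tilde\l$. Fixing the parameter $\tilde\l$ fixes this degree of freedom in $\tilde c$, and that is how the paper makes the assignment $\tilde c\mapsto(\dot{\tilde A}_1,\ldots,\dot{\tilde A}_M)$ single-valued. Your closing worry about a dimension count deciding whether the parametrization is a local diffeomorphism is therefore not the right delicate point: the explicit singular-part matching resolves both existence and uniqueness of the vector field directly, without any dimension count. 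To repair the proposal you would need to carry out the differentiation of \eqref{fonctiontilde} and the Laurent/degree argument sketched above, or an equivalent substitute.
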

\begin{remark}
\label{opendoublepointrem}
By definition \ref{spectraldata}, the existence of the function $\mu$ assures the periodicity of the polynomial Killing field with initial value $\tilde \xi_\l \in I(\tilde a)$.
If $\tilde c(1)=0$ and ${\rm Im}(\tilde c(0)/\tilde b(0))=0$, the deformation preserves the closing condition of the surface along the integral curves defined by $\tilde c (\tilde \l)$. The condition ${\rm Re}(\tilde c(0)/\tilde b(0))<0$ increases the length of the flux $|\tau|$.
\end{remark}

\begin{proof}
First given a smooth family of $\tilde A_1 (t) ,\ldots, \tilde A_M (t)$ which respect the reality condition, we calculate the change of $\ln\mu$ and the uniquely defined corresponding polynomial $\tilde c(\tilde \l)$.  In a second step, we will prove that a given polynomial $\tilde c (\tilde \l)$ will smoothly determine
a vector field $\dot {\tilde A}_1,\ldots, \dot {\tilde  A}_M$.
If the polynomial $A_m$ changes, then also the biholomorphic map $\l\mapsto z_m(\l)$ changes. If we
differentiate (\ref{fonctiontilde}) with $z_m (\tilde \l)=\phi^{-1}_m (\tilde \l)$ we obtain
\begin{align*}
2(\ln\mu-n_m\pi \mi)\tfrac{d}{dt}\ln\mu&=
\dot{\tilde A}_m(z_m(\tilde \l))+\tilde A_m'(z_m(\tilde \l))\dot{z}_m(\tilde \l)
\end{align*}
The equations \eqref{eq:def_c} and \eqref{eq:dlogmu} imply
\begin{equation}\label{eq:A and c}
\frac{\tilde \l \tilde c( \tilde \l)}{\tilde b(\tilde \l)}=
\frac{\dot{\tilde A}_m(z_m(\tilde \l))}{\tilde A'_m(z_m(\tilde \l))z_m'(\tilde \l)}+\frac{\dot{z}_m(\tilde \l)}{z_m'(\tilde \l)}.
\end{equation}
On the right hand side the second term has no poles at the roots of $b$ since $z_m (\tilde \l)$ is biholomorphic on $V_m$ (a derivative of a biholomorphic map doesn't vanish). The singular parts of the meromorphic function on the left hand side at the roots of $\tilde b$ are determined by the first term on the right hand side. The family ${\tilde A}_1 (t) ,\ldots,{\tilde A}_M (t)$ and the finitely many values of the derivatives
of $z_m (\tilde \l)$ at the roots of $\tilde b (\tilde \l)$ determine the values of $\tilde \l \to \tilde c(\tilde \l)$ at all roots of $\tilde b$. The polynomial $\tilde c$ is determined up to a multiple
constant of $\tilde b$  by a purely imaginary complex number ($\tilde c \to \tilde c +\mi \gamma b$).
This degree of freedom comes from the M\"obius transformation on the parameter $\tilde \l$. If $\tilde c=\mi \gamma \tilde b$, then $\tilde \l \to e^{\mi\gamma t} \tilde \l$.
The choice of the parameter $\tilde \l$ fixes the degree of freedom in the polynomial $\tilde c$.

Conversely, the choice of $\tilde c$ determines the value of $\dot {\tilde A}_m$ at all the zeroes of $\tilde b$ which coincide with the zeroes of $\tilde A '_m$. By definition of $\tilde A_m$, the degree of $\dot {\tilde A}_m$ is less than the degree of $\tilde A '_m$
(The two highest coefficients are independent of $t$). The family $\dot {\tilde A}_m$ depends linearly on $\tilde c$.
\end{proof}

\subsection{Flow with common roots of $a$ and $b$}
We assume in this section that $a$ has only simple roots.
At common roots of $a$ and $b$ the vector field defined by polynomials $c$ and equation~\eqref{eq:integrability_2} has singularities. We study this
situation and describe how to continuously extend the flow through such singularities. For this purpose we consider the embedding of ${\calM}_g \hookrightarrow {\calM}_{g+n}$ as described in section \ref{param}. Common roots of $a$ and $b$ should be considered as higher order roots of $\tilde a$. (We define  $(\tilde a, \tilde b)= (a p^2, b p)$ where $p$ is the polynomial whose roots coincide with roots of $f$ ($f$ is defined
in definition \ref{spectraldata}~(vi)) counted with multiplicity at common roots of $a$ and $b$. Then  $f=p \tilde f$, $\tilde f$ has no roots at common roots of $a$ and $b$.

The order $d_m=2 \ell_m+1$ is odd and at least three. The genus of the spectral curve is preserved, if the number of odd order roots of $\tilde a$ is preserved (see below).
This is equivalent to the condition that all $\tilde{A}_m$ of odd degree have only one odd order root. Hence we consider the polynomials $\tilde A_m$  of the form
\begin{align*}
\tilde{A}_m(z_m) &=(z_m-2\alpha_{m})p_m^2(z_m)
\quad\hbox{ with }\\
p_m(z_m)&=z_m^{\ell_m}+\beta_{m,1}z^{\ell_m-1}+\ldots+\beta_{m,\ell_m} \hbox{ with } \beta_{m,1}=\alpha_{m}
\end{align*}
Since the sum of the zeroes of $\tilde A_m$ is equal to zero, the odd order root $2 \alpha_m$ is given by
$\beta_{m,1}=\alpha_m$. In this case, all double roots of $\tilde a$ will produce double roots of the spectral curves and will not contribute
to the geometric genus of the spectral curve. So we remain in the image of the embedding space ${\calM}_g \hookrightarrow {\calM}_{g+n}$. 
In this parametrization double roots of $A_m$ are simple roots of $\tilde b$. The corresponding $(a,b) \in {\calM}_g$ have no double points and correspond to the same function $\mu$. The coefficients of $\tilde{A}_m$ are parametrize by the coefficients $\alpha_m=\beta_{m,1}$ and $\beta_{m,2},...,\beta_{m,\ell_m}$.

\begin{proposition}
\label{commonroots}
Let $(a,b)$ be the spectral data of a periodic solution of the sinh-Gordon equation. Suppose $a$ and $b$ have common roots. Assume $c$ is a polynomial of degree $g+1$ such that $\l^{g+1} \overline{c (1/\bar \l )}=c(\l)$ ($c$ can depend on $a$ and $b$ smoothly and does not vanish
at $(a_0,b_0)$) that does not vanish at the common roots of $a$ and $b$. Then there exists for some $\epsilon >0$ a continuous integral curve $(a_t, b_t)_{t \in (-\epsilon, \epsilon)}$
with $(a_0,b_0)=(a,b)$ along the vector field induced by $c$, which is smooth on $(-\epsilon, \epsilon) \backslash \{0\}$. Moreover for $t \in (-\epsilon, \epsilon) \backslash \{0\}$, $a_t$ and $b_t$ have no common roots.
\end{proposition}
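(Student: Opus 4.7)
The plan is to apply the smooth parametrization of Section~\ref{param} to lift the singular vector field on $\mathcal{M}_g$ to a regular one on a larger parameter space, and then exhibit a trajectory passing through the singular stratum by a hyperbolicity argument. Concretely, let $p(\lambda)$ be the polynomial whose roots are precisely the common roots of $a$ and $b$, counted with multiplicity, and set $(\tilde a, \tilde b) := (a p^2, b p)$. By Proposition~\ref{opendoublepointprop}, near each common root the spectral data is locally parametrized by the odd-degree polynomial
\[
\tilde A_m(z_m) = (z_m - 2\alpha_m)\, p_m^2(z_m),\qquad p_m(z_m) = z_m^{\ell_m} + \alpha_m z_m^{\ell_m-1} + \beta_{m,2} z_m^{\ell_m-2} + \ldots + \beta_{m,\ell_m},
\]
of degree $d_m = 2\ell_m+1$, and the singular initial datum $(a,b)$ corresponds to $\tilde A_m(z_m) = z_m^{d_m}$, i.e.\ $\alpha_m = \beta_{m,j} = 0$ for all $m,j$.

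Next I would rewrite the flow in the coefficients $(\alpha_m, \beta_{m,j})$ using the fundamental identity
\[
\frac{\tilde\lambda\,\tilde c(\tilde\lambda)}{\tilde b(\tilde\lambda)} = \frac{\dot{\tilde A}_m(z_m(\tilde\lambda))}{\tilde A_m'(z_m(\tilde\lambda))\,z_m'(\tilde\lambda)} + \frac{\dot z_m(\tilde\lambda)}{z_m'(\tilde\lambda)}
\]
from Section~\ref{param}. Matching the singular parts at the roots of $\tilde b$ yields a smooth ODE system on the $(\alpha_m, \beta_{m,j})$-parameters (with a prescribed $\tilde c$ lifted from $c$), and the assumption that $c$ does not vanish at the common roots of $a$ and $b$ ensures that the leading Laurent coefficient of $\tilde c/\tilde b$ at these poles is nonzero. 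This nontrivial forcing is what will drive the system off the singular stratum.

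The heart of the argument is to extract from this ODE a trajectory with $(\alpha_m(0), \beta_{m,j}(0)) = 0$ that leaves the singular stratum immediately for $t\neq 0$. I would do this by the weighted self-similar rescaling $\alpha_m(t) = t\,\tilde\alpha_m(t)$ and $\beta_{m,j}(t) = t^{j}\,\tilde\beta_{m,j}(t)$, adapted to the quasi-homogeneity of $z_m^{d_m}$, followed by the logarithmic reparametrization $s = \log|t|$. In the new coordinates the origin becomes a hyperbolic equilibrium whose linearized eigenvalues are the weights $1, 2, \ldots, \ell_m$ coming from the rescaling, and the stable/unstable manifold theorem supplies smooth trajectories $(\tilde\alpha_m, \tilde\beta_{m,j})(s)$ converging to the equilibrium as $s\to -\infty$ on each side $t>0$ and $t<0$. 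These glue to a continuous curve through $t=0$ which is smooth for $t\neq 0$; moreover for $t\neq 0$ the coefficients $\beta_{m,j}$ are nonzero, so $\tilde A_m$ has only simple roots in a neighbourhood of the origin, which means $a_t := \tilde a_t$ and $b_t := \tilde b_t$ have no common roots for $t\neq 0$.

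The main obstacle is precisely the hyperbolicity analysis: one needs to show that the linearization of the rescaled system at the origin has a nonempty unstable manifold in the direction required by the forcing, and that there is a compatible one-parameter family of trajectories that glues across $t=0$ to the unique equilibrium. This reduces to a concrete computation of the leading Taylor coefficients of $\tilde c/\tilde b$ at the roots of $\tilde A_m'$, where the hypothesis $c(\alpha_m^0)\neq 0$ enters essentially to guarantee that the unstable eigendirection is excited. Once the hyperbolic structure and the nondegeneracy of this coupling are established, the stable manifold theorem provides the required continuous integral curve through $t=0$, concluding the proof.
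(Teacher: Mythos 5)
Your overall strategy (pass to the smooth parametrization of Section~\ref{param} with $(\tilde a,\tilde b)=(ap^2,bp)$, rewrite the flow in the coefficients of the local polynomials $\tilde A_m$, blow up the singularity and invoke the Stable Manifold Theorem) is the same as the paper's, but the step where you carry it out contains a genuine gap: the rescaling $\alpha_m(t)=t\,\tilde\alpha_m(t)$, $\beta_{m,j}(t)=t^j\tilde\beta_{m,j}(t)$ is not adapted to the actual vector field. At a common root of order $d_m=2\ell_m+1$ the equation forced by $\tilde c/\tilde b$ reads, to leading order, $\dot\alpha_m\sim C_m\,\alpha_m^{-\ell_m}$ with $C_m\neq 0$ (this is where the hypothesis that $c$ does not vanish at the common roots enters), so along any trajectory $\alpha_m\sim t^{1/(\ell_m+1)}$, a \emph{fractional} power; with your linear-in-$t$ ansatz the rescaled variable $\tilde\alpha_m\sim t^{-\ell_m/(\ell_m+1)}$ blows up as $t\to0$, so the origin is not an equilibrium of your rescaled system and the claimed eigenvalues $1,2,\ldots,\ell_m$ "coming from the weights" do not materialize. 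The correct desingularization (as in the paper) writes $\alpha_m=e^{\mi\theta_m}r_m s^{N/(\ell_m+1)}$ with $N=\operatorname{lcm}(\ell_m+1)$ to synchronize the different common roots, multiplies the field by $s^N$ to convert the pole into a zero, and then the hyperbolicity is \emph{not} automatic: one must choose the angular direction $\bar\theta_m$ on the exceptional fibre so that ${\rm Im}\bigl(C_m e^{-\mi(\ell_m+1)\bar\theta_m}\bigr)=0$, and the two admissible choices of sign of ${\rm Re}\bigl(C_m e^{-\mi(\ell_m+1)\bar\theta_m}\bigr)$ produce the trajectories moving in and out. Moreover the starting point in the $p_m$-coefficients is not free: one must take $q_m$ at $t=0$ to be the polynomial part of $w^{\ell_m}(1-2/w)^{-1/2}$ (Lemma~\ref{pol}) precisely so that $\dot q_m/\dot\alpha_m$ stays bounded; without this normalization the blow-up locus is a manifold of degenerate equilibria and the Stable Manifold Theorem cannot be applied at an arbitrary point of it.

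A second, smaller error is your concluding step: along the flow the polynomials are constrained to keep the form $\tilde A_m(z_m)=(z_m-2\alpha_m)p_m^2(z_m)$, so $\tilde A_m$ never has only simple roots for $t\neq0$ — the double roots of $\tilde a_t$ persist by design (they are removable double points, i.e.\ simple roots of $\tilde b_t$, and they keep the curve in $\mathcal M_g$). The statement that $a_t$ and $b_t$ have no common roots for $t\neq0$ is instead the assertion that the odd-order root $2\alpha_m$ separates from the roots of $p_m$ (equivalently $p_m(2\alpha_m)\neq0$), which follows from $\alpha_m(t)\neq0$ on the in/out trajectories together with the fact that the rescaled polynomial $q_m$ stays near $\widehat q_m$, whose roots are away from $w=2$. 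So the skeleton of your argument is right, but the quasi-homogeneous weights, the choice of base point on the exceptional fibre, the angular nondegeneracy computation, and the final "no common roots" argument all need to be replaced by the correct ones.
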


\begin{proof}
For a given $c$, we consider the polynomial $\tilde c$ with $c/b=\tilde c / \tilde b$ on the set of spectral data $(\tilde a, \tilde b)$ in ${\calM}_{g+n}$. This definition assures that along any integral curve of $\tilde c$ in ${\calM}_{g+n}$, all polynomials $\tilde A_m$ keep the form  $\tilde A_m =(z-2\alpha_m)p^2_m(z)$ where $\alpha_m$
is a moving branch point along an integral curve of the vector field $\tilde c$ nearby a common root of $a$ and $b$ located at zero in this coordinate. In the following we will parameterize integral curves by moving the roots $\alpha_1,...,\alpha_m$ and coefficients of $p_1,...,p_m$ polynomials of degree $\ell_1,...,\ell_m$.

At $t=0$, the vector field induced by $\tilde c $  has a singularity. We multiply the vector field with a real function depending on the coefficient of $a$ and $b$ which vanishes at common roots of $a$ and $b$ in such a way that the new vector field becomes smooth and without poles at $t=0$. The new vector field will have a root at $t=0$. We want to find a trajectory moving into the root of the vector field and a trajectory moving out of the root. For this purpose,
we calculate the first derivative of the vector field and find non trivial stable and unstable eigenspaces. By the Stable Manifold Theorem (see e.g Teschl \cite{teschl2012}) there then exist integral curves moving in and out the zero of the vector field.

We have to collect different common roots of $a$ and $b$ and prove that the linearized vector field has non-empty stable and unstable eigenspaces. This means that the first derivative of the vector field at a common root of $a$ and $b$ has non-zero eigenvalues with positive and negative real parts. We compute the first derivative
at common roots $\alpha_1,...,\alpha_m$ separately and coefficients of $p_1,...,p_m$ at $t=0$.

\vskip 0.25cm
In $V_m$ we consider the polynomial $A_m (z_m)$ together with the parameter $z_m$ introduce in Section 9.2. In the sequel we drop the subscript $\bullet_m$ and we consider $A(z)=(z-2 \alpha)p^2 (z):=\alpha ^{2\ell+1} \left( \frac{z}{\alpha}-2\right) q^2(\frac{z}{\alpha})$ 
where $\alpha \in \C$ is a complex value close to zero, and $\ell :=\ell_m$ is the degree of the polynomial $q$.
The polynomial $q$  depends on $t$ and $\alpha (t)$ with $\alpha (0)=0$ and $\alpha ^\ell(t) q(\frac{z}{\alpha (t)})\to z^\ell$ when $t \to 0$.

We denote $w=\frac{z}{\alpha}$ and we set $q(w)=w^\ell+w^{\ell-1}+\gamma_{m,2}w^{\ell-2}+\gamma_{m,3}w^{\ell-3}+...+\gamma_{m,\ell}$. 
As long as $\alpha =0$, the polynomial $A(z)=z^{2\ell+1}$ independently of the choice of the polynomial $q$. Hence we can choose freely an appropriate $q$ at $t=0$. To apply the Stable Manifold Theorem, we need that $\dot \alpha$ does not vanish at $t=0$. In particular $\frac{\dot q}{\dot \alpha}$ should be bounded at $t=0$. The choice of $q$ at $t=0$ is specified in Lemma \ref{pol}.

\begin{equation*} \begin{split}
    \dot A (z)&= \dot \alpha \,\alpha^{2\ell}  q \left( \tfrac{z}{\alpha} \right) \left((2\ell+1)\left( \tfrac{z}{\alpha}-2 \right) q \left(\tfrac{z}{\alpha} \right)
    -\left( \tfrac{z}{\alpha} \right)q \left( \tfrac{z}{\alpha} \right)-2\left( \tfrac{z}{\alpha} \right)\left( \tfrac{z}{\alpha}-2\right)q' \left( \tfrac{z}{\alpha}\right) \right) \\
    &\qquad + 2 \alpha^{2\ell+1} q \left( \tfrac{z}{\alpha} \right)  \left( \tfrac{z}{\alpha}-2\right) \dot q \left( \tfrac{z}{\alpha} \right) \\
    A' &= \alpha^{2\ell} q \left( \tfrac{z}{\alpha}\right) \left( q \left(\tfrac{z}{\alpha} \right) +2\left( \tfrac{z}{\alpha} -2 \right)q' \left(\tfrac{z}{\alpha} \right) \right) \\
    \frac{\dot A}{A'} &=\frac{ \dot \alpha \left( (2\ell+1)(w-2)q (w) - w q (w)-2w (w-2)q' (w) \right) +2\alpha (w-2)\dot q (w)}{ q (w) +2(w-2)q' (w)}
\end{split}
\end{equation*}
Now $A'$ has $2\ell$ roots. Besides the $\ell$ double roots of $A$ it has $\ell$ additional roots, which are equal to the roots of the polynomial
$$\frac{\alpha ^\ell}{2\ell+1} \left( q  \left( \frac{z}{\alpha} \right) + 2\left( \frac{z}{\alpha} -2 \right) q' \left( \frac{z}{\alpha} \right) \right).$$
This polynomial has highest coefficient $1$. Locally in $V_m$, the function $\frac{ \tilde \l \tilde c (\tilde\l)}{\tilde b(\tilde\l)}$ may be uniquely decomposed into a rational function
depending on $z \in W_m \subset \C$, which vanishes at $z \to \infty$ and a holomorphic function $h(z)$ nearby the roots of $\tilde b$. Hence there exists a unique polynomial $C(z) $ of degree $\ell-1$, such that in $V_m \simeq W_m$, the Laurent decomposition gives
$$\frac{\tilde\l \tilde c }{\tilde b}=\frac{C(z)}{\frac{\alpha^\ell}{2\ell+1} \left( q \left(\frac{z}{\alpha}\right) + 2\left( \frac{z}{\alpha}-2 \right) q' \left(\frac{z}{\alpha} \right) \right) } + h(z)\,.$$
Therefore we obtain the differential equation in $V_m$
$$\frac{\dot A (z)}{A' (z)} = \frac{ C(z)}{\frac{\alpha^\ell}{2\ell+1} \left( q \left( \frac{z}{\alpha} \right) + 2\left( \frac{z}{\alpha} -2 \right) q' \left( \frac{z}{\alpha} \right) \right)}\,.$$
This equation is equivalent to
$$\dot \alpha \left(  (2\ell+1)(w-2)q (w)-w q (w) - 2 w (w-2)q'(w) \right) + 2\alpha (w-2)\dot q (w) = \frac{(2\ell+1) C(\alpha w)}{\alpha ^\ell}.$$
We have $\alpha^\ell q \left( \frac{z}{\alpha} \right) \to z^\ell$ as $t \to 0$ and at the starting point, we should choose $q$ in such a way, that $\frac{\dot q}{\dot \alpha}$ stays bounded. If we do that then we can neglect the second term on the left hand side of the equation which is of order $O( \alpha)$ for small $t$.
On the right hand side we have $C(0) \neq 0$ and at the starting point, we choose a polynomial $q$ with
$(2\ell+1)(w-2)q (w) - w q(w) -2 w (w-2)q' (w)$ constant. Therefore $q$ is given by the polynomial in the lemma \ref{pol}.

\begin{lemma}
\label{pol}
For each $\ell \in \N$ there exists a unique polynomial $\widehat q$ of degree $\ell$ with highest coefficients $1$, such that
$$(2\ell+1)(w-2)\widehat q (w)-w  \widehat q (w)- 2 w (w-2) \widehat q' (w) =K$$
where $K$ is a constant. This polynomial is the polynomial part of $w^\ell \left(1-\frac{2}{w} \right)^{-1/2}$.
\end{lemma}

\begin{proof}
We consider the expression $(2\ell+1)(w-2) \widehat q (w)-w \widehat q (w)- 2 w (w-2)\widehat q' (w)$, which for polynomials $\hat{q}$ of degree $\ell$ with highest coefficient one is a polynomial of degree at most $\ell$. The condition that this polynomial is constant yields $\ell-1$ linear equations on the coefficients of $\widehat q$. We can solve these equations uniquely by first defining the coefficient of $w^{\ell-1}$ in $\widehat q$ such that the coefficient of $w^\ell$ of the polynomial under consideration vanishes and then the lower order coefficients in the inverse order of their power. If we insert for $ \widehat q(w)=w^\ell \left(1- \frac{2}{w} \right)^{-1/2}$, then $A=\alpha ^{2\ell+1} \left( \frac{z}{\alpha}-2\right) q^2(\frac{z}{\alpha})$ is equal to $z^{2\ell+1}$ and independent of $\alpha$. Consequently the former expression vanishes. Moreover, if $\hat{q}(w)$ is the polynomial part of $w^\ell \left(1- \frac{2}{w} \right)^{-1/2}$, then $A(z)-z^{2\ell+1}$ is a polynomial of degree $\ell+1$ with respect to $z$ and $w=\frac{z}{\alpha}$. Differentiating with respect $\alpha$ shows that the polynomial under consideration is constant.
\end{proof}

{\emph{Continuation of the proof of Proposition~\ref{commonroots}}:} For this polynomial $\widehat q$ we have
\begin{align*}
(2\ell+1)(w-2)\widehat q (w) - w\widehat q (w) -2 w (w-2) \widehat q' (w)= & \begin{pmatrix}- \frac{1}{2} \cr \ell \end{pmatrix} (-2)^{\ell+1}(2\ell+1) \\
= & \frac{1.3...(2\ell-1)(2\ell+1)}{\ell!} (-2)
\end{align*}
The solution of the differential equation obeys at the initial value the equation
\begin{align}
\label{alpha}
\dot \alpha  & = \frac{ -(2\ell+1).\ell!}{2 \alpha ^\ell 1.3...(2\ell-1)(2\ell+1)} \left( C(0)+ O(\alpha) \right) \\
\dot q (w) & =  \frac{\dot \alpha}{2 \alpha} (h_n (w) + O ( \alpha) )
\end{align}
where $h_n (w)$ is the polynomial part of $2w \left( q' (w)-\widehat q '  (w) \right)- (2\ell+1) \left(  q (w) - \widehat q (w) \right) + \left(1-\frac{2}{w} \right)^{-1} (q (w) - \widehat q (w) )$
where $\widehat q (w)$ is the polynomial in the lemma above. We remark that the first equation is a complex valued meromorphic equation. The second equation is the equation which involves the move of coefficients $\gamma(t) -\widehat \gamma=\gamma (t) - \gamma (0)$. To remove all poles of the vector field, we have
to multiply the right hand side by $\alpha ^{\ell+1}$ such that $\dot q$ does not has a pole. As a consequence $\dot \alpha$ is of order $O(\alpha)$ and $\dot q$ is of order
$O(q-\widehat q)$ for small $t$. So the linearized vector field has block diagonal form with respect to the decomposition of $\alpha$ and $q$. Since we are interested in trajectories on which $\dot q / \dot \alpha$ is bounded, we restrict to the eigenspaces of the $\dot \alpha$ equation. For these eigenspaces $\dot \alpha$ is not zero while $\dot q$ vanishes in the lowest order.

It remains to find eigenvalues of the equation (\ref{alpha}) with non zero real parts. We collect all equations corresponding to each $m$, and end up with equations of the form
$$\dot \alpha_m = \frac{C_m (1+O(\alpha_1, ...,\alpha_M))}{\alpha_m^{\ell_m}}$$
where $\ell_m$ is the degree of $q_{m}$ corresponding to $A_m (z_m)$, with $m=1,...,M$.
Let $N$ be the least common multiple of $\ell_1+1, ...,\ell_M+1$. Then we use the parameters $\alpha_1= e^{\mi \theta_1}  s^{\frac{N}{\ell_1+1}}$
and $\alpha_m= e^{\mi \theta_m} r_m s^{\frac{N}{\ell_m+1}}$ for $m>1$ with real $s, r_m, \theta_m$  to describe the evolution of $A_1,...,A_m$ (The term $O(\alpha_1,..,\alpha_m)=O(s^p)$ with $p=\inf N/ \ell_m+1$). Here we assume that the root of $b$ of index $m=1$ is a common root of $a$ and $b$. Then with initial value $(s_1(0),\theta_1(0))=(0, \bar \theta_1)$ we have
$$\dot \alpha_ 1= e^{\mi \theta_1} s^{\left( \frac{N}{\ell_1+1}-1 \right)} \left( \frac{N}{\ell_1 +1} \dot s  + \mi s \dot \theta_1 \right).$$
Now we multiply the whole vectorfield with the real parameter $s^N$. The corresponding integral curves will be reparameterized by $ u=t  s^N$, but do not change
as subsets in the space of polynomials  $(a,b)$. Then we obtain for $m=1$ the equation
$$e^{\mi \theta_1} s^{\left( \frac{N}{\ell_1+1}-1 \right)} \left( \frac{N}{\ell_1+1} \dot s + \mi s \dot \theta_1 \right)=\frac{ C_1 s^{\frac{N}{\ell_1+1}} }{e^{\mi\ell_1 \theta_1}} (1 + O(s)).$$
This gives the system
\[
    \dot \theta_1 = {\rm Im}\,\bigl( C_1 e^{-\mi(\ell_1+1)\theta_1} \bigr) + O(s^p)\,,\qquad
    \dot s  =\tfrac{\ell_1+1}{N} s \; {\rm Re}\,\bigl(  C_1 e^{-\mi(\ell_1+1)\theta_1} \bigr) + O(s^{p+1})\,.
\]
Now choose $\bar \theta_1$ suitably to get ${\rm Im}\left( C_1 e^{-\mi(\ell_1+1)\bar \theta_1} \right)=0$, and recall that $C_1 \neq 0$ by choosing the polynomial $c$ without zeroes at branch points $\alpha_1$. This implies that
$$\frac{\partial}{\partial \theta_1} {\rm Im}\left( C_1 e^{-\mi(\ell_1+1)\theta_1} \right)_{| \theta_1=\bar \theta_1} =-(\ell_1+1)  {\rm Re}\left( C_1 e^{-\mi(\ell_1+1)\bar \theta_1} \right)= (\ell_1+1)  \widehat c_1 \neq 0\,.$$
We have to choose the initial value $\bar \theta_1$ at the starting point in the exceptional fibre of the blow up in such a way that $\widehat c_1 = \pm |C_1|$ has different sign. Then there exists different solution with negative and positive eigenvalues of the corresponding linearized equation
\[
    \dot \theta_1 = (\ell_1+1)\widehat c_1 \theta_1 \,,\qquad
    \dot s = \tfrac{\ell_1+1}{N} s \; {\rm Re}\,\bigl(  C_1 e^{-(\ell_1+1)\bar \theta_1} \bigr)=-\tfrac{(\ell_1+1)  \widehat c_1}{N} s
\]
For $m >1$, we use parameters $(s, r_ m ,\theta_m)$ with initial values $(0,\bar r_m, \bar \theta_m)$ and $\bar r_m \neq 0$, for $\alpha_m$ to get (with reparametrization by $u = t s^N$) the equation
\begin{align*}
\dot \alpha_m & =\left(\mi r_m  \dot \theta_m s+ \dot r_m s + \frac{N}{\ell_m+1}r_m \dot s \right) e^{\mi\theta_m} s^{\frac{N}{\ell_1+1}-1} \\
&  =\frac{C_m s^N (1 +O(s^p))}{s^{\frac{\ell_m N }{\ell_m +1}} e^{\mi \ell_m \theta_m} r_m^{\ell_m}} =C_m e^{- \mi \ell_m \theta_m}r_m^{-\ell_m} s^{\frac{N}{\ell_m+1}} (1+O(s^p))
\end{align*}
This implies
$$i r_m  \dot \theta_m s+ \dot r_m s + \frac{N}{\ell_m+1}r_m \dot s=C_m e^{- \mi( \ell_m+1) \theta_m}r_m^{-\ell_m} s (1+O(s^p))\,.$$
Hence we study the system
\begin{align*}
\dot  \theta_m & = {\rm Im} \left( C_m e^{-\mi (\ell_m +1) \theta_m} \right) r_m ^{-(\ell_m+1)} +O(s^p) \\
\dot r_m & = {\rm Re} \left( C_m e^{-\mi (\ell_m +1) \theta_m} \right) r_m ^{-\ell_m}-\frac{\ell_1+1}{\ell_m+1} {\rm Re} \left( C_1 e^{-\mi (\ell_1 +1) \theta_1} \right) r_m +O(s^p)
\end{align*}
Now we choose $\bar \theta_m$ in such a way that ${\rm Im} \left( C_m e^{-\mi (\ell_m +1)\bar  \theta_m} \right)=0$ and if $\widehat c_m= -{\rm Re} \left( C_m e^{-\mi (\ell_m +1)\bar  \theta_m} \right)$, we can choose different values of $\bar  \theta_m$ to get different signs of $\widehat c_m = \pm |C_m|$. For
a choice of $\bar \theta_1$, and then a sign for $\widehat c_1$, we fix a choice of $(\bar \theta_2 ..., \bar \theta_M)$ in such a way that $\widehat c_1$ and
$\widehat c_m$ have same signs for m=2,..,M. We choose $\bar r_m >0$ which satisfy
$$ {\rm Re} \left( C_m e^{-\mi (\ell_m +1)\bar  \theta_m} \right) \bar r_m ^{-(\ell_m+1)}-\frac{\ell_1+1}{\ell_m+1} {\rm Re} \left( C_1 e^{-\mi (\ell_1 +1) \bar \theta_1} \right) =0$$
The linearized system is
\[
    \dot\theta_m  = (\ell_m+1)\widehat c_m  \bar r_m ^{-(\ell_m+1)} \theta_m \,,\quad
    \dot r_m  = K_m (\bar \theta, \bar r_m) (r_m - \bar r_m)
\]
with $K_m (\bar \theta, \bar r_m)=\frac{(\ell_m+1)}{r^{\ell_m+1}} \widehat c_m$. Now there exists different choices of initial values, which fix the sign of eigenvalues for the linearized system. Thus we can find stable and unstable trajectories, and invoking the Stable Manifold Theorem concludes the proof.
\end{proof}
%
%
\section{Isolated property of the Abresch family}
\label{isolated}
Consider an Abresch annulus $X_1( \C /(\tau\Z) \times \R)$ where $\tau$ is the period of the cylinder in the $x$-direction.
By proposition \ref{abresch}, the metric of  $X_1$ is induced by two elliptic functions $x \to f(x) =\tfrac{-\o_x}{\cosh \o}$ and $y \to g(y)=\tfrac{-\o_y}{\cosh \o}$. The Jacobi operator on $X_1$ is given by
$${\calL}_1=\frac{1}{\cosh^2 \o_1}\left(\partial^2_{x} + \partial^2_{y} +1 + \frac{2 | \nabla \o _1|^2}{\cosh^2 \o_1} \right)=\frac{1}{\cosh^2 \o_1}\left(\partial^2_{x} + \partial^2_{y} +1 + 2f^2(x) + 2 g^2 (y) \right).$$
We use Fourier analysis. We define the set of periodic eigenfunctions $\{e_n\}$ associate to eigenvalues
$\lambda_0 < \lambda_1 \leq \lambda_2....$ repeated with multiplicity,
\begin{equation} \label{eq:opx}
 \partial_{x}^2 e_n (x) +2 f^2 (x)e_n(x)=-\lambda_n e_n(x).
\end{equation}
 where $f$ is an elliptic function which satisfy
$$-( f_x)^2=f^4+(1+c-d)f^2+c= (f^2-\delta_1)(f^2-\delta_2).$$
If $\tau$ denotes the period of the annulus in the $x$-direction, which coincides with a period of $f$, then
the set $\{e_n \}_{n \in \N}$ span the Hilbert space $L^2 (\R \backslash \tau \Z)$. A bounded solution of ${\calL}_1 v_0=0$ decomposes into
$$v_0(x,y)=\sum_{i \geq 0} u_n(y) e_n(x)$$
where $u_n : \R \to \R$ are uniformly bounded functions.
\begin{lemma}
\label{degenerate}
Let $u : \C/\tau \Z \to \R$ be a bounded solution of
$$
{\calL}_1 u=\cosh^{-2} \o_1 (\partial _{x} ^{2} u +\partial _{y} ^{2} u+ u +2f^2(x) u + 2 g^2 (y) u ) =0
$$
where $x \mapsto f(x)$ and $y \mapsto g(y)$ are the functions defined in Theorem \ref{abresch}. Then $u$ can not have more than 2 zeroes on horizontal sections unless it vanishes identically.
\end{lemma}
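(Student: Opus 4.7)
The plan is to Fourier-decompose $u$ in the $x$-variable using the periodic eigenbasis of the one-dimensional Hill operator in~\eqref{eq:opx}, and then reduce the zero-count on a horizontal section to a Chebyshev property of three band-edge Lam\'e eigenfunctions. Let $\{e_n\}_{n\geq 0}$ be the $L^2$-orthonormal basis of periodic eigenfunctions of $H_x:=-\partial_x^2-2f^2(x)$ on $\R/\tau\Z$, with eigenvalues $\lambda_0<\lambda_1\leq\lambda_2<\lambda_3\leq\cdots$; by classical Hill/Sturm--Liouville theory $e_0$ has no zero in $[0,\tau)$, $e_1,e_2$ each have exactly two zeros, and $e_n$ for $n\geq 3$ has at least four zeros. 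Writing $u(x,y)=\sum_{n}u_n(y)\,e_n(x)$ and separating $x$-modes in $\calL_1 u=0$ yields, for each $n$, the Hill equation
\begin{equation*}
u_n''(y)+\bigl(1-\lambda_n+2g^2(y)\bigr)u_n(y)=0,
\end{equation*}
equivalently $H_y u_n=(1-\lambda_n)u_n$ with $H_y:=-\partial_y^2-2g^2(y)$. Boundedness of $u$ forces every $u_n$ to be bounded on $\R$, which in turn forces $1-\lambda_n$ to lie in the $L^2$-spectrum of the periodic operator $H_y$.

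By Theorem~\ref{abresch} the functions $f^2$ and $g^2$ are elliptic solutions of cubic ODEs, so $H_x$ and $H_y$ are one-gap Lam\'e operators of spectral genus one; in particular $\sigma(H_y)=[\mu_0,\mu_1]\cup[\mu_2,\infty)$, and the three distinguished periodic band-edge eigenvalues of $H_x$ are precisely $\lambda_0,\lambda_1,\lambda_2$. The crucial spectral step is to verify that the reflection $\lambda\mapsto 1-\lambda$ sends exactly $\{\lambda_0,\lambda_1,\lambda_2\}$ into $\sigma(H_y)$, while $1-\lambda_n$ for $n\geq 3$ lies in the gap above $\mu_2$. I expect this pairing of the band edges of the two coupled Lam\'e operators to be the main obstacle of the proof: it requires using that $f$ and $g$ are not independent but come from a single sinh-Gordon solution $\omega$, together with the identities $-(f_x)^2=f^4+(1+c-d)f^2+c$ and $-(g_y)^2=g^4+(1+d-c)g^2+d$ from Theorem~\ref{abresch}, and a direct computation tracking the constants $c,d$. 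Granted this, $u_n\equiv 0$ for all $n\geq 3$ and
\begin{equation*}
u(x,y)=u_0(y)\,e_0(x)+u_1(y)\,e_1(x)+u_2(y)\,e_2(x).
\end{equation*}

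The remaining step is the Chebyshev property: any nonzero $\R$-linear combination of $\{e_0,e_1,e_2\}$ has at most two zeros in $[0,\tau)$. This is the one-gap Lam\'e analogue of the elementary fact that $\{1,\cos x,\sin x\}$ is a Chebyshev system in the flat limit $f\equiv 0$; it can be verified using the explicit Weierstrass-$\sigma$ representation of band-edge Lam\'e functions together with a Wronskian computation. Suppose now that $u$ has more than two zeros on horizontal sections $\{y=y_0\}$ for $y_0$ in an open set (the situation produced in Section~\ref{isolated} by the four-vertex theorem). The Chebyshev property forces $u_0(y)=u_1(y)=u_2(y)=0$ throughout that open set, and since each $u_n$ solves a Hill equation with analytic coefficients it is itself real-analytic in $y$, so vanishing on an open set implies $u_n\equiv 0$ on $\R$. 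Hence $u\equiv 0$.
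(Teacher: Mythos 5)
Your plan follows the same architecture as the paper's proof: expand $u$ in the periodic eigenbasis of \eqref{eq:opx}, kill every mode beyond the first three by showing the separated $y$-equation admits no bounded solution, and conclude with a Chebyshev-type bound on the zeros of combinations of $e_0,e_1,e_2$. The problem is that the two steps carrying all the content are only announced, and the first is misstated. The paper identifies the first three periodic eigenfunctions explicitly as the degree-one Lam\'e functions $e_0=\sqrt{f^2-\delta_2}$, $e_1=f$, $e_2=\sqrt{\delta_1-f^2}$ with eigenvalues $\lambda_0=-\delta_1$, $\lambda_1=1+c-d$, $\lambda_2=-\delta_2$, and the coupling of the two operators comes from the shared discriminant $\Delta=(1+c-d)^2-4c=(1+d-c)^2-4d$ of the ODEs in Theorem~\ref{abresch}; this yields the exact identity $1-\lambda_2=1+\delta_2=-\beta_1=\mu_0$, the lowest periodic eigenvalue of \eqref{eq:opy}, whose eigenfunction $\sqrt{g^2-\beta_2}$ is positive. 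Hence for any eigenfunction of \eqref{eq:opx} with more than two zeros one has $\lambda_n>\lambda_2$ and therefore $1-\lambda_n<\mu_0$, i.e.\ the energy falls strictly \emph{below the bottom} of the spectrum of the $y$-operator, where no bounded solution on $\R$ exists (the fact quoted from \cite{sturm}). Your claim that $1-\lambda_n$ for $n\geq 3$ lies ``in the gap above $\mu_2$'' is not the correct location, and you explicitly defer the verification of the band-edge pairing (``I expect this \ldots to be the main obstacle''), so the decisive computation is absent from your argument rather than merely compressed.

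The closing Chebyshev property is likewise asserted but not proved, and the paper's route is different from the Wronskian/Weierstrass-$\sigma$ computation you gesture at: it uses the algebraic relations $e_0^2+\delta_2=e_1^2=\delta_1-e_2^2$ to observe that the product of the four sign combinations $(\alpha_0e_0\pm\alpha_1e_1\pm\alpha_2e_2)$ is an even quartic polynomial in $e_1$, and then counts preimages of values of $e_1$ over one period to bound the zeros of $\alpha_0e_0+\alpha_1e_1+\alpha_2e_2$ by two. Your final step (more than two zeros on an open set of heights forces $u_0=u_1=u_2$ there, then analyticity of solutions of the Hill equations gives $u\equiv0$) is a reasonable way to match how the lemma is invoked in Section~\ref{isolated}, and your remark that boundedness of $u_n$ constrains $1-\lambda_n$ relative to the spectrum of the $y$-operator is sound in the weak form actually needed. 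But as written the proposal is a program whose two essential verifications --- the identity $1-\lambda_2=\mu_0$ tying $f$ and $g$ through $c,d$, and the three-function zero bound --- are left open, so it does not yet constitute a proof.
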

\begin{proof}
Consider elliptic equations which define functions $x \mapsto f(x)$ and $y \mapsto g(y)$
$$-( f_x)^2=f^4+(1+c-d)f^2+c= (f^2-\delta_1)(f^2-\delta_2)$$
$$-( g_y)^2=g^4+(1+d-c)g^2+d= (g^2-\beta_1)(g^2-\beta_2)$$
with roots $2 \delta_1=-(1+c-d) +\sqrt{\Delta}$,  $2\delta_2=-(1+c-d)-\sqrt{\Delta}$, $2\beta_1= -(1+d-c)+\sqrt{\Delta}$, $2\beta_2=
-(1+d-c)-\sqrt{\Delta}$ and $\Delta= (1+c-d)^2-4c=(1+d-c)^2-4d$, we see that $f$ is oscillating around zero between $-\sqrt{\delta_1}$ and $\sqrt{\delta_1}$ since $\delta_2 <0$ and $g$ is oscillating between $-\sqrt{\beta_1}$ and $\sqrt{\beta_1}$.

We solve the equation (\ref{eq:opx}) on $[0, \tau /2 ]$. The function $\wp:=\alpha - f^2$ (with $3\alpha=-(1+c-d)$) is the Weierstrass $\wp$-function which satisfies the elliptic equation
$$(\wp')^2=4\wp^3 -g_2\wp -g_3$$
for some constant $g_2,g_3$ depending only on constants $c$ and $d$. The equation (\ref{eq:opx}) transform into the Lam\'e equation
\begin{equation} \label{lame}
\partial^2_{x} e_n -2\wp e_n=-\mu_n e_n
\end{equation}
On $[0,\tau/2]$, the functions $e_0=\sqrt{f^2-\delta_2}, e_1=f ,e_2= \sqrt{\delta_1-f^2}$ are known as Lam\'e function of degree one of the first kind (see \cite{whittaker}, chapter XXIII). These fuctions extend to $[0,\tau]$ by symmetry and they are the first three eigenfunctions of the Lam\'e operator.

The function $e_0=\sqrt{ f^2- \delta_2} >0$ is an eigenfunction
associate to the eigenvalue $\lambda_0=-\delta_1$ with boundary data $e_0(0)=e_0 (\tau/2), \partial_x e_0 (0)=\partial_x e_0 (\tau/2)=0$.
The second eigenfunction $e_{1}= f$ is associate to eigenvalue $\lambda_{1}=1+c-d$ with $e_1 (0)=-e_1 (\tau /2),  \partial_x e_0 (0)=\partial_x e_0 (\tau/2)=0$. The third function is $e_{2}= \sqrt{\delta_1 -f^2}$ associate to eigenvalue $\lambda_{2}=-\delta_2$ with $e_2 (0)=e_2 (\tau /2 )=0$ and $\partial_x e_0 (0)=-\partial_x e_0 (\tau/2)$.
These eigenfunctions extend by symmetry to $\R /\tau \Z$. They are the first three eigenfunctions of the spectrum with $\l_0<\l_1<\l_2$ and have at most two zeroes on each horizontal curve.  If $e_k$ is an eigenfunction having strictly  more than two zeroes on a period $[0,\tau]$, then the associated eigenvalues $\lambda_k > \lambda_{2} $.  If not, one can argue by contradiction and compute  $W=e_k (\partial_x e_{2})-(\partial_xe_k )e_{2}$. Then $W'=(\lambda_k - \lambda_{2}) e_k e_{2}$ and by studying the behavior of $W$ between two consecutive zeroes of $e_k$, the function $e_{2}$ has to change sign.  Thus $e_{2}$ would have at least four zeroes, a contradiction.

Now we consider $u$ a bounded Jacobi field on $\Sigma_0$. By Fourier expansion we decompose  $u$ as
$$u(x,y)=\sum_{i \geq 0} u_n(y) e_n(x)$$
\noindent
Since $u$ is bounded on $A_0$ then $u_n$ is bounded on $\R$.
Inserting $u$ in the equation ${\calL}_0 u =0$ we obtain a countable set of equations for $n \in \Z$:
$$ \partial ^2_{y} u_n (y) +2g^2(y)u_n(y)+(1-\lambda_n) u_n(y)=0.$$
For $n \geq 2$, we have $(1-\lambda_n)< 1-\lambda_{2}=1+\delta_2=\frac12( 1+d-c + \sqrt{\Delta})=-\beta_1$.
But as we remarked for equation (\ref{eq:opx}), the function $\sqrt{ g^2- \beta_2} >0$ is the first periodic eigenfunction associated with the first eigenvalue $\mu_0 =-\beta_1$ of
\begin{equation} \label{eq:opy}
 \partial_{y}^2 v (y) +2 g^2 (y)\,v(y)=-\mu \,v(y)
\end{equation}
It is a well known fact (see \cite{sturm} for example) that for $\mu < \mu_0$ the equation (\ref{eq:opy}) cannot have bounded solutions on $\R$. Then $u_n = 0$ for $n \geq 2$. The function $u$ is a linear combination of $e_0,e_1,e_2$ and we obtain a contradiction with the following lemma.
\end{proof}
\begin{lemma}
For any real constants $\a_0,\a_1,\a_2$, the function $\a_0 e_0 + \a_1 e_1+\a_2 e_2$ has at most two roots on $\R/\tau \Z$.
\end{lemma}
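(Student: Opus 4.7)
From the explicit formulas $e_0 = \sqrt{f^2 - \delta_2}$, $e_1 = f$, $e_2 = \sqrt{\delta_1 - f^2}$ on a fundamental half-period (extended to $\R/\tau\Z$ by the symmetries described in the text), one reads off the algebraic relations $e_0^2 - e_1^2 = -\delta_2$ and $e_1^2 + e_2^2 = \delta_1$. Hence the smooth curve $\Phi(x) = (e_0(x), e_1(x), e_2(x))$ lies on the real algebraic variety
\begin{equation*}
\tilde V = \{(X, Y, Z) \in \R^3 : X^2 - Y^2 = -\delta_2,\ Y^2 + Z^2 = \delta_1\},
\end{equation*}
and in fact inside its connected component $V_+ = \tilde V \cap \{X > 0\}$, because $e_0 > 0$. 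Since $\delta_2 < 0$, the relation $X^2 = -\delta_2 + Y^2 \geq -\delta_2 > 0$ shows that $\tilde V$ splits as a disjoint union of two smooth closed curves indexed by the sign of $X$, so $V_+$ is topologically a circle.

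The first step of my plan is to show that $\Phi : \R/\tau\Z \to V_+$ is a smooth diffeomorphism. As $x$ traverses $[0, \tau)$, $e_1 = f$ oscillates once between $\pm\sqrt{\delta_1}$; the sign of $e_2$ is reversed on the second half-period (by the antisymmetry $\partial_x e_2(0) = -\partial_x e_2(\tau/2)$ stated earlier), so the projection $(e_1, e_2)(x)$ winds around the circle $Y^2 + Z^2 = \delta_1$ exactly once, giving a continuous bijection $\Phi : \R/\tau\Z \to V_+$. Non-vanishing of $\Phi'$ is checked pointwise: at the extremal points of $f$ one has $e_1' = 0$ and $e_2 = 0$, but uniqueness for the Lam\'e ODE forbids $e_2'$ from vanishing simultaneously; at all other points $e_1' = f' \neq 0$.

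Next I parametrize $V_+$ by $\phi \in \R/2\pi\Z$ via
\begin{equation*}
Y(\phi) = \sqrt{\delta_1}\cos\phi,\quad Z(\phi) = \sqrt{\delta_1}\sin\phi,\quad X(\phi) = \sqrt{\delta_1\cos^2\phi - \delta_2}.
\end{equation*}
Through the diffeomorphism $\Phi$, counting zeros of $v(x) = \alpha_0 e_0(x) + \alpha_1 e_1(x) + \alpha_2 e_2(x)$ on $\R/\tau\Z$ reduces to counting zeros of $\tilde v(\phi) = \alpha_0 X(\phi) + \alpha_1 Y(\phi) + \alpha_2 Z(\phi)$ on $\R/2\pi\Z$. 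The crucial observation is that $X(\phi + \pi) = X(\phi)$ while $Y$ and $Z$ change sign, so
\begin{equation*}
\tilde v(\phi)\,\tilde v(\phi + \pi) = \alpha_0^2 X(\phi)^2 - (\alpha_1 Y(\phi) + \alpha_2 Z(\phi))^2 = \alpha_0^2(\delta_1\cos^2\phi - \delta_2) - \delta_1(\alpha_1\cos\phi + \alpha_2\sin\phi)^2.
\end{equation*}
Expanding with the double-angle formulas, the right-hand side has the form $A + B\cos(2\phi) + C\sin(2\phi)$, and a short check shows $(A, B, C) = (0,0,0)$ forces $(\alpha_0, \alpha_1, \alpha_2) = 0$; hence the product has at most two zeros on the half-period $[0, \pi)$.

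Finally, when $\alpha_0 \neq 0$ the identity $\tilde v(\phi) + \tilde v(\phi + \pi) = 2\alpha_0 X(\phi) \neq 0$ rules out simultaneous vanishing of $\tilde v(\phi)$ and $\tilde v(\phi + \pi)$; consequently each zero of the product on $[0, \pi)$ accounts for exactly one zero of $\tilde v$ in $\{\phi, \phi + \pi\}$, giving at most two zeros of $\tilde v$ on $\R/2\pi\Z$. The case $\alpha_0 = 0$ is immediate since $\tilde v = \sqrt{\delta_1}(\alpha_1\cos\phi + \alpha_2\sin\phi)$. The main obstacle I expect to encounter is the first step: verifying that $\Phi$ winds around $V_+$ \emph{once} (rather than some integer multiple) requires care with the sign conventions governing the extensions of $e_1$ and $e_2$ across their zeros; after this is pinned down, the remainder is elementary algebra and trigonometry.
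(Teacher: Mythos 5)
Your argument is correct, and it reaches the bound by a route that differs from the paper's in its key step, though both rest on the same two facts: the algebraic relations $e_0^2+\delta_2=e_1^2=\delta_1-e_2^2$ and the once-per-period sweep of $(e_1,e_2)$ (the paper phrases the latter as ``$e_1$ takes every value in $(-\sqrt{\delta_1},\sqrt{\delta_1})$ exactly twice, with $e_2$ of opposite sign at the two preimages,'' which is exactly your winding-number-one statement, so that ingredient is available and your flagged ``main obstacle'' is not really an obstacle). The paper stays purely algebraic: it multiplies the four sign variants $\a_0e_0\pm\a_1e_1\pm\a_2e_2$ to get an even quartic $p(e_1)$ and argues that at most two of its roots can be values of $e_1$ at zeros of the given combination; this avoids any explicit parametrization but the root-bookkeeping is terse. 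You instead uniformize the curve $\{X^2-Y^2=-\delta_2,\ Y^2+Z^2=\delta_1,\ X>0\}$ by an angle $\phi$, multiply only the two antipodal values $\tilde v(\phi)\tilde v(\phi+\pi)$ to get $A+B\cos2\phi+C\sin2\phi$, and use the antipodal sum $\tilde v(\phi)+\tilde v(\phi+\pi)=2\a_0X(\phi)\neq0$ to convert the at-most-two zeros of this degree-one trigonometric polynomial in $2\phi$ into at most two zeros of $\tilde v$; your check that $(A,B,C)=0$ forces $\a_0=\a_1=\a_2=0$ is correct and uses $\delta_1>0$, $\delta_2<0$, which hold in this setting. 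What your version buys is a cleaner and fully explicit zero count (Chebyshev-type bound for trigonometric polynomials plus the antipodal pairing), at the cost of verifying the diffeomorphism $\R/\tau\Z\to V_+$; the paper's version needs no parametrization but leaves more of the counting implicit. One shared, harmless caveat: both proofs tacitly assume $(\a_0,\a_1,\a_2)\neq(0,0,0)$ (otherwise the function vanishes identically) and the nondegenerate case $\delta_1>0$, which is the situation in which the lemma is applied.
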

\begin{proof}
The functions $e_0$, $e_1$ and $e_2$ obey $e_0^2+\delta_2=e_1^2=\delta_1-e_2^2$. Therefore in the expression
$$(\alpha_0e_0+\alpha_1e_1+\alpha_2e_2)(\alpha_0e_0+\alpha_1e_1-\alpha_2e_2)(\alpha_0e_0-\alpha_1e_1+\alpha_2e_2)(\alpha_0e_0-\alpha_1e_1-\alpha_2e_2)$$
is an even polynomial $p(e_1)$ of degree four with respect to $e_1$ with real coefficients not depending on $e_0$ and $e_1$. Along the period $\tau$ the function $e_1$ takes all values in $(-\sqrt{\delta_1},\sqrt{\delta_1})$ exactely twice and $\pm\sqrt{\delta_1} $ exactely once. At two points in the preimage of one value of $e_1$ in $(-\sqrt{\delta_1},\sqrt{\delta_1})$ the function $e_0$ takes the same value and $e_2$ takes values with oposite sign. Therefore every root of $p(e_1)$ corresponds to at most one root of $\a_0 e_0 + \a_1 e_1+\a_2 e_2$. For non vanishing values $e_1$ at roots of $\a_0 e_0 + \a_1 e_1+\a_2 e_2$, the negative $-e_1$ is the value at a root of $\a_0 e_0 - \a_1 e_1+\a_2 e_2$ and not of a root of $\a_0 e_0 + \a_1 e_1+\a_2 e_2$. Therefore at most two of the four roots of $p(e_1)$ are the values of $e_1$ at one root of $\a_0 e_0 + \a_1 e_1+\a_2 e_2$.
\end{proof}
%

%
\section{Bubbletons on Abresch family are not embedded}
\label{abresch bubbleton}
\noindent{\bf Bubbletons on flat annuli.}
We consider the spectral data of the flat cylinder and we prove that a flat cylinder with bubbletons cannot be embedded.
\begin{proposition}
There are no embedded bubbletons of spectral genus zero.
\end{proposition}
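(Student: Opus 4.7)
I would argue by contradiction, supposing the existence of an embedded bubbleton $A$ of spectral genus zero. Its spectral polynomial then takes the form $a(\lambda) = \prod_{i=1}^k (\lambda-\alpha_i)^2(1-\lambda\bar\alpha_i)^2 \cdot a_0$ with $|\alpha_i|\neq 1$ and $a_0$ the constant datum of the flat cylinder, and by the isospectral invariance of the period (Proposition~\ref{stab}), $A$ inherits the flat cylinder's period $|\tau|=2\pi$. The goal is to transport $A$ via isospectral and Whitham deformations to a non-embedded multi-cover of the flat annulus, contradicting embeddedness preservation.

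First I would reduce to the simple-bubbleton case $k=1$. Using the ``removing bubbletons'' procedure of Section~\ref{sembedded}, I iteratively let the $\C\P^1$-parameter $L'$ of the isospectral orbit drift to the fixed locus where $(L'_0)^{\perp}$ is an eigenline of $\tilde\xi_{0,\alpha_i}$. By Proposition~\ref{bubbleembedded}, embeddedness is preserved along the isospectral action, so the limit is an embedded bubbleton with one fewer double-root pair; after $k-1$ iterations I obtain an embedded simple bubbleton $A_1$ with spectral polynomial $(\lambda-\alpha_0)^2(1-\lambda\bar\alpha_0)^2 a_0$ and period $2\pi$.

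Next I would open the remaining pair of double roots $(\alpha_0,1/\bar\alpha_0)$ via Proposition~\ref{opendoublepointprop}. Since $|\alpha_0|\neq 1$, the opening admits two independent directions, and Proposition~\ref{deformembbubble} preserves embeddedness along the perturbation, yielding a one-parameter family of embedded minimal annuli of positive geometric spectral genus. Once the spectral genus is positive, Lemma~\ref{increasetau} provides a polynomial $c(\lambda)$ with ${\rm Re}(c(0)/b(0))<0$ whose Whitham flow (Theorem~\ref{thm:deformation}) strictly increases $|\tau|$; Proposition~\ref{embeddeddeformation} preserves embeddedness along this flow, and Proposition~\ref{commonroots} allows continuation through singular loci where $a$ and $b$ acquire common roots.

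Finally, by compactness of ${\calM}_{g_0}^{emb}(\epsilon_0)$ (Theorem~\ref{compact}) the flow remains in this compact component and $|\tau|$ attains a supremum. Lemma~\ref{increasetau} forbids the supremum from being attained at positive spectral genus, so the limiting embedded annulus must have spectral genus zero, and is therefore a flat cylinder or a covering of it, but with period $|\tau|>2\pi$ by construction. The only such candidate is a $k$-fold covering of $\Gamma\times\R$ for some $k\geq 2$, which is manifestly non-embedded (it wraps the great circle $\Gamma$ onto itself), contradicting the embeddedness preservation along the flow. The main obstacle in this plan is the passage through common-root singularities of $(a,b)$, where the Whitham vector field has poles; this is precisely the content of Proposition~\ref{commonroots}, which via the Stable Manifold Theorem supplies an integral curve emerging from the singular stratum in a direction that strictly increases the flux.
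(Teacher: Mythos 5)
Your proposal is correct and follows essentially the same route as the paper: use the embeddedness of the bubbleton (period $2\pi$ inherited from the flat cylinder) to open the double point at $\alpha_0$ via the parametrization of spectral data, then run the flux-increasing Whitham deformation preserving embeddedness until the maximum of $|\tau|$, which forces spectral genus zero with $|\tau|>2\pi$, i.e.\ a non-embedded covering of the flat annulus, a contradiction. The only additions you make — the explicit reduction to a simple bubbleton and the invocation of Proposition~\ref{commonroots} for common-root singularities — are consistent with the paper's machinery and do not change the argument.
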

\begin{proof}
The flat cylinder has a spectral curve of genus zero. Assume that there is an embedded bubbleton dressed at $\alpha_0$ with $|\alpha_0| \neq 1$. Since the bubbleton is embedded, removing the bubbleton gives the embedded flat cylinder, then the period of the bubbleton is the same as the period of the flat cylinder.
The length of this period is $|\tau| = 2\pi$.  The bubbleton occurs at a double point $\alpha_0$ where $\mu (\alpha_0)^2=1$ and $\mu=f \nu +g$ with $f(\alpha_0)=0$.
We add to $a(\l)$ a double zero and to $b(\l)$ a simple zero at $\alpha_0$ and $\bar \alpha_0^{-1}$, and we apply the parametrization of section \ref{param} to obtain
$$\tilde a ( \l) = (1-\l \bar \alpha_0)^2(\l -\alpha_0)^2 a(\l) \hbox{ and }  \tilde b ( \l) = (1-\l \bar \alpha_0)(\l -\alpha_0) b(\l)\,.$$
Since the bubbleton is assumed to be embedded we can open the spectral curve at the double point $\alpha_0$, increasing strictly the length of $|\tau|$ in the family of genus 2 spectral curves by chosing $\tilde c(\tilde \l)$ as in proposition \ref{opendoublepointprop} and remark \ref{opendoublepointrem}.
Along this deformation the annulus stays embedded. Now we increase the period $|\tau|$ up to its maximun.
At this maximum we are in a spectral genus zero by lemma \ref{increasetau}, closed along a period $|\tau| > 2\pi$. This flat annulus is a covering of the embedded flat annulus since the length of horizontal geodesics are larger than $2\pi$. Since the deformation we use preserves embeddedness, the bubbleton cannot be embedded.
\end{proof}
\noindent{\bf Bubbleton on Riemann's type example.}
We consider the spectral data $(\tilde a, \tilde b)$ of an Abresch annulus. This data satisfies (see appendix)
$$\l^{2g} \overline{\tilde a(1/\bar \l)}=\tilde a(\l) \hbox{ and } \l^{g+1} \overline{ \tilde b(1/\bar \l)}=-\tilde b(\l)  $$
\begin{enumerate}
\item[a)] $\l^{2g}  \tilde a(1/ \l)=\tilde a(\l)$ and  $\l^{g+1}\tilde b(1/\l) =\tilde b(\l)$ if $\tilde a$ has a root $\alpha \in \R^+$ and $\tilde b(0) \in \mi \R$
\item[b)]  $\l^{2g}  \tilde a(1/ \l)=\tilde a(\l)$ and $\l^{g+1}\tilde b(1/\l) =- \tilde b(\l)$ if $\tilde a$ has only roots in $\R^-$ and $\tilde b(0) \in  \R$
\end{enumerate}
Now consider a bubbleton on $(\tilde a, \tilde b)$. Bubbletons occur at $\alpha_0$ away from $\SY^1$. The spectral curve of a simple bubbleton is singular and
$$\Sigma=\{ (\nu, \l) \in \C^2 ; \nu^2= \l^{-1} a (\l) =\l^{-1}  (\l -\alpha_0)^2(1-\l \alpha_0)^2 \tilde a (\l) \}$$
To close the period, we need  $\tilde F_{\alpha_0}(\tau)=\pm \un$, where
$\tau$ is a period of the annulus $(\tilde a, \tilde b)$. The set of bubbletons at $\alpha_0$ is described by an orbit homeomorphic to $\C \P ^1$. If the bubbleton closes with period $\tau$, then there is a holomorphic map  $\mu=f \nu +g$ on $\Sigma-\{0, \infty \}$.
The map $\nu$ has simple roots at $\alpha_0$ and $1/\bar \alpha_0$ and $\nu=(\l -\alpha_0)(1-\l \alpha_0) \tilde \nu$ where $\nu^2=\l^{-1} \tilde a(\l)$. We see that $\mu= f \nu +g = (\l -\alpha_0)(1-\l \alpha_0)f  \tilde \nu + g$ is holomorphic on $\tilde \Sigma -\{0, \infty\}$ and $\mu (\alpha_0)= g(\alpha_0) = \pm 1$ (by hyperelliptic involution).

This means that if there is a periodic bubbleton with period $\tau$, then we have an isospectral set of periodic bubbletons with the same period. In this isospectral orbit there is a potential $\xi_\l$ with a root at $\alpha_0$. This annulus has a removable singularity
and we see that $\mu$ defines a holomorphic periodic map, and the same period $\tau$ closes the annulus defined by the potential $\tilde \xi_\l$.
In summary, bubbletons occur at double points of $\mu= \tilde f \tilde \nu + \tilde g$, that is where $\mu (\alpha_0)=\pm 1$.

\begin{theorem}
There is no embedded bubbleton in the family of Abresch annuli.
\end{theorem}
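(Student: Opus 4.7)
The plan is to reduce, by a carefully engineered Whitham deformation, a hypothetical embedded bubbleton on an Abresch annulus to an embedded bubbleton on the flat cylinder, and then invoke the proposition just proved that no such object exists. Assume for contradiction that an embedded bubbleton on an Abresch annulus is given, with spectral data $a(\lambda)=(\lambda-\alpha_0)^2(1-\bar\alpha_0\lambda)^2\tilde a(\lambda)$, $b(\lambda)=(\lambda-\alpha_0)(1-\bar\alpha_0\lambda)\tilde b(\lambda)$, where $|\alpha_0|\neq 1$ and $(\tilde a,\tilde b)$ are the spectral data of an Abresch annulus of spectral genus $\tilde g\in\{1,2\}$. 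Since the bubbleton is embedded, Proposition \ref{bubbleembedded} guarantees that the entire orbit of potentials corresponding to this bubbleton yields embedded annuli, so we have enough rigidity to deform.

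First I would construct, via Theorem \ref{thm:deformation}, a Whitham flow driven by a polynomial $c(\lambda)$ of the form $c(\lambda)=(\lambda-\alpha_0)(1-\bar\alpha_0\lambda)c_0(\lambda)$ with $c_0$ satisfying the reality condition \eqref{eq:realc}, the Sym-point condition $c(1)=0$ and the period-preserving condition ${\rm Im}(c(0)/b(0))=0$. The crucial feature is that $c$ vanishes at the bubbleton roots $\alpha_0$ and $1/\bar\alpha_0$, which by the singular-flow analysis of Proposition \ref{commonroots} (applied at these common roots of $a$ and $b$) keeps the bubbleton factor fixed along the flow; only the Abresch factor $(\tilde a,\tilde b)$ evolves. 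The connectedness theorem of Section \ref{abdeformation} ensures that the connected component of ${\calM}_{g_0}^{emb}(\epsilon_0)$ containing our starting surface also contains the flat cylinder as the flux maximum, and Lemma \ref{increasetau} gives enough degrees of freedom in $c_0$ (since $\tilde g>0$) to produce a flow whose underlying Abresch part degenerates onto the spectral data of the flat cylinder.

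Along this flow, embeddedness is preserved by Proposition \ref{embeddeddeformation} for the generic times and by Proposition \ref{deformembbubble} at times when $\tilde a$ develops double roots. The coalescence of a pair of roots of $\tilde a$ at a point of $\SY^1$ gives a removable double root that reduces the spectral genus of the reduced surface, via the isospectral isomorphism recalled in the proposition following Proposition \ref{corcurva}; iterating this, we reach $\tilde g=0$ while preserving the bubbleton factor. The limit is an embedded bubbleton on the flat cylinder, which contradicts the immediately preceding proposition that rules out embedded bubbletons of spectral genus zero.

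The main obstacle I expect is ensuring that this Whitham flow actually realises the degeneration to the flat cylinder while staying inside the space of embedded annuli with flux bounded away from zero. The compactness of ${\calM}_{g_0}^{emb}(\epsilon_0)$ from Theorem \ref{compact} guarantees an accumulation point of the flow, and Lemma \ref{increasetau} guarantees that only $\tilde g=0$ can be a local maximum of $|\tau|$, but one must rule out a flow reaching a different genus-zero limit (an Abresch annulus of unduloid type) instead of the flat cylinder, or trapping in a common-root singularity that cannot be resolved by the stable-manifold argument of Proposition \ref{commonroots}. In the genus-two Abresch case, the argument must be run twice (collapsing roots of $\tilde a$ in pairs), and the bubbleton double root at $\alpha_0$ must never collide with a moving Abresch root; the choice $c(\lambda)=(\lambda-\alpha_0)(1-\bar\alpha_0\lambda)c_0(\lambda)$ is designed precisely to prevent this, but verifying it uniformly in time is the real technical content.
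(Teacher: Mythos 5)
There is a genuine gap, and it sits exactly at the mechanism you designed to make the argument work. Your choice $c(\lambda)=(\lambda-\alpha_0)(1-\bar\alpha_0\lambda)c_0(\lambda)$, with the bubbleton factor held fixed, is algebraically consistent with the polynomial identity \eqref{eq:integrability_1} (dividing by $p=(\lambda-\alpha_0)(1-\bar\alpha_0\lambda)$ reduces it to the Whitham equation for $(\tilde a,\tilde b)$ driven by $c_0$), but it is incompatible with the bubbleton's closing condition. The bubbleton orbit in $I(a)$ consists of periodic annuli only when $\alpha_0$ is a double point of $\mu$, i.e.\ $\mu(\alpha_0)=\pm1$, which is exactly condition (vi) of Definition \ref{spectraldata} ($\mu=f\nu+g$ with $f=\tilde f/p$ holomorphic). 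Along your flow one has $\partial_t\ln\mu=c/(\nu\lambda)=c_0/(\tilde\nu\lambda)$, so at the fixed point $\alpha_0$ the value $\mu(\alpha_0)$ moves with speed $c_0(\alpha_0)/(\tilde\nu(\alpha_0)\alpha_0)\neq0$ unless $c_0(\alpha_0)=0$; hence for $t\neq0$ the node is no longer a double point of $\mu$, the potentials with $\xi_{\alpha_0}\neq0$ cease to be periodic, and at the end of the flow you reach the flat cylinder carrying no closed bubbleton, so there is nothing to contradict. Requiring instead a double zero of $c$ at $\alpha_0$ over-determines $c$: for a simple bubbleton on a genus-two Abresch annulus the reality-constrained space of $c$ has real dimension $g+2=6$, while the double zero at $\alpha_0$ (4 real conditions), $c(1)=0$ and ${\rm Im}(c(0)/b(0))=0$ already give 6 conditions, leaving generically only $c\equiv0$ and in any case no room for ${\rm Re}(c(0)/b(0))<0$. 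Note also that Proposition \ref{commonroots} cannot be invoked the way you do: its hypothesis is that $c$ does \emph{not} vanish at the common roots of $a$ and $b$, and its conclusion is that the flow immediately separates $a$ and $b$ (no common roots for $t\neq0$) -- the opposite of ``keeping the bubbleton factor fixed''.

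The paper's proof keeps your overall strategy (deform a simple embedded bubbleton on an Abresch annulus to one on the flat cylinder and contradict the genus-zero proposition), but runs the Whitham flow on the reduced data $(\tilde a,\tilde b)$ and lets the double point move according to $\dot\alpha_0=-\alpha_0\,\tilde c(\alpha_0)/\tilde b(\alpha_0)$, which is the unique motion preserving $\mu(\alpha_0)=\pm1$; this is well defined because $\tilde b(\alpha_0)\neq0$ (Lemma \ref{doublepointabresch} and Proposition \ref{example} show $\alpha_0$ is neither a root of $\tilde a$ nor of $\tilde b$), and embeddedness of the deformed bubbletons is preserved by Proposition \ref{deformembbubble}. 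The genuinely technical step, which your proposal omits entirely (and which your fixed-$\alpha_0$ design only avoided because that design is broken), is to show that along the deformation the moving double point neither collides with $\SY^1$ nor escapes to infinity: a collision would force $\alpha_0$ and $1/\bar\alpha_0$ to coalesce at a root of $\tilde b$ on $\SY^1$, hence at $\lambda=\pm1$, and the explicit count of roots of $\mu^2-1$ near $\lambda=\pm1$ for the genus $0$, $1$ and $2$ Abresch spectral data excludes any such extra double point, while boundedness of the roots of $\tilde b$ excludes escape to infinity. Without this tracking argument (or a valid substitute), the reduction to the flat-cylinder case is not established.
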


\begin{proof}
We assume the existence of an embedded bubbleton. This bubbleton has $2n$ higher order roots. We can apply the isospectral action to inductively reduce the order of the bubbleton. To do that assume the potential  $\xi_\l=p(\l) h_{L',\alpha_0}\tilde \xi_\l h^{-1}_{L',\alpha_0}$ induces the bubbleton. Then we can find an isospectral action, which changes $L'$ and preserve $\tilde \xi_\l$. At the closure of the limit we produce a potential with removable singularity at $\alpha_0$. If the bubbleton is embedded, then we obtain an embedded bubbleton with $2n-2$ higher order roots. Inductively we find a bubbleton on an Abresch annulus with a double root $\alpha_0$ of order $2$.

The point $\alpha_0$ is not a root of $\tilde a(\l)$, because if it were, then would have $d \mu (\alpha_0)=0$ and then $\alpha_0$ would be a common root of $\tilde a$ and $\tilde b$.
But $\tilde b$ never has roots at the roots of $\tilde a$ by Proposition \ref{example}. Thus bubbletons occur only away from $\SY^1$, and away from the roots of $\tilde a(\l)$.
A study of spectral data in Lemma \ref{doublepointabresch} show that at roots of $\tilde b$ the function $\mu$ cannot have double points.
Thus double points do not occur at roots of $\tilde a$ and roots of $\tilde b$.

This property holds for every spectral genus 0, 1 or 2 spectral curve describing an Abresch annulus. Now we can apply deformation of Theorem \ref{thm:deformation} where $\tilde a$ and $\tilde b$ have no common roots, increasing the flux in order to reduce the spectral genus and end with spectral data $(\tilde a, \tilde b)$ of the flat annulus. We deform the bubbleton on the Abresch annulus (of spectral genus 1 or 2) into a bubbleton on a flat cylinder. It suffices to keep the period closed of the bubbleton along the deformation so that $\tilde F_{\alpha_0}(\tau)=\pm \un$ where $\alpha_0$ is a double point. Here double points of $\mu$ evolve by the equation (see proof of theorem \ref{thm:deformation})
$$\frac{ \dot \alpha_0}{\alpha_0} =- \frac{\tilde c(\alpha_0)}{\tilde b(\alpha_0)}.$$
Since $\tilde b(\alpha_0)\neq 0$, the equation is well defined. The double point moves along the trajectory of spectral data and the bubbleton remains closed and embedded by proposition \ref{deformembbubble}. We can follow the double roots along the deformation of Abresch annuli, increasing the flux and ending at the flat annulus.

To prove that at the end we have a bubbleton on a flat cylinder, there remains to prove that the double point $\alpha_0$ is not converging to $\SY^1$ along the deformation
(if not the double roots can be removed without changing the geometry of the annulus).

By symmetry $\mu (\bar \alpha_0 ^{-1}) = \pm 1$ and if during the deformation the point $\alpha_0$ is going to $\SY^1$, then the two double points $\alpha_0$ and $\bar \alpha_0 ^{-1}$ must coalesce.
Then we have a root of $d \ln \mu$ and thus a zero of $\tilde b$ on $\SY^1$. This can happen only at $\l =-1$ or $\l =1$ by proposition \ref{example}, where zeroes of $\tilde b$ are given.

Now we study zeroes of the function $\mu ^2 -1$ on the spectral curve $\Sigma$. For a flat annulus we have two roots of $\mu ^2 -1$. Since $\ln \mu =\tfrac{\mi \pi}{2} (\sqrt{\l}+1/ \sqrt{\l})$, we have two roots of order two at $\l=1$ and two roots of order one at $\l=-1$ (by hyperelliptic involution we have roots on $(\l_0,\nu)$ and $\l_0,-\nu)$ when $\l_0$ is not a zero of $a(\l)$).

For genus one with $0<\alpha <1$, we have two roots of order one at $\l=-1$ and two roots of order one at the branch points $\alpha, \bar \alpha ^{-1}$, and two roots of order one
at $\l=1$.

For genus one with $-1<\alpha <0$, we have two roots of order two at $\l =1$ and two roots of order one at the branch points and no roots at $\l=-1$.

For genus two, we have one root of order one at each branch point $\alpha, \beta, \bar \alpha ^{-1}, \bar \beta ^{-1}$, two roots of order one at $\l =1$ and no roots at $\l=-1$.

Beside these roots there are no additional roots of $\mu^2 -1$ nearby $\l = \pm 1$. This means that along a deformation converging to the flat annulus, the roots of $\mu^2 -1$
in $\C^* -\SY^1$ away from the branch points cannot converge to $\SY^1$. The roots which sit at $\l = \pm 1$ are limits of roots listed above and thus there is no additional one,
since $\alpha_0$ is not amongst the branch points of the spectral curve.

Along this deformation the roots of $\mu^2 -1$ cannot go to infinity because zeroes of $b$ are bounded. We remark that the equation $\frac{ \dot \alpha_0}{\alpha_0} =- \frac{c(\alpha_0)}{b(\alpha_0)}$
cannot provide a solution where $\alpha_0 (t) \to \infty$ in finite time, since the vector field $c$ is polynomial in $\alpha_0$, and $b(\alpha_0)$ is bounded away from zero when
$\alpha_0$ goes to inifinity.

Hence our deformation deforms a bubbleton on a Riemann type example to a bubbleton on a flat cylinder. Since the deformation preserves embeddness, this yields an embedded bubbleton on the flat cylinder, a contradiction.
\end{proof}

\appendix
\section{Sym point}
\label{sympoint}
\begin{proposition}
\label{sym}Let $\zeta_\l:\R^2\to \pk$ be a polynomial Killing field \eqref{eq:pKf} with initial condition $\xi_\l \in \pk$. Let $F_\l$  be the corresponding extended frame $F_\l^{-1}dF_\l=\alpha (\zeta_\l)$ and the immersion $X_{\l_0}(z)=(F_{\l_0} (z)  \sigma_3 F_{\l_0}^{-1}(z) ,Re(-\mi z))$ is parameterized by its third coordinate with $\lambda_0=e^{\mi\theta}$. The Hopf differential is given by $Q_{\l_0}(z)=-4\beta_{-1} \gamma_0 \l_0^{-1}(dz)^2=\frac{1}{4}(dz)^2$. Then the map $\widetilde F _\l (z) = F_{e^{\mi \theta}\l}( e^{\mi(1-g)\theta/2}z)$ is the unitary factor of $\exp(z \widetilde{\xi}_\lambda)$ with

$$\widetilde \xi _\l = e^{\mi(1-g)\theta/2} \xi_{e^{\mi\theta}\l}.$$
In particular we have
$$\det \widetilde \zeta_\l (z)=\det \widetilde \xi_\l=-\l^{-1}\widetilde a( \l)= -\l^{-1} e^{- \mi g\theta} a(e^{\mi\theta}\l).$$
The immersion is locally given by $$\widetilde X_1(z)=X_{\l_0}(e^{\mi(1-g) \theta/2} z)=(\widetilde F_1 (z)  \sigma_3 \widetilde F_1^{-1}(z) ,\mathrm{Re}\,(-\mi e^{\mi(1-g) \theta/2}z)).$$
\end{proposition}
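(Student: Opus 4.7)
\textbf{Proof proposal for Proposition \ref{sym}.} The statement is a direct bookkeeping verification: it asserts that a simultaneous M\"obius rescaling $\mu = e^{\mi\theta}\l$ of the spectral parameter and conformal rescaling $w = e^{\mi(1-g)\theta/2}z$ of the surface parameter realigns the Sym point from $\l_0=e^{\mi\theta}$ to $1$. My plan is to package these two substitutions into a single candidate polynomial Killing field, verify that it satisfies the Lax equation in the new variables, and then read off the conclusions from uniqueness.

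First I would set $\phi(z)=e^{\mi(1-g)\theta/2}z$ and define
\[
    \widetilde{\zeta}_\l(z):= e^{\mi(1-g)\theta/2}\,\zeta_{e^{\mi\theta}\l}(\phi(z)),
\]
whose initial value at $z=0$ is exactly $\widetilde{\xi}_\l = e^{\mi(1-g)\theta/2}\xi_{e^{\mi\theta}\l}$. Expanding $\xi_\mu=\sum_{d=-1}^{g}\hat\xi_d\mu^d$ under $\mu=e^{\mi\theta}\l$ shows that the $\l^d$--coefficient of $\widetilde{\xi}_\l$ is $e^{\mi(1-g+2d)\theta/2}\hat\xi_d$. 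A short computation confirms the reality structure $\hat{\widetilde{\xi}}_d = -\overline{\hat{\widetilde{\xi}}}^{\,t}_{g-1-d}$, so (after the diagonal $U(1)$--normalization of Remark \ref{normbeta} to restore $\widetilde{\beta}_{-1}\in\mi\R^+$) one has $\widetilde{\xi}_\l\in\pk$.

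Next I would carry out the central chain-rule calculation. Using $\mu=e^{\mi\theta}\l$ and $dz=e^{\mi(1-g)\theta/2}dw$ in the explicit form \eqref{matrixalpha} of $\alpha(\zeta_\mu)$, the $dz$--part picks up factors $e^{\mi(1-g)\theta/2}$ on the diagonal and on the $\gamma_0$--entry, and $e^{-\mi(1+g)\theta/2}$ on the $\beta_{-1}\l^{-1}$--entry; similarly for the $d\bar z$--part. These are exactly the factors that, by the displayed exponent formula above, identify
\[
    \phi^{\ast}\alpha(\zeta_{e^{\mi\theta}\l}) \;=\; \alpha(\widetilde{\zeta}_\l),
\]
where the right hand side is the 1-form \eqref{matrixalpha} built out of the coefficients $(\widetilde{\alpha}_0,\widetilde{\beta}_{-1},\widetilde{\gamma}_0)$ of $\widetilde{\zeta}_\l$. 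Pulling back the original Lax equation $d\zeta_\mu=[\zeta_\mu,\alpha(\zeta_\mu)]$ via $\phi$ then yields $d\widetilde{\zeta}_\l=[\widetilde{\zeta}_\l,\alpha(\widetilde{\zeta}_\l)]$, so $\widetilde{\zeta}_\l$ is a polynomial Killing field in the sense of Definition~\ref{pkf}. Pulling back $F_{\mu}^{-1}dF_{\mu}=\alpha(\zeta_\mu)$ under the same substitutions gives $\widetilde{F}_\l^{-1}d\widetilde{F}_\l=\alpha(\widetilde{\zeta}_\l)$, and since $\widetilde{F}_\l(0)=F_{e^{\mi\theta}\l}(0)=\un$, the uniqueness in Proposition~\ref{lax}/Theorem~\ref{symbobenko} identifies $\widetilde{F}_\l$ as the unitary factor of $\widetilde{\xi}_\l$.

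Finally, the determinant identity is purely algebraic: $\det\widetilde{\xi}_\l = e^{\mi(1-g)\theta}\det\xi_{e^{\mi\theta}\l} = e^{\mi(1-g)\theta}\cdot(-e^{-\mi\theta}\l^{-1})\,a(e^{\mi\theta}\l)=-\l^{-1}\widetilde{a}(\l)$ with $\widetilde{a}(\l)=e^{-\mi g\theta}a(e^{\mi\theta}\l)$, and the immersion formula follows by substituting $z\mapsto\phi(z)$ into the Sym--Bobenko formula for $X_{\l_0}$ and observing $F_{\l_0}(\phi(z))=\widetilde{F}_1(z)$ at $\l=1$. The main obstacle, such as it is, is not conceptual but accounting: one must check that every exponential factor produced by the $\mu^{-1}$-terms, the $\mu$-terms, and the Jacobians $dz,d\bar z$ combines to give exactly the factor demanded by the reality condition $\hat{\widetilde{\xi}}_d=-\overline{\hat{\widetilde{\xi}}}^{\,t}_{g-1-d}$ of $\mathcal{P}_g$; the exponent $(1-g)\theta/2$ in $\phi$ is precisely the unique choice that makes this balance.
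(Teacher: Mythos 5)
Your proposal is correct and follows essentially the same route as the paper: both verify that the simultaneous substitutions $\l\mapsto e^{\mi\theta}\l$, $z\mapsto e^{\mi(1-g)\theta/2}z$ respect the reality condition of $\pk$, identify $\widetilde F_\l(z)=F_{e^{\mi\theta}\l}(e^{\mi(1-g)\theta/2}z)$ as the unitary factor attached to $\widetilde\xi_\l$, and then read off the determinant and immersion formulas. The only difference is one of detail: you justify the unitary-factor identity by pulling back the frame and Lax equations and invoking uniqueness, a step the paper simply asserts (it instead records the Hopf differential $\widetilde Q_1=\tfrac14 e^{\mi(1-g)\theta}(dz)^2$), so your write-up is a slightly more explicit version of the same bookkeeping argument.
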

\begin{proof}
The matrix $\widetilde \xi_\l=\bigl( \begin{smallmatrix}
\widetilde \a(\l) &  \widetilde \beta(\l) \cr
\widetilde \g(\l) & -\widetilde \a(\l)
\end{smallmatrix} \bigr) \in\pk$ satisfies the reality condition with
$$\widetilde \alpha ( \l)=e^{\mi(1-g)\theta/2} \alpha (e^{\mi\theta}\l)\,,\,\, \widetilde \beta( \l)=e^{\mi(1-g)\theta/2} \beta (e^{\mi\theta}\l)\,\, \hbox{ and }\,\,\widetilde \gamma ( \l)=e^{\mi(1-g)\theta /2} \gamma (e^{\mi \theta } \l).$$
\noindent
The matrix $\widetilde \xi _\l$ has unitary factor in Iwasawa decomposition  $\widetilde F_\l (z)=  F_{e^{i \theta}\l}( e^{i(1-g)\theta/2}z)$. The Hopf differential associate to $\widetilde F_1 (z)$ is given by
$$\widetilde Q_1 = -4 \widetilde \beta_{-1} \widetilde \gamma_0 (dz)^2 = -4 \widetilde a(0) (dz)^2
=-4 e^{-ig \theta} \beta_{-1} \gamma_0 (dz)^2 =\tfrac{1}{4}\, e^{\mi(1-g) \theta}  (dz)^2 .$$
which proves the formula for $\tilde X_1$.
\end{proof}
\section{Bubbletons}\label{sec:bubbleton}
Bubbletons occur when $a(\l)=(\l - \a_0)^2(1-\l \bar \a_0)^2 \tilde a ( \l)$ has roots of higher order, and then the spectral curve $\Sigma$ is singular.
The idea is to construct a group action which deforms an embedded bubbleton associated to $\xi_\l \in I(a)$ into an embedded annulus induced by a potential $\tilde \xi_\l \in I({\tilde a})$. Inductively we can remove all roots of higher order of $a(\l)$.

If $\xi _{\alpha_0} \neq 0$ and $\det \xi_{\alpha_0}=0$, then the matrix is nilpotent and defines a complex line
$L=\ker \xi_{\alpha_0} = {\rm Im}\;  \xi_{\alpha_0}  \in \C\P^1$. This complex line $L$ determines a simple factor having a pole at $\alpha_0$ and $\bar \alpha_0^{-1}$. Technical details on simple factor dressing are given in section 6 of \cite{HKS1}. Roughly speaking, the idea is to consider the following matrix
$$\pi _{\alpha_0} (\l) := \begin{pmatrix}
 \sqrt{\frac{\lambda-\alpha_0}{1-\bar{\alpha}_0\,\lambda}} & 0 \\
 0 & \sqrt{\frac{1-\bar{\alpha}_0\,\lambda}{\lambda-\alpha_0}} \end{pmatrix} $$
and $\pi_{L,\alpha_0} = Q_L \pi_{\alpha_0} Q_L^{-1}$, where $L,L^{\perp}$ are eigenlines of $\pi_{L,\alpha_0}$ by changing the basis of $\C^2$ ($Q_L \in \SU$ and $<Q_L e_1>=L$).
To get a simple factor, we need to find the correct matrix $Q_{1,L'}$ in $\SU$ (by Gram-Schmidt orthogonalization) depending only on $L'$ in order to satisfy at $\l=0$
$$h_{L',\alpha_0}(0)=Q_{1,L'} \pi ^{-1}_{L',\alpha_0}(0) \in \Lambda^+_r\SL$$
with $r < |\alpha_0| <1$
This means that at $\l=0$ the matrix $h_{L',\alpha_0}(0)$ is an upper triangular matrix. Moreover, if $L$ is given, there is a unique $L'$ depending only on $L$ such that $h_{L',\alpha_0}(\l)(L')=L$ (see Lemma 6.1, 6.2, section 6, \cite{HKS1}). We denote this change by $L'=\hat Q_{L}L$ for some $\hat Q_L \in \SU$.

Any potential $\xi _\l \in I(a)$ decomposes uniquely into an element $(L',\tilde \xi_\l ) \in \C\P^1 \times  I({\tilde a})$ by
$$\xi_\l= p(\l)h_{L',\alpha_0} \tilde \xi_\l h^{-1}_{L',\alpha_0}$$
where $p(\l)=(\l - \alpha_0)(1-\bar \alpha _0 \l)$ and $L'=\hat Q_LL$ with $L={\rm Ker}\; \xi_{\alpha_0}$.
In the case where $(L'_0)^{\perp}$ is an eigenline of $\tilde \xi_{\alpha_0}$, the potential $\xi_\l$ has a zero at $\alpha_0$ and we can remove the zero without changing its extended frame $F_\l$.

For any potential $\xi_\l$ which decomposes into $(L', \tilde \xi_\l)$ we consider the change $(L'(t), \tilde \xi_\l)$ and change $L'(t)$ to $L'_1$ where $(L'_1)^{\perp}$ is an eigenline of $\tilde \xi_{\alpha_0}$. This change preserves embeddedness and periodicity as it is isospectral. The isospectral action acts transitively on the first factor $L' \in \C\P ^\times =\C \P ^1 -\{ L'_1,L'_2\}$ where  $(L'_1)^{\perp}, (L'_2)^{\perp}$
are eigenlines of $\tilde \xi_{\alpha_0}$ (eventually there is only one eigenline $(L'_1)^{\perp}$).

We consider the unitary factor $F_\l: \R^2 \rightarrow \Lambda_r \SU (\C)$ of the r-Iwasawa decomposition
$$\exp (z\xi_{\l})=F _\l B_\l$$
and we define $\tilde F_\l: \R^2 \rightarrow \Lambda \SU (\C)$ the unitary factor of the r-Iwasawa decomposition
$$\exp (z p(\l) \tilde \xi_{\l})=\tilde F_\l \tilde B_\l.$$
Terng-Uhlenbeck \cite{TerU} provide the following useful formula.
\begin{proposition}
Let $h_{L',\alpha_0}$ the simple factor with $\alpha_0 \in \C$, $r< |\alpha_0|<1$ and $L \in \C\P^1$. Then
$$ F_\l (z)= h_{L',\alpha_0} \tilde F_\l (z) h^{-1}_{L'(z),\alpha_0}     \hbox{ with } L'(z)={ {}^t \overline{\tilde F} }_{\alpha_0}(z) L' $$
\end{proposition}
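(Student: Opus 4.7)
\emph{Proof plan.} The plan is to derive the dressing formula from uniqueness of the $r$-Iwasawa decomposition. I start from the identity
\[
\exp(z\xi_\l)=\exp\bigl(zp(\l)\,h_{L',\alpha_0}(\l)\tilde\xi_\l h^{-1}_{L',\alpha_0}(\l)\bigr)=h_{L',\alpha_0}(\l)\,\exp\bigl(zp(\l)\tilde\xi_\l\bigr)\,h^{-1}_{L',\alpha_0}(\l),
\]
which uses only that conjugation commutes with the matrix exponential and that $h_{L',\alpha_0}(\l)$ does not depend on $z$. Substituting the $r$-Iwasawa factorisation $\exp(zp(\l)\tilde\xi_\l)=\tilde F_\l\tilde B_\l$ gives the ``provisional'' decomposition
\[
\exp(z\xi_\l)=\bigl(h_{L',\alpha_0}\tilde F_\l\, h^{-1}_{L'(z),\alpha_0}\bigr)\bigl(h_{L'(z),\alpha_0}\tilde B_\l\, h^{-1}_{L',\alpha_0}\bigr),
\]
for any choice of line $L'(z)\in\C\P^1$, and the task is to choose $L'(z)$ so that the first bracket lies in $\Lambda_r\SU$ and the second in $\Lambda_r^+\SL$.

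Next I would show that a single condition, namely the vanishing of the apparent pole of $h_{L',\alpha_0}\tilde F_\l h^{-1}_{L'(z),\alpha_0}$ at $\l=\alpha_0$, forces the stated formula for $L'(z)$. The simple factor $h_{L',\alpha_0}(\l)$ has a simple pole at $\alpha_0$ whose rank-one residue has image $L'$, while $h^{-1}_{L'(z),\alpha_0}(\l)$ has a simple zero there whose kernel is spanned by a vector representing $L'(z)$. The pole cancels precisely when the image of the composite residue, namely $\tilde F_{\alpha_0}(z)^{-1}L'$, agrees with this kernel; that is,
\[
L'(z)=\tilde F_{\alpha_0}(z)^{-1}L'={}^t\overline{\tilde F}_{\alpha_0}(z)\,L',
\]
where the second equality uses the reality identity ${}^t\overline{\tilde F}_{1/\bar\l}=\tilde F_\l^{-1}$ for $\tilde F\in\Lambda_r\SU(\C)$ together with a change of variables that exchanges $\alpha_0$ and $1/\bar\alpha_0$. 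With this choice of $L'(z)$, the reality symmetry forces the companion pole at $\l=1/\bar\alpha_0$ to cancel as well, so the first bracket is holomorphic on the annulus $A_r$ and, being unitary on $\SY^1$ (as a product of a unitary $\tilde F_\l$ with the simple factors $h_{L,\alpha_0}$ which transform correctly under the reality involution), belongs to $\Lambda_r\SU(\C)$.

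For the second bracket, I would check that $h_{L'(z),\alpha_0}\tilde B_\l h^{-1}_{L',\alpha_0}$ extends to $\Lambda_r^+\SL$. Its only possible singularities are at $\alpha_0$ and $1/\bar\alpha_0$, both of which lie outside the disc $|\l|<r$, so the factor is holomorphic on $I_r$. At $\l=0$ the normalisation built into the definition of the simple factors, $h_{L,\alpha_0}(0)=Q_{1,L}\pi^{-1}_{\alpha_0}(0)\in\Lambda_r^+\SL$, combined with $\tilde B_\l(0)\in\Lambda_r^+\SL$, produces an upper-triangular matrix with positive real diagonal entry, so the factor indeed lies in $\Lambda_r^+\SL$. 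Uniqueness of the $r$-Iwasawa decomposition then identifies the first bracket with $F_\l(z)$, yielding the claim.

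The main obstacle I anticipate is the pole-cancellation computation at $\l=\alpha_0$: one has to work with the precise rank-one form of the residues of $h_{L',\alpha_0}$ and $h^{-1}_{L'(z),\alpha_0}$ and with the definition $h_{L,\alpha_0}=Q_{1,L}\pi^{-1}_{L,\alpha_0}$ of Section~\ref{sec:polynomial killimg field}, in order to translate the cancellation condition into the projective action $L'(z)=\tilde F_{\alpha_0}(z)^{-1}L'$ and to reconcile it with the reality condition so that both poles disappear simultaneously. Once this identification is established, the remaining steps (unitarity on $\SY^1$, holomorphy on $I_r$, normalisation at $\l=0$) follow from the structural properties of $\Lambda_r\SU$, $\Lambda_r^+\SL$ and the Iwasawa splitting recalled at the start of Section~\ref{sec:spectral data}.
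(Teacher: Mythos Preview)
The paper does not give its own proof of this proposition: it is stated as a formula due to Terng--Uhlenbeck \cite{TerU} and simply quoted. Your approach via uniqueness of the $r$-Iwasawa decomposition and pole cancellation is exactly the standard Terng--Uhlenbeck argument, so in outline you are reproducing the original proof rather than diverging from anything in the present paper.

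There is, however, one genuine slip in your sketch. You write
\[
L'(z)=\tilde F_{\alpha_0}(z)^{-1}L'={}^t\overline{\tilde F}_{\alpha_0}(z)\,L',
\]
and justify the second equality by the reality identity ${}^t\overline{\tilde F}_{1/\bar\l}=\tilde F_\l^{-1}$ ``together with a change of variables that exchanges $\alpha_0$ and $1/\bar\alpha_0$''. But that identity gives ${}^t\overline{\tilde F}_{\alpha_0}=\tilde F_{1/\bar\alpha_0}^{-1}$, not $\tilde F_{\alpha_0}^{-1}$; since $|\alpha_0|\neq 1$ these are distinct matrices, and no mere change of variables identifies them. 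The correct route is to carry out the pole-cancellation analysis carefully at \emph{both} $\alpha_0$ and $1/\bar\alpha_0$, keeping track of how the simple factor $h_{L',\alpha_0}=Q_{1,L'}\pi_{L',\alpha_0}^{-1}$ pairs the rank-one residues at the two points via the unitary $Q_{1,L'}$ and the map $L\mapsto L'=\hat Q_L L$ described just before the proposition. When this bookkeeping is done, the kernel condition that removes the singularity is precisely $L'(z)={}^t\overline{\tilde F}_{\alpha_0}(z)L'$, and the companion singularity at $1/\bar\alpha_0$ then cancels automatically by the reality symmetry---but you arrive there from the pole at $1/\bar\alpha_0$ (equivalently, from the reality-reflected structure of the simple factor), not from the pole at $\alpha_0$ alone. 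Once this point is fixed, the rest of your plan (unitarity on $\SY^1$, holomorphy on $I_r$, upper-triangular normalisation at $\l=0$, and uniqueness of Iwasawa) goes through as you describe.
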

It turns out that changing the line $L'$ is isospectral and preserves the closing conditions. To investigate this restricted isospectral action we have the following
\begin{theorem}
\label{bubbleaction}(Theorem 6.8 \cite{HKS1})
We consider a decomposition of a potential $\xi_\l \in I(a)$ in $(L', \tilde \xi_\l) \in \C \P^1 \times I({\tilde a})$ and the two-dimensional subgroup action  of $\R^{2g}$,
$\tilde \pi(.)\xi_\l : \C \to I(a)$  given  by
$$ \tilde \pi (\beta) \xi_\l=\left\{
\begin{array}{ll}
\left( \frac{\beta}{\l-\alpha_0} + \frac{\bar \beta \l}{1-\bar \a _0 \l} \right) \l^{\frac{1-g}{2}} \xi_\l & \hbox{ when } g=2k+1 \\
\left( \frac{\beta}{\l-\alpha_0} + \frac{\bar \beta \l}{1-\bar \a_0 \l} \right) (\l^{\frac{-g}{2}}+ \l^{1-\frac{g}{2}})  \xi _\l& \hbox{ when } g=2k\,.
\end{array} \right.
$$
This subgroup acts on $\xi_\l=(L', \tilde \xi_\l)$ by preserving the second term  $\tilde \xi_\l$ of the decomposition. If we denote by  $\tilde \pi(\beta) \xi_\l=(L' (\beta), \tilde \xi_\l)$, the
subgroup $\C$ acts on $L' \in \C \P^1 \backslash \{L'_1,L'_2\} $ transitively where  $(L'_1)^{\perp}, (L'_2)^{\perp}$ are eigenlines of $\tilde \xi_{\alpha_0}$ and fixed point of the action.
\end{theorem}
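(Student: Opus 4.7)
The plan is to interpret $\tilde{\pi}(\beta)$ as a dressing transformation by a one‑parameter family of simple factors at the points $\alpha_0$ and $\bar{\alpha}_0^{-1}$, and to carry out the verification within the $r$‑Iwasawa framework for some $r$ with $r<|\alpha_0|<1$. The guiding picture is that the simple‑factor decomposition $\xi_\l = p(\l)\,h_{L',\alpha_0}\,\tilde{\xi}_\l\,h^{-1}_{L',\alpha_0}$ provides coordinates $(L',\tilde{\xi}_\l)\in\mathbb{C}\mathrm{P}^1\times I(\tilde{a})$ on $I(a)$, and in these coordinates the action generated by the rational function with poles only at $\alpha_0,\bar{\alpha}_0^{-1}$ has to keep the ``bulk'' $\tilde{\xi}_\l$ fixed while translating $L'$.

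First I would set $\eta_\beta(\l):=\bigl(\tfrac{\beta}{\l-\alpha_0}+\tfrac{\bar{\beta}\l}{1-\bar{\alpha}_0\l}\bigr)\l^{(1-g)/2}\xi_\l$ (and the analogous expression for even $g$). A direct calculation on $|\l|=1$ using $\bar{\l}=1/\l$ shows that the scalar prefactor is real on $\SY^1$ and obeys the same reality relation as $\l^{(1-g)/2}$; combined with the reality of $\xi_\l\in\pk$ this gives ${}^t\overline{\eta_\beta(1/\bar{\l})}=-\eta_\beta(\l)$, so $\eta_\beta$ lies in the loop algebra $\Lambda_r\su$ for any $r$ with $r<|\alpha_0|<1$. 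Apply the $r$‑Iwasawa theorem to factor $\exp(\eta_\beta)=F_\l(\beta)B_\l(\beta)$ with $F_\l(\beta)\in\Lambda_r\SU$ and $B_\l(\beta)\in\Lambda_r^+\SL$, and define $\tilde{\pi}(\beta)\xi_\l:=B_\l(\beta)\xi_\l B_\l^{-1}(\beta)=F_\l^{-1}(\beta)\xi_\l F_\l(\beta)$. Isospectrality (hence membership in $I(a)$) is immediate, and commutativity $\tilde{\pi}(\beta+\beta')=\tilde{\pi}(\beta)\tilde{\pi}(\beta')$ follows by the same bookkeeping as in Proposition~\ref{com}.

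The main obstacle is the second assertion: that $\tilde{\pi}(\beta)$ fixes the factor $\tilde{\xi}_\l$ and moves only $L'$. The key observation is that the poles of $\eta_\beta$ sit exactly at the poles of $h_{L',\alpha_0}$, while the polynomial $p(\l)$ in the decomposition vanishes there to precisely the right order. Plugging the decomposition into the exponential and expanding to first order in $\beta$, the commutator $[\eta_\beta,\xi_\l]$ decomposes naturally against the splitting $I(a)\cong\mathbb{C}\mathrm{P}^1\times I(\tilde{a})$: the regular part at $\alpha_0,\bar{\alpha}_0^{-1}$ descends to a derivation of $\tilde{\xi}_\l$ whose $r$‑Iwasawa projection vanishes (because $\tilde{\xi}_\l$ is independent of $\l$ near these points and the generator is holomorphic in the corresponding neighbourhood after conjugating by $h_{L',\alpha_0}$), while the singular part produces a dressing of $h_{L',\alpha_0}$ by a simple factor. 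This is the analogue of the Terng–Uhlenbeck product formula quoted just before the theorem, and its proof reduces to checking that $B_\l(\beta)h_{L',\alpha_0}=h_{L'(\beta),\alpha_0}\widehat{B}_\l(\beta)$ with $\widehat{B}_\l$ commuting with $\tilde{\xi}_\l$ on the disc $|\l|\le r$; this is where the delicate residue computation with the projectors $Q_L$, $Q_{1,L'}$ of Appendix B has to be carried out in detail.

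Finally, transitivity is the simplest part. The infinitesimal action at $\beta=0$ gives $\tfrac{d}{d\beta}L'(\beta)\bigr|_{\beta=0}$ as a linear functional on $\C$ depending rationally on $L'$ and on the matrix $\tilde{\xi}_{\alpha_0}$; a short residue calculation identifies its vanishing locus with the lines $L'$ for which $(L')^{\perp}$ is an eigenline of $\tilde{\xi}_{\alpha_0}$, i.e.\ the lines $L'_1,L'_2$ (coinciding when $\tilde{\xi}_{\alpha_0}$ is nilpotent or scalar). Away from these two points the orbit map $\beta\mapsto L'(\beta)$ is a submersion of the 2‑dimensional real group $\C$ into the 2‑dimensional real manifold $\mathbb{C}\mathrm{P}^1\setminus\{L'_1,L'_2\}$, hence open; combined with the fact that $\mathbb{C}\mathrm{P}^1\setminus\{L'_1,L'_2\}$ is connected and that the orbit is also closed in it (by commutativity and compactness of $I(a)$ from Proposition~\ref{1}), transitivity follows.
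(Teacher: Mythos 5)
The paper itself offers no proof of this theorem (it is quoted from \cite{HKS1}), so your attempt has to stand on its own, and as written it has two genuine problems. First, the claim that $\eta_\beta(\l)=\bigl(\tfrac{\beta}{\l-\alpha_0}+\tfrac{\bar\beta\l}{1-\bar\alpha_0\l}\bigr)\l^{(1-g)/2}\xi_\l$ ``lies in the loop algebra $\Lambda_r\su$'' is false and, worse, self-defeating. Your reality computation on $\SY^1$ is correct, but membership in the Lie algebra of $\Lambda_r\SU(\C)$ also requires analyticity on the annulus $A_r=\{r<|\l|<1/r\}$, and $\eta_\beta$ has poles at $\alpha_0$ and $\bar\alpha_0^{-1}$, both of which lie inside $A_r$ precisely because $r<|\alpha_0|<1$. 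If $\eta_\beta$ did lie in that Lie algebra, then by the triviality criterion stated after Proposition \ref{com} (exponents belonging to one Iwasawa factor act trivially) the action $\tilde\pi(\beta)$ would be the identity, contradicting the transitivity you prove afterwards. What you actually need, and all you need, is that $\exp(\eta_\beta)\in\Lambda_r\SL(\C)$, i.e.\ analyticity near $\SY_r$; the whole point of taking $r<|\alpha_0|$ is that the generator is skew on $\SY^1$ yet belongs to neither factor.

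Second, the heart of the theorem --- that $\tilde\pi(\beta)$ preserves the factor $\tilde\xi_\l$ of the decomposition $\xi_\l=p(\l)h_{L',\alpha_0}\tilde\xi_\l h^{-1}_{L',\alpha_0}$, moves only $L'$, and has exactly the lines with $(L'_i)^\perp$ an eigenline of $\tilde\xi_{\alpha_0}$ as fixed points --- is only asserted: you defer the ``delicate residue computation'' and the proof of the exchange relation $B_\l(\beta)h_{L',\alpha_0}=h_{L'(\beta),\alpha_0}\widehat B_\l(\beta)$, so the statement is not actually established. A cleaner route for the first half is to conjugate the exponent by $h_{L',\alpha_0}$: since $f(\l)p(\l)=\beta(1-\bar\alpha_0\l)+\bar\beta\l(\l-\alpha_0)$, the conjugated generator is $\l^{(1-g)/2}\bigl(\beta(1-\bar\alpha_0\l)+\bar\beta\l(\l-\alpha_0)\bigr)\tilde\xi_\l$, and one checks directly that each of its three terms is of the trivial form $\tilde\xi_\l(\l^{-i}t+\l^{\,i+1-\tilde g}\bar t)$ for $\tilde g=g-2$, so the induced action on $I(\tilde a)$ is trivial; the nontrivial part is then entirely carried by the simple factor, which is where a Terng--Uhlenbeck type permutation formula and the normalization of $h_{L',\alpha_0}$ at $\l=0$ must be used, and where the fixed-point locus emerges. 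Your transitivity argument is also loose at the end: the closedness claim ``by commutativity and compactness of $I(a)$'' does not work as stated (orbit closures pick up the fixed points $L'_1,L'_2$); the correct elementary argument is that on $\C\P^1\setminus\{L'_1,L'_2\}$ all orbits are open, orbits partition this connected set, hence there is exactly one --- but this still presupposes the fixed-point identification and the surjectivity of the decomposition map onto $\C\P^1\times I(\tilde a)$, neither of which your sketch proves.
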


We show that the action acts transitively on the first factor and preserves the second factor $\tilde \xi_\l$ to prove that along this deformation embeddedness and closing conditions are preserved.

\begin{proposition}
\label{bubbleclosing}(Proposition 6.7 \cite{HKS1})
If there is $(L'_1, \tilde \xi_\l) \in \C\P^1 \times I({\tilde a})$, such that  $\xi_{1,\l} =p(\l) h_{L'_1,\alpha_0}\tilde \xi _\l h^{-1}_{L'_1,\alpha_0}$
induces an embedded minimal annulus with period $\tau$, then for any $L'_2 \in \C\P^1$, the potential $\xi_{2,\l} =p(\l)h_{L'_2,\alpha_0}\tilde \xi _\l h ^{-1}_{L'_2,\alpha_0}$ yields an embedded minimal annulus with the same period $\tau$.
\end{proposition}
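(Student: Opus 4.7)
The plan is to combine the transitivity of the subgroup action $\tilde{\pi}$ from Theorem~\ref{bubbleaction} with the isospectral invariance of both periodicity (Proposition~\ref{stab}) and embeddedness (Proposition~\ref{bubbleembedded}), and to treat the fixed points of the action by a continuity/limit argument.

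First I observe that $\xi_{1,\l}$ and $\xi_{2,\l}$ have the same spectral polynomial and the same off--diagonal product $a(0)$, since these depend only on $\tilde{\xi}_\l$ and $p(\l)$, not on the choice of $L'$. Hence both lie in the same isospectral set $I(a)$, where $a(\l)=(\l-\alpha_0)^{2}(1-\bar{\alpha}_0\l)^{2}\tilde{a}(\l)$. Suppose first that $(L'_2)^{\perp}$ is \emph{not} an eigenline of $\tilde{\xi}_{\alpha_0}$, i.e.\ $L'_2$ is not one of the two fixed points of $\tilde\pi$. By Theorem~\ref{bubbleaction} the action $\tilde{\pi}$ acts transitively on $\mathbb{C}\mathrm{P}^1$ minus the two fixed points and preserves the factor $\tilde{\xi}_\l$, so there is $\beta\in\mathbb{C}$ with $\tilde{\pi}(\beta)\xi_{1,\l}=\xi_{2,\l}$. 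Proposition~\ref{stab} says that the stabilizer $\Gamma_{\xi}$ depends only on the orbit of $\xi$ under the full isospectral action; since $(\tau,0,\dots,0)\in\Gamma_{\xi_{1,\l}}$ by hypothesis, the same holds for $\xi_{2,\l}$, so $\xi_{2,\l}$ induces a $\tau$--periodic minimal annulus. Embeddedness of this annulus follows from Proposition~\ref{bubbleembedded}, which asserts exactly that $\tilde{\pi}$ preserves embeddedness within the orbit of $\xi_{1,\l}$.

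It remains to treat the case where $(L'_2)^{\perp}$ is an eigenline of $\tilde{\xi}_{\alpha_0}$, i.e.\ $L'_2$ is a fixed point of $\tilde{\pi}$. Here $\xi_{2,\l}$ acquires a zero at $\l=\alpha_0$, so $L'_2$ cannot be reached by the action from $L'_1$ in finite time. The idea is to approach $L'_2$ through the open orbit: choose a continuous path $\beta(t)\in\mathbb{C}$ with $L'\!\bigl(\beta(t)\bigr)\to L'_2$ as $t\to\infty$, and set $\xi_\l(t):=\tilde{\pi}(\beta(t))\,\xi_{1,\l}$. By the previous paragraph, each $\xi_\l(t)$ induces a $\tau$--periodic embedded minimal annulus $X(z,t)$. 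Because the Iwasawa factorization is real analytic on the open stratum and extends continuously through the fixed points of $\tilde{\pi}$ (as explained in Section~\ref{sembedded} and the discussion following Proposition~\ref{remove}), the extended frames $F_\l(t,z)$ and their Sym--Bobenko immersions $X(z,t)$ converge uniformly on compact sets to the frame and immersion associated to $\xi_{2,\l}$. Passing to the limit in the closing condition $F_\l(t,\tau)=\pm F_\l(t,0)$ preserves it, so $\xi_{2,\l}$ induces a $\tau$--periodic annulus. Embeddedness passes to the limit by the maximum principle argument already used in Proposition~\ref{simpleembedded}: a limit of embedded minimal surfaces in $\mathbb{S}^2\times\mathbb{R}$ is either embedded or contains two sheets with common tangent plane, and the latter would be detected by the converging sequence.

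The main obstacle is the fixed--point case, where $\tilde{\pi}$ extends only continuously, not smoothly, so one cannot invoke Proposition~\ref{bubbleembedded} directly. The fix is the combination of (i) the explicit continuous extension of the Iwasawa decomposition $\xi_\l\mapsto F_\l$ across the locus of potentials with a root at $\alpha_0$, which guarantees $\mathcal{C}^2$ convergence of the immersions on compact sets, and (ii) the closedness of embeddedness under such convergence. A minor technical point is that at the limit $\xi_{2,\l}$ vanishes at $\alpha_0$ and so reduces via Proposition~\ref{remove} to a potential $\tilde{\xi}_\l\in I(\tilde{a})$ of lower spectral genus inducing the same immersion, which is consistent with the claim that $L'_2$ is a fixed point and explains why the period $\tau$ is unchanged.
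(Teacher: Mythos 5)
Note first that this paper does not actually prove Proposition~\ref{bubbleclosing}: it is quoted from \cite{HKS1}, so your reconstruction has to stand on its own within the paper's logical structure. The main problem is that it does not: your embeddedness step cites Proposition~\ref{bubbleembedded}, but the proof of Proposition~\ref{bubbleembedded} given in this paper explicitly invokes Proposition~\ref{bubbleclosing} (``If the annulus of $\xi_\l$ closes with period $\tau$, we prove in Proposition~\ref{bubbleclosing} that all $\hat\xi_\l\in O$ give $\tau$-periodic annuli''), precisely to have $\tau$-periodic annuli before the tubular-neighbourhood comparison can be run. So, as written, your argument is circular. The circularity is repairable rather than fatal, because you do supply an independent proof of the periodicity half (orbit-invariance of the stabilizer and of the condition $F_\gamma=\pm\un$ at the Sym point, via Proposition~\ref{stab} and the commutativity $\pi(\gamma)\pi(t)\xi_\l=\pi(t)\pi(\gamma)\xi_\l$, noting that $\tilde\pi$ is a subgroup of the isospectral action). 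But then you must \emph{inline} the embeddedness-preservation argument for the $\tilde\pi$-orbit rather than cite Proposition~\ref{bubbleembedded} as a black box: uniform continuity of $(t,\xi_\l)\mapsto\pi(t)\xi_\l$ on the compact set $I(a)$, commutation with the translation flow so that the polynomial Killing fields stay uniformly close for all $z$, and the comparison of the two immersions inside the embedded $\epsilon_1$-tubular neighbourhood of Lemma~\ref{tubular} (which itself needs the already-established $\tau$-periodicity). With that substitution your first paragraph becomes a genuine proof on the open stratum, and your limit argument for the fixed points of $\tilde\pi$ (continuous extension of the action, closedness of embeddedness by the maximum principle, passage to the limit in the closing condition, reduction via Proposition~\ref{remove}) is consistent with how the paper handles this degeneration elsewhere.

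A second, smaller gap: your transitivity step silently assumes that $L'_1$ is not a fixed point of $\tilde\pi$, i.e.\ that $\xi_{1,\alpha_0}\neq 0$ so that $\xi_{1,\l}$ is a genuine bubbleton. If $L'_1$ were a fixed point, $\xi_{1,\l}$ has a removable zero and only encodes the reduced annulus of $\tilde\xi_\l$; from that hypothesis one cannot reach a generic $L'_2$ by the action, and indeed the implication ``reduced annulus embedded $\Rightarrow$ all bubbletons on it embedded'' is false (Section~\ref{abresch bubbleton} shows embedded Abresch annuli carry no embedded bubbletons). You should state explicitly that the hypothesis is taken with $(L'_1)^{\perp}$ not an eigenline of $\tilde\xi_{\alpha_0}$, which is clearly the intended reading of the proposition.
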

\begin{corollary}\label{bubbleton orbit}\cite{HKS1}
This group action defines an orbit which preserves the degree of the roots of the potential $\xi_\l$. If $a(\l)=(\l-\a_0)^{2}(1-\l \bar \alpha_0)^{2} \tilde a(\l)$,
then the closure of the set $N= \{ \xi_\l \in I(a) \mid  \xi_{\alpha_0} \neq 0 \}$ is  isomorphic to the set $(\C \P^1) \times I({\tilde a})=\bar N$.
\end{corollary}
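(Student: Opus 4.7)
The plan is to establish the claimed bijection by exhibiting an explicit decomposition map $\Phi \colon \bar N \to \C\P^1 \times I(\tilde a)$ built from the simple factor $h_{L',\alpha_0}$, and then verifying it is a homeomorphism. First I would address the degree-preservation statement. Since the two-dimensional action $\tilde\pi(\beta)$ of Theorem~\ref{bubbleaction} and more generally the full $\pi(t)$ of Definition~\ref{groupaction} realise orbit elements as conjugations $\xi_\l \mapsto B_\l \xi_\l B_\l^{-1}$ by loops $B_\l$ that are invertible on $\C^\times$ (from the Iwasawa factorisation, since $B_\l \in \Lambda_r^+\SL(\C)$ has determinant $1$ and no poles outside the relevant disk), the order of vanishing of $\xi_\l$ at every $\alpha \in \C^\times$ is an orbit invariant. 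In particular $I(a)$ splits into the two invariant pieces $N = \{\xi_\l \mid \xi_{\alpha_0}\ne 0\}$ and its complement $\{\xi_{\alpha_0}=0\}$, and the action preserves the degree of roots at $\alpha_0$ as claimed.

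Next I would define $\Phi$ on $N$. For $\xi_\l \in N$, the double root of $a$ at $\alpha_0$ forces $\det \xi_{\alpha_0}=0$, so $\xi_{\alpha_0}$ is rank-one nilpotent and $L := \ker \xi_{\alpha_0} = \mathrm{Im}\,\xi_{\alpha_0} \in \C\P^1$ is well-defined; Lemma~5.1 of \cite{HKS1} then supplies the unique $L' = \hat Q_L L$ for which $h_{L',\alpha_0}(0)$ is upper-triangular. Setting $\tilde\xi_\l := p(\l)^{-1}\, h_{L',\alpha_0}^{-1}\, \xi_\l\, h_{L',\alpha_0}$, the pole of $h_{L',\alpha_0}^{-1}$ at $\alpha_0$ and $\bar\alpha_0^{-1}$ exactly cancels the double zero of $p(\l) = (\l - \alpha_0)(1-\bar\alpha_0\l)$, so that $\tilde\xi_\l$ is polynomial of degree $\tilde g = g-2$ on $\C^\times$ with $\det \tilde\xi_\l = -\l^{-1}\tilde a(\l)$, and the reality condition is inherited from $\xi_\l$ because the dressing is compatible with the involutions of Proposition~\ref{involution}; hence $\tilde\xi_\l \in I(\tilde a)$, and I set $\Phi(\xi_\l) := (L',\tilde\xi_\l)$. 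Conversely, for any $(L',\tilde\xi_\l)\in \C\P^1\times I(\tilde a)$, the formula $\xi_\l := p(\l)\, h_{L',\alpha_0}\, \tilde\xi_\l\, h_{L',\alpha_0}^{-1}$ produces an element of $I(a)$, and a direct computation of $\xi_{\alpha_0}$ in the basis $(L'^\perp, L')$ shows $\xi_{\alpha_0} \ne 0$ precisely when $(L')^\perp$ is not an eigenline of $\tilde\xi_{\alpha_0}$. These constructions are mutually inverse on $N$ and are continuous as compositions of Iwasawa factorisation and polynomial operations.

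Finally I would extend $\Phi$ across the boundary $\bar N \setminus N$. The two exceptional lines $L'_1,L'_2\in \C\P^1$ with $(L'_i)^\perp$ an eigenline of $\tilde\xi_{\alpha_0}$ correspond under the inverse formula to potentials acquiring an extra zero at $\alpha_0$, hence sitting in $\{\xi_{\alpha_0}=0\}$; combined with the compactness of $I(a)$ (Proposition~\ref{1}) and the continuity of the decomposition in $L'$, these account for all limit points of $N$, so that $\bar N$ indeed maps bijectively onto the whole of $\C\P^1 \times I(\tilde a)$. The main obstacle I anticipate is the continuity of $\Phi^{-1}$ at these exceptional lines: as $L' \to L'_i$ the conjugation by $h_{L',\alpha_0}$ degenerates and the product $p(\l)\,h_{L',\alpha_0}\tilde\xi_\l h_{L',\alpha_0}^{-1}$ develops an apparent singularity at $\alpha_0$ that must be resolved by tracking the $(\l - \alpha_0)$-Laurent expansion of each factor. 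Using the explicit $2 \times 2$ form $h_{L',\alpha_0} = Q_{1,L'}\,\pi_{L',\alpha_0}\,Q_{1,L'}^{-1}$ from Appendix~\ref{sec:bubbleton}, one has to verify that the leading terms of $\tilde\xi_\l$ along the eigenspace decomposition of $\tilde\xi_{\alpha_0}$ produce a holomorphic limit $\xi_\l \in I(a)$ with the correct determinant and reality, and that the resulting map $\C\P^1\times I(\tilde a) \to \bar N$ is an open continuous bijection, hence a homeomorphism, rather than merely a continuous surjection.
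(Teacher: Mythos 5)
The paper does not actually prove this corollary here --- it is imported from \cite{HKS1} --- but the mechanism underlying it is exactly the simple-factor decomposition $\xi_\l=p(\l)\,h_{L',\alpha_0}\,\tilde\xi_\l\, h^{-1}_{L',\alpha_0}$ that you build your map $\Phi$ from, so your route is the intended one. Two points, however, are genuine gaps rather than routine verifications. First, your degree-preservation argument is not correct as written: an element $B_\l\in\Lambda_r^+\SL(\C)$ is only guaranteed to be holomorphic on the disc $I_r$, and $\alpha_0$ lies outside that disc (since $r<|\alpha_0|<1$), so you cannot evaluate and invert $B_{\alpha_0}$. The correct argument uses the other expression of the action, $\pi(t)\xi_\l=F_\l^{-1}(t)\,\xi_\l\,F_\l(t)$ with $F_\l\in\Lambda_r\SU(\C)$ holomorphic and invertible on the annulus $A_r$, which contains both $\alpha_0$ and $1/\bar\alpha_0$; that is what makes the vanishing order at $\alpha_0$ an orbit invariant.

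Second, and more substantially, the content of the statement $\bar N\cong\C\P^1\times I(\tilde a)$ sits precisely at the exceptional lines $L'_1,L'_2$ where $(L'_i)^\perp$ is an eigenline of $\tilde\xi_{\alpha_0}$, and your proposal stops short there. You need (i) continuity of $(L',\tilde\xi_\l)\mapsto p(\l)h_{L',\alpha_0}\tilde\xi_\l h^{-1}_{L',\alpha_0}$ \emph{at} those lines --- this is the point where the subgroup action $\tilde\pi$ of Theorem~\ref{bubbleaction} extends only continuously, not smoothly, to its fixed points, and you explicitly leave this Laurent-expansion analysis ``to verify''; and (ii) injectivity at those boundary points, which you assert (``maps bijectively'') but do not prove: your mutual-inverse argument uses $L=\ker\xi_{\alpha_0}$, and that recovery breaks down exactly when $\xi_{\alpha_0}=0$, so injectivity on $\bar N\setminus N$ requires a separate argument (for instance reading off $(L',\tilde\xi)$ from the next-order term of $\xi_\l$ at $\alpha_0$, or from limits along $N$). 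By contrast, your worry about openness is unnecessary: once continuity and injectivity are established, $\C\P^1\times I(\tilde a)$ is compact and $\pk$ is Hausdorff, so the continuous bijection is automatically a homeomorphism onto its image, and the image is $\bar N$ because the non-exceptional pairs are dense and are carried onto $N$.
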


\section{Abresch annuli}

\subsection{Spectral data}

\begin{proposition}(Proposition 7.2 \cite{HKS1})
\label{example}
The spectral curve of genus 0 associate to an embedded annulus parameterized by its Sym point at $\l =1$ is given by

\begin{enumerate}
\item[1)]  $a(\l)=\frac{-1}{16}$ and $b(\l)= \frac{\pi}{16} ( \l-1)$
\end{enumerate}

The spectral curve of genus 1 associate to an embedded annulus parameterized by its Sym point at $\l =1$ is given by

\begin{enumerate}
\item[2)] $a(\l)=\frac{1}{16\a}(\l-\alpha)(\a \l -1)$ for $\a \in (0,1)$ and $b(\l)= \frac{b(0)}{\gamma} (\l - \gamma)(\gamma \l -1)$ with $\gamma \in (\alpha,1)$ and $b(0) \in \mi\R$
both determined by $\a$.
\item[3)] $a(\l)=\frac{-1}{16\beta}(\l+\beta)(\beta \l +1)$ for $\beta \in (0,1)$ and $b(\l)= \frac{b(0)}{\gamma} (1- \l)(1+ \l)$ and $b(0) \in \R$ determined by $\beta$.
\end{enumerate}
The spectral curve of genus 2 associate to an embedded annulus parameterized by its Sym point at $\l =1$ is given by
\begin{enumerate}
\item[4)] $a(\l)=\frac{1}{16\beta \alpha}(\l-\a)(\a \l -1) (\l +\beta)(\beta \l +1)$ for $\a, \beta \in (0,1)$ and $b(\l)=\frac{b(0)}{\gamma}(1+\l)(\l - \gamma)(\gamma \l -1)$ for $\gamma \in (\alpha,1)$ and $b(0) \in \mi \R$ both determined by $\a$ and $\beta$.
\end{enumerate}
In conclusion, the polynomial $a$ satisfies the additional symmetry $\l^{2g} a(1/\l) = a(\l)$ and
\begin{enumerate}
\item[a)] $\l^{g+1} b(1/\l) = b(\l)$ if $a$ has a root $\alpha \in \R^+$ and $b(0) \in \mi \R$
\item[b)]$\l^{g+1} b(1/\l) =- b(\l)$ if $a$ has only roots in $\R^-$ and $b(0) \in  \R$
\end{enumerate}
\end{proposition}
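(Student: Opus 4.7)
The plan is to use the extra symmetries of Abresch annuli to constrain the spectral data, then enumerate the small-genus cases directly. An Abresch annulus is foliated by horizontal constant-curvature curves, and equivalently its Shiffman Jacobi field $u = \cosh^2(\omega)\,\partial_x k_g$ vanishes identically (Theorem~\ref{shiffman} together with the Abresch system). Geometrically this corresponds to a reflection symmetry across a vertical geodesic plane; in a suitable conformal parametrization this reflection acts by $z \mapsto -\bar z$. First I would show that this reflection lifts to an additional involution of $\Sigma$ commuting with $\sigma$, $\rho$, $\eta$ and acting on the spectral parameter by $\lambda \mapsto 1/\lambda$. Combined with the reality involution $\rho$, this extra symmetry forces $a(\lambda) = \lambda^{2g} a(1/\lambda)$, so $a$ has real coefficients, and yields for $b$ either $\lambda^{g+1} b(1/\lambda) = b(\lambda)$ (case~(a)) or $\lambda^{g+1} b(1/\lambda) = -b(\lambda)$ (case~(b)), depending on the parity of how the reflection interacts with the Sym normalization at $\lambda = 1$. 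The sign condition $\lambda^{-g} a(\lambda) \le 0$ on $\SY^1$ together with simplicity then forces the roots of $a$ to come in real reciprocal pairs lying in $\R^* \setminus \{\pm 1\}$, and the phase $a(0) = -e^{\mi\Theta}/16$ selects which half-axis carries the roots.

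For genus $0$, the constraint $|a(0)| = 1/16$ combined with $a(\lambda) \le 0$ on $\SY^1$ pins $a \equiv -1/16$; then $\Theta = 0$, so $b(0) \in \R$, and $\lambda b(1/\lambda) = b(\lambda)$ forces $b(\lambda) = b(0)(\lambda - 1)$. The closing condition (v) of Definition~\ref{spectraldata} on the rational curve $\nu^2 = -1/(16\lambda)$ is a single period integral that fixes $b(0) = \pi/16$, giving case~(1). For genus $1$, the two reciprocal roots of $a$ either both lie in $(0,1) \cup (1,\infty)$, parameterized by $\alpha \in (0,1)$ (case~(2)), or both lie in $(-1,0) \cup (-\infty,-1)$, parameterized by $\beta \in (0,1)$ (case~(3)); the corresponding symmetry parity of $b$ is then determined by whether $b(0)$ is real or imaginary. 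The remaining real parameter in $b$ (the interior root $\gamma$ in case~(2), or the scalar $b(0)$ in case~(3)) is fixed by the period integral~(v) on the elliptic spectral curve.

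For genus $2$, the polynomial $a$ has four real reciprocal-pair roots, and the Abresch symmetry together with the sign restriction on $\SY^1$ forces one pair in $(0,1) \cup (1,\infty)$ and the other in $(-1,0) \cup (-\infty,-1)$, parameterized by $\alpha, \beta \in (0,1)$ (case~(4)); otherwise two roots coalesce and the example drops to genus one. The cubic $b$ with $b(0) \in \mi \R$ has symmetry $\lambda^3 b(1/\lambda) = b(\lambda)$, and analysis at the branch points of $\mu$ singles out $\lambda = -1$ as a root of $b$, leaving the real parameter $\gamma \in (\alpha,1)$ and $b(0)$ to be determined by the two independent period integrals in~(v) on the hyperelliptic curve. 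I expect the main obstacle to be twofold: rigorously deriving the extra $\lambda \mapsto 1/\lambda$ involution on $\Sigma$ from the geometric reflection of the Abresch annulus, which requires carefully tracking the Sym normalization $\lambda_0 = 1$ through the reflection; and verifying that the period integrals are monotone in the free parameters and cut out exactly the stated ranges, which I would do by matching with the explicit elliptic-function parametrization of $\omega$ from Theorem~\ref{abresch} rather than by an abstract argument.
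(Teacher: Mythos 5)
Your route is genuinely different from the paper's, but as it stands it has real gaps. The paper does not argue via a hidden reflection symmetry at all: it observes that the Abresch system forces the linear relation $\o_{zzz}-2\o_z^3=-\tfrac14\o_{\bar z}+\tfrac{c-d}{2}\o_z$ among the Pinkall--Sterling solutions, feeds this into the iteration of Proposition~\ref{induction} to read off $a(\l)$ explicitly in terms of the real Abresch parameters $c\le 0$, $d\le 0$ of Theorem~\ref{abresch}, and then cites \cite{HKS1} for the verification that $b$ satisfies the closing conditions. Your symmetry strategy is legitimate in spirit (the extra involution $\l\mapsto 1/\l$ is indeed what characterizes Abresch spectral data), but the step you yourself flag --- lifting the reflection of the surface to an involution of $\Sigma$ while tracking the Sym normalization at $\l=1$ --- is precisely where the content lies, and it is not carried out; in the paper this information is obtained for free from the explicit $(c,d)$-dependence of $a$.

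More concretely, even granting the involution, your enumeration is too quick. The reality condition $\l^{2g}\overline{a(1/\bar\l)}=a(\l)$ combined with $\l^{2g}a(1/\l)=a(\l)$ only forces the zero set of $a$ to be invariant under complex conjugation; for $g=2$ a non-real quadruple $\{\alpha,\bar\alpha,1/\alpha,1/\bar\alpha\}$, or two reciprocal pairs on the same half-axis, is not excluded by these symmetries together with the sign condition, since $\l^{-1}(\l-\alpha)(\alpha\l-1)=2\alpha\cos t-(1+\alpha^2)<0$ on $\SY^1$ for every real $\alpha\in(0,1)$, so any product of such factors has constant sign on the circle and the overall constant can be adjusted to meet $\l^{-g}a(\l)\le 0$ and $|a(0)|=\tfrac1{16}$. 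Thus ``real reciprocal pairs, one positive and one negative'' does not follow from your constraints; in the paper it comes out of the explicit formula for $a$ in terms of the elliptic functions $f,g$. Similarly, the quantitative claims ($\gamma\in(\alpha,1)$, $b(0)$ determined by $\alpha,\beta$, $b(0)=\pm\pi/16$ at genus zero) rest on monotonicity of period integrals that you assert but do not prove, whereas the paper outsources exactly this computation to \cite{HKS1}. Finally, a sign slip at genus zero: the relevant symmetry there is $\l\,b(1/\l)=-b(\l)$ (your case (b)), since $\l\,b(1/\l)=b(\l)$ would force $b\propto(\l+1)$ rather than $(\l-1)$.
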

\begin{proof}
The proof is given in \cite{HKS1}. The Abresch system of proposition \ref{abresch} gives the relation $\o_{zzzz}-2\o_z^3=-\frac{1}{4} \o_{\bar z} +\frac{c-d}{2} \o_z$.
We apply the iteration of Pinkall-Sterling described in proposition \ref{induction} and obtain a corresponding polynomial $a(\l)$. Finally we prove in \cite{HKS1} that the polynomial $b$ satisfies the closing conditions.
\end{proof}

\begin{lemma}
\label{doublepointabresch}
If $\gamma$ is a root of $b$ then the corresponding function $|\mu (\gamma)| \neq 1$.
\end{lemma}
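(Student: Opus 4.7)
The strategy is to integrate the meromorphic differential $d\ln\mu = b\,d\lambda/(\nu\lambda^2)$ along a carefully chosen real segment, starting from a point where $|\mu|$ is known exactly, and to exploit the reality conditions of $a$ and $b$ listed in Proposition~\ref{example} to show that the real part of the integral is non-zero. Specifically, Proposition~\ref{example} tells us that the only root $\gamma$ of $b$ in $\C^{\ast}\setminus\SY^1$ that needs to be analysed is the real positive value $\gamma\in(\alpha,1)$ appearing in cases (2) and (4), together with its partner $1/\gamma$ obtained by the involution $\rho$. The natural starting point for the integration is the Sym point $\lambda=1$, where the closing condition forces $\mu(1)=\pm 1$ and hence $\ln|\mu(1)|=0$.

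First I would verify that the integrand $b\,d\lambda/(\nu\lambda^2)$ is purely real along the real segment $(\alpha,1/\alpha)\subset\R_+$ on a fixed sheet of $\Sigma$. On this segment the explicit form of $a$ from Proposition~\ref{example} gives $a(\lambda)<0$, so $\nu(\lambda)=\sqrt{a(\lambda)/\lambda}$ is purely imaginary and varies continuously; fix the branch $\nu\in i\R_+$. The symmetry $\lambda^{g+1}b(1/\lambda)=b(\lambda)$ together with $b(0)\in i\R$ in cases (2), (4) implies that $b(\lambda)$ is purely imaginary for real $\lambda$. Therefore $b/\nu$ and hence $b\,d\lambda/(\nu\lambda^2)$ restricts to a real-valued $1$-form on $(\alpha,1/\alpha)$.

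Next I would show that the integral is non-zero by a sign argument. Writing $b(\lambda)=(b(0)/\gamma)(\lambda-\gamma)(\gamma\lambda-1)\cdot p(\lambda)$, where $p(\lambda)$ is either $1$ (case~2) or the positive-on-the-segment factor $(1+\lambda)$ (case~4), the only factor of $b$ that can change sign on $(\gamma,1)$ is the product $(\lambda-\gamma)(\gamma\lambda-1)$, which is strictly negative there since $\gamma<\lambda<1<1/\gamma$. The denominator $\nu(\lambda)\lambda^2$ retains a fixed sign on the same interval by continuity, and none of the factors vanish on the open segment. Consequently the real integrand is of one strict sign on $(\gamma,1)$, so
\begin{equation*}
\ln|\mu(\gamma)|-\ln|\mu(1)|=\mathrm{Re}\int_{1}^{\gamma}\frac{b(\lambda)\,d\lambda}{\nu(\lambda)\lambda^2}\ne 0,
\end{equation*}
and since $\ln|\mu(1)|=0$ we conclude $|\mu(\gamma)|\ne 1$. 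The partner $1/\gamma$ is handled by applying the involution $\rho$, which sends $\mu\mapsto\bar\mu^{-1}$ and so preserves $|\mu|$.

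The main obstacle I anticipate is the consistent bookkeeping of signs and branches across the different genera in Proposition~\ref{example}: one must choose the correct sheet of $\nu$ at the Sym point so that $\mu$ really equals $+1$ or $-1$ on that branch, and then verify that the same choice of branch is valid all the way along $[\gamma,1]$ without crossing a branch cut of the spectral curve. Once that is set up, the argument reduces to the elementary monotonicity of a real integral. In cases~(1) and (3), where the roots of $b$ lie on $\SY^1$ (at $\pm 1$), the notation $\gamma$ of Proposition~\ref{example} does not refer to these roots, so the lemma as stated applies only to the off-circle roots treated above, and no contradiction arises with the closing condition $\mu(1)=\pm 1$ at the Sym point.
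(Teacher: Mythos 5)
Your proposal is correct and follows essentially the same route as the paper: restrict $d\ln\mu=b\,d\lambda/(\nu\lambda^2)$ to the real segment of $(\alpha,1/\alpha)$ where it is real-valued, use Proposition~\ref{example} to place the off-circle root $\gamma$ in $(\alpha,1)$, and conclude $\mathrm{Re}\,\ln\mu(\gamma)\neq 0$ by strict monotonicity of $\mathrm{Re}\,\ln\mu$ between $\gamma$ and a point where $\mu=\pm1$. The only (cosmetic) difference is that you anchor the integration at the Sym point $\lambda=1$ and use the fixed sign of $b/(\nu\lambda^2)$ on $(\gamma,1)$, whereas the paper anchors at the branch point $\alpha$ via the vanishing of $\int_\alpha^{1}d\ln\mu$ from closing condition (iv); both hinge on the same reality and monotonicity facts, and your explicit treatment of $1/\gamma$ via $\rho$ (which sends $|\mu|$ to $|\mu|^{-1}$, hence preserves $|\mu|\neq1$) is a small bonus the paper leaves implicit.
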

\begin{proof}
For $\l \in [\alpha, \bar \alpha^{-1}]$, the function $h= \ln \mu$ is real and
$$\int_{\a}^{1 / \a} \frac{b \,d\l}{\nu \l^2}=2 \int_{\a}^{1} \frac{b \,d\l}{\nu \l^2}= \int _{\alpha} ^1 dh = 0.$$
Then $\gamma$ is a root of $dh$ and is contained in the interval $(\alpha,\,1)$. Since ${\rm Re}\, h(\alpha)={\rm Re}\, h(1)=0$, the value $\gamma$ is the local critical point of $h$ and then ${\rm Re}\, h(\gamma) \neq 0$, so $|\mu| \neq 1$.
\end{proof}

\subsection{Whitham deformation of spectral genus $0,1$ and $2$}
\label{opendoublegenuszero}

We apply the deformation in the case where the spectral curve has spectral genus one or two. We show that the family of Riemann type examples is a two parameter family.

The space of polynomials $a$ of degree $2g$ which obey $\l^{2g} \overline{a(\bar \l^{-1})}=a(\l)$ is a real
$2g+1$ dimensional vector space with $a_g \in \R, a_0, ...,a_{g-1} \in \C$ and
$(a_{2g},...,a_{g+1})=(\bar a_0,...,\bar a_{g-1})$. The value $|a_0|$ is independent of the roots of $a$. All other
$2g$ degrees of freedom are determined by the roots of $a$. The space of polynomials $b$ or $c$ is a real $g+2$ dimensional vector space. Then the set of spectral data $(a,b)$ has $(3g+3)$-degrees of freedom. By choosing a polynomial $c$, we have
\begin{equation}
- 2\dot{b}a + b\dot{a} = - 2 \l ac' + ac + \l a'c=p(\l)
\end{equation}
which is a set of $(3g+2)$-real equations (since $\l^{3g+1}\overline{ p(1/ \bar \l)}=-p(\l)$). These equations determine $(\dot a, \dot b)$ in terms of $c$. To preserve the closing condition we need $c(1)=0$ and
one real condition ${\rm Im} \tfrac{c(0)}{b(0)}=0$.

We consider the embedded flat cylinder as described in Proposition \ref{example}. The spectral data are given by $a_0(\l)=-1/16$ and $b_0(\l)=\frac{\pi}{16}(\l-1)$. We look for all deformations of  $(a_0,b_0)$ preserving embeddedness in the family of finite type annuli of genus $0,1$ or $2$.
We determine the values of $\mu =\pm 1$ on the unit circle. The point at $\l =+1$ and $\l =-1$ are available and we can open nodes of the spectral curve preserving the closing condition and embeddedness.

Therefore we can deform $a(\l)=\frac{1}{16}(\l-1)^2(\l+1)^2$ and $b(\l)=\frac{\pi \mi}{16}(\l-1)^2(\l+1)$. The corresponding $c$'s have to obey $c(1)=0$ and ${\rm Re}\,c(0)=0$. The solution space is the two-dimensional space spanned by $\mi(\l^3-1)$ and $\mi(\l^2-\l)$. Therefore they obey
$$\l^3c(1/\l)=-c(\l).$$
This implies that all of them preserve the symmetry
\begin{align*}
\l^4a(1/\l)&=a(\l)&\l^3b(1/\l)&=b(\l)&\l^3c(1/\l)&=-c(\l).
\end{align*}
The solution is calculated in \cite[section 7]{HKS1}. It is a
two-dimesnional family parameterized by $(\alpha,\beta)\in(0,1]\times(0,1]$
\begin{align*}
a(\l)&=\frac{1}{\beta\alpha}(\l-\alpha)(\alpha\l-1)(\l+\beta)(\beta\l+1)&
b(\l)&=\frac{b(0)}{\gamma}(1+\l)(\l-\gamma)(\gamma\l-1)
\end{align*}
with $b(0)\in \mi\mathbb{R}$ and $\gamma\in[\alpha,1]$ determined by
$\alpha$ and $\beta$. The unique genus zero example corresponds to
$\alpha=1=\beta$ and the two genus one families to $\alpha=1$ and
$\beta\in(0,1)$ and $\alpha\in(0,1)$ and $\beta=1$. In these cases
double roots of $a$ can be cancelled with simple roots of $b$.
Due to the normalization, in the first case together with the
cancellation $a$ is multiplied with $-1$ and $b$ with $\mi$ and the
action of the symmetry changes. This is a two parameter family of embedded annuli, describing the space moduli of Riemann type examples.
\begin{remark}
If there were double points on the unit circle not situated at $\l=1$ or $\l=-1$, then these could be used to deform an Abresch annulus into a higher spectral genus annulus. In an other way this is impossible by uniqueness of Abresch annuli. This would produce a Jacobi field with four zeroes on each horizontal section. This argument proves that the only double points are at $\l=1$ and $\l=-1$. An alternative way to prove the isolated property is to determine exactly the set of double point on the unit circle.
\end{remark}


\def\cydot{\leavevmode\raise.4ex\hbox{.}} \def\cprime{$'$}
\providecommand{\bysame}{\leavevmode\hbox to3em{\hrulefill}\thinspace}
\providecommand{\MR}{\relax\ifhmode\unskip\space\fi MR }
\providecommand{\MRhref}[2]{%
  \href{http://www.ams.org/mathscinet-getitem?mr=#1}{#2}
}
\providecommand{\href}[2]{#2}


\begin{thebibliography}{10}

\bibitem{abresch}
U.~Abresch, \emph{Constant mean curvature tori in terms of elliptic functions},
  J. Reine U. Angew Math. \textbf{374} (1987), 169--192.

\bibitem{bob}
A.~I. Bobenko, \emph{All constant mean curvature tori in {$\mathbb{R}^3$},
  {$\mathbb{S}^3$}, {$\mathbb{H}^3$} in terms of theta-functions}, Math. Ann.
  \textbf{290} (1991), 209--245.
  
  \bibitem{BurP_adl}
F.~E. Burstall and F.~Pedit, \emph{Harmonic maps via {A}dler-{K}ostant-{S}ymes
  theory}, Harmonic maps and integrable systems, Aspects of Mathematics, vol.
  E23, Vieweg, 1994.

\bibitem{BurP:dre}
\bysame, \emph{Dressing orbits of harmonic maps}, Duke Math. J. \textbf{80}
  (1995), no.~2, 353--382.

\bibitem{cheng}
S.-Y. Cheng, \emph{Eigenfunctions and nodal sets}, Comment. Math. Helv.
  \textbf{51} (1976), no.~1, 43--55.
  
  \bibitem{DPW}
J.~Dorfmeister, F.~Pedit, and H.~Wu, \emph{Weierstrass type representation of
  harmonic maps into symmetric spaces}, Comm. Anal. Geom. \textbf{6} (1998),
  no.~4, 633--668.

\bibitem{[S-T]}
R.~Sa Earp and E.~Toubiana, \emph{Screw motion surfaces in {$\Bbb H^2\times\Bbb
  R$} and {$\Bbb S^2\times\Bbb R$}}, Illinois J. Math. \textbf{49} (2005),
  no.~4, 1323--1362.

\bibitem{sturm}
M.~S.~P. Eastham, \emph{Results and problems in the spectral theory of periodic
  differential equations}, Springer, Berlin, 1975.

\bibitem{Forster-RS}
O.~Forster, \emph{Lectures on {R}iemann surfaces}, Graduate Texts in
  Mathematics, vol.~81, Springer-Verlag, New York, 1991.

\bibitem{hau}
L.~Hauswirth, \emph{Minimal surfaces of {R}iemann type in three-dimensional
  product manifolds}, Pacific J. Math. \textbf{224} (2006), no.~1, 91--117.

\bibitem{HKS1}
L.~Hauswirth, M.~Kilian, and M.~U. Schmidt, \emph{Finite type minimal surfaces
  in $\mathbb{S}^2 \times \mathbb{R}$}, To appear Ill. J. Math.

\bibitem{HKS3}
\bysame, \emph{On the moduli of constant mean curvature annuli of finite type in
  the 3-sphere}, preprint, 2012.

\bibitem{HeintzeKarcher}
E.~Heintze and H.~Karcher, \emph{A general comparison theorem with applications
  to volume estimates for submanifolds}, Ann. Sci. \'Ecole Norm. Sup. (4)
  \textbf{11} (1978), no.~4, 451--470.
  
  \bibitem{HIT}
N.~Hitchin, \emph{Harmonic maps from a 2-torus to the 3-sphere}, J.
  Differential Geom. \textbf{31} (1990), no.~3, 627--710.

\bibitem{hoffmanwhite}
D.~Hoffman and B.~White, \emph{Axial minimal surfaces in {$S^2\times\bold R$}
  are helicoidal}, J. Differential Geom. \textbf{87} (2011), no.~3, 515--523.

\bibitem{jackson}
S.~B. Jackson, \emph{The four-vertex theorem for surfaces of constant
  curvature}, Amer. J. Math. \textbf{67} (1945), 563--582.

\bibitem{Karcher}
H.~Karcher, \emph{Riemannian comparison constructions}, Global differential
  geometry, MAA Stud. Math., vol.~27, Math. Assoc. America, Washington, DC,
  1989, pp.~170--222.

\bibitem{lockhart-mcowen}
R.~B. Lockhart and R.~C. McOwen, \emph{Elliptic differential operators on
  noncompact manifolds}, Ann. Scuola Norm. Sup. Pisa Cl. Sci. (4) \textbf{12}
  (1985), no.~3, 409--447.

\bibitem{mazet}
L.~Mazet, \emph{A general halfspacetheorem for constant mean curvature
  surfaces}, American Journal Math \textbf{135} (2013), no.~3, 801--834.

\bibitem{McI:tor}
I.~McIntosh, \emph{Harmonic tori and their spectral data}, Surveys on geometry
  and integrable systems, Adv. Stud. Pure Math., vol.~51, Math. Soc. Japan,
  Tokyo, 2008, pp.~285--314.

\bibitem{MPR}
W.~H. {Meeks III}, J.~P{\'{e}}rez, and A.~Ros, \emph{Properly embedded minimal
  planar domain}, To appear Ann. Math.

\bibitem{MRstable}
W.~H. {Meeks III} and H.~Rosenberg, \emph{Stable minimal surfaces in
  {$M\times\Bbb R$}}, J. Differential Geom. \textbf{68} (2004), no.~3,
  515--534.

\bibitem{MR}
\bysame, \emph{The theory of minimal surfaces in {$M\times\Bbb R$}}, Comment.
  Math. Helv. \textbf{80} (2005), no.~4, 811--858.

\bibitem{PedRit}
R.~H.~L. Pedrosa and M.~Ritor{\'e}, \emph{Isoperimetric domains in the
  {R}iemannian product of a circle with a simply connected space form and
  applications to free boundary problems}, Indiana Univ. Math. J. \textbf{48}
  (1999), no.~4, 1357--1394.

\bibitem{PS}
U.~Pinkall and I.~Sterling, \emph{On the classification of constant mean
  curvature tori}, Ann. Math. \textbf{130} (1989), 407--451.

\bibitem{PreS}
A.~Pressley and G.~Segal, \emph{Loop groups}, Oxford Science Monographs, Oxford
  Science Publications, 1988.

\bibitem{HR}
H.~Rosenberg, \emph{Minimal surfaces in {${\Bbb M}^2\times\Bbb R$}}, Illinois
  J. Math. \textbf{46} (2002), no.~4, 1177--1195.
  
\bibitem{Symes_80}
W.~W. Symes, \emph{Systems of {T}oda type, inverse spectral problems, and
  representation theory}, Invent. Math. \textbf{59} (1980), no.~1, 13--51.

\bibitem{TerU}
C.~Terng and K.~Uhlenbeck, \emph{B\"{a}cklund transformations and loop group
  actions}, Comm. Pure and Appl. Math \textbf{LIII} (2000), 1--75.

\bibitem{teschl2012}
G.~Teschl, \emph{Ordinary differential equations and dynamical systems},
  Providence, RI: American Mathematical Society (AMS), 2012.

  \bibitem{whittaker} E.T.~Whittaker and G.N.~Watson, \emph{A course of modern analysis.} Fourth edition. Reprinted Cambridge University Press, New York 1962 vii+608

 \end{thebibliography}


%
%
%
%
%

\end{document}